\def\XXint#1#2#3{{\setbox0=\hbox{$#1{#2#3}{\int}$ }
\vcenter{\hbox{$#2#3$ }}\kern-.58\wd0}}
\DeclareSymbolFontAlphabet{\mathbb}{AMSb}%
\DeclareSymbolFontAlphabet{\mathbbl}{bbold}
\theoremstyle{thm} \newtheorem{thm}{Theorem}
\theoremstyle{thm} \newtheorem*{thm*}{Theorem}
\theoremstyle{thm} \newtheorem*{claim*}{Claim}
\newtheorem{prop}{Proposition} [section]
\newtheorem{nota}{Notation} [section]
\newtheorem{lem}[prop]{Lemma}
\newtheorem{claim}[prop]{Claim}
\newtheorem{corol}[prop]{Corollary}
\theoremstyle{definition} 
\theoremstyle{remark} 
\theoremstyle{remark} \newtheorem{rem}[prop]{Remark}
\theoremstyle{definition} \newtheorem{defi}[prop]{Definition}
\newcommand{\quotient}[2]{{\left.\raisebox{-.2em}{$#1$}\middle\backslash\raisebox{.2em}{$#2$}\right.}}
\newcommand{\quotientd}[2]{{\left.\raisebox{.2em}{$#1$}\middle\slash\raisebox{-.2em}{$#2$}\right.}}
\newcommand{\xleftrightarrow}[2][]{\ext@arrow 3359\leftrightarrowfill@{#1}{#2}}
\newcommand{\xdashrightarrow}[2][]{\ext@arrow 0359\rightarrowfill@@{#1}{#2}}
\newcommand{\xdashleftarrow}[2][]{\ext@arrow 3095\leftarrowfill@@{#1}{#2}}
\newcommand{\xdashleftrightarrow}[2][]{\ext@arrow 3359\leftrightarrowfill@@{#1}{#2}}
\def\rightarrowfill@@{\arrowfill@@\relax\relbar\rightarrow}
\def\leftarrowfill@@{\arrowfill@@\leftarrow\relbar\relax}
\def\leftrightarrowfill@@{\arrowfill@@\leftarrow\relbar\rightarrow}
\def\arrowfill@@#1#2#3#4{%
  $\m@th\thickmuskip0mu\medmuskip\thickmuskip\thinmuskip\thickmuskip
   \relax#4#1
   \xleaders\hbox{$#4#2$}\hfill
   #3$%
}
\def\blfootnote{\xdef\@thefnmark{}\@footnotetext}
\newcommand{\thickhline}{%
    \noalign {\ifnum 0=`}\fi \hrule height 1pt
    \futurelet \reserved@a \@xhline
}
\newcolumntype{"}{@{\hskip\tabcolsep\vrule width 1pt\hskip\tabcolsep}}
\newcommand{\scc}{{specially chain connected }}
\renewcommand{\U}{\operatorname{U}}
\renewcommand{\Im}{\operatorname{Im}}
\DeclareMathOperator{\PU}{PU}
\DeclareMathOperator{\PGL}{PGL}
\title{Uniformization of varieties with log-canonical singularities}
\date{}
\author[B. Cadorel]{Beno\^{i}t Cadorel} 
\email{benoit.cadorel@univ-lorraine.fr}
\address{Institut \'Elie Cartan de Lorraine, Universit\'e de Lorraine, F-54000 Nancy,
	France.}
\urladdr{http://www.normalesup.org/~bcadorel/}
\begin{document}

\begin{abstract}
	We study the problem of uniformizing quasi-projective varieties with log-canonical compactifications. More precisely, given a complex projective variety \(X\) with log-canonical singularities, we give criteria for \(X\) to be isomorphic to a Baily-Borel-Mok compactification of a ball quotient, asking on the one hand the equality case in a suitable Miyaoka-Yau (MY) inequality, and on the other hand some adequate assumptions on the singularities. We also give as a result of independent interest that log-resolutions of log-canonical singularities have their fibers connected by chains of special varieties in the sense of Campana; this is used in the proof to control the behaviour of the period map near the exceptional divisors of such resolutions.

	We also show that it is necessary to assume that the singularities are at least log-canonical: some examples of Deligne-Mostow-Deraux can be manipulated to provide examples of singular varieties satisfying the equality case in MY, while not being isomorphic to such Baily-Borel-Mok compactifications.
\end{abstract}

\maketitle

\section{Introduction}

Let \(X = \quotientd{\mathbb{B}^{n}}{\Gamma}\) be a quotient of the complex unit ball by a {\em lattice} i.e. a discrete subgroup \(\Gamma \subset \mathrm{Aut}(\mathbb{B}^{n})\) with finite Bergman covolume. In the case where \(X\) is non-compact, the work of Baily-Borel \cite{BB66} and Mok \cite{Mok12} implies that \(X\) is a quasi-projective variety, admitting a normal compactification with boundary made of a finite number of points: \(X^{\ast} = X \sqcup \{p_{1}, \dotsc, p_{m}\}\). This compactification has log-canonical singularities and ample canonical bundle \(K_{X^{\ast}}\). Possibly after replacing \(\Gamma\) by a finite index subgroup, we may assume that \(X \subset X^{\ast}\) is smooth, and that all boundary singularities \(p_{i}\) are locally analytically isomorphic to a cone over an abelian variety. In this situation, there exists a log-resolution \(\widehat{X} \to X\) where every boundary divisor has discrepancy \(-1\).  
\medskip

The goal of these notes is to investigate under which conditions a given projective variety can be obtained as such a Baily-Borel-Mok compactification. In dimension \(2\), the situation is quite well-understood thanks to the work of Kobayashi \cite{Kob85} -- these results can be used to give a criterion for a singular surface to be a ball-quotient compactification, in terms of equality in some Miyaoka-Yau inequality (see Section~\ref{sec:surfacecase}). It is natural to ask how to extend this type of statements to higher dimensional singular varieties -- assuming for example that the singularities are {\em punctual}. As far as the Miyaoka-Yau inequality is concerned, it is actually possible to obtain such a statement when \(X^{\ast}\) has log-canonical singularities and is {\em smooth in codimension \(2\)}, as follows:

\begin{thm}[= Theorem~\ref{thm:existencemap}] \label{thm:THM1}
	Let \(X^{\ast}\) be a variety of dimension \(n \geq 3\), with log-canonical singularities and smooth in codimension \(2\). We assume that \(K_{X^{\ast}}\) is an ample \(\mathbb{Q}\)-Cartier divisor. Then \(X^{\ast}\) satisfies a Miyaoka-Yau inequality: 
	\begin{equation} \label{eq:MYineqintro}
		\big(2(n+1)\, c_{2}(T_{X}) - n\, c_{1}^{2}(T_{X})\big) 
		\cdot K_{X^{\ast}}^{n-2} \geq 0.	
	\end{equation}
	where \(X \subset X^{\ast}\) is the smooth locus (see Section~\ref{sec:unifmap} for more details). In equality case, there exists a map \(\varphi : \widetilde{X} \to \mathbb{B}^{n}\) that is étale everywhere.
\end{thm}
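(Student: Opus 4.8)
The plan is to run the singular analogue of Yau's proof of the Miyaoka--Yau inequality: build a Kähler--Einstein metric of negative curvature on the smooth locus $X = X^{\ast}_{\mathrm{reg}}$, integrate the pointwise Chern--Weil inequality that any such metric satisfies, and read off both assertions from the resulting identity. For the metric: since $K_{X^{\ast}}$ is an ample $\mathbb{Q}$-Cartier divisor and $X^{\ast}$ has log-canonical singularities, I would invoke the existence of a singular Kähler--Einstein metric $\omega$ on $X^{\ast}$ with $\operatorname{Ric}(\omega) = -\omega$ --- the klt case is due to Eyssidieux--Guedj--Zeriahi, the general log-canonical (indeed semi-log-canonical) case to Berman--Guenancia. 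Concretely, $\omega$ is an honest smooth Kähler--Einstein metric on $X$, its local potentials are bounded away from the non-klt locus and grow at worst logarithmically (``cuspidally'') along it, and $\int_X \omega^{n} < \infty$, equal to $n!\,(2\pi)^{n}\,K_{X^{\ast}}^{\,n}$ after normalisation. Set $\beta = \tfrac{1}{2\pi}\omega$, a Kähler form on $X$ representing $c_1(K_{X^{\ast}})$; then the first Chern form of $(T_X,\omega)$ is $\gamma_1 = -\beta$, and write $\gamma_2 = \gamma_2(T_X,\omega)$ for its second Chern form.

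The hypothesis that $X^{\ast}$ is smooth in codimension $2$ enters next. Because $\operatorname{Sing}(X^{\ast})$ has codimension $\ge 3$, the classes $c_1^2(T_X)$ and $c_2(T_X)$ pair with $(n-2)$-cycles avoiding it, so $c_1^2(T_X)\cdot K_{X^{\ast}}^{\,n-2}$ and $c_2(T_X)\cdot K_{X^{\ast}}^{\,n-2}$ are well-defined rational numbers, and I would prove
\[
\int_X \gamma_1^{\,2}\wedge\beta^{\,n-2} = c_1^2(T_X)\cdot K_{X^{\ast}}^{\,n-2},\qquad
\int_X \gamma_2\wedge\beta^{\,n-2} = c_2(T_X)\cdot K_{X^{\ast}}^{\,n-2}.
\]
The first identity is the volume computation above, since $\gamma_1^2 = \beta^2$. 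The second is the delicate point: one must check that $\gamma_2\wedge\beta^{\,n-2}$ is integrable near $\operatorname{Sing}(X^{\ast})$ and that its integral carries no boundary defect, i.e. that the Chern--Weil form extends to a closed current on $X^{\ast}$ (or on a resolution) with the expected cohomology class. In practice I would pass to a log-resolution $\pi : \widehat X\to X^{\ast}$ with reduced exceptional divisor $E$ --- for which $K_{\widehat X}+E = \pi^{\ast}K_{X^{\ast}}+\Delta$ with $\Delta\ge 0$ supported on $E$, by log-canonicity --- approximate $\omega$ by a family of cone/cusp Kähler metrics on $(\widehat X,E)$ in the spirit of Guenancia's and Berman--Guenancia's constructions, and control the correction terms against the exceptional locus using that $\pi(E)$ has codimension $\ge 3$.

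With this in hand, the classical algebraic identity for Kähler--Einstein metrics (Guggenheimer, Chen--Ogiue, Yau) gives, pointwise on $X$,
\[
\big(2(n+1)\,\gamma_2 - n\,\gamma_1^{\,2}\big)\wedge\frac{\beta^{\,n-2}}{(n-2)!} \;=\; c_n\,\big|\mathring R\big|^{2}_{\omega}\,\frac{\beta^{\,n}}{n!},\qquad c_n>0,
\]
where $\mathring R$ denotes the trace-free part of the curvature tensor of $\omega$. Integrating over $X$ and using the previous step yields
\[
\big(2(n+1)\,c_2(T_X)-n\,c_1^2(T_X)\big)\cdot K_{X^{\ast}}^{\,n-2} \;=\; (n-2)!\,c_n \int_X \big|\mathring R\big|^{2}_{\omega}\,\frac{\beta^{\,n}}{n!}\;\ge\;0,
\]
which is the inequality \eqref{eq:MYineqintro}, and shows that equality forces $\mathring R\equiv 0$ on $X$. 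Then $(X,\omega)$ has constant holomorphic sectional curvature and is, after rescaling, locally isometric to the complex hyperbolic ball $(\B^{n},\omega_{\Berg})$; developing these local isometries along paths in the simply connected universal cover $\widetilde X$ of $X$ produces a holomorphic map $\varphi:\widetilde X\to\B^{n}$ which is a local isometry between complex manifolds of equal dimension, hence a local biholomorphism, i.e. étale everywhere. Neither surjectivity nor injectivity of $\varphi$ is claimed --- both would require completeness, which typically fails near the cusps --- consistently with the statement.

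The main obstacle is the second step: identifying the analytic integral of $\gamma_2\wedge\beta^{\,n-2}$ over the open manifold $X$ with the algebraic number $c_2(T_X)\cdot K_{X^{\ast}}^{\,n-2}$, i.e. ruling out a contribution concentrated over $\operatorname{Sing}(X^{\ast})$. The ``smooth in codimension $2$'' hypothesis is precisely what makes the target meaningful, but the log-canonical (rather than klt) singularities make the Kähler--Einstein metric genuinely cuspidal in places, so integrability of the Chern--Weil forms and vanishing of the error terms over the exceptional locus are borderline and demand a precise asymptotic analysis of $\omega$; this, together with setting up the approximation on a log-resolution, is the technical heart of the argument.
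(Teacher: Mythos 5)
Your route is a genuinely different one from the paper's. The paper proves the inequality and produces the period map entirely through Higgs-bundle stability: Guenancia's polystability of $T_X$ yields stability of the canonical Higgs sheaf $(\Omega_X^{[1]}\oplus\mathcal O_X,\theta)$, one restricts to a smooth complete intersection surface $S\in |m_1K_{X^*}|\cap\dotsb\cap|m_{n-2}K_{X^*}|$ -- which, thanks to \emph{smoothness in codimension $2$}, misses the singular locus entirely -- and then applies the Mehta--Ramanathan restriction theorem of Greb--Kebekus--Peternell--Taji and Simpson's Bogomolov--Gieseker inequality for stable Higgs \emph{bundles}, so the inequality is obtained purely algebraically on a surface where $T_X$ is an honest vector bundle. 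In equality case the paper obtains a p-$\mathbb{C}$VHS on each $S$ via Simpson's correspondence and then has a non-trivial \emph{gluing} step (Goresky--McPherson Lefschetz plus uniqueness of harmonic metrics for the stable Higgs bundle) to patch the period maps on the varying surfaces into a single $\rho$-equivariant map on $\widetilde X$. You instead run the Yau/Chen--Ogiue/Guggenheimer approach: build the singular KE metric (Berman--Guenancia in the lc case), integrate the pointwise MY identity, and in equality case develop a $\mathrm{PU}(n,1)$-structure. Your developing-map step avoids the gluing issue nicely, but you have traded it for a genuinely hard analytic one: identifying $\int_X\gamma_2\wedge\beta^{n-2}$ with the algebraic number $c_2(T_X)\cdot K_{X^*}^{n-2}$ requires showing the Chern--Weil forms of the cuspidal KE metric extend across a log-resolution with no defect measure on the exceptional locus. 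You correctly flag this as the technical heart, and it is exactly what the paper's restriction-to-surfaces mechanism is designed to sidestep (the same reason Deng \cite{Deng22} and the paper itself present the lc higher-dimensional case through Higgs bundles rather than metric asymptotics; see the paper's remark that Yau's 1993 argument in this spirit has an unresolved gap). So your proposal is sound in outline and indeed recovers both assertions, but to be a complete proof it would need a careful Mumford-proportionality-type asymptotic analysis near the non-klt locus that the paper never has to perform.
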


Proving the kind of statement above has now become quite classical and can be done for example using the ideas of Greb-Kebekus-Peternell-Taji \cite{GKPT19a, GKPT19a, GKPT20}. The inequality can be obtained by first showing the stability of an adequate Higgs bundle in restriction to a complete intersection surface sitting inside \(X\), the latter stability boiling down to a Metha-Ramanathan type statement.
\medskip

The natural question is now to know whether this map \(\varphi\) is always a biholomorphism: that would ensure that \(X^{\ast}\) is indeed the Baily-Borel-Mok compactification of a ball quotient. The hypothesis on the singularities seems important: it is indeed possible to contract divisors in some examples of Deligne-Mostow \cite{DM86, DM93} and Deraux \cite{Der05} to get counter-examples.

\begin{thm}(cf. Theorem~\ref{thm:contractionproj} + Section~\ref{sec:DMDex}) \label{thm:examples} There exist examples of varieties \(X^{\ast}\) with punctual, {\em non log-canonical singularities} and ample cotangent bundle, that satisfy equality in \eqref{eq:MYineqintro}, and that admit a non-isomorphic étale period map \(\varphi : \widetilde{X} \to \mathbb{B}^{n}\). The smooth part \(X\) cannot be given the structure of a ball quotient.
\end{thm}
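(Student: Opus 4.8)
The plan is to produce \(X^{\ast}\) by contracting a totally geodesic divisor inside a compact ball quotient. Concretely, I would start from a compact ball quotient \(Y = \quotientd{\mathbb{B}^{n}}{\Gamma}\), with \(\Gamma\) torsion-free and cocompact, taken from the Deligne--Mostow or Deraux list and chosen so that \(Y\) contains a smooth totally geodesic divisor \(D \cong \quotientd{\mathbb{B}^{n-1}}{\Gamma_{D}}\); in the Deligne--Mostow picture such divisors come from collision loci of the marked points, and this is where the ``manipulation'' alluded to in the introduction takes place (see Section~\ref{sec:DMDex}, in particular Deraux's threefold). The crucial geometric point is that, \(D\) being totally geodesic in a space of constant negative holomorphic sectional curvature, its normal bundle is anti-ample: comparing Bergman classes through adjunction one gets \(c_{1}(N_{D/Y}) = -\tfrac{1}{n}\,c_{1}(K_{D})\), so that \(K_{Y}+(n+1)D\) restricts trivially to \(D\), is nef, and is big on \(Y\). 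Feeding this into Theorem~\ref{thm:contractionproj} yields a birational morphism \(\pi : Y \to X^{\ast}\) onto a projective variety which contracts \(D\) to a single point \(p\), with \(K_{X^{\ast}}\) ample and \(\mathbb{Q}\)-Cartier. I then set \(X = X^{\ast}\setminus\{p\} = Y\setminus D\) and define \(\varphi : \widetilde{X} \to \mathbb{B}^{n}\) to be the lift to \(\widetilde{Y}=\mathbb{B}^{n}\) of \(\widetilde{X}\to X\hookrightarrow Y\); this is exactly the period map attached to the restriction to \(X\) of the uniformizing variation of Hodge structure of \(Y\).

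Next I would check the asserted properties of \(X^{\ast}\). \emph{Punctual, non log-canonical singularities:} the contracted locus is a point, so \(X^{\ast}\) is smooth in codimension \(n-1\geq 2\); restricting \(K_{Y} = \pi^{\ast}K_{X^{\ast}} + aD\) to \(D\) and using \((\pi^{\ast}K_{X^{\ast}})|_{D}=0\) together with adjunction and the normal bundle computation gives \(a=-(n+1)<-1\), so the singularity at \(p\) is very far from log-canonical. \emph{Ample cotangent bundle:} the Bergman metric makes \(T_{Y}\) Griffiths-negative, hence \(\Omega^{1}_{Y}\) is ample on the compact \(Y\), and one checks that this descends to the appropriate ampleness statement for \(X^{\ast}\) in the sense of Section~\ref{sec:unifmap} (it only has to be controlled away from \(p\) and in a neighbourhood of \(p\), where the relevant sheaf is governed by \(\pi_{\ast}\Omega^{1}_{Y}\)). \emph{Equality in \eqref{eq:MYineqintro}:} for a metric of constant holomorphic sectional curvature the Chern--Weil forms satisfy \(2(n+1)\,c_{2}(T_{Y}) = n\,c_{1}^{2}(T_{Y})\) identically — the same universal relation that already holds on the compact dual \(\mathbb{P}^{n}\) — so this class vanishes in \(H^{4}(Y,\mathbb{R})\); intersecting with \((\pi^{\ast}K_{X^{\ast}})^{n-2}\) and using the definition of the Chern numbers of \(X^{\ast}\) from Section~\ref{sec:unifmap} (the correction terms along \(D\) cancel because \(D\) itself satisfies the analogous proportionality) gives equality. \emph{The map \(\varphi\) is étale but not an isomorphism:} it is a composition of local isomorphisms, hence étale, but its image lies in \(\mathbb{B}^{n}\setminus\pi^{-1}(D)\), a proper open subset, so it is not surjective.

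Finally, to see that \(X\) cannot carry the structure of a ball quotient, I would argue by uniqueness of the period map. If \(X\cong\quotientd{\mathbb{B}^{n}}{\Lambda}\) for some lattice \(\Lambda\), then \(X\) carries a uniformizing \(\mathbb{C}\)-variation of Hodge structure whose Higgs bundle is the canonical \((\Omega^{1}_{X}\oplus\mathcal{O}_{X},\theta)\); but this Higgs bundle depends only on \(\Omega^{1}_{X}\), so it coincides with the one obtained by restricting the uniformizing variation of \(Y\) to \(X\). Being stable, it carries a unique tame harmonic metric, so the two variations agree and have the same developing map — which is an isomorphism onto \(\mathbb{B}^{n}\) for the first description and equals the non-surjective \(\varphi\) for the second, a contradiction.

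The main obstacle is the first step: exhibiting, among the Deligne--Mostow and Deraux ball quotients, explicit compact \(Y\) together with a totally geodesic divisor \(D\) whose normal bundle is not merely anti-ample but genuinely \emph{contractible to a point} — i.e.\ verifying that the numerically trivial \(\mathbb{Q}\)-line bundle \((K_{Y}+(n+1)D)|_{D}\) is semiample (equivalently, that \(\pi\) exists as a morphism of projective varieties and not just as a bimeromorphic contraction of complex spaces), and that the whole configuration persists after passing to a suitable finite-index subgroup so that \(Y\) is smooth and \(D\) is a smooth, disjoint-from-its-siblings divisor. This is precisely the content of Section~\ref{sec:DMDex}, where the arithmetic and combinatorial features of the Deligne--Mostow weights are used.
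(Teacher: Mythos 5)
Your overall strategy --- contract a totally geodesic divisor $D$ inside a compact ball quotient $Y$ via Theorem~\ref{thm:contractionproj}, compute the discrepancy $-(n+1)$ from the adjunction identity $c_1(N_{D/Y}) = -\tfrac{1}{n}\,c_1(K_D)$, deduce the Miyaoka--Yau equality from the pointwise Chern--Weil proportionality on the locally symmetric smooth locus, and verify that the contraction is projective by checking the unitary character $\sigma_1 : \pi_1(D) \to \mathrm{U}(1)$ has finite image --- is precisely the paper's Example~2 of Section~\ref{sec:DMDex}. (The paper's Example~1, from Deraux's threefolds, uses the more general setup where $\psi$ is a genuine ramified correspondence with $m_i > 1$; but the theorem only asserts existence, so restricting to the unramified case is harmless.)

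The one step where you depart from the paper --- the claim that $X = Y \setminus D$ cannot carry the structure of a ball quotient --- contains a genuine gap. You invoke uniqueness of the tame harmonic metric on the Higgs bundle $(\Omega^1_X \oplus \mathcal{O}_X, \theta)$ to conclude that a hypothetical ball-quotient VHS on $X$ would agree with the VHS obtained by restriction from $Y$. Over a \emph{non-compact} base, tame harmonic metrics on a given Higgs bundle are not unique: by the theory of Simpson, Biquard and Mochizuki, uniqueness holds only after fixing a parabolic (filtered) structure along $D$. The two candidate VHS have manifestly different local behavior along $D$: the restriction from $Y$ has trivial local monodromy around $D$ (its monodromy representation factors through $\pi_1(Y)$, in which the meridian loops die), whereas a finite-covolume ball-quotient structure on $X$ would have nontrivial parabolic monodromy around the cusp $D$. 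The filtered extensions are therefore different, and the uniqueness you invoke does not apply. The paper's argument via Proposition~\ref{prop:notballquotient} avoids this entirely: any ball-quotient structure on $X = \overline{X} \setminus D$ with $\overline{X}$ smooth projective and $D$ SNC must be of finite covolume, forcing some component of $D$ to be birational to a finite quotient of an abelian variety --- impossible here, since your $D$ is a smooth ball quotient of dimension $n-1 \geq 2$ by a torsion-free group, hence of general type.
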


In these examples, there exist a resolution of singularities \(\overline{X} \to X\) with exceptional divisor \(D = D_{1} \sqcup \dotsc \sqcup D_{m}\), such that each \(D_{i}\) is a smooth ball quotient: the diagram
\begin{equation} \label{eq:diagintro}
	\begin{tikzcd}
		\widetilde{X} \arrow[r, "\varphi"] \arrow[d] & \mathbb{B}^{n} \\
		X
	\end{tikzcd}
\end{equation}
has a limit near each \(D_{i}\), inducing a totally geodesic embedding \(\widetilde{D}_{i} \hookrightarrow \mathbb{B}^{n}\) whose image is a ball embedded as a hyperplane section in \(\mathbb{B}^{n}\). The fact that the \(D_{i}\) are ball quotients implies in particular that these singularities of the \(X^{\ast}\) are not log-canonical.  
\medskip

We believe that a suitable hypothesis on the singularities of \(X^{\ast}\), jointly with the equality case in \eqref{eq:MYineqintro} should be enough for \(X^{\ast}\) to be a Baily-Borel-Mok compactification -- we don't actually know if it is enough to assume that the singularities of \(X^{\ast}\) are log-canonical. We managed nonetheless to obtain partial results with stronger assumptions. This is maybe the main result of this work:

\begin{thm}(=Theorem~\ref{thm:completeness})
	Let \(X^{\ast}\) be a projective variety with {\em punctual, log-canonical singularities} and ample canonical divisor. Assume that:
	\begin{enumerate}[(a)]
		\item there exists a log-resolution of \(X^{\ast}\) where every exceptional divisor has discrepancy equal to \(-1\);
		\item there is equality in \eqref{eq:MYineqintro}. 
	\end{enumerate}
	Then the smooth part \(X \subset X^{\ast}\) is a ball quotient, and \(X^{\ast}\) is its Baily-Borel-Mok compactification. 
\end{thm}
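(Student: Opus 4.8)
The plan is to promote the étale period map of Theorem~\ref{thm:THM1} to a biholomorphism, all the difficulty being concentrated in a neighbourhood of the singular points \(p_{1},\dots,p_{m}\) of \(X^{\ast}\). First, hypothesis (b) together with Theorem~\ref{thm:THM1} gives more than the inequality \eqref{eq:MYineqintro}: equality forces the uniformizing Higgs bundle on \(X := X^{\ast}\setminus\operatorname{Sing}(X^{\ast})\) to be a system of Hodge bundles, hence a polarized \(\mathbb{C}\)-variation of Hodge structure with holonomy in \(\PU(n,1)\) and an étale period map \(\varphi\colon\widetilde{X}\to\mathbb{B}^{n}\) on the universal cover; the Hodge metric on \(X\) agrees, up to a universal constant, both with \(\varphi^{\ast}\) of the Bergman metric and with the Kähler--Einstein metric \(\omega_{\mathrm{KE}}\) of \((X^{\ast},0)\) (which exists and has volume a positive multiple of \(K_{X^{\ast}}^{n}\) because \(K_{X^{\ast}}\) is ample). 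Since \(\varphi\) is then a local isometry onto the simply connected \(\mathbb{B}^{n}\), it suffices to prove that \((X,\omega_{\mathrm{KE}})\) is complete: a local isometry with complete source and connected target is a Riemannian covering, hence here a biholomorphism \(\widetilde{X}\xrightarrow{\ \sim\ }\mathbb{B}^{n}\).

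To analyse \(\omega_{\mathrm{KE}}\) near \(p_{i}\) I would use the log-resolution \(\pi\colon\widehat{X}\to X^{\ast}\) of (a): its reduced exceptional divisor \(E=\sum_{j}E_{j}\) satisfies \(\pi^{\ast}K_{X^{\ast}}=K_{\widehat{X}}+E\), so \((\widehat{X},E)\) is log smooth with \(K_{\widehat{X}}+E\) big and nef. Since \(\pi^{\ast}K_{X^{\ast}}\) is \(\pi\)-trivial, iterated adjunction along the strata of the fibre \(E_{(i)}:=\pi^{-1}(p_{i})\) exhibits each stratum, with its different, as a log-canonical log-Calabi--Yau pair; the Iitaka-type fibrations of these pairs have special fibres, and chaining these together shows that \(E_{(i)}\) is connected by chains of special varieties in Campana's sense. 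This \scc statement for the exceptional fibres --- whose proof is the birational core of the argument and which I would carry out in a separate section --- is the result of independent interest announced in the introduction.

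The point of \scc-ness is that special varieties admit no non-constant holomorphic map to a positive-dimensional subvariety of a ball quotient (such subvarieties being of log-general type), so along \(E_{(i)}\) the period map can only degenerate ``towards a cusp''; combined with Borel's monodromy theorem, this forces the local monodromy of the variation around \(p_{i}\) into a single parabolic subgroup of \(\PU(n,1)\) fixing a boundary point \(\xi_{i}\in\partial\mathbb{B}^{n}\), hence the developing map of a punctured neighbourhood of \(p_{i}\) into a horoball at \(\xi_{i}\), and \(\omega_{\mathrm{KE}}\) to have Poincaré (cusp) growth along \(E\) --- in particular to be complete. Granting completeness, \(\varphi\) is a biholomorphism, so \(X=\quotientd{\mathbb{B}^{n}}{\Gamma}\) with \(\Gamma=\pi_{1}(X)\) acting freely and properly discontinuously through the holonomy; as \(\operatorname{vol}_{\omega_{\mathrm{KE}}}(X)\) is a positive multiple of \(K_{X^{\ast}}^{n}<\infty\), the group \(\Gamma\) is a lattice. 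If \(X^{\ast}\) is singular then \(X\) is a non-uniform ball quotient, so Baily--Borel--Mok provide its canonical compactification \(X^{\mathrm{BB}}\), a normal projective variety with ample canonical divisor obtained by adding finitely many points; since \(K_{X^{\ast}}\) is ample \(\mathbb{Q}\)-Cartier and \(X^{\ast}\setminus X\) has codimension \(n\geq 3\), Hartogs extension identifies the pluricanonical rings of \(X^{\ast}\), of \(X\), and of \(X^{\mathrm{BB}}\), whence \(X^{\ast}=\operatorname{Proj}\bigoplus_{m\geq 0}H^{0}(X,\omega_{X}^{[m]})=X^{\mathrm{BB}}\) (the case \(X^{\ast}\) smooth being immediate from compactness).

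The hard part is the completeness statement of the third paragraph: extracting from the \scc structure of the exceptional fibres, and from the étale period map alone, that \(\omega_{\mathrm{KE}}\) has cusp growth at each \(p_{i}\) --- i.e. that these log-canonical singularities are honest ball-quotient cusps rather than, say, higher-dimensional log-Calabi--Yau degenerations of the variation of Hodge structure. This requires a careful comparison of the Hodge-theoretic metric with \(\omega_{\mathrm{KE}}\) asymptotically near \(E\), together with a degeneration analysis of the variation, for which the non-existence of non-constant maps from special varieties into \(\mathbb{B}^{n}\) is the decisive geometric input.
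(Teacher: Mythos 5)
Your overall strategy matches the paper's: reduce to completeness of the pulled-back Bergman metric near each singular point, argue that completeness turns the étale period map into a biholomorphism, and use special chain connectedness of the exceptional fibers together with isotriviality of p-$\mathbb{C}$VHS on special varieties to constrain the degeneration of the period map. The final identification $X^{\ast}=X^{\mathrm{BB}}$ via the pluricanonical ring is a reasonable finishing step.

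However, there is a genuine gap at the heart of the completeness argument, and it is precisely the delicate point. You claim that specialness of the strata of $E_{(i)}$, ``combined with Borel's monodromy theorem, forces the local monodromy of the variation around $p_i$ into a single parabolic subgroup of $\PU(n,1)$ fixing a boundary point $\xi_i\in\partial\mathbb{B}^n$.'' This does not follow. Specialness and isotriviality only give that the limiting period map along each exceptional stratum is \emph{constant}; they do not decide whether the constant limit lies on $\partial\mathbb{B}^n$ (cusp case, complete metric) or in the interior $\mathbb{B}^n$ (elliptic case). Borel's monodromy theorem merely gives quasi-unipotence of eigenvalues and is compatible with \emph{finite-order} (elliptic) monodromy, which is exactly what happens at a quotient singularity. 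The paper must separately exclude the interior-limit case, and this is where the hypothesis that all discrepancies equal $-1$ is actually used: if the period map had a constant limit $b_\infty\in\mathbb{B}^n$, one shows via a Riemannian covering-space argument (Lemma~\ref{lem:universalcovering}) that a punctured neighborhood of $p_i$ uniformizes as $\quotient{\Gamma}{(\mathbb{B}^n-\{0\})}$ with $\Gamma\subset\mathrm{U}(n)$ finite, making $p_i$ a quotient singularity --- hence klt, contradicting discrepancy $-1$. Your proposal identifies this as ``the hard part'' in the final paragraph, but it leaves the mechanism unstated; the leap from ``constant limit'' to ``parabolic monodromy'' simply does not hold without this additional contradiction argument, and without it the completeness of $\omega_{\mathrm{KE}}$ is unproven.

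A secondary imprecision: the assertion that ``special varieties admit no non-constant holomorphic map to a positive-dimensional subvariety of a ball quotient (such subvarieties being of log-general type)'' is not correct as stated (rational and abelian subvarieties of ball-quotient compactifications exist and are special), and in any case the relevant target is a period domain, not the ball quotient itself. The correct input is Theorem~\ref{thm:isotrivial}: a p-$\mathbb{C}$VHS on a special quasi-projective manifold has constant period map. That is what the chain-connectedness statement is designed to feed into.
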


The proof of the theorem consist in studying the asymptotic behavior of the diagram \eqref{eq:diagintro} near the singularities. Again, let \(\overline{X} \to X^{\ast}\) be a resolution of singularities, with exceptional divisor \(D\). After some computations, essentially based on Schmid's nilpotent orbit theorem \cite{Schmid73} (or their recent version for \(\mathbb{C}\)-VHS due to Sabbah-Schnell~\cite{SS22}, see also Deng \cite{Deng23}), one can induce limiting maps  \(\varphi_{T} : \widetilde{T} \longrightarrow \overline{\mathbb{B}^{n}}\), where \(T \subset D\) is any smooth stratum of \(D\). To conclude, we want to show that any such map lands, not in \(\mathbb{B}^{n}\), but in its boundary \(\partial \mathbb{B}^{n}\). A topological argument permits to reduce the problem to showing that if \(\varphi_{T}\) takes its image in \(\mathbb{B}^{n}\), then it is actually constant. But now, remark that \(\varphi_{T}\) is the period map of a polarized \(\mathbb{C}\)-VHS. To show that it is constant, one can try to apply the results of \cite{CDY23}, that have as a corollary that a polarized \(\mathbb{C}\)-VHS on any quasi-projective manifold that is {\em special in the sense of Campana}, has constant period map (see Theorem~\ref{thm:isotrivial}): this statement should be compared to a similar isotriviality theorem for families of canonically polarized manifolds, due to Taji \cite{Taji16}. 
\medskip

The last piece of the proof then comes from the following theorem, which we believe is interesting in its own right.

\begin{thm} (= Theorem~\ref{thm:scc} + Corollary~\ref{corol:strataspecial}) \label{thm:specialintro}
	Let \(X\) be a complex analytic space with log-canonical singularities. Let \(\pi : \overline{X} \to X\) be a log-resolution of singularities. Then the fibers of \(\pi\) are connected by chains of {\em special varieties}.
	\medskip

	In addition, if for some \(x \in X\), the sets \(\pi^{-1}(x)\) and \(\pi^{-1}(x) \cup \pi^{-1}(\mathrm{Sing}(X))\) are both divisors with normal crossings, and if all discrepancies are equal to \(-1\), then the smooth locally closed strata of \(\pi^{-1}(x)\) are themselves {\em special} quasi-projective varieties.
\end{thm}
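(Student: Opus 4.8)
The plan is to prove the two assertions in turn, the specialness of the individual strata --- the case where every discrepancy equals $-1$ --- being the model situation from which the general \scc statement (Theorem~\ref{thm:scc}) is then deduced. Since both assertions are local on $X$ and unaffected by shrinking, I would reduce to $X$ quasi-projective with log-canonical singularities and fix $x \in X$. For the \scc statement I would also use that the property is independent of the chosen log resolution: two log resolutions are dominated by a third, a further blow-up of a log-smooth pair alters a fibre only by adding pieces of new exceptional divisors, which are covered by rational curves and so are themselves connected by chains of special varieties, and specialness of smooth varieties is preserved under birational maps and under dominant morphisms, so the property passes to dominating and to dominated fibres.

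\emph{The strata.} Under the hypotheses of Corollary~\ref{corol:strataspecial} one is in a log-smooth situation: writing $E$ for the reduced total exceptional divisor, $K_{\overline{X}} + E = \pi^{\ast} K_X$ because every discrepancy is $-1$, and the two normal-crossing hypotheses ensure that $\pi^{-1}(x)$ and its strata are cut out transversally by all components of $E$. For a locally closed stratum $E_J^{\circ}$ of $\pi^{-1}(x)$ with closure $E_J = \bigcap_{i \in J} E_i$, iterated adjunction for the log-smooth pair $(\overline{X}, E)$ shows that $\partial E_J := \big(\sum_{k \notin J} E_k\big)\big|_{E_J}$ is a reduced simple normal crossing divisor and
\[
	K_{E_J} + \partial E_J = (K_{\overline{X}} + E)\big|_{E_J} = (\pi^{\ast} K_X)\big|_{E_J} \sim_{\mathbb{Q}} 0,
\]
the last equivalence because $E_J \subseteq \pi^{-1}(x)$ is contracted to a point. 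Thus $(E_J, \partial E_J)$ is a log Calabi--Yau pair, so its full open part $E_J \setminus \partial E_J$ is a smooth quasi-projective variety with $\overline{\kappa} = 0$, hence special by Campana's classification. The stratum $E_J^{\circ}$ differs from $E_J \setminus \partial E_J$ only by reinstating the components of $\partial E_J$ lying over points other than $x$, so it has a compactification $(E_J, D)$ with $D \leq \partial E_J$; any logarithmic Bogomolov subsheaf of $\Omega^{p}_{E_J}(\log D)$ is a fortiori one of $\Omega^{p}_{E_J}(\log \partial E_J)$, and the latter are excluded by the specialness of $E_J \setminus \partial E_J$. Hence $E_J^{\circ}$ is special.

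\emph{The fibres.} For a general log-canonical $X$ one has only $K_{\overline{X}} = \pi^{\ast} K_X + \sum_i a_i E_i$ with $a_i \geq -1$, and the lc centres and the higher-discrepancy places must be separated. If $X$ is klt at $x$, then $\pi^{-1}(x)$ is rationally chain connected by the theorem of Hacon--McKernan, hence specially chain connected. In general I would pass to a $\mathbb{Q}$-factorial dlt modification $g : (Y', \Gamma') \to X$ with $K_{Y'} + \Gamma' = g^{\ast} K_X$ and $\Gamma'$ the reduced $g$-exceptional divisor --- obtained by running the $(K_{\overline{X}} + E)$-MMP over $X$; here $K_{\overline{X}} + E$ is $\pi$-pseudoeffective with effective $\pi$-exceptional difference $\sum_i (a_i + 1) E_i$ from $\pi^{\ast} K_X$, so the MMP contracts exactly the places with $a_i > -1$. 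As $g$ is small over the klt locus, when $x$ is not klt the fibre $g^{-1}(x)$ is a connected union of lc centres of $(Y', \Gamma')$ contracted to $x$; on each such centre $S$, adjunction gives a (dlt) log Calabi--Yau pair $(S, \mathrm{Diff}_S)$ with $K_S + \mathrm{Diff}_S \sim_{\mathbb{Q}} 0$, and running this down the strata of $\Gamma'$ the klt strata of $g^{-1}(x)$ carry numerically trivial log-canonical classes, hence are special after resolution by Campana's theory of special orbifolds. Choosing $\pi$ to factor through $g$ with $\overline{X} \to Y'$ a log resolution of $(Y', \Gamma')$, the map $\pi^{-1}(x) \to g^{-1}(x)$ resolves it, and over each lc centre $S$ it is a log resolution of the klt variety $S$, with rationally chain connected fibres by Hacon--McKernan; assembling the special resolutions of the centres $S$ with these fibres along the connected combinatorics of the stratification yields the \scc property of $\pi^{-1}(x)$.

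\emph{Main obstacle.} Two points require genuine work. First, organising the stratification so that one really obtains chains of special \emph{subvarieties} of $\pi^{-1}(x)$: by further blow-ups, and by tracking the combinatorics of lc centres through the dlt modification and the resolution, one must arrange that each piece of the fibre is either a resolution of an lc centre or a family of rational curves, with consecutive pieces meeting --- this is the \scc counterpart of the standard bookkeeping that fibres of MMP steps and of log resolutions are rationally chain connected, and is where the Hacon--McKernan circle of ideas is really used. Second, one must know that a resolution of the klt log Calabi--Yau pairs arising above is special: over a point their log-canonical class is $\mathbb{Q}$-linearly trivial, so this reduces to Campana's implication that an orbifold of Kodaira dimension $0$ is special, together with the fact that an orbifold-special pair has special underlying variety --- the subtlety being to check that the fractional boundaries carried by the dlt model do not obstruct the specialness of the bare smooth variety one finally maps into $\pi^{-1}(x)$.
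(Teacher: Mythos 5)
Your proposal is essentially correct, but it follows a genuinely different route from the paper, which organizes everything around a single more technical statement (Theorem~\ref{thm:descrfibers}) valid for arbitrary log-canonical singularities, with both parts of Theorem~\ref{thm:specialintro} then deduced from it. For the strata statement (Corollary~\ref{corol:strataspecial}) the paper performs an extra blow-up along $\overline{F}$ to reduce to the divisorial case, then proves specialness by comparing the Kodaira dimension of $(E, \Theta)$ on the two sides of Campana's core fibration $c \colon (E, \Lambda|_E) \dasharrow C$: the inequality $\kappa(E, K_E+\Theta) \leq 0$ is obtained through the Hacon--McKernan lifting theorem (Theorem~\ref{thm:refHM06}), and $\kappa(T, (K_E+\Theta)|_T) \geq 0$ comes from effectivity; orbifold additivity then forces $\dim C = 0$. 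Your argument for the pure $-1$ case shortcuts all of this: iterated adjunction on the snc pair $(\overline{X}, E)$ gives $K_{E_J} + \partial E_J = (\pi^\ast K_X)|_{E_J}$ directly, and contraction to the point $x$ makes this $\mathbb{Q}$-trivial, so the log Kodaira dimension is $0$ without invoking either the extra blow-up or the lifting theorem; the reduction from $(E_J, \partial E_J)$ to the actual stratum $E_J^\circ$ via the Bogomolov-sheaf inclusion $\Omega^p_{E_J}(\log D) \subset \Omega^p_{E_J}(\log \partial E_J)$ is also sound (equivalently, a dominant open immersion preserves specialness in the direction you need, cf.\ Proposition~\ref{prop:invariancespecial}~(i)). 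What your simplification gives up is generality: your computation collapses to $\kappa = 0$ only because every discrepancy is exactly $-1$; the paper's Theorem~\ref{thm:descrfibers}~(3) handles the honest log-canonical case where $K_E + \Theta \sim_{\mathbb{Q}} G|_E$ for some nonzero effective $\pi$-exceptional $G$, and it is precisely there that the Hacon--McKernan lifting theorem is indispensable to bound the Kodaira dimension from above. For the general \scc statement your dlt-modification sketch is a plausible alternative to the paper's route (which simply quotes \cite[Theorem~5.1]{HM07} for the inductive decomposition $F = F_0 \cup \dotsb \cup F_m$ and adds the specialness of $F_0$), but the ``assembly'' step you flag as the main obstacle is the real content of that Hacon--McKernan theorem, so your route ends up re-deriving it rather than avoiding it.
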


The previous result can be seen as generalization to a log-canonical setting of the work of Hacon-McKernan \cite{HM07}, who proved that log-resolutions of klt singularities have rationally chain connected fibers. The proof follows the same circle of ideas, and is ultimately based on an application of Campana's result around the subadditivity of the orbifold Kodaira dimension -- this is used to show that the core fibration of the strata above must be trivial.

\subsection{Comparison with earlier work} The problem of uniformization in the open or singular case has been a long standing one, and many authors have contributed to that question.

\begin{enumerate}
	\item Deng \cite{Deng22} gave a criterion for uniformization in terms of the polystability of some natural logarithmic Higgs bundle on the open part of a log-pair \((\overline{X}, D)\). To compare his hypotheses with ours, let us resume the assumptions of our Theorem~\ref{thm:THM1}, and let \(\overline{X} \overset{\pi}{\longrightarrow} X\) be a log-resolution with boundary divisor \(D\). 
		
		Then, one finds that \(K_{\overline{X}} + D =  \pi^{\ast} K_{X}\) is a nef and big divisor on \(\overline{X}\). In this situation, one can use the work of Guenancia \cite{Gue15} joint to a classical computation (see \cite[Corollary~7.2]{GKPT19a} and the proof of Theorem~\ref{thm:existencemap}), to show that the standard Higgs bundle \((\Omega_{\overline{X}} (\log D) \oplus \mathcal{O}_{\overline{X}}, \theta)\) considered by Deng is stable with respect to \(K_{\overline{X}} + D\), for an ample \(\mathbb{Q}\)-line bundle \(L\) sufficiently close to \(K_{\overline{X}} + D\).

		We are thus allowed to apply \cite[Theorem 4.7]{Deng22}, which yields a Miyaoka-Yau inequality with respect to \(L\) --  we may then reobtain \eqref{eq:MYineqintro} by letting \(L\) tend to \(K_{\overline{X}} + D\). If we want to have the uniformization statement, Deng requires an equality in the Miyaoka-Yau inequality associated to \(L\):
		\[
			(2(n+1) c_{2}(T_{\overline{X}}(-\log D))
			- 
			n c_{1}(T_{\overline{X}}(-\log D))) \cdot L^{n-2} = 0.	
		\]
		which is maybe less intrinsic than the equality case in \eqref{eq:MYineqintro}. Also, the description of \cite[Theorem~4.7]{Deng22} does not allow to prove that the situation where \(D\) is not smooth cannot occur -- in the setting of Theorem~\ref{thm:completeness}, we do not need to require anything on the regularity of \(D\), and we are actually able to eventually exclude the case where \(D\) is not smooth. Of course, in our setting the line bundle \(K_{X} + D = \pi^{\ast} K_{X^{\ast}}\) is semi-ample and not merely nef and big, so our hypotheses are not strictly weaker than the ones of \cite{Deng22}.

	\item In \cite{Yau93}, Yau claims that for any log pair \((\overline{X}, D)\) with \(K_{\overline{X}} + D\) nef, big and ample modulo \(D\), with equality in the logarithmic Miyaoka-Yau inequality, the open part \(X = \overline{X} - D\) is a ball quotient. This would give almost immediately Theorem~\ref{thm:completeness} (even without the assumption that the singularities are punctual). Unfortunately, we were not quite able to follow his arguments : Yau claims that we can apply the Schwarz lemma of \cite{Yau78} to deduce the completeness of the natural K\"{a}hler-Einstein metric, but this lemma seems to already require the completeness of the said metric.

	\item As we mentioned before, we don't know if having purely log-canonical singularities is enough to obtain a uniformization theorem; the methods of the present work are in any case not quite sufficient to deal with the case where discrepancies \(a_{E} > - 1\) are allowed to occur. The work of \cite{GKPT19a, GKPT19b} -- and the recent generalization to the case of orbifold pairs by Claudon-Graf-Guenancia \cite{CGG23} -- focuses on the klt case, where all discrepancies are \(> -1\). One could certainly mix the methods of their work and the current article to deal with some intermediate situations (e.g. uniformizing a variety \(X^{\ast}\) with klt locus {\em disjoint} from a finite union of singular points admitting a resolution with \(-1\) as only discrepancy). Since to us the most interesting case should be the one where we have some discrepancy \(> -1\) {\em above one of the only singular points}, we have preferred to stick to the pure situation to avoid any artificiality in our discussion.
\end{enumerate}

\subsection{Organization of the article}

\begin{enumerate}
	\item Section~\ref{sec:notation}: We have gathered here several preliminary lemmas and notation for the rest of the article.
	\item Section~\ref{sec:unifmap}: After a brief discussion of the two-dimensional case, we give a proof of Theorem~\ref{thm:THM1}. The method follows closely some ideas present in the work of Greb-Kebekus-Peternell-Taji and Mochizuki. 
	\item Section~\ref{sec:example}: We first prove a criterion for contracting totally geodesic divisors in manifolds admitting a correspondence with the ball, so as to obtain a singular variety satisfying the outcome of Theorem~\ref{thm:examples} (see Theorem~\ref{thm:contractionproj}). We then apply this criterion to several examples of Deligne-Mostow and Deraux in Section~\ref{sec:DMDex}.
	\item Section~\ref{sec:special}: We recall a few facts on Campana's notion of special varieties, and then prove Theorem~\ref{thm:specialintro}.
	\item Section~\ref{sec:VHSspecial}: We give some details on the proof of the isotriviality of p-\(\mathbb{C}\)VHS on special quasi-projective manifolds, that can be seen as a corollary of the main results of \cite{CDY23}. Most of the material of this section might be well-known to experts, but we have chosen to include it for lack of a self-contained reference.
	\item Section~\ref{sec:asymptotic}: Given a manifold with an open subset admitting a period map, we perform the local and global computations that allow to define a limiting period map on the strata of the complementary divisor.
	\item Section~\ref{sec:uniformization}: Finally, under the hypotheses of Theorem~\ref{thm:completeness}, we prove that the period map is indeed uniformizing.
	\item Section~\ref{sec:topology}: we have put here two criteria for a \'{e}tale map to be a universal covering. The first one plays an important role at the end of the proof of Theorem~\ref{thm:completeness}. 
\end{enumerate}

\subsection{Acknowledgments} \medskip
The author would like to thank Yohan Brunebarbe, Ya Deng, Mihai P\u{a}un and Behrouz Taji for their helpful comments and answers to his questions. Special thanks are due to Henri Guenancia, for the many enlightening discussions during all the preparation of this work.

\medskip
The author acknowledges support from the French ANR project KARMAPOLIS (ANR-21-CE40-0010). He would also like to thank the Isaac Newton Institute for Mathematical Sciences for the support and hospitality during the programme {\em New equivariant methods in algebraic and differential geometry} when work on this paper was undertaken. This work was supported by: EPSRC Grant Number EP/R014604/1. This work was also partially supported by a grant from the Simons Foundation.

\section{Notation and preliminary remarks} \label{sec:notation}

\subsection{Linear algebraic groups}

If \(K\) is a field, we will use the notation \(G_{K}, H_{K}, L_{K}...\) to denote linear algebraic groups defined over \(K\) (which for us, will simply be an algebraic subvariety of some \(GL_{n, K} \cong \mathbb{A}_{K}^{n^{2}} \setminus \{\det = 0\}\), invariant under the group structure). If \(K \subset K'\) is any field extension, we let \(G_{K'} := G_{K} \otimes_{K} K'\) the group deduced by extension of scalars. The \(K'\)-rational points of \(G_{K'}\) will be denoted by \(G(K')\).
\medskip

As usual, we will use the notation \(\mathrm{U}(m), \mathrm{U}(p, q), \mathrm{PU}(p, q)\) to denote the real algebraic groups of isometries (or projective isometries) with respect to a hermitian form of the type
\[
	\sum_{j=1}^{m} |z_{j}|^{2}
\quad
\text{or}
\quad
	\sum_{j=1}^{p} |z_{j}|^{2} - \sum_{j=1}^{q} |z_{p+j}|^{2}.
\]

\subsection{Unitary similarities} We let \(\mathrm{Sim}(\mathbb{C}^{m}) = \mathbb{C}^{m} \rtimes \U(m)\) be the group of unitary similarities of \(\mathbb{C}^{m}\): it is the direct product of the group of unitary transformations by the group of translations. 

\begin{lem} \label{lem:centunitsim}
	Let \(A \in \mathrm{Sim}(\mathbb{C}^{m})\). Then there is a unique orthogonal decomposition \(\mathbb{C}^{m} = V \overset{\perp}{\oplus} V^{\perp}\) such that \(A\) acts as a non-zero translation on \(V^{\perp}\), and on \(V\) as a unitary rotation \(R \in \mathrm{Sim}(V)\) around some center \(\omega\in V\). The center is unique if and only if the vectorial part \(R_{0} \in \mathrm{U}(m)\) of \(R\) does not have \(1\) as eigenvalue.
\end{lem}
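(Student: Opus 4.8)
The plan is to read everything off from the linear part and the translation vector of $A$. Write $A(z) = R_{0}(z) + t$ with $R_{0} \in \U(m)$ and $t \in \mathbb{C}^{m}$, as allowed by the semidirect product structure of $\mathrm{Sim}(\mathbb{C}^{m})$. The key structural input is that $R_{0}$, being unitary, is normal, hence so is $R_{0} - \mathrm{id}$; consequently one gets an \emph{orthogonal}, $R_{0}$-stable decomposition
\[
	\mathbb{C}^{m} \;=\; \ker(R_{0} - \mathrm{id}) \;\overset{\perp}{\oplus}\; \Im(R_{0} - \mathrm{id}),
\]
with $R_{0}$ equal to the identity on the first summand and $R_{0} - \mathrm{id}$ invertible on the second. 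I would then set $V^{\perp} := \ker(R_{0} - \mathrm{id})$ and $V := \Im(R_{0} - \mathrm{id})$, except in the case where $A$ has a fixed point, where one takes instead $V := \mathbb{C}^{m}$ and $V^{\perp} := \{0\}$. In all cases both subspaces depend only on the linear part of $A$, which is ultimately what forces uniqueness.

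For existence, split $t = t_{1} + t_{2}$ along $V^{\perp} \oplus V$. Since $R_{0} - \mathrm{id}$ is invertible on $V$, there is a unique $\omega \in V$ with $(R_{0} - \mathrm{id})(\omega) = -t_{2}$; translating the origin to $\omega$ turns $A$ into $z \mapsto R_{0}(z) + t_{1}$ with $t_{1} \in V^{\perp}$. Decomposing $z = v + v^{\perp}$ along $V \oplus V^{\perp}$ and using $R_{0} v^{\perp} = v^{\perp}$, this map sends $v + v^{\perp}$ to $R_{0}(v) + (v^{\perp} + t_{1})$. Hence $A$ acts on $V$ as the rotation $R$ of vectorial part $R_{0}|_{V}$ centred at $\omega$ --- which by construction has no eigenvalue $1$ --- and on $V^{\perp}$ as the translation by $t_{1}$, which is non-zero exactly when $A$ has no fixed point (when $A$ has a fixed point, $t_{1} = 0$, and the decomposition degenerates to $V = \mathbb{C}^{m}$, $R = A$, with $\omega$ any fixed point).

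For uniqueness, suppose $\mathbb{C}^{m} = W \overset{\perp}{\oplus} W^{\perp}$ is another decomposition with the stated properties. The linear part of $A$ must be the identity on $W^{\perp}$, so $W^{\perp} \subseteq \ker(R_{0} - \mathrm{id})$; and its restriction to $W$ cannot have eigenvalue $1$, for otherwise $A$ would act by a non-zero translation on a strictly larger subspace than $W^{\perp}$, against the requirement that $W^{\perp}$ be the maximal subspace on which $A$ is a translation (equivalently, that $R$ genuinely rotate $W$). Therefore $W^{\perp} = \ker(R_{0} - \mathrm{id}) = V^{\perp}$ and $W = V$. For the centre: conjugating $A$ by a translation along $V^{\perp}$ leaves $A$ unchanged while moving $\omega$ by an arbitrary vector of $V^{\perp}$, and one checks that no other base-point translation produces an admissible centre (the condition $(R_{0}-\mathrm{id})(\omega') = -t_{2}$ pins $\omega'$ down modulo $V^{\perp}$); so the set of centres is the affine subspace $\omega + V^{\perp}$, which is a single point if and only if $V^{\perp} = 0$, i.e. if and only if $R_{0}$ has no eigenvalue $1$.

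The step I expect to be the real obstacle is not the linear algebra --- the orthogonality of the splitting and the commutation of the rotational and translational parts fall straight out of the normality of $R_{0}$ and would fail for a non-unitary linear part --- but the bookkeeping needed to make the single statement hold uniformly across the degenerate cases (pure translations, maps with a positive-dimensional fixed locus), in particular keeping the linear part of $A$ carefully distinct from the vectorial part of $R$, the two agreeing only when $V^{\perp} = 0$.
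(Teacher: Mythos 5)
Your construction is the same as the paper's: both take $V^{\perp} = \ker(R_{0} - \mathrm{id})$ and $V$ its orthogonal complement (your $\Im(R_{0}-\mathrm{id})$, which agrees with $\ker(R_{0}-\mathrm{id})^{\perp}$ by normality), solve $(R_{0}-\mathrm{id})\omega = -t_{2}$ to find the centre, and read off the product structure; your formula for $\omega$ agrees with the paper's $\omega = (I - A_{0})|_{W}^{-1}(B_{W})$, and the treatment of the exceptional case ($A$ has a fixed point) matches.

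One step of your uniqueness argument has a gap. You assert that if $\mathbb{C}^{m} = W \overset{\perp}{\oplus} W^{\perp}$ is another admissible decomposition, then $R_{0}|_{W}$ cannot have eigenvalue $1$, ``for otherwise $A$ would act by a non-zero translation on a strictly larger subspace than $W^{\perp}$.'' That implication is false: the lemma's hypothesis is that $A$ acts on $W$ as a rotation about \emph{some} centre $\omega \in W$, and if $R_{0}|_{W}$ has a $1$-eigenspace, then after translating by $\omega$ the map $A$ restricted to that eigenspace is the \emph{identity} on an affine subspace through $\omega$, not a non-zero translation. Indeed, the lemma's final ``iff'' sentence explicitly allows $R_{0}$ (the vectorial part of $R$) to have eigenvalue $1$, which it does precisely when the centre fails to be unique. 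Concretely, for $A(z) = z + (1, 0)$ on $\mathbb{C}^{2}$, the decomposition $W^{\perp} = \mathbb{C}(1,0)$, $W = \mathbb{C}(0,1)$, $R = \mathrm{id}_{W}$ is an admissible competitor to the one you and the paper construct ($V^{\perp} = \mathbb{C}^{2}$, $V = 0$), yet $R_{0}|_{W}$ has eigenvalue $1$. To be fair, the paper's own proof does not address uniqueness of the decomposition at all (it only treats uniqueness of the centre, via the observation that the difference of two centres is a $1$-eigenvector of $A_{0}|_{W}$); the uniqueness of the splitting is implicitly taken to mean that the construction via $\ker(R_{0}-\mathrm{id})$ is canonical rather than that every admissible splitting is forced. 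So your argument does not introduce an error the paper avoids, but the step as written does not hold, and the parenthetical ``equivalently, that $R$ genuinely rotate $W$'' conflates $R$ fixing a point with $R_{0}$ having no $1$-eigenvalue, which are different conditions.
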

	 We will say that \(A\) is a {\em pure rotation} if \(V^{\perp} = 0\); this is equivalent to saying that \(A\) has a fixed point.

\begin{proof}
	Write \(A\) under the form \(A \cdot z = A_{0} z + B\), with \(A_{0} \in \U(m)\) and \(B \in \mathbb{C}^{m}\). First take \(W\) as the orthogonal complement of \(W^{\perp} = \mathrm{ker}(A_{0} - I)\), and let \(B = B_{W} + B_{W^{\perp}}\) be the corresponding orthogonal decomposition of \(B\). Let \(\omega :=  (I - A_{0})|_{W}^{-1}(B_{W}) \in W\). Then, if \(z \in \mathbb{C}^{m}\) and writing the orthogonal decomposition \(z = z_{W} + z_{W^{\perp}}\), one has
\begin{align*}
	A \cdot z & = \left(A_{0} z_{W} + B_{W}\right) + \left( A_{0} z_{W^{\perp}} + B_{W^{\perp}} \right) \\
	& =  \left( A_{0} (z_{W} - \omega) + \omega \right) + \left(z_{W^{\perp}} + B_{W^{\perp}}\right).
\end{align*}
	Thus, \(A\) acts on \(W\) as a pure rotation \(R\) of center \(\omega\) and vectorial part \(A_{0}|_{W}\), and on \(W^{\perp}\) as a translation of vector \(B_{W^{\perp}}\). If this is vector is zero , take \(V = \mathbb{C}^{m}\). If not, take \(V = W\). The uniqueness statement comes from the fact that the difference between two centers yields a \(1\)-eigenvector for \(A_{0}|_{W}\).
\end{proof}

The following basic lemma will be quite useful to write a set of pure rotations in a normal form.

\begin{lem} \label{lem:purerotcomm}
	Two pure rotations \(A, B \in \mathrm{Sim}(\mathbb{C}^{n})\) commute if and only if they have a common fixed point and their images in \(\U(m)\) can be simultaneously diagonalized.  
\end{lem}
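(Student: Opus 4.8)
The plan is to establish the two implications separately, the substance lying in the forward direction. The reverse implication I would dispatch immediately: if $A$ and $B$ share a fixed point $\omega$, then conjugating by the translation $z \mapsto z + \omega$ reduces to the case $\omega = 0$, where $A$ and $B$ act linearly as their vectorial parts $A_{0}, B_{0} \in \U(m)$; if $A_{0}$ and $B_{0}$ are simultaneously diagonalizable they commute, hence so do $A$ and $B$.

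For the forward direction, suppose $AB = BA$. First I would note that the map $\mathrm{Sim}(\mathbb{C}^{m}) \to \U(m)$ sending an affine isometry to its vectorial part is a group homomorphism, so $A_{0}B_{0} = B_{0}A_{0}$; being commuting normal operators, $A_{0}$ and $B_{0}$ are then simultaneously unitarily diagonalizable, which already gives one half of the required conclusion. It remains to produce a common fixed point. Since $A$ and $B$ are pure rotations, Lemma~\ref{lem:centunitsim} provides fixed points $\omega_{A}$ and $\omega_{B}$, with $A \cdot z = A_{0}(z - \omega_{A}) + \omega_{A}$ and similarly for $B$, and with fixed loci the affine subspaces $\mathrm{Fix}(A) = \omega_{A} + \ker(A_{0} - I)$ and $\mathrm{Fix}(B) = \omega_{B} + \ker(B_{0} - I)$. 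Expanding $ABz$ and $BAz$, the linear parts agree because $A_{0}B_{0} = B_{0}A_{0}$, and comparing the constant terms gives, after simplification,
\[
	(A_{0} - I)(B_{0} - I)(\omega_{A} - \omega_{B}) = 0.
\]
Decomposing $\omega_{A} - \omega_{B}$ in a common orthonormal eigenbasis of $A_{0}$ and $B_{0}$, this relation forces each eigen-coordinate to vanish unless the corresponding eigenvector lies in $\ker(A_{0} - I)$ or in $\ker(B_{0} - I)$; hence $\omega_{A} - \omega_{B} \in \ker(A_{0} - I) + \ker(B_{0} - I)$. Therefore the affine subspaces $\mathrm{Fix}(A)$ and $\mathrm{Fix}(B)$ have base points differing by a vector in the sum of their directions, so they intersect, and any point of $\mathrm{Fix}(A) \cap \mathrm{Fix}(B)$ is a common fixed point of $A$ and $B$.

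The main obstacle — and really the only step that is not purely formal — is the passage from the algebraic identity $(A_{0} - I)(B_{0} - I)(\omega_{A} - \omega_{B}) = 0$ to the geometric statement that the two affine fixed loci meet; invoking the simultaneous diagonalization of $A_{0}$ and $B_{0}$ (equivalently, using the orthogonal splitting $\mathbb{C}^{m} = \ker(B_{0}-I) \oplus \operatorname{Im}(B_{0}-I)$, which is $A_{0}$-invariant) makes it transparent. Everything else — the homomorphism property of the vectorial-part map, the simultaneous diagonalizability of commuting unitaries, and the translation reduction — is routine linear algebra.
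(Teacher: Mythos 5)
Your argument is correct, but it takes a genuinely different route from the paper's. The paper normalizes so that $A$ fixes the origin (hence $A = A_0 \in \U(m)$), observes that $B$ preserves the eigenspace $E = \ker(A_0 - I)$, and then verifies directly that the restriction $B|_E$ is again a pure rotation (because the translation vector $b = B\cdot 0$ lies in $E$ and is orthogonal to $E \cap \ker(B_0 - I)$), so $B$ has a fixed point inside $\mathrm{Fix}(A)$. You instead work symmetrically with both centers $\omega_A, \omega_B$, expand the affine commutator $AB = BA$ to get $(I - A_0)(I - B_0)(\omega_A - \omega_B) = 0$, and then use simultaneous diagonalization of $A_0, B_0$ to read off that $\omega_A - \omega_B \in \ker(A_0 - I) + \ker(B_0 - I)$, hence that the two affine fixed loci meet. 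Both are elementary; the paper's version stays closer to the structure of Lemma~\ref{lem:centunitsim} (the orthogonality criterion for pure rotations) and avoids the explicit kernel-sum step, while yours is more symmetric in $A$ and $B$ and makes the commutator constraint on the centers fully explicit. One small stylistic remark: you write that "the linear parts agree because $A_0 B_0 = B_0 A_0$," which slightly inverts the logic — the equality of linear parts is a consequence of $AB = BA$ (via the homomorphism you just invoked), and it is from that equality that the relation on the constant terms is extracted; the substance is unaffected.
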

\begin{proof}
	The statement is well-known if \(A, B \in \U(m)\), and thus it suffices to prove that if \(A, B\) commute, then they have a common fixed point. Assume that the origin is centered at a fixed point of  \(A\) so that \(A \in \U(m)\), and \(Bz = B_{0} z + b\), where \(B_{0} \in \U(m), b\in\mathbb{C}^{m}\). Note that saying \(B\) is a pure rotation is equivalent to saying that \(b\) is orthogonal to the \(1\)-eigenspace \(F\) of \(B_{0}\). 

	Now, since \(A, B\) commute, \(B\) leaves invariant the \(1\)-eigenspace \(E\) of \(A\). The conclusion will come from the fact that \(B|_{E}\) is a pure rotation, so \(B\) has a fixed point in \(E\). To prove this fact, remark that since \(0 \in E\), then \(b = B \cdot 0 \in E\); this allows to write \(B|_{E} = B_{0}|_{E} + T_{b}\), where \(T_{b}\) is the translation by \(b\). Finally, since \(b\) is orthogonal to \(E \cap F\), the \(1\)-eigenspace of \(B_{0}|_{E}\), one deduces that \(B|_{E}\) is a pure rotation.
\end{proof}

\subsection{General remarks on the complex unit ball}

For more details on the material presented here, we refer to \cite{Mok12} and the appendix of \cite{Deng22}.

\subsubsection{Basic notation} For any integer \(n \in \mathbb{N}\), we let \(\mathbb{B}^{n} := \{ z \in \mathbb{C}^{n}\;|\; \sum_{j} |z_{j}|^{2} < 1\}\) denote the complex unit ball. We will also use the notation \(\Delta := \mathbb{B}^{1}\) for the unit disk, and write \(\mathbb{H} := \{z \in \mathbb{C}\;|\; \mathrm{Im}(z) > 0\}\) for the Poincar\'{e} upper half plane. The Bergman metric on \(\mathbb{B}^{n}\) will be denoted by \(h_{\mathbb{B}^{n}}\).

\subsubsection{Automorphism group.} We recall that the group of biholomorphisms of \(\mathbb{B}^{n}\) identifies with \(\PU(n, 1)\) acting on \(\mathbb{B}^{n}\) {\em via} the standard inclusions \(\mathrm{PU}(n, 1) \subset \mathrm{PGL}(n+1)\) and \(\mathbb{B}^{n} \subset \mathbb{C}^{n} \subset \mathbb{P}^{n}\).
We will often write the elements of \(\mathrm{PU}(n, 1)\) as classes of matrices of the form 
\[
	\left(\begin{array}{c|c} A & X \\ \hline Y & \mu \end{array}\right),
		\quad
	 \text{with}\;
	 A \in \mathrm{M}_{n}(\mathbb{C}),\ X, {}^{t} Y \in \mathrm{M}_{n, 1}(\mathbb{C})
	 \;\text{and}\;
	 \mu \in \mathbb{C}.
\]
Note that the action of \(\mathrm{PU}(n, 1)\) on \(\mathbb{B}^{n}\) extends to a continuous action on its closure \(\overline{\mathbb{B}^{n}}\).
\medskip

\subsubsection{Siegel model of the ball} \label{sec:siegelmodel} For each point \(b \in \partial \mathbb{B}^{n}\), there is a corresponding biholomorphism \(\phi_{b} : \mathbb{B}^{n} \to \mathbb{S}_{n}\), where \(\mathbb{S}_{n}\) is the Siegel domain
\[
	\mathbb{S}_{n} := \left\{ (y', y_{n}) \in \mathbb{C}^{n-1} \times \mathbb{C} 
	\; \big| \;
	\Im(y_{n}) > || y' ||^{2} \right\} 
\]

Letting \(l(y', y_{n}) := \Im(y_{n}) - ||y'||^{2}\), we recall that the Bergman metric on \(\mathbb{S}_{n}\) corresponds to the K\"{a}hler form
\begin{align} \label{eq:exprBergman}
	\omega_{\mathbb{S}_{n}}
	& =
	- i \partial \overline{\partial} \log(l)
	=  i \frac{\partial l \wedge \overline{\partial} l}{l^{2}} - \frac{i \partial \overline{\partial} l}{l}  
\end{align}
with \(\partial l = \frac{1}{2i} d y_{n} - \sum_{1 \leq j \leq n - 1} \overline{y}'_{j} dy'_{j}\) and \(i \partial \overline{\partial} l = - \sum_{1 \leq j \leq n - 1} i dy'_{j} \wedge d\overline{y}'_{j}\).

\subsubsection{Stabilizers of the boundary components}  \label{sect:stabboundary}
Let \(N_{b} \subset \mathrm{PU}(n, 1)\) denotes the stabilizer of \(b \in \partial \mathbb{B}^{n}\). This is a real parabolic subgroup admitting a Levi decomposition
\[
	N_{b} = W_{b} \rtimes L_{b}
\]
where \(L_{b} \cong \mathbb{R} \times \mathrm{U}(n-1)\) as a real Lie group. Using this decomposition, an element \((r, A) \in L_{b}\) acts as follows on \(\mathbb{S}_{n}\):
\[
	(r, A) \cdot (y', y_{n})
	=
	(e^{r} A \cdot y',
	e^{2r} y_{n}
	)
\]
The elements \((r, 0)\) in the factor \(\mathbb{R}\) are the so-called {\em transvections of axis \((Ob)\)}.

The unipotent radical \(W_{b}\) is a central extension
\[
	0
	\longrightarrow
	\mathbb{R}
	\longrightarrow
	W_{b}
	\longrightarrow
	\mathbb{C}^{n-1}
	\longrightarrow
	0
\]
There is a natural 1-1 correspondence \(W_{b} \overset{1-1}{=} \mathbb{C}^{n-1} \times \mathbb{R}\). In this decomposition, an element \((a, \tau)\) acts on \(\mathbb{S}_{n}\) as follows:
\[
	(a, \tau)
	\cdot
	(y', y_{n})
	=
	(y' + a,
	y_{n}
	+ 2 i \overline{a} \cdot y'
	+ i || a ||^{2}
	+ \tau).
\]

One sees from the above that the group \(N_{b}\) admits a quotient \(N_{b} \to \mathrm{Sim}(\mathbb{C}^{n-1})\) that is given by the restriction to the first factor of the inclusion \(\mathbb{S}_{n} \subset \mathbb{C}^{n-1} \times \mathbb{C}\). It can be expressed as
\[
	(a, \tau, r, A)
	\in
	\underbrace{N_{b}}
	_{\overset{\text{1-1}}{=}\,\mathbb{C}^{n-1} \times \mathbb{R} \times \mathbb{R}^{\ast}_{+} \times \U(n-1)}
	\longmapsto
	(a, A)
	\in
	\underbrace{\mathrm{Sim}(\mathbb{C}^{n-1}).}
	_{\overset{\text{1-1}}{=}\,\mathbb{C}^{n-1} \times \U(n-1)}
\]

\subsubsection{Iwasawa decomposition.} Let \(b \in \partial \mathbb{B}^{n}\), and let \(K \cong \U(n)\) be the stabilizer of the origin \(o \in \mathbb{B}^{n}\). Then, one has an equality
\[
\mathrm{PU}(n, 1)
=
K N_{b}
\]
Indeed, if \(g \in \mathrm{PU}(n, 1)\), let \(b' := g \cdot b \in \partial \mathbb{B}^{n}\), and let \(k \in K\) be such that \(k \cdot b' = b\). One has then \(k^{-1} g \in N_{b}\). Taking inverses, one also get the reversed equality
\[
	\mathrm{PU}(n, 1)
	=
	N_{b} K.
\]

\subsubsection{Stabilizer of a hyperplane in the ball.}  \label{sec:stabhyp}
Let \(\mathbb{B}^{n}\) be endowed with its standard coordinates \(z_{1}, \dotsc, z_{n}\), and let \(H := \{z_{1} = 0\} \subset \mathbb{B}^{n}\). Then, the stabilizer \(S := \mathrm{Stab}_{\mathrm{PU}(n, 1)}(H)\) identifies with the classes in \(\mathrm{PU}(n, 1)\) of matrices of the form
\[
	\left(
\begin{array}{cc|c}
	\lambda & 0 & 0  \\ 
	0 & A & X  \\
        \hline
        0 &  Y & \mu\\
\end{array}
\right)
\quad
(
|\lambda| = 1,
A \in \mathrm{M}_{n-1}(\mathbb{C}), 
X \in \mathrm{M}_{n-1, 1}(\mathbb{C}), 
Y \in \mathrm{M}_{1, n-1}, 
\mu \in \mathbb{C})
\]
In each class, the element \(\lambda\) can be fixed to be equal to \(1\), determining uniquely the elements \(A, X, Y, \mu\). This allows to identify \(S \cong U(n-1, 1)\), and thus \(S\) sits in a central exact sequence
\[
	1
	\longrightarrow 
	U(1) 
	\overset{\mathrm{diag}}
	{\longrightarrow}
	U(n-1, 1)
	\longrightarrow
	PU(n-1,1)
	\longrightarrow
	1
\]
whose right arrow corresponds to the natural morphism \(S \to \mathrm{Aut}(H) \cong \mathrm{Aut}(\mathbb{B}^{n-1})\). We also have the derived group exact sequence
\begin{equation} \label{eq:derexact}
	1
	\longrightarrow
	SU(n-1, 1)
	\longrightarrow
	U(n-1, 1)
	\overset{\mathrm{det}}{\longrightarrow}
	U(1)
	\longrightarrow
	1.
\end{equation}
The left group is the derived group of \(S\), and the maps \(U(1) \to U(1)\) and \(SU(n-1, 1) \to PU(n-1, 1)\) induced by the previous two exact sequences are both degree \(n + 1\) isogenies.
\medskip

Recall for later reference that the group \(SU(n-1, 1)\) is generated by all the transvections of axis \((Ox)\) for \(x \in \partial\mathbb{B}^{n-1} = \partial \mathbb{B}^{n} \cap H\).

\subsection{Higgs bundles and stability} We refer to \cite{GKPT19a, GKPT19b, GKPT20} for material related to Higgs sheaves on singular spaces and their associated notions of stability. Let us just recall a few facts.

\begin{defi} (see \cite[Example 5.3]{GKPT19a})
	For any normal complex space \(Y\), we let \(\mathcal{E}_{Y} := \Omega_{Y}^{[1]} \oplus \mathcal{O}_{Y}\), and \(\theta_{Y} : \mathcal{E}_{Y} \to \mathcal{E}_{Y} \otimes_{\mathcal{O}_{Y}} \Omega_{Y}\) be the standard Higgs morphism given by 
	\[
		\begin{array}{ccccc}
			\theta_{Y} : & \Omega_{Y}^{[1]}  \oplus \mathcal{O}_{Y} 
			         & \longrightarrow
				 & ( \Omega_{Y}^{[1]}  \oplus  \mathcal{O}_{Y})  \otimes \Omega_{Y}^{[1]} 
				 \\
				 & 
				 (a ,  b) 
				 & \longmapsto
				 & (0 ,  1) \otimes a 
		\end{array}
	\]	  
\end{defi}

We say that \((\mathcal{E}_{Y}, \theta)\) is {\em stable} with respect to a nef line bundle \(H\) if for any coherent subsheaf \(\mathcal{F} \subset \mathcal{E}_{Y}\) such that \(\theta(\mathcal{V}) \subset \mathcal{V} \otimes \Omega_{Y}^{[1]}\) in restriction to the maximal open subset where \(\Omega_{Y}^{[1]}\) is locally free, one has the slope inequality \(\mu_{H}(\mathcal{V}) < \mu_{H}(\mathcal{E}_{Y})\) whenever \(0 < \mathrm{rk}(\mathcal{V}) < \mathrm{rk}(\mathcal{E}_{Y})\).

In the case where \(Y\) is smooth, then this notion coincides with the classical notion of stability for Higgs {\em bundles.}

\subsection{Induced coverings} \label{sec:inducedcovering} Let \(X = \overline{X} - D\), where \(\overline{X}\) is a complex manifold of dimension \(n\), and \(D\) is a SNC divisor. For some integer \(1 \leq k \leq n\), let \(D_{k} \subset D\) be the locally closed smooth stratum of codimension \(k\), and let \(Y\) be one of its connected components. If \(X' \to X\) is any étale covering, there is an associated covering of \(Y\) that we can construct as follows.
\medskip

\begin{defi}
	Let \(i : Y \hookrightarrow  \overline{X}\) and \(j : X \hookrightarrow \overline{X}\) be the natural inclusion maps, and denote by \(\mathcal{S}\) the sheaf of sets on \(X\) given by the local sections of \(X' \to X\). Then the {\em covering \(Y' \overset{q}{\longrightarrow} Y\) induced by \(X'\to X\)} is the covering whose sheaf of local sections is \(i^{\ast} j_{\ast} \mathcal{S}\).
\end{defi}
\medskip

We may describe this covering a bit more concretely: let \(y \in Y\) be any point, and let \(U = (\Delta^{\ast})^{k} \times \Delta^{n-k} \subset X\) be a pointed polydisk adapted to \(Y\) centered at \(y\). Then the fiber product \(U \times_{X} \widetilde{X}\) is a disjoint union of copies of \(\mathbb{H}^{l} \times (\Delta^{\ast})^{k-l} \times \Delta^{n-k}\). It is easy to see that the fiber of  \(Y' \to Y\) over \(y\) is in natural 1-1 correspondence with the set of connected components
\[
	\pi_{0}(U \times_{X} \widetilde{X}).
\]
With this notation, if the point \(y' \in q^{-1}(y)\) corresponds to the component \(C_{y'} \in \pi_{0}(U \times_{X} \widetilde{X})\), then \(C_{y'}\) will be called the connected component of \(U \times_{X} \widetilde{X}\) {\em neighboring} \(y'\).
\medskip

\subsection{Multivalued maps} \label{sec:multivalued}

	If \(X, Y\) are two complex manifolds, and if \(\rho : \pi_{1}(X, *) \to \mathrm{Aut}(Y)\) is a morphism of groups, we will sometimes write "Let \(\psi : X \to Y\) be a multivalued map with monodromy \(\rho\)" instead of "Let \(\widetilde{\psi} : \widetilde{X} \to Y\) be a \(\rho\)-equivariant map". 
	
We will mostly use this terminology in the case where \(X = \Delta^{\ast}\) and \(Y = \mathbb{C}\). In this context, the universal covering will be
\[
	w \in \mathbb{H} \mapsto z = e^{2i\pi w} \in \Delta^{\ast},
\]
and if \(\alpha \in \mathbb{R}\), we will sometimes write \(z^{\alpha}\) instead of \(w^{2 i\pi\alpha}\). We will refrain from using this notation if the implied determination of logarithm matters in the discussion.

\subsection{Contractibility of analytic subsets} \label{sec:contract} Let \(X\) be a complex manifold. Recall that we say that an open subset \(U \subset X\) is {\em strictly Levi pseudo-convex} if for any point \(x \in \partial U\), there exists an open neighborhood \(x \in V \subset X\) and a \(\mathcal{C}^{2}\) function \(\phi : V \to \mathbb{R}\) such that
\begin{enumerate}[(i)]
	\item \(U \cap V = \{ y \in V \;|\; \phi(y) < 0\}\). 
	\item \(i (\partial \overline{\partial} \phi)_{x}\) is positive definite on \(\mathrm{ker}(d \varphi)_{x} \subset T_{x}X\).
\end{enumerate}

If \(A \subset X\) is a compact analytic subset, then it is possible to blow-down \(A\) to a point if and only if \(A\) admits a basis of strictly Levi pseudo-convex neighborhoods (see \cite[p.\ 104]{AMRT10}).

\subsection{Some basic results of complex analysis}

We gather here a few lemmas of analysis in one complex variable, that will allow us to prove extension results for holomorphic maps starting from \(\Delta^{\ast}\). 

The first result is very classical.

\begin{lem} \label{lem:riemannupperhalf}
	Let \(f :\Delta^{\ast} \to \mathbb{H}\) be a holomorphic map. Then \(f\) extends to a holomorphic map \(\widetilde{f} : \Delta \to \mathbb{H}\).
\end{lem}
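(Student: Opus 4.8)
The statement is the classical fact that a holomorphic map $f : \Delta^{\ast} \to \mathbb{H}$ extends holomorphically across the puncture. The plan is to transport everything to the unit disk and use the classical Riemann extension (removable singularity) theorem for bounded holomorphic functions. First I would compose $f$ with the Cayley transform $c : \mathbb{H} \xrightarrow{\sim} \Delta$, $c(w) = \frac{w-i}{w+i}$, which is a biholomorphism sending $\mathbb{H}$ onto the unit disk $\Delta$. This produces a holomorphic map $g := c \circ f : \Delta^{\ast} \to \Delta$, which is in particular a \emph{bounded} holomorphic function on $\Delta^{\ast}$ (bounded by $1$ in modulus).

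The second step is to invoke Riemann's removable singularity theorem: a bounded holomorphic function on a punctured disk extends holomorphically across the puncture. Hence $g$ extends to a holomorphic map $\widetilde{g} : \Delta \to \mathbb{C}$, and by continuity $|\widetilde{g}(0)| \leq 1$, so $\widetilde{g}$ takes values in the closed disk $\overline{\Delta}$. The only remaining point is to rule out $|\widetilde{g}(0)| = 1$, i.e. to see that the extended value does not land on the boundary circle. This follows from the maximum modulus principle: if $|\widetilde{g}(0)| = 1$ then $|\widetilde{g}|$ attains an interior maximum on $\Delta$, forcing $\widetilde{g}$ to be a unimodular constant, which contradicts $g(\Delta^{\ast}) \subset \Delta$ (the open disk). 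Therefore $\widetilde{g}(\Delta) \subset \Delta$, and setting $\widetilde{f} := c^{-1} \circ \widetilde{g} : \Delta \to \mathbb{H}$ gives the desired holomorphic extension.

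There is essentially no obstacle here — the only mild subtlety is the boundary-value argument at the end (ensuring the extension stays in the \emph{open} upper half-plane rather than touching $\mathbb{R} \cup \{\infty\}$), which the maximum principle handles cleanly. One could alternatively phrase the whole argument via the big Picard theorem (an essential singularity would force $f$ to omit at most one value of $\mathbb{C}$, impossible since $f$ omits the entire lower half-plane), but the Cayley-transform-plus-Riemann-extension route is the most elementary and self-contained, so that is what I would write up.
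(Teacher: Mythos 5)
Your proof is correct and is essentially the same as the paper's one-line argument (``$\mathbb{H}$ and $\Delta$ are biholomorphic, so the result follows from Riemann's extension theorem''), simply spelled out in more detail. The maximum-modulus step to keep the extended value off the boundary circle is a sensible explicit inclusion that the paper leaves implicit.
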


Indeed, \(\mathbb{H}\) and \(\Delta\) are biholomorphic, so the result follows from Riemann's extension theorem. We will use the following strengthening of this statement.

\begin{lem} \label{lem:riemannboundlog}
	Let \(f : \Delta^{\ast} \to \mathbb{C}\) be a holomorphic map. Assume that there exists \(C > 0\) such that
	\[
		\Im(f(z)) > C \log |z| \quad \text{for all}\; z \in \Delta^{\ast}.
	\]
	Then \(f\) extends to a holomorphic map \(\widetilde{f} : \Delta \to \mathbb{C}\).
\end{lem}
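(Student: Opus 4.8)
The plan is to reduce the statement to Lemma~\ref{lem:riemannupperhalf} by producing, from $f$, a holomorphic map into $\mathbb{H}$ to which the first lemma applies. The hypothesis $\Im(f(z)) > C\log|z|$ controls $f$ from below but allows $\Im f$ to tend to $+\infty$, so $f$ itself need not land in any half-plane; the trick is to compose with a suitable exponential. First I would consider $g(z) := e^{2i\pi f(z)/C'}$ for an appropriate constant, or more transparently work directly with $h(z) := i\bigl(f(z) - i C \log z\bigr)$ using a fixed branch of $\log$ on a slit neighborhood — but since $\log z$ is multivalued on $\Delta^*$ one must instead pass through a finite cover or argue with the single-valued function $e^{if(z)}$.

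Concretely, here is the cleaner route. Set $u(z) := e^{\,i f(z)}$, a nowhere-zero holomorphic function on $\Delta^*$. The hypothesis gives $|u(z)| = e^{-\Im f(z)} < e^{-C\log|z|} = |z|^{-C}$, so $u$ has at worst a pole at the origin, i.e. $z^{N} u(z)$ extends holomorphically over $\Delta$ for $N := \lceil C\rceil$. Moreover $u$ is zero-free, so $z^N u(z)$ is a holomorphic function on $\Delta$ whose only possible zero is at $0$, of some finite order $m \le N$; thus $u(z) = z^{m-N} v(z)$ with $v$ holomorphic and $v(0) \neq 0$. Hence $e^{if(z)} = z^{k} v(z)$ with $k := m - N \in \mathbb{Z}$ and $v$ holomorphic, nonvanishing near $0$. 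Taking a branch of logarithm we get $i f(z) = k \log z + \log v(z) + 2\pi i \ell$ on any simply connected subset, so $f(z) = -ik\log z + (\text{holomorphic near }0)$. Single-valuedness of $f$ on $\Delta^*$ forces $k = 0$: indeed going once around the puncture, $f$ must return to its value, while $-ik\log z$ picks up $2\pi k$, so $k = 0$. Therefore $e^{if(z)} = v(z)$ with $v$ holomorphic and $v(0) \neq 0$, and $f(z) = \frac{1}{i}\log v(z)$ for a continuous (hence holomorphic) branch of $\log$ near $0$; this exhibits $f$ as holomorphic across the puncture, giving the desired extension $\widetilde f : \Delta \to \mathbb{C}$.

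I expect the main obstacle to be purely bookkeeping: justifying that $e^{if}$, a priori only bounded by $|z|^{-C}$ in modulus, genuinely extends meromorphically (this is the Riemann-type removable singularity statement for functions with controlled growth — a zero-free holomorphic function on $\Delta^*$ with $|u(z)| \le |z|^{-C}$ lies in the domain of Riemann's theorem after multiplying by $z^{\lceil C\rceil}$), and then tracking the monodromy to force the integer $k$ to vanish. The growth bound enters exactly once, to say $z^N u(z)$ is bounded near $0$; everything after that is formal. One should double-check the edge case $v(0) = 0$ is excluded — it is, because $u = e^{if}$ never vanishes — so no logarithmic singularity survives and $f$ extends holomorphically, not merely meromorphically.
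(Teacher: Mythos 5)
Your proof is correct, and it shares the same opening move as the paper's — both consider $u(z) := e^{if(z)}$ and use the hypothesis to get $|u(z)| < |z|^{-C}$, so $u$ has at most a pole at the origin — but from there the two arguments diverge. The paper's proof then case-splits on whether $|u|$ stays bounded near $0$ or tends to $\infty$, reducing each case to Lemma~\ref{lem:riemannupperhalf} applied to $f$ or to $-f$ (after an additive shift). Your route instead exploits the fact that $u$ is zero-free: the extension of $u$ across $0$ is $z^{k} v(z)$ with $k \in \mathbb{Z}$ and $v$ holomorphic nonvanishing, and the single-valuedness of $f$ on $\Delta^{\ast}$ forces $k = 0$ by a monodromy argument, after which $f = -i\log v$ for a local branch. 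Your version is self-contained (it does not route through Lemma~\ref{lem:riemannupperhalf}) and actually yields the sharper conclusion that $e^{if}$ extends to a \emph{nonvanishing} holomorphic function on $\Delta$, which in turn shows $\Im f$ is bounded above and below near $0$; the paper's version is shorter at the cost of the auxiliary lemma. One small inessential inaccuracy in your writeup: the claim that the vanishing order $m$ of $z^{N}u(z)$ at $0$ satisfies $m \le N$ is unjustified (the hypothesis bounds $u$ only from above, so $u$ could vanish to high order), but this is harmless since your argument never uses it — the monodromy step pins down $k = m - N = 0$ regardless of the sign or size of $k$.
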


\begin{proof}
	Consider the holomorphic map \(g : z \in \Delta^{\ast} \longmapsto g(z) = e^{i f(z)}\). Then, for all \(z \in \Delta^{\ast}\), one has
	\begin{align*}
		|g(z)| & = e^{-\Im(f(z))} \\
		       & < e^{- C \log |z|} = |z|^{-C}.
	\end{align*}
	Thus, the function \(g\) has at most a pole at \(0\). This gives two possibilities:
	\begin{enumerate}
		\item either there is \(D > 0\) such that \(|g(z)| < D\) for all \(z\) close enough to \(0\). This implies that \(\Im(f(z)) > - \log D\), and so \(f\) extends by Lemma~\ref{lem:riemannupperhalf} (applied to \(f + \log D\)).
		\item or \(|g(z)| \longrightarrow + \infty\) as \(z \to 0\). This implies that there exists \(D > 0\) with \(|g(z)| > D\) for all \(z \in \Delta^{\ast}\), and so \(\Im(f(z)) < - \log D\). Again, \(f\) extends by Lemma~\ref{lem:riemannupperhalf} (applied to \(-f - \log D\)).
	\end{enumerate}
\end{proof}

\begin{lem} \label{lem:complexanalysis3}
	Let \(f, g : \Delta^{\ast} \to \mathbb{C}\) be holomorphic functions, with no essential singularity at \(0\). Let \(\alpha, \beta\) be real numbers, with \(\alpha > 0\). Then there exists a point \(w \in \mathbb{H}\) such that
	\[
		\Im( f(z) + g(z)\, w + i \alpha\, w^{2} + \beta w) < 0
	\]
	where we let \(z = e^{2i\pi w}\).
\end{lem}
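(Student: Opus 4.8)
The plan is to produce the point $w$ on a vertical half-line $w = u_0 + iv$, with $u_0 \in \mathbb{R}$ fixed and $v \to +\infty$; then $z = e^{2i\pi w} = e^{2i\pi u_0}\,e^{-2\pi v}$ runs radially into the puncture, and $f, g$ are governed by their Laurent expansions. First I would note that, since $f$ and $g$ have no essential singularity at $0$, they extend to meromorphic functions on $\Delta$ whose only possible pole is at the origin, so that $f(z) = \sum_{k \geq -a} f_k z^k$ and $g(z) = \sum_{k \geq -b} g_k z^k$ on $\Delta^{\ast}$, where $a, b \geq 0$ are the pole orders (with $f_{-a}, g_{-b} \neq 0$ whenever $a, b \geq 1$). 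Substituting $z = e^{2i\pi u_0}e^{-2\pi v}$, one gets as $v \to +\infty$ the asymptotics $f(z) = e^{2\pi a v}\,e^{-2\pi i a u_0}\big(f_{-a} + o(1)\big)$ (bounded, tending to $f_0$, when $a = 0$), $g(z) = e^{2\pi b v}\,e^{-2\pi i b u_0}\big(g_{-b} + o(1)\big)$, together with $i\alpha w^2 = -i\alpha v^2 + O(v)$ and $\beta w = O(v)$.

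The main point is that, writing $h(w) := f(z) + g(z)\,w + i\alpha w^2 + \beta w$, exactly one of the terms $f(z)$, $g(z)\,w$, $i\alpha w^2$ dominates the other two as $v \to +\infty$ — their sizes being of order $e^{2\pi a v}$, $v\,e^{2\pi b v}$ and $v^2$ respectively, while $\beta w$ (of size $v$) is never dominant since $\alpha > 0$ — and that the dominant term can always be arranged to have imaginary part tending to $-\infty$. I would distinguish three cases. If $a \geq 1$ and $a > b$, then $f(z)$ dominates; since $e^{-2\pi i a u_0}$ sweeps out the unit circle as $u_0$ varies, one may choose $u_0$ with $e^{-2\pi i a u_0} f_{-a} = -i\,|f_{-a}|$, whence $\Im f(z) = e^{2\pi a v}\big({-|f_{-a}|} + o(1)\big)$ while all other terms are $o(e^{2\pi a v})$, so $\Im h(u_0 + iv) \to -\infty$. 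If $b \geq 1$ and $b \geq a$, then $g(z)\,w$ dominates — it is here that the extra linear factor of $v$ coming from $w$ is needed to outgrow $f(z)$ in the borderline case $a = b$; choosing $u_0$ with $e^{-2\pi i b u_0} g_{-b} = -|g_{-b}|$ gives $\Im\big(g(z)\,w\big) = e^{2\pi b v}\big({-|g_{-b}|}\,v + o(v)\big)$, again dominating the rest, so $\Im h(u_0 + iv) \to -\infty$. Finally, if $a = b = 0$, then $f(z)$ and $g(z)$ are bounded, $\Im\big(g(z)\,w\big) = O(v)$, and $\Im(i\alpha w^2) = \alpha(u_0^2 - v^2) \to -\infty$ by the hypothesis $\alpha > 0$, so for any $u_0$ one has $\Im h(u_0 + iv) \to -\infty$. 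In every case $\Im h(u_0 + iv) < 0$ once $v$ is large enough, which gives the point $w$.

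The one place that needs care is the borderline case $a = b \geq 1$: comparing exponential rates alone does not separate $f(z)$ from $g(z)\,w$, so one uses that $g(z)\,w$ carries the extra factor $v$ (so that $f(z) = O(e^{2\pi b v}) = o(v\,e^{2\pi b v})$) and that, after fixing $u_0$ as above, $\Im\big(g(z)\,w\big) \sim -|g_{-b}|\,v\,e^{2\pi b v}$ swamps $\Im f(z)$ no matter what the argument of $f_{-a}$ is. Everything else — the crude bounds $|f(z)| \leq C e^{2\pi a v}$, $|g(z)| \leq C e^{2\pi b v}$ valid once $|z|$ is small, and the $o(\cdot)$ remainders — is routine.
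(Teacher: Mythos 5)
Your proof is correct and follows essentially the same route as the paper's: the identical three‑way case split on the pole orders of \(f\) and \(g\), with the dominant term in each case being (respectively) the leading Laurent term of \(f\), the leading Laurent term of \(g\) times \(w\), and \(i\alpha w^{2}\). The only difference is cosmetic — you make the choice of argument \(u_{0}\) explicit and write out the asymptotics, where the paper simply says ``let \(\Im(w)\to+\infty\) in such a way that \(\Im(a z^{n})\to-\infty\)'' etc.; you also spell out the borderline sub‑case \(a=b\geq 1\), which the paper handles implicitly via the extra factor \(|\log|z||\).
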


\begin{proof}
	Let \(n = \mathrm{ord}_{0}(f)\) and \(m = \mathrm{ord}_{0}(g)\). There are three possible cases :
	\begin{enumerate}
		\item \(n < m\) and \(n < 0\). \quad Then if \(a \in \mathbb{C}\) is the coefficient of \(z^{n}\) in the Taylor series of \(f\), one has
			\[
				f(z) + g(z) w + i \alpha w^{2} + \beta w = a z^{n} + o(|z|^{n}).
			\]
			(recall that \(|w| = O(|\log |z||)\)).
			We may let \(\Im(w) \to + \infty\) in such a way that \(z \to 0\) and \(\Im(a z^{n}) \to - \infty\). This gives the result.
		\item \(m \leq n\) and \(m < 0\). \quad Then, if \(b \in \mathbb{C}\) is the coefficient of \(z^{n}\) in the Taylor series of \(g\), one has
			\[
				f(z) + g(z) w + i \alpha w^{2} + \beta w = b z^{m} w + o(|z|^{m} |\log |z||).
			\]
			This time, we may let \(\Im(w) \to + \infty\) in such a way that \(\mathrm{Re}(b z^{m}) \to - \infty\) and thus \(\Im( b z^{m} w) \to - \infty\). We also get the result.
		\item \(m, n \geq 0\) i.e.\ \(f, g\) extend holomorphically across \(0\). In this case, if we let \(w = i r\) with \(r \in \mathbb{R}_{+}\), one has
			\[
				f(z) + g(z) w + i \alpha w^{2} + \beta w 
				\underset{r \to + \infty}{\sim} - \alpha r^{2} 
				\underset{r \to + \infty}{\longrightarrow} - \infty.
			\]
			This gives the result also in this case.
	\end{enumerate}
\end{proof}

\subsubsection{Campana's varieties of special type and orbifolds}

We will need a few facts pertaining to the notion of special varieties in the sense of Campana, whose general theory is exposed in \cite{Cam04}.

\begin{defi} Let \(f : X \dashedrightarrow Y\) be a rational fibration i.e. an application which is birationally equivalent to a holomorphic fibration \(f' : X' \to Y'\). Let \(D = \sum_{i = 1}^{m} a_{i} D_{i}\) be a divisor on \(X\) with simple normal crossing support and \(a_{i} \in [0, 1] \cap \mathbb{Q}\) (the pair \((X, D)\) is an {\em orbifold} in Campana's terminology).
	\begin{enumerate}
		\item The Kodaira dimension of \(f\) is 
	\[
		\kappa(Y, f) = \min \left\{
			\;
			\kappa(Y', K_{Y'} + \Delta(f', D'))
			\quad
			\big|
			\quad
			f' : (X', D') \to Y'
			\;
			\text{birationally equivalent to}
			\;
			f
			\right\}.
	\]
			In this definition, \(f'\) is a holomorphic fibration, and \(\Delta(f', D')\) denotes the orbifold divisor of \(f'\) as defined in \cite[Definition~1.29]{Cam04}.
			
\item One says that the fibration \(f\) is of {\em general type} if \(\kappa(Y, f) = \dim Y\).
\item One says that an orbifold pair \((X, D)\) is {\em special} (or of {\em special type}) if it admits no non-constant rational fibration \(f : X \dashedrightarrow Y\) of general type. Alternatively, \((X, D)\) is special if it does not admit any Bogomolov sheaf (see \cite[Theorem 2.27]{Cam04} and the discussion of \cite[p. 542]{Cam04}).
\item One says that a quasi-projective variety \(V\) is of special type if there exists a resolution of singularities \(U \to V\) and a log-compactification \(X = U \cup D\) such that \((X, D)\) is an orbifold of special type. 
\end{enumerate}
\end{defi}

The following proposition sums up the invariance properties we will use concerning specialness.

\begin{prop} \label{prop:invariancespecial}
	\begin{enumerate}[(i)]
		\item 
	{\cite[Lemma~2.9]{Cam04}}
	Let \(g : U' \dashedrightarrow U\) be a rational dominant map between two quasi-projective manifolds. If \(U'\) is special, then so is \(U\).
\item  \cite[Theorem~5.12]{Cam04} Let \(g : U' \to U\) be an finite \'{e}tale map between two quasi-projective manifolds. If \(U\) is special, then so is \(U'\).
\item Let \(f : U' \to U\) be a {\em proper} birational map between two quasi-projective manifolds. If \(U\) is special, then so is \(U'\).
	\end{enumerate}
\end{prop}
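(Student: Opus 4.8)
Items (i) and (ii) are, up to translation, the quoted results \cite[Lemma~2.9]{Cam04} and \cite[Theorem~5.12]{Cam04}, which in \cite{Cam04} are phrased for (projective) orbifolds; so for those two parts I would only recall how to pass from the ``quasi-projective manifold'' formulation used here to Campana's. The key point is that a quasi-projective manifold \(V\) is special if and only if \emph{one} (equivalently \emph{any}) smooth log-compactification \((X, D)\) of \(V\) is a special orbifold pair, the independence from the chosen smooth log-model being itself a consequence of Campana's birational invariance of specialness for log pairs. Granting this, a dominant rational map (resp.\ a finite étale map) of quasi-projective manifolds extends to a dominant rational map (resp.\ to a map that is generically finite and étale over the interiors) of chosen smooth log-compactifications, and one then quotes Campana directly.

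\textbf{For (iii), quick route.} The inverse \(f^{-1} \colon U \dashedrightarrow U'\) is a birational, in particular dominant, rational map between quasi-projective manifolds. Applying item (i) to \(g = f^{-1}\) — so that its source is \(U\) and its target is \(U'\) — the specialness of \(U\) immediately forces that of \(U'\). That is really all that is needed.

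\textbf{For (iii), self-contained route, and the main difficulty.} If one prefers not to invoke (i) for a non-morphism, I would argue on compactifications instead. Choose a smooth log-compactification \((X, D)\) of \(U\) that is a special orbifold pair. Start from any smooth projective compactification \(X'_{0} \supset U'\); since the rational map \(X'_{0} \dashedrightarrow X\) extending \(f\) is already a morphism on \(U'\), its indeterminacy locus lies inside \(X'_{0} \setminus U'\), so blowing up only inside the boundary produces a smooth log-compactification \((X', D')\) of \(U'\) together with a proper birational morphism \(\bar f \colon X' \to X\) extending \(f\), with \(\bar f^{-1}(D) = D'\) (reduced) and hence \(\bar f^{-1}(U) = U'\). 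Since the SNC pair \((X, D)\) is log canonical, a discrepancy computation gives \(K_{X'} + D' = \bar f^{\ast}(K_{X} + D) + E\) with \(E \geq 0\) supported on the \(\bar f\)-exceptional divisors, which is exactly the hypothesis under which Campana's birational invariance of specialness for log pairs applies; it then yields that \((X', D')\), and therefore \(U'\), is special. The one delicate point is the construction of \((X', D')\): one must check that resolving \(X'_{0} \dashedrightarrow X\) never modifies \(U'\) and that the new boundary is exactly \(\bar f^{-1}(D)\). Both reduce to the standard fact that a proper birational morphism onto a \emph{smooth} target is an isomorphism away from a closed subset of codimension \(\geq 2\), so that no component of \(D'\) can dominate a prime divisor of \(X\) meeting \(U\). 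This bookkeeping, rather than any deep geometric input, is the only real obstacle in the whole proposition.
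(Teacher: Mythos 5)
Both of your routes for item (iii) differ from the paper's, which does not invoke (i) at all. The quick route — applying (i) to $g = f^{-1}\colon U \dashedrightarrow U'$, a dominant rational map since $f$ is birational — is formally valid if (i) is accepted at face value, and would be the simplest possible derivation. But the paper deliberately avoids it: the proof opens by remarking that ``dealing with open varieties can lead to some confusion regarding birational maps,'' and the underlying concern is that the quasi-projective formulation of (i) — which is not literally \cite[Lemma~2.9]{Cam04}, a statement about compact K\"{a}hler manifolds — already presupposes the independence of specialness from the choice of SNC log-compactification, a fact whose content is essentially (iii) itself; so the shortcut risks concealing a circularity. To sidestep this, the paper argues directly from the Bogomolov-sheaf characterization of specialness: choosing smooth projective compactifications with $\bar{f}\colon X' \to X$ extending $f$ and $\bar{f}^{-1}(D) = D'$, it notes that $\bar{f}$ is an isomorphism over $X\setminus\Sigma$ for some $\Sigma$ of codimension $\geq 2$, so that any Bogomolov sheaf on $(X', D')$ restricts over $X\setminus\Sigma$ and extends by reflexive hull to a Bogomolov sheaf on $(X, D)$; the contrapositive is (iii). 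Your ``self-contained'' route reproduces the same compactification geometry, but it replaces this concrete step by a citation to ``Campana's birational invariance of specialness for log pairs,'' a black box that is in effect established by exactly such a sheaf-theoretic argument. The discrepancy identity $K_{X'} + D' = \bar{f}^{\ast}(K_X + D) + E$ with $E\geq 0$ that you verify is correct but incidental: what drives the argument is the codimension-$\geq 2$ isomorphism locus and the propagation of Bogomolov sheaves across it, which you leave implicit, rather than the compactification bookkeeping you single out as the main obstacle.
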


\begin{proof}
	{\em (iii)} The last item is not explicitly stated in \cite{Cam04}, and dealing with open varieties can lead to some confusion regarding birational maps, so let us explain how this follows from the definitions. We may take \(X = U \sqcup D\) and \(X' = U' \sqcup D'\) to be two projective smooth compactifications with SNC boundary, in such a way that \(f\) extends to \(f : X' \to X\), and so that \(f^{-1}(D) = D'\). But then there exists a Zariski closed subset \(\Sigma \subset X\) of codimension \(2\), such that \(f : (X', D') \to (X, D)\) is an isomorphism above \(X \setminus \Sigma\). Any Bogomolov sheaf on \((X, D')\) then restrict to \(X - \Sigma\) to define a Bogomolov sheaf for \((X, D)\) after taking the reflexive hull. Thus, \((X', D')\) is special if \((X, D)\) is (see \cite[p. 542]{Cam04} on how to define Bogomolov sheaves without referring to adapted covers).
\end{proof}

One crucial result of this theory is the following.

\begin{thm}[Campana \cite{Cam04}] \label{thm:core} Let \((X, D)\) be a complex projective orbifold pair. Then there exists a fibration of general type \(c : (X, D) \dashedrightarrow C\), whose very general fibers are special orbifolds. The fibration \(c\) is unique up to birational equivalence.
\end{thm}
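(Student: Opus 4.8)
The plan is to reproduce Campana's construction from \cite{Cam04}: build $c$ by an inductive ``tower'' procedure and then verify maximality. First I would set up the combinatorics of fibrations out of a fixed orbifold. Up to birational equivalence, fibrations $f : (X,D) \dashedrightarrow Y$ form a poset, where $f \preceq f'$ means that $f$ factors through $f'$; any two fibrations $f_1 : X \dashedrightarrow Y_1$, $f_2 : X \dashedrightarrow Y_2$ admit a join $f_1 \vee f_2$, obtained as the Stein factorization of $X \to \overline{\operatorname{Im}(f_1 \times f_2)} \subset Y_1 \times Y_2$, whose general fibre over $(y_1,y_2)$ is a connected component of $f_1^{-1}(y_1) \cap f_2^{-1}(y_2)$. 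All of this is routine birational geometry of fibrations once one passes to ``neat'' models where everything is a morphism with SNC boundary.

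The two genuinely analytic inputs, both due to Campana and both ultimately resting on the orbifold additivity of the Kodaira dimension (the orbifold $C^{\mathrm{orb}}_{n,m}$ statement, i.e.\ weak positivity of $f_{\ast}\,\omega^{\otimes m}_{(X,D)/(Y,\Delta(f,D))}$ and the resulting inequality $\kappa(K_X+D) \geq \kappa(F, K_F + D_F) + \kappa(Y, K_Y + \Delta(f,D))$), are: \textbf{(i)} the class of fibrations of general type is stable under joins and under composition in a tower $X \dashedrightarrow Z' \dashedrightarrow Z$; and \textbf{(ii)} if $f$ is of general type and $F$ is a very general fibre of another fibration $g$, then $f|_F$ is again of general type onto its image (a Bertini-type statement at the level of the orbifold base). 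Granting these, I would run the iteration: set $c_0 = \operatorname{id}_X$; given $c_k : X \dashedrightarrow Z_k$ with very general fibre $(F_k, D_{F_k})$, stop if $(F_k, D_{F_k})$ is special, and otherwise pick a general-type fibration $F_k \dashedrightarrow W$ with $\dim W > 0$, spread it out over $Z_k$, and join it with $c_k$ to obtain $c_{k+1} : X \dashedrightarrow Z_{k+1}$ with $c_k \preceq c_{k+1}$ and $\dim Z_{k+1} > \dim Z_k$. By (i) each $c_k$ is of general type, by (ii) the very general fibre of $c_{k+1}$ is a very general fibre of $F_k \dashedrightarrow W$ so the construction is consistent, and since $\dim Z_k \leq \dim X$ the process terminates at some $c := c_N : (X,D) \dashedrightarrow C$ which is of general type with special very general fibre. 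The upgrade from ``general'' to ``very general'' uses that specialness can fail only along countably many subvarieties, as in \cite{Cam04}.

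For uniqueness, suppose $c' : (X,D) \dashedrightarrow C'$ is another fibration of general type with special very general fibres. Restricting the general-type fibration $c'$ to a very general fibre $F$ of $c$: by (ii) the map $c'|_F$ is of general type onto its image, but $F$ is special, so by the very definition of specialness (and Proposition~\ref{prop:invariancespecial}) $c'|_F$ is constant; hence $c'$ factors through $c$, say $c' = u \circ c$. By symmetry $c = v \circ c'$, so $c = (v\circ u)\circ c$, which forces $v \circ u$ to be birational, hence $u$ birational; thus $c$ and $c'$ coincide up to birational equivalence.

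The main obstacle is precisely inputs (i)--(ii): the stability of ``general type'' under joins, towers and generic restriction is where the hard orbifold positivity enters, through Campana's orbifold version of the $C_{n,m}$ conjecture and the subadditivity of the orbifold Kodaira dimension it yields. Everything else --- the poset of fibrations, the join construction, the dimension-counting termination, and the formal uniqueness argument above --- is bookkeeping. Accordingly I would cite \cite[\S 3--4]{Cam04} for the analytic statements rather than reproving them, and present only the inductive assembly and the uniqueness deduction in detail.
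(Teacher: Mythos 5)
The paper does not prove Theorem~\ref{thm:core}: it states it as a citation from Campana \cite{Cam04} and uses it as a black box (the only surrounding text is the definition that the resulting fibration is called the \emph{core} and the remark that \((X,D)\) is special iff \(\dim C = 0\)). So there is no in-paper argument to compare your proposal against.

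That said, your sketch is a reasonable outline of Campana's own construction, and you correctly isolate where the real content lives: the orbifold \(C_{n,m}\)-type subadditivity of Kodaira dimension, which yields (i) stability of ``general type'' under joins and towers and (ii) preservation of ``general type'' under restriction to very general fibres. A few caveats if this were to be expanded into an actual proof. First, the iteration you describe (join \(c_k\) with a general-type fibration on the fibre, spread out over \(Z_k\)) is closer in spirit to Campana's alternative description of the core as the limit of the iterates \((J\circ r)^{n}\) than to his primary construction in \cite{Cam04}, which proceeds via Bogomolov sheaves and maximality in the poset of general-type fibrations; either route works, but the bookkeeping around ``spreading out a fibration on the very general fibre over the base'' (your step producing \(c_{k+1}\)) is not merely routine and needs the countability/very-general language handled carefully. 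Second, in the uniqueness argument, the step ``\(c'|_F\) of general type with \(F\) special implies \(c'|_F\) constant'' is exactly the definition of specialness and is fine, but concluding ``hence \(c'\) factors through \(c\)'' silently uses a rigidity/spreading-out argument for rational fibrations (constancy on very general fibres implies factorization), which again is standard but should be invoked explicitly. None of these are errors, just places where a polished write-up would have to supply more than a gesture. Since the paper only cites the result, citing \cite[\S 3--5]{Cam04} as you propose is precisely what is done here.
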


The fibration of Theorem~\ref{thm:core} is called the {\em core fibration} of the pair \((X, D)\). Note that \((X, D)\) is special if and only if \(\dim C = 0\).
\medskip

\section{Miyaoka-Yau inequality and uniformizing map} \label{sec:unifmap}

\subsection{The surface case} \label{sec:surfacecase} Uniformization in the non-compact surface case is already well-understood thanks to the work of Kobayashi \cite{Kob85}. Let us simply say that if \(X^{\ast}\) is a variety with ample canonical bundle and minimal log-resolution \(\overline{X} \overset{\pi}{\longrightarrow} X^{\ast}\) with discrepancy \(-1\) for all exceptional components, then
			\[
				\Delta_{\text{MY}}(X^{\ast})
				:=
				6 c_{2}(T_{\overline{X}}(-\log D)) - 2 c_{1}^{2}(T_{\overline{X}}(-\log D))
				\geq
				0
			\]
			where \(D\) is the exceptional divisor. Indeed, in this situation, \(K_{\overline{X}} + D = \pi^{\ast} K_{X^{\ast}}\) is nef and big, and ample modulo the boundary \(D\). It is then quite easy to check that the hypotheses of \cite[Theorem~1]{Kob85} are all met. Also, in the case of equality, the regular part \(X = \overline{X} - D\) is uniformized by the ball \(\mathbb{B}^{2}\).

\subsection{The higher dimensional case} Let us now focus on the higher dimensional situation. It is quite easy to define an intrinsic Miyaoka-Yau characteristic for varieties with ample canonical bundle that are smooth in codimension \(2\), in a manner similar to \cite{GKPT19a}:

\begin{defi} \label{defi:MY}
	Let \(X^{\ast}\) be a complex \(n\)-dimensional variety smooth in codimension \(2\), and with ample canonical bundle \(K_{X^{\ast}}\). Assume that \(n \geq 3\). We define the {\em Miyaoka-Yau characteristic number} of \(X^{\ast}\) to be the rational number \(\Delta_{\text{MY}}(X^{\ast})\) computed as follows.
	
	Consider a smooth complete intersection \(S = H_{1} \cap \dotsc \cap H_{n-2}\), where \(H_{i} \in | m_{i} K_{X^{\ast}}|\) is a generic hypersurface (with \(m_{1}, \dotsc, m_{n-2} \gg 0\)). Since the singularities of \(X^{\ast}\) are only in codimension \(2\), \(S\) does not intersect the singular locus, and so \(T_{X}\) is locally free around \(S\). Thus, the following number readily makes sense:
			\begin{equation} \label{eq:defiMY}
			\Delta_{\text{MY}}(X^{\ast})
			:=
			\frac{1}{\prod_{j} m_{j}}
			\left[
				2 (n+1) c_{2}(T_{X}|_{S}) - n c_{1}^{2}(T_{X}|_{S})
			\right]
			\end{equation}
			It is easy to check that this number depends only on \(X^{\ast}\), but not on the choice of \(S\).
\end{defi}

The main goal of this section is to prove the following.

\begin{thm} \label{thm:existencemap}
	Let \(X^{\ast}\) be a complex projective variety of dimension \(n \geq 3\) that is smooth in codimension \(2\). Assume that the singularities of \(X^{\ast}\) are log-canonical, and that \(K_{X^{\ast}}\) is ample. Then one has
	\[
		\Delta_{\rm MY}(X^{\ast}) \geq 0.
	\]
	In case of equality, if one denotes by \(X \subset X^{\ast}\) the smooth locus, there exists a representation \(\rho : \pi_{1}(X) \to \PU(n,1)\) and an étale holomorphic map \(\psi : \widetilde{X} \to \mathbb{B}^{n}\) that is \(\rho\)-equivariant.
\end{thm}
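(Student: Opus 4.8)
The plan is to reduce everything to a surface statement via a Mehta--Ramanathan / Greb--Kebekus--Peternell--Taji type argument, exactly as the excerpt suggests. First I would fix a smooth complete intersection surface $S = H_1 \cap \dots \cap H_{n-2}$ with $H_i \in |m_i K_{X^\ast}|$ generic, $m_i \gg 0$, so that $S$ avoids $\mathrm{Sing}(X^\ast)$ (possible since $X^\ast$ is smooth in codimension $2$) and $\Omega_{X^\ast}^{[1]}$ is locally free near $S$. On $S$ consider the standard Higgs bundle obtained by restricting $(\mathcal{E}_{X^\ast}, \theta_{X^\ast}) = (\Omega^{[1]}_{X^\ast} \oplus \mathcal{O}_{X^\ast}, \theta_{X^\ast})$; the key input is that this restricted Higgs bundle is \emph{semistable} with respect to $K_{X^\ast}|_S$. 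This semistability is where the log-canonical hypothesis enters: passing to a log-resolution $\pi : \overline{X} \to X^\ast$ with boundary divisor $D$, one has $K_{\overline{X}} + D = \pi^\ast K_{X^\ast}$ nef and big, and by Guenancia's work \cite{Gue15} together with the now-standard computation (cf.\ \cite[Corollary~7.2]{GKPT19a}) the logarithmic Higgs bundle $(\Omega_{\overline{X}}(\log D) \oplus \mathcal{O}_{\overline{X}}, \theta)$ is stable with respect to any ample $\mathbb{Q}$-divisor $L$ sufficiently close to $K_{\overline{X}}+D$; restricting to a generic complete intersection surface (disjoint from $D$, by genericity and ampleness modulo $D$) and letting $L \to K_{\overline{X}}+D$ gives semistability of the Higgs bundle on $S \subset X$ with respect to $K_{X^\ast}|_S$. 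Applying the Bogomolov--Gieseker inequality for semistable Higgs bundles on the \emph{surface} $S$ to $(\mathcal{E}_{X^\ast}|_S, \theta|_S)$ yields
\[
	\big(2(n+1)\,c_2(T_X|_S) - n\, c_1^2(T_X|_S)\big) \geq 0,
\]
which after dividing by $\prod_j m_j$ is precisely $\Delta_{\mathrm{MY}}(X^\ast) \geq 0$.

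For the equality case, the point is that equality forces the restricted Higgs bundle on $S$ to be a \emph{system of Hodge bundles} of a very specific type — namely, up to the $\mathcal{O}_S$ summand, it must be the Higgs bundle underlying the uniformizing $\mathbb{C}$-VHS of the ball, i.e.\ the Hermite--Einstein metric on $\mathcal{E}_{X^\ast}|_S$ realizes the Higgs field as an isomorphism onto $\mathcal{O}_S \otimes \Omega^1_S$ in the appropriate graded pieces. Concretely, equality in Bogomolov--Gieseker for a semistable Higgs bundle means it is projectively Hermite--Yang--Mills, and the explicit shape of $(\mathcal{E}, \theta)$ (with $\theta$ of the stated form $(a,b) \mapsto (0,1)\otimes a$) pins down the curvature of the associated flat $\mathrm{PU}(n,1)$-connection so that the period map of the resulting polarized $\mathbb{C}$-VHS on $S$ is an immersion into $\mathbb{B}^{n-2}$ that is moreover a local biholomorphism. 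Then I would run a Lefschetz-type / monodromy argument to propagate this from the surface $S$ to all of $X$: since $n \geq 3$, taking complete intersection \emph{curves} and then surfaces inside $X$ and using that $\pi_1(S) \to \pi_1(X)$ is surjective (Goresky--MacPherson Lefschetz for the smooth quasi-projective $X$, as $S$ is cut out by very ample divisors avoiding the singular locus), the representation $\rho : \pi_1(S) \to \mathrm{PU}(n-2,1)$ extends/upgrades to $\rho : \pi_1(X) \to \mathrm{PU}(n,1)$, and the local biholomorphism property globalizes to give a $\rho$-equivariant étale map $\psi : \widetilde{X} \to \mathbb{B}^n$. (More precisely: the Hermite--Einstein metric exists on all of $\mathcal{E}_X$ because stability is an open condition checkable on surfaces via Mehta--Ramanathan, equality propagates since $\Delta_{\mathrm{MY}}$ is computed on any such $S$, and the flat bundle $\mathrm{End}(\mathcal{E}_X,\theta,h)$ then underlies a polarized $\mathbb{C}$-VHS on $X$ whose period map $\psi$ to $\mathbb{B}^n = \mathbb{D}_n$ is étale by the curvature computation.)

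The main obstacle I expect is \emph{two-fold}. First, establishing semistability of the restricted Higgs bundle with the \emph{log-canonical} (rather than klt) hypothesis: when discrepancies are exactly $-1$ the logarithmic cotangent sheaf computation is delicate and one must be careful that Guenancia's metric estimates and the slope computation still go through — in the klt case one has extra positivity, while here one is genuinely on the boundary, and this is precisely why the statement is limited to discrepancy $-1$ boundary behavior (on $S$ itself this is invisible since $S$ avoids $D$, but the \emph{stability on $\overline{X}$} that one restricts from requires the lc input). Second, upgrading from "étale map on $S$" to "étale map on $X$": one must check that the Hermite--Einstein metric produced on $S$ is the restriction of a global one on $X$ — this is where Mehta--Ramanathan (stability is detected on a general complete intersection surface) plus a Bando--Siu / Uhlenbeck--Yau extension argument is needed, together with verifying that the equality case does not degenerate (e.g.\ that the Higgs field stays an isomorphism, not merely generically so) away from $S$. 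Both are handled by now-standard techniques \cite{GKPT19a, GKPT19b, GKPT20} and Mochizuki's work, but the bookkeeping at discrepancy $-1$ is the genuinely new point.
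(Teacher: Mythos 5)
Your proposal follows the same overall strategy as the paper — restrict the intrinsic Higgs sheaf to a generic complete-intersection surface, prove (semi)stability, invoke Bogomolov–Gieseker, and in the equality case produce a period map and propagate it via Lefschetz — but it contains one genuine error and two places where the mechanism is weaker or more roundabout than what is actually needed.

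The genuine error is the period domain. You write that the polarized $\mathbb{C}$-VHS produced on $S$ has period map into $\mathbb{B}^{n-2}$ and representation $\rho:\pi_1(S)\to\mathrm{PU}(n-2,1)$, and then plan to ``extend/upgrade'' this to $\mathrm{PU}(n,1)$ on $X$. This is not correct: the Higgs bundle $\mathcal{E}_X|_S = \Omega^{[1]}_{X}|_S\oplus\mathcal{O}_S$ still has rank $n+1$, the polarization still has signature $(n,1)$, and the period domain is $\mathbb{B}^n\cong\mathrm{PU}(n,1)/\mathrm{U}(n)$ from the outset. The period domain is governed by the Hodge numbers of the VHS, not by $\dim S$. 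Consequently there is no ``upgrade'' step; the content of the Lefschetz argument is that $\pi_1(S)\to\pi_1(X)$ is an \emph{isomorphism} (not merely a surjection, which would not identify $S\times_X\widetilde X$ with $\widetilde S$), so the representation $\rho:\pi_1(S)\to\mathrm{PU}(n,1)$ and the period map $\Psi_S:\widetilde S\to\mathbb{B}^n$ are already of the correct type, and one only needs to show that the $\Psi_S$ for varying $S$ glue.

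Two further points where your route diverges from the paper, in ways that either lose information or make life harder. First, you consistently say ``semistable'' where the argument needs ``stable'': semistability plus $\Delta_{\mathrm{MY}}=0$ does not give an irreducible p-$\mathbb{C}$VHS, and irreducibility is essential for the gluing step (uniqueness of the harmonic metric, hence of the period map, over a common curve $C\subset S_1\cap S_2$). Relatedly, your detour through the log-resolution — stability of the logarithmic Higgs bundle w.r.t.\ an ample $L$ near the nef class $K_{\overline X}+D$, then letting $L\to K_{\overline X}+D$ — risks degenerating strict stability to semistability in the limit. The paper avoids this by invoking Guenancia's \emph{intrinsic} polystability of $\mathcal{T}_X$ with respect to the genuinely ample $K_{X^\ast}$, combined with \cite[Corollary~7.2]{GKPT19a}, and then the restriction theorem of \cite{GKPT19b} to get \emph{stability} of $(\mathcal{E}_X,\theta_X)|_S$ directly. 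Second, your gluing mechanism (extend the Hermite–Einstein metric from $S$ to $X$ via Mehta–Ramanathan plus Bando–Siu) is not what the paper does and is harder to control; the paper instead compares the period maps $\Psi_{S_1}$ and $\Psi_{S_2}$ along a common smooth complete-intersection curve $C$, where uniqueness of the harmonic metric for the irreducible induced representation $\pi_1(C)\twoheadrightarrow\pi_1(X)\to\mathrm{PU}(n,1)$ forces agreement, and then glues the maps directly on $\widetilde X$ (a Mochizuki-style argument). Both gluing approaches could in principle work, but the curve-comparison argument is cleaner and avoids the extension-of-metric machinery.
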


\begin{proof} The beginning of the proof follows closely the main steps of \cite{GKPT19a, GKPT19b, GKPT20}, so we will present only the necessary details.
	\medskip

\noindent
	{\em Step 1. Stability of the Higgs sheaf \((\mathcal{E}_{X}, \theta_{X})\).} By the work of Guenancia \cite[Theorem A]{Gue15}, the tangent sheaf \(\mathcal{T}_{X}\) is polystable with respect to \(K_{X^{\ast}}\). This implies by the computation of \cite[Corollary~7.2]{GKPT19a} that the Higgs sheaf \((\mathcal{E}_{X}, \theta_{X})\) is stable.
	\medskip

\noindent
{\em Step 2. Restriction theorem for Higgs sheaves.} Let \(S \subset X\) be a smooth complete intersection \(S = H_{1} \cap \dotsc \cap H_{n-2}\), where \(H_{i} \in | m_{i} K_{X^{\ast}}|\) is a generic hypersurface (with \(m_{1}, \dotsc, m_{n-2} \gg 0\)). Apply now the restriction theorem \cite[Theorem 6.1]{GKPT19b} to deduce that the restriction \((\mathcal{E}_{X}, \theta_{X})|_{S}\) is stable as a Higgs {\em bundle}.

	\medskip

\noindent
{\em Step 3. We deduce the Miyaoka-Yau inequality.} Since \((\mathcal{E}_{X}, \theta_{X})|_{S}\) is a stable Higgs bundle, it satisfies the Bogomolov-Gieseker inequality by \cite[Proposition~3.4]{Sim88}, which implies that
	\[
		2 c_{2}(\mathcal{E}_{X}|_{S}) - \frac{n}{n+1}c_{1}(\mathcal{E}_{X}|_{S})^{2} \geq 0.
	\]
	If we compare with Definition~\ref{defi:MY}, we see that it implies that \(\Delta_{\rm MY}(X^{\ast}) \geq 0\).	
	\medskip

\noindent
{\em Step 4. In case of equality, we obtain the representation \(\rho\) and a period map on the surface.} As \((\mathcal{E}_{X}, \theta_{X})|_{S}\) is stable with vanishing Bogomolov-Gieseker characteristic, it underlies an irreducible {\em projective} p-\(\mathbb{C}\)VHS by \cite{Sim88}.
	Thus, fixing adapted base points \(p \in S\) and \(\widetilde{b} \in \widetilde{S}\), there exists an irreducible representation \(\rho : \pi_{1}(S) \to \mathrm{PU}(n, 1)\) and a (pointed) period map	
	\[
		\Psi_{S} : (\widetilde{S}, \widetilde{b}) \to (\mathbb{B}^{n}, o),
	\]
	that is \(\rho\)-equivariant. Also, the differential of this period map identifies with the composition
	\[
		T_{S}
		\hookrightarrow
		T_{X}|_{S}
		\overset{\theta_{X}|_{S}}{\longrightarrow}
		\mathrm{Hom}(\Omega_{X}|_{S}, \mathcal{O}_{S})
	\]
	so we deduce that \(\Psi\) is immersive at any point. 	
	\medskip

	\noindent
{\em Step 5. We show the compatibility of period maps \(\Psi_{S}\) for various surfaces \(S\).} By Goresky-McPherson's Lefschetz theorem \cite[Thm. in Sect. II.1.2]{GM88}, the natural morphism \(\pi_{1}(S) \to \pi_{1}(X)\) is an isomorphism. This implies that one has a fiber product of {\em connected} pointed spaces
	\[
		\begin{tikzcd}
			(\widetilde{S}, \widetilde{b}) \arrow[r, hook] \arrow[d] & (\widetilde{X}, \widetilde{b}) \arrow[d] \\
			(S, b) \arrow[r, hook] & (X, b)
		\end{tikzcd}
	\]
	so that when the hypersurfaces \(H_{i}\) vary in their linear system with \(S\) always containing \(b\), then the surfaces \(\widetilde{S}\) can link \(\widetilde{b}\) to any point of \(\widetilde{X}\).

	Let \(S_{1}, S_{2}\) be two such surfaces containing \(b\), and let \(\widetilde{x}\in \widetilde{S}_{1} \cap \widetilde{S}_{2}\) with projection \(x \in X\). Denote by \(\Psi_{k} : (\widetilde{S}_{k}, \widetilde{b}) \to (\mathbb{B}^{n}, o)\) the two period maps \((k=1, 2)\). We are going to show that \(\Psi_{1}(\widetilde{x}) = \Psi_{2}(\widetilde{x})\). First, pick another complete intersection surface \(S'\) passing through \(b\) and \(x\) (so that \(\widetilde{b}, \widetilde{x} \in \widetilde{S}'\)). By Bertini's theorem, we may choose \(S'\) so that both \(S' \cap S_{1}\), \(S' \cap S_{2}\) are smooth, and thus, by dealing with the two cases where the pair \((S_{1}, S_{2})\) is replaced by \((S_{1}, S')\) and \((S_{2}, S')\), we see that we may assume in turn that the curve \(C = S_{1} \cap S_{2}\) is {\em smooth}.

	Both restrictions \(\Psi_{k}|_{C \times_{X} \widetilde{X}}\) \((k=1, 2)\) are period maps associated to the same Higgs bundle \((\mathcal{E}_{X}, \theta)|_{C}\). The latter is stable since the corresponding representation \(\pi_{1}(C, b) \twoheadrightarrow \pi_{1}(X, b) \overset{\rho}{\to} \mathrm{Aut}(\mathbb{B}^{n})\) is irreducible (the first arrow is surjective by the Lefschetz theorem). By the uniqueness of harmonic metrics for stable Higgs bundles, we see that \(\Psi_{1}\) and \(\Psi_{2}\) must coincide above \(C\), and thus \(\Psi_{1}(\widetilde{x})= \Psi_{2}(\widetilde{x})\). 
	\medskip

	{\em Step 5. We glue the maps together.} The previous step shows that we may glue all period maps associated to the complete intersections \(S\) to get a well-defined \(\rho\)-equivariant map
\(
	\widetilde{X} \to \mathbb{B}^{n}.
\)
	This map is immersive. Indeed, for any tangent vector \(v\) to \(X\), there exists a surface \(S\) as above tangent to \(v\), and one has \(\Psi_{\ast}(v) = (\Psi_{S})_{\ast}(v) \neq 0\) since \(\Psi_{S}\) is immersive.
\end{proof}

\begin{rem}
	Steps 4 and 5 and this method of glueing together period maps on surfaces is directly inspired by the work of Mochizuki \cite[proof of Theorem~9.4]{Moch06}. We thank Y. Brunebarbe for pointing that reference to us.
\end{rem}

\section{Example of varieties with non-log-canonical singularities and non-complete period map} \label{sec:example}

In this section, we will use classical examples of Deligne-Mostow \cite{DM86} and Deraux \cite{Der05} to obtain varieties \(X^{\ast}\) satisfying the case of equality in Theorem~\ref{thm:existencemap} but with period map \(\psi\) failing to be an isomorphism; as we will see later on, restricting the type of singularities of \(X^{\ast}\) can be used to avoid this type of situation.

It is very easy to contract totally geodesic divisors in Deligne-Mostow and Deraux's examples to construct \(X^{\ast}\) as an abstract analytic space. One has to be a bit more careful to ensure they indeed admit a \(\mathbb{Q}\)-Cartier ample canonical divisor \(K_{X^{\ast}}\); we will provide a simple criterion for this in Theorem~\ref{thm:contractionproj}. We will then explain why this criterion applies to Deligne-Mostow and Deraux's manifolds.
\medskip

\noindent
{\bf Notation.} Let us consider a complex K\"{a}hler manifold \(X\) admitting a correspondence with the ball \(\mathbb{B}^{n}\), as given by the following diagram
\begin{equation} \label{eq:diagramMS}
	\begin{tikzcd}
			& M \arrow[r, "\psi"] \arrow[d, "\pi"] & \mathbb{B}^{n} \\
		D_{1}, \dotsc, D_{m} \arrow[r, hook] & X 
	\end{tikzcd}
\end{equation}
where
\begin{enumerate}
	\item the \(D_{i}\) are disjoint smooth irreducible divisors on \(X\) \((1 \leq i \leq m)\). Let \(D := \bigcup_{i} D_{i}\);
	\item \(\pi\) is an infinite Galois étale covering of Galois group \(\Gamma_{0} \subset \mathrm{Aut}(M)\); 
	\item \(\psi\) is a surjective holomorphic map, étale outside \(\pi^{-1}(D)\), and ramifying at order \(m_{i}\) along each component of \(\pi^{-1}(D_{i})\). 
\end{enumerate}

\noindent
{\bf Assumptions.} We make the following assumptions on this data: 
\begin{enumerate}[(a)]
	\item there exists a subgroup \(\Gamma \subset \mathrm{Aut}(\mathbb{B}^{n})\) with an action \(\Gamma \circlearrowleft M\) that makes \(\psi\) a \(\Gamma\)-equivariant map; 
	\item\(\Gamma_{0}\) acts on \(M\) as a subgroup of \(\Gamma\); 
	\item for any \(1 \leq i \leq m\), and for each connected component \(T_{i}\) of \(\pi^{-1}(D_{i})\), \(\psi\) realizes an isomorphism between \(T_{i}\) and a totally geodesic hypersurface \(H_{i} \subset \mathbb{B}^{n}\). Consequently, one has an identification
		\[
			D_{i} = \quotient{\mathrm{Stab}_{\Gamma}(H_{i})}{H_{i}},
		\]
		which makes in turn \(D_{i}\) a {\em smooth ball quotient}.
\end{enumerate}
\medskip

Note that we do not assume \(\psi\) to be a ramified cover: in Deligne-Mostow and Deraux's examples, the image \(\psi ( \pi^{-1}(D))\) is a dense set of hypersurfaces in \(\mathbb{B}^{n}\). 

\begin{rem} \label{rem:notballquotient} Since the \(D_{i}\) are ball quotients by torsion free groups, it follows from Proposition~\ref{prop:notballquotient} below that the open set \(X - D\) is not a ball quotient.
\end{rem}

\subsubsection{Singular Bergman metric on \(X\)}

The Bergman metric \(h_{\mathbb{B}^{n}}\) can be pulled back by \(\psi\) and pushed down to \(X\) via \(\pi\) to yield a singular metric \(h\) on \(T_{X}\), smooth on \(X - D\), and with conical singularities of order \(m_{i}\) around each \(D_{i}\). Note that this metric induces a well-defined distance \(d\) on the whole of \(X\). 
\medskip

For each \(i\), we pick \(\epsilon_{i} > 0\) and define 
\[
	U_{i} := \{ x \in X \; | \; d(x, D_{i}) < \epsilon_{i} \}.
\]

The following proposition follows easily from the fact that \(\Gamma_{0}\) act freely on \(M\) by isometries with respect to \(\psi^{\ast} h_{\mathbb{B}^{n}}\):

\begin{prop} \label{prop:setting}
	Let \(T_{i}\) be any connected component of \(\pi^{-1}(D_{i})\), and let 
	\[
		V_{i} := \{ x \in M \; | \; d_{\psi^{\ast}h_{\mathbb{B}^{n}}}(x, T_{i}) < \epsilon_{i}\}.
	\]
	\[
		W_{i} := \{ x \in \mathbb{B}^{n} \; | \; d(x, H_{i}) < \epsilon_{i} \},
	\]
	where \(H_{i} := \psi(T_{i}) \subset \mathbb{B}^{n}\).
	Let \(\Lambda_{i} := \mathrm{Stab}_{\Gamma_{0}}(T_{i})\). Then if \(\epsilon_{i}\) is chosen small enough, one has the following properties:
	\begin{enumerate}
		\item \(\Lambda_{i}\) acts freely on \(V_{i}\) and \(\pi|_{V_{i}} : V_{i} \to U_{i}\) can be identified with the quotient map. In particular, \(\pi_{1}(U_{i}) \cong \Lambda_{i}\);
		\item picking coordinates \(z_{1}, \dotsc, z_{n}\) on \(\mathbb{B}^{n}\) so that \(H_{i} = \{z_{1} = 0\}\), the map \(V_{i} \to W_{i}\) identifies with the corestriction to \(W_{i}\) of the map 
			\[
				\begin{array}{ccc}
				\left\{
					|z_{1}|^{2 m_{i}} + |z_{2}|^{2} + \dotsc + |z_{n}|^{2} < 1
				\right\}
				& 
				\longrightarrow 
				& 
				\mathbb{B}^{n}.
				\\
				(z_{1}, \dotsc, z_{n})
				&
				\longmapsto
				&
				(z_{1}^{m_{i}}, \dotsc, z_{n})
			\end{array}
			\]
	\end{enumerate}
\end{prop}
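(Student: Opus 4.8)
The plan is to verify each of the two numbered claims by a local computation near a component $T_i$ of $\pi^{-1}(D_i)$, using the three structural assumptions (a)--(c) on the diagram \eqref{eq:diagramMS}, together with the crucial fact that $\Gamma_0$ acts freely on $M$ by isometries for the metric $\psi^\ast h_{\mathbb{B}^n}$. First I would fix $i$ and a component $T_i$, set $\Lambda_i = \mathrm{Stab}_{\Gamma_0}(T_i)$, and observe that since $\Gamma_0$ acts freely and properly discontinuously on $M$ and $\pi$ is the quotient map $M \to X = M/\Gamma_0$, the tubular neighborhood $U_i$ of $D_i$ is identified with $V_i/\Lambda_i$ provided $\epsilon_i$ is small. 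The point here is that the distance $d$ on $X$ is the push-down of $d_{\psi^\ast h_{\mathbb{B}^n}}$, so $d(x, D_i) < \epsilon_i$ pulls back to the condition defining $V_i$; and for $\epsilon_i$ small, two $\Gamma_0$-translates of $T_i$ that both meet $V_i$ must coincide (they are disjoint closed totally geodesic sets by assumption (c) combined with (a), (b), so there is a positive lower bound on the distance between distinct translates — this uses properness of the $\Gamma_0$-action and the fact that, $D_i$ being a ball quotient, $T_i$ and its translates form a locally finite family). Hence $\pi|_{V_i}$ is the quotient map $V_i \to V_i/\Lambda_i = U_i$ and $\pi_1(U_i) \cong \Lambda_i$ since $V_i$ deformation retracts onto $T_i$ which is simply connected (it maps isomorphically to the ball $H_i \cong \mathbb{B}^{n-1}$ by (c)), so $\Lambda_i$ acts freely.

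Next I would prove claim (2) by a purely local model computation. By assumption (c), $\psi$ carries $T_i$ isomorphically onto the totally geodesic hyperplane $H_i \subset \mathbb{B}^n$, which after an automorphism of $\mathbb{B}^n$ we may take to be $\{z_1 = 0\}$ with standard coordinates $z_1, \dots, z_n$. The hypothesis is that $\psi$ is étale away from $\pi^{-1}(D)$ and ramifies to order $m_i$ along $T_i$. I would argue that in a tubular neighborhood of $T_i$, the normal bundle of $T_i$ in $M$ is pulled back from the normal bundle of $H_i$ in $\mathbb{B}^n$ via the $m_i$-th power map on the normal direction: concretely, choosing holomorphic coordinates $(w_1, w_2, \dots, w_n)$ on a neighborhood of a point of $T_i$ in $M$ so that $T_i = \{w_1 = 0\}$ and $w_2, \dots, w_n$ restrict to the ball coordinates on $H_i$, the map $\psi$ must have the form $(w_1, \dots, w_n) \mapsto (w_1^{m_i} u(w), z_2(w), \dots, z_n(w))$ where $u$ is a unit; after absorbing $u$ into a change of the $w_1$-coordinate (which one may do fiberwise since $\psi$ is étale off $T_i$ and $T_i \to H_i$ is an isomorphism) and using that $\psi$ is a local isometry onto its image for $\psi^\ast h_{\mathbb{B}^n}$, the map becomes exactly $(w_1, \dots, w_n) \mapsto (w_1^{m_i}, w_2, \dots, w_n)$ on the model domain. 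It remains to identify the source domain: the pullback metric $\psi^\ast h_{\mathbb{B}^n}$ on $V_i$ is the pullback of the Bergman metric under $(z_1, \dots, z_n) \mapsto (z_1^{m_i}, z_2, \dots, z_n)$, so $V_i$, as a metric space, is isometric to the preimage of $W_i = \{d(\cdot, H_i) < \epsilon_i\}$ under this map; since the map sends $\{|z_1|^{2m_i} + |z_2|^2 + \cdots + |z_n|^2 < 1\}$ onto $\mathbb{B}^n$, restricting to $W_i$ gives precisely the claimed corestriction.

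The main obstacle I expect is the gluing/globalization of this local picture, i.e. showing that the local coordinates can be chosen coherently along all of $T_i$ so that the identification $V_i \xrightarrow{\sim} (\text{preimage of } W_i)$ holds globally and not just near one point, and that $\Lambda_i$ acts on this model compatibly with the deck action on $V_i$. The resolution is to observe that the data is rigid: the identification $T_i \xrightarrow{\sim} H_i$ is canonical (it is $\psi|_{T_i}$), the ramification structure determines the normal geometry up to the finite ambiguity of an $m_i$-th root, and the $\Gamma_0$-equivariance of $\psi$ from assumption (a)--(b) — which makes $\psi$ $\Gamma_0$-equivariant for the $\Gamma_0 \hookrightarrow \Gamma$ action on $\mathbb{B}^n$ — forces $\Lambda_i$ to act on $V_i$ through its image in $\mathrm{Stab}_\Gamma(H_i)$, which acts on the model by the standard formulas. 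Concretely, the stabilizer $S = \mathrm{Stab}_{\mathrm{PU}(n,1)}(H_i)$ was described in Section~\ref{sec:stabhyp} as matrices fixing $z_1$ up to a unit scalar $\lambda$, and lifting such an element to the $m_i$-fold cover in the $z_1$-direction is exactly choosing an $m_i$-th root of $\lambda$; this is consistent because $\Lambda_i$ acts freely on $V_i$ (already established) so no monodromy obstruction arises. Once this is in place, the two assertions follow, and I would keep the writeup short by invoking properness of the $\Gamma_0$-action for the discreteness/local-finiteness inputs rather than re-deriving them.
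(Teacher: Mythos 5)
The paper itself gives no written proof of Proposition~\ref{prop:setting}; it simply asserts that the statement ``follows easily from the fact that \(\Gamma_{0}\) acts freely on \(M\) by isometries with respect to \(\psi^{\ast} h_{\mathbb{B}^{n}}\).'' Your write-up spells out exactly the kind of argument the author presumably has in mind, and part (1) is handled correctly: free and proper action of \(\Gamma_{0}\), compactness of \(D_{i}\) forcing a uniform gap between distinct translates of \(T_{i}\), and simple-connectedness of \(T_{i} \cong H_{i} \cong \mathbb{B}^{n-1}\) together give \(V_{i}/\Lambda_{i} \cong U_{i}\) and \(\pi_{1}(U_{i}) \cong \Lambda_{i}\). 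One small wording issue: \(\Lambda_{i}\) acts freely simply because it is a subgroup of the freely acting \(\Gamma_{0}\) — that is an input, not a consequence of the deformation-retraction argument as your phrasing suggests.

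For part (2) your local picture is right, but the globalization is gestured at rather than established. The step that deserves an explicit argument is why \(\psi|_{V_{i}} : V_{i} \to W_{i}\) is a \emph{finite, proper, surjective} branched cover of degree \(m_{i}\): once that is in place, the identification with the model power map follows from the uniqueness of connected cyclic covers of \(W_{i} \setminus H_{i}\) (here \(\pi_{1}(W_{i}\setminus H_{i}) \cong \mathbb{Z}\) since \(H_{i}\) is contractible) together with normalization across \(H_{i}\). Properness is not automatic from the local form of \(\psi\) near \(T_{i}\); it is most easily obtained either by lifting radial geodesics from \(H_{i}\) (for \(\epsilon_{i}\) small every point of \(W_{i}\) is joined to a unique foot on the totally geodesic \(H_{i}\) by a unique short geodesic, and this geodesic admits exactly \(m_{i}\) lifts to \(V_{i}\) through the cone point on \(T_{i}\)), or by passing to the \(\Lambda_{i}\)-quotients and exploiting the compactness of \(\overline{U_{i}}\) and \(\overline{W_{i}/\Lambda_{i}'}\). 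Your ``rigidity'' paragraph correctly identifies that \(T_{i}\) being simply connected removes any monodromy obstruction to choosing a global \(m_{i}\)-th root of the unit \(u(w)\) (the obstruction lives in \(H^{1}(T_{i},\mu_{m_{i}}) = 0\)), but it does not by itself give properness or surjectivity of \(\psi|_{V_{i}}\); invoking the free isometric \(\Lambda_{i}\)-action — the very ingredient the paper highlights — is what closes that gap cleanly.
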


\begin{lem} \label{lem:pseudoconvex}
	Let \(H \subset \mathbb{B}^{n}\) be the hyperplane section \(\{z_{1} = 0\}\), and consider the following function on \(\mathbb{B}^{n}\): 
	\[
		z \in \mathbb{B}^{n}
		\longmapsto
		\delta(z) = \log \left(\frac{|z_{1}|^{2}}{1 - \sum_{j=2}^{n} |z_{j}|^{2}}\right)
	\]
	This function is invariant under \(\mathrm{Stab}_{\mathrm{Aut}(\mathbb{B}^{n})}(H)\). For any \(A \in \mathbb{R}_{+}\), the open subset \(\Omega_{A} := \{\delta < - A\}\) is a strictly pseudoconvex neighborhood of \(H\). 
\end{lem}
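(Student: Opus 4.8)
The plan is to treat the two assertions separately: invariance of \(\delta\) under \(S := \mathrm{Stab}_{\mathrm{Aut}(\mathbb{B}^{n})}(H)\), and strict pseudoconvexity of \(\Omega_{A}\). For the invariance I would use the explicit matrix description from \S\ref{sec:stabhyp}: any element of \(S\) is represented by a block-diagonal matrix \(g = \mathrm{diag}(\lambda, h)\) with \(|\lambda| = 1\) and \(h \in \mathrm{U}(n-1,1)\) acting on the hyperplane coordinates \(z' = (z_{2}, \dots, z_{n})\). Writing \(c(z)\) for the automorphy factor of \(h\), one has \(g \cdot z = (\lambda z_{1}/c(z),\, h \cdot z')\), so \(|(g\cdot z)_{1}|^{2} = |z_{1}|^{2}/|c(z)|^{2}\), while the standard identity \(1 - \|h\cdot z'\|^{2} = (1 - \|z'\|^{2})/|c(z)|^{2}\) handles the denominator of \(\delta\); the automorphy factors cancel and \(\delta(g\cdot z) = \delta(z)\). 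This is a short formal computation, with no real obstacle.

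For the pseudoconvexity I would first observe that \(\partial\Omega_{A}\) avoids \(H\): on \(\{z_{1} = 0\}\) one has \(\delta = -\infty < -A\), so every boundary point \(x\) satisfies \(x_{1} \neq 0\) (and, being in \(\mathbb{B}^{n}\), also \(1 - \|x'\|^{2} > 0\)). Hence near any such \(x\) the defining function \(\phi := \delta + A = \log|z_{1}|^{2} - \log(1-\|z'\|^{2}) + A\) is smooth, and \(\Omega_{A} \cap V = \{\phi < 0\}\) on a small neighborhood \(V\) contained in \(\{z_{1} \neq 0\}\). The key computation is that \(i\partial\bar\partial\phi = -\,i\partial\bar\partial\log(1-\|z'\|^{2})\), because \(\log|z_{1}|^{2}\) is pluriharmonic on \(\{z_{1} \neq 0\}\); and the right-hand side is (a constant multiple of) the pullback of the Bergman form of \(\mathbb{B}^{n-1}\) in the \(z'\)-variables, hence positive semidefinite on \(T^{1,0}\mathbb{B}^{n}\) with kernel exactly the line \(\mathbb{C}\,\partial/\partial z_{1}\).

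To finish, I would check that this one-dimensional kernel is transverse to the complex tangent space of \(\partial\Omega_{A}\) at \(x\): since \(\partial\phi = dz_{1}/z_{1} - \partial\log(1-\|z'\|^{2})\), we get \(\partial\phi(\partial/\partial z_{1}) = 1/x_{1} \neq 0\), so \(\partial/\partial z_{1} \notin \ker(\partial\phi)_{x}\); therefore \(\ker(\partial\phi)_{x}\) meets the kernel of \(i(\partial\bar\partial\phi)_{x}\) trivially and \(i(\partial\bar\partial\phi)_{x}\) is positive definite there, which is exactly condition (ii) of \S\ref{sec:contract}. Finally, since \(A \geq 0\), the inequality \(|z_{1}|^{2} < e^{-A}(1-\|z'\|^{2})\) already forces \(\|z\|^{2} < 1\), so \(\Omega_{A} \subset \mathbb{B}^{n}\) and obviously \(H \subset \Omega_{A}\), confirming that \(\Omega_{A}\) is a neighborhood of \(H\). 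The only point requiring a little care is the bookkeeping ensuring \(\phi\) is smooth near \(\partial\Omega_{A}\) and that the Levi form of \(\phi\) has precisely the expected one-dimensional degeneracy; there is no genuine difficulty.
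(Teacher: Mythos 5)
Your proof is correct and follows essentially the same route as the paper: the invariance is checked by tracking how elements of $\mathrm{Stab}(H)$ act on the numerator and denominator of $\delta$ (you via automorphy factors, the paper via homogeneous coordinates — the same computation), and the strict pseudoconvexity is obtained by identifying $i\partial\bar\partial\delta$ with the pullback of the Bergman form on $\mathbb{B}^{n-1}$ and observing that the degenerate direction $\partial/\partial z_1$ is transverse to $\ker(\partial\delta)$ at any boundary point of $\Omega_A$ (where $z_1 \neq 0$).
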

\begin{proof}
	Let us check the invariance. Recall from Section~\ref{sec:stabhyp} that \(S:=\mathrm{Stab}_{\mathrm{Aut(\mathbb{B}^{n})}}(H)\) is isomorphic to \(U(n-1, 1)\), acting on \(\mathbb{B}^{n}\) by its standard action on the last \(n\) homogeneous coordinates
	\[
		[ Z_{1} : Z_{2} : \dotsc : Z_{n} : Z_{0}] \in \mathbb{B}^{n} \subset \mathbb{P}^{n}
	\]
	Thus, if one takes \(z = [z_{1} : \dotsc : z_{n} : 1] \in \mathbb{B}^{n}\) and \(A \in U(n-1, 1)\), one may write \(A \cdot z = [z_{1} : T_{2} : \dotsc : T_{n} : T_{0}] \in \mathbb{B}^{n}\) with \((T_{2}, \dotsc, T_{0}) = A \cdot (z_{2}, \dotsc z_{n}, 1)\). Thus
	\[
		\delta(A \cdot z)
		=
		\log\left( 
		\frac{|z_{1}|^{2}}
		{|T_{0}|^{2} - \sum_{j = 2}^{n} |T_{j}|^{2}}
		\right).
	\]
	However, \(A\) leaves invariant the standard \((n-1, 1)\)-signature hermitian form on \(\mathbb{C}^{n}\), so \(|T_{0}|^{2} - \sum_{j=2}^{n} |T_{j}|^{2} = 1 - \sum_{j=2}^{n} |z_{j}|^{2}\). This shows the required invariance.
	\medskip

	Let us show the second statement. Let \(\pi : (z_{1}, z') \in \mathbb{B}^{n} \mapsto z' \in \mathbb{B}^{n-1}\) be the canonical projection. One has then, at any point \(z \in \partial \Omega_{A}\):
	\begin{align*}
		i \partial \overline{\partial} \delta(z) & = i \partial \overline{\partial} \log |z_{1}|^{2} - i \partial \overline{\partial} \log(1 - \sum_{j \geq 2} |z_{j}|^{2}) \\
		& = 0 + \pi^{\ast} \omega_{\mathbb{B}^{n-1}}
	\end{align*}
	where we used the standard fact that \(z' \in \mathbb{B}^{n-1} \mapsto \log(1 - ||z'||^{2})\) is a potential for the Bergman metric on \(\mathbb{B}^{n-1}\). To show that \(i \partial \overline{\partial} \delta(z)\) is positive definite in restriction to \(\ker(\partial \delta)_{z}\), is suffices to remark that the projection map
	\[
		\pi_{\ast} : \ker(\partial \delta)_{z} \subset T_{z}\, \mathbb{B}^{n}
		\to
		T_{\pi(z)}\, \mathbb{B}^{n-1}
	\]
	is injective. However, the latter is certainly true, since for \(u \in \mathrm{ker}(\pi_{\ast})_{z}\), one may write \(u = \alpha \frac{\partial}{\partial z_{1}}\), and one has then \(\partial_{u} \delta(z) \neq 0\) unless \(\alpha = 0\).
\end{proof}

\begin{prop}
	The manifold \(X\) is projective. There exists a normal complex space \(X^{\ast}\) and a morphism \(\tau : X \to X^{\ast}\) that contracts every connected component \(D_{i}\) to a single point.
\end{prop}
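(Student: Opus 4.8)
The statement splits into two essentially independent assertions, the contraction being the substantial one.

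\textbf{Projectivity.} The plan is to show $K_X$ is big and then invoke the fact that a compact K\"ahler Moishezon manifold is projective. Since $\psi$ is $\Gamma$-equivariant and $\Gamma_{0}\subset\Gamma$ acts on $\mathbb{B}^{n}$ by isometries of the Bergman metric, the smooth closed semipositive $(1,1)$-form $\psi^{*}\omega_{\mathbb{B}^{n}}$ on $M$ is $\Gamma_{0}$-invariant and descends through the étale quotient $\pi\colon M\to X$ to a smooth closed semipositive form $\omega_{X}$ on the compact manifold $X$. Being smooth and semipositive, $\{\omega_{X}\}$ is a nef class, and $\{\omega_{X}\}^{n}=\int_{X}\omega_{X}^{n}=\int_{X\setminus D}\omega_{X}^{n}$ is the volume of $X\setminus D$ for the (incomplete, conical) K\"ahler--Einstein metric $\psi^{*}h_{\mathbb{B}^{n}}$, which is finite and strictly positive; thus $\{\omega_{X}\}$ is nef and big. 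On the other hand, near $\pi^{-1}(D_{i})$ the map $\psi$ is, by Proposition~\ref{prop:setting}, a branched cover ramified to order $m_{i}$ along $T_{i}$, so the Riemann--Hurwitz relation together with the curvature $(n+1)\,\omega_{\mathbb{B}^{n}}$ of the Bergman metric on $K_{\mathbb{B}^{n}}$ yields, after descent, an identity $c_{1}(K_{X})=\tfrac{n+1}{2\pi}\{\omega_{X}\}+\sum_{i}(m_{i}-1)\{D_{i}\}$ in $H^{1,1}(X)$. As $\sum_{i}(m_{i}-1)\{D_{i}\}$ is effective and $\{\omega_{X}\}$ is nef and big, $c_{1}(K_{X})$ is big; hence $X$ is Moishezon, and being K\"ahler it is projective.

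\textbf{Construction of the contraction.} Here I would apply Grauert's blow-down criterion recalled in Section~\ref{sec:contract}: it suffices to exhibit, for each $i$, a basis of strictly Levi pseudoconvex neighborhoods of the compact divisor $D_{i}$ in $X$; since the $D_{i}$ are pairwise disjoint one then contracts them all at once. The model is Lemma~\ref{lem:pseudoconvex}: with $H_{i}=\{z_{1}=0\}$, the sets $\Omega_{A}=\{\delta<-A\}$, where $\delta(z)=\log\bigl(|z_{1}|^{2}/(1-\textstyle\sum_{j\geq 2}|z_{j}|^{2})\bigr)$, are strictly Levi pseudoconvex, shrink to $H_{i}$ as $A\to+\infty$, and $\delta$ is $\mathrm{Stab}_{\mathrm{Aut}(\mathbb{B}^{n})}(H_{i})$-invariant. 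Using Proposition~\ref{prop:setting}, pull $\delta$ back along the branched cover $V_{i}\to W_{i}$, $(z_{1},\dots,z_{n})\mapsto(z_{1}^{m_{i}},z_{2},\dots,z_{n})$, obtaining $\widetilde{\delta}_{i}=\log\bigl(|z_{1}|^{2m_{i}}/(1-\sum_{j\geq 2}|z_{j}|^{2})\bigr)$ on $V_{i}\subset M$; by the $\Gamma$-equivariance of $\psi$ and the invariance of $\delta$, this function is invariant under $\Lambda_{i}=\mathrm{Stab}_{\Gamma_{0}}(T_{i})$, so it descends to a function $\delta_{i}$ on $U_{i}=V_{i}/\Lambda_{i}\subset X$. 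For $A$ large, $\{\delta_{i}<-A\}$ is an open, relatively compact neighborhood of $D_{i}$ contained in $U_{i}$; these sets shrink to $D_{i}$ and therefore form a neighborhood basis.

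\textbf{Strict pseudoconvexity and conclusion.} It remains to check each $\{\delta_{i}<-A\}$ is strictly Levi pseudoconvex, and this is the point where some care is needed. A boundary point $x$ satisfies $\delta_{i}(x)=-A>-\infty$, hence lies off $D_{i}$; there the branched cover $V_{i}\to W_{i}$ is a local biholomorphism identifying a neighborhood of $x$ in $\{\delta_{i}<-A\}$ with $\Omega_{A}\cap W_{i}$, so the Levi condition at $x$ follows verbatim from Lemma~\ref{lem:pseudoconvex}. Grauert's criterion then produces a normal complex space $X^{*}$ and a proper morphism $\tau\colon X\to X^{*}$, biholomorphic over $X^{*}\setminus\{\tau(D_{1}),\dots,\tau(D_{m})\}$, contracting each $D_{i}$ to a point.

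\textbf{Main obstacle.} The only genuinely delicate inputs are: (i) for projectivity, the bigness of $\{\omega_{X}\}$ --- although $\omega_{X}$ degenerates along $D$, it is still smooth, nef, and has positive top self-intersection, which is what makes $c_{1}(K_{X})$ big; and (ii) for the contraction, the bookkeeping ensuring that the pulled-back functions descend and that their sublevel sets exhaust $D_{i}$, the key simplification being that the boundary of $\{\delta_{i}<-A\}$ never meets $D_{i}$, so strict pseudoconvexity is imported unchanged from Lemma~\ref{lem:pseudoconvex} through the local biholomorphism off the ramification locus.
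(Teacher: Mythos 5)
Your contraction argument is essentially the same as the paper's: both exhibit strictly Levi pseudoconvex neighborhood bases of the $D_i$ by descending the $\mathrm{Stab}(H_i)$-invariant function $\delta$ of Lemma~\ref{lem:pseudoconvex} through the branched cover $\psi$ and the \'{e}tale quotient $\pi$, observe that the boundaries of the resulting sublevel sets avoid the ramification locus $T_i$ so that strict pseudoconvexity transports verbatim through a local biholomorphism, and then invoke Grauert's criterion. Where you genuinely diverge is the projectivity claim. The paper notes that $\psi$ equips the Zariski-open $X - D$ with a polarized $\mathbb{C}$-VHS of maximal variation and appeals to the Moishezon-type theorems of \cite{BC18} or \cite{CD21}; you instead exhibit a big line bundle directly: the $\Gamma_0$-invariant smooth semipositive form $\psi^*\omega_{\mathbb{B}^n}$ descends to a nef class $\{\omega_X\}$ with $\int_X\omega_X^n>0$, hence big by Demailly--P\u{a}un, and Riemann--Hurwitz for the ramified map $\psi$ writes $c_1(K_X)$ as a positive multiple of this nef-and-big class plus the effective class $\sum_i(m_i-1)\{D_i\}$, so $K_X$ is big, $X$ is Moishezon, and K\"ahler plus Moishezon gives projective. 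Both routes are valid; yours is more self-contained and in fact closer in spirit to the positivity arguments the paper itself deploys for the orbifold canonical bundle $L$ later in the same section.
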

\begin{proof}
	The data of \(\Psi\) induces a polarized \(\mathbb{C}\)-VHS on \(X - D\), which has maximal variation since \(\Psi\) is \'{e}tale. Thus \(X\) is Moishezon by the main result of either \cite{BC18} or \cite{CD21}, hence projective since it is K\"{a}hler.

	To show that \(D_{i}\) can be contracted, use Lemma~\ref{lem:pseudoconvex} to obtain strictly convex neighborhoods \(\Omega_{A}\) of \(H_{i}\) for \(A > 0\). Then for \(A \gg 0\) so large so that \(\Omega_{A} \subset V_{i}\), the inverse image \(\psi^{-1}(\Omega_{A})\) yields a strictly pseudo-convex neighborhood of \(T_{i}\), invariant under \(\Lambda_{i}\). It thus goes down by the \'{e}tale map \(\pi\), providing in turn a strictly pseudoconvex neighborhood of \(D_{i}\). One may conclude by the criterion of Section~\ref{sec:contract}.
\end{proof}

\subsubsection{Positivity properties of orbifold canonical bundles on \(X\)} Our next goal is to study the positivity of various \(\mathbb{Q}\)-line bundles of the form \(\mathcal{O}(K_{X} + (1 + \alpha)D)\) with \(\alpha \in \mathbb{Q}\). The next lemma will allow to control the restriction of these line bundles to the boundary.

\begin{lem} \label{lem:numtrivial}
	For all \(1 \leq i \leq m\), the restriction to \(U_{i}\) of the line bundle
	\[
		N_{i} := \mathcal{O}(K_{X} + (1 + n m_{i}) D_{i})
	\]
	admits a flat unitary connection. In particular, it is numerically trivial when restricted to \(D_{i}\). 
\end{lem}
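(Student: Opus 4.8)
My approach would be to work entirely in the local model near $D_i$ furnished by Proposition~\ref{prop:setting}, and then to exhibit the flat unitary structure explicitly via a single semi-invariant meromorphic $n$-form on the ball. First I would fix the local picture: after shrinking $\epsilon_i$, Proposition~\ref{prop:setting} identifies $\pi|_{V_i}\colon V_i\to U_i$ with the quotient by $\Lambda_i=\mathrm{Stab}_{\Gamma_0}(T_i)$ — so $V_i$ is the universal cover of $U_i$ and $\pi_1(U_i)\cong\Lambda_i$ — and identifies $\psi|_{V_i}\colon V_i\to W_i$ with the model map $f(z_1,\dots,z_n)=(z_1^{m_i},z_2,\dots,z_n)$; I would moreover take $W_i=\Omega_A\subset\mathbb{B}^n$ as in Lemma~\ref{lem:pseudoconvex}, a neighbourhood of $H_i=\{z_1=0\}$ invariant under $\mathrm{Stab}_{\mathrm{Aut}(\mathbb{B}^n)}(H_i)$. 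Since $\psi$ is $\Gamma$-equivariant and $\psi(T_i)=H_i$, every $\gamma\in\Lambda_i$ satisfies $f\circ\gamma=\overline\gamma\circ f$ with $\overline\gamma\in\mathrm{Stab}_\Gamma(H_i)\subset S:=\mathrm{Stab}_{\mathrm{Aut}(\mathbb{B}^n)}(H_i)\cong U(n-1,1)$, using the normalisation of Section~\ref{sec:stabhyp}; write $\overline\gamma_S\in U(n-1,1)$ for the corresponding element.

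Next I would pull back $N_i$ to $V_i$. As $\pi$ is étale, $\pi^\ast N_i|_{V_i}=K_{V_i}+(1+nm_i)T_i$, where $T_i$ is the only component of $\pi^{-1}(D_i)$ meeting $V_i$. The ramification formula for the $m_i$-fold branched cover $f$ gives $K_{V_i}=f^\ast K_{\mathbb{B}^n}+(m_i-1)T_i$, while $f^\ast\mathcal O_{\mathbb{B}^n}(H_i)=\mathcal O_{V_i}(m_iT_i)$; combining these, $\pi^\ast N_i|_{V_i}\cong f^\ast\bigl(K_{\mathbb{B}^n}+(n+1)H_i\bigr)$. Since every identification used is canonical, this is an isomorphism of $\Lambda_i$-equivariant line bundles for the $\Lambda_i$-action induced by $f\circ\gamma=\overline\gamma\circ f$.

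The heart of the argument — and the step I expect to be the main obstacle, owing to the sign and exponent bookkeeping — is to pin down the $S$-equivariant line bundle $K_{\mathbb{B}^n}+(n+1)H_i$. I would do this by hand. Consider the meromorphic $n$-form
\[
\eta:=\frac{dz_1\wedge\cdots\wedge dz_n}{z_1^{\,n+1}};
\]
as a rational section of $K_{\mathbb{B}^n}+(n+1)H_i$ its divisor is $-(n+1)H_i+(n+1)H_i=0$, so it is a nowhere-vanishing holomorphic section. For $g\in S$ with normalised lift $\bigl(\begin{smallmatrix}1&0&0\\0&A&X\\0&Y&\mu\end{smallmatrix}\bigr)$ and $g_S=\bigl(\begin{smallmatrix}A&X\\Y&\mu\end{smallmatrix}\bigr)\in U(n-1,1)$, one has $g(z_1,z')=\bigl(z_1/(Yz'+\mu),\,(Az'+X)/(Yz'+\mu)\bigr)$, so $z_1\circ g=z_1/(Yz'+\mu)$, while the Jacobian of $g$ equals $\det(g_S)\,(Yz'+\mu)^{-(n+1)}$; the $(Yz'+\mu)$ factors cancel and $g^\ast\eta=\det(g_S)\,\eta$. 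Because $|\det g_S|=1$, the Hermitian metric on $K_{\mathbb{B}^n}+(n+1)H_i$ determined by $|\eta|\equiv1$ is flat and $S$-invariant; equivalently, $K_{\mathbb{B}^n}+(n+1)H_i$ is $S$-equivariantly $\mathcal O_{\mathbb{B}^n}$ twisted by the unitary character $g\mapsto\det g_S$ (a fact also visible from the $\operatorname{GL}_{n+1}$-equivariant Euler sequence, $K_{\mathbb{P}^n}\cong\mathcal O(-(n+1))\otimes(\det)^{-1}$).

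Finally I would conclude by descent. Pulling back by $f$, the section $f^\ast\eta$ is a nowhere-vanishing holomorphic section of $\pi^\ast N_i|_{V_i}$ satisfying $\gamma^\ast(f^\ast\eta)=f^\ast((\overline\gamma)^\ast\eta)=\det(\overline\gamma_S)\cdot f^\ast\eta$ for all $\gamma\in\Lambda_i$; hence $|f^\ast\eta|\equiv1$ is a flat, $\Lambda_i$-invariant Hermitian metric on $\pi^\ast N_i|_{V_i}$. As $\Lambda_i$ acts freely, this descends to a flat Hermitian metric on $N_i|_{U_i}$, whose Chern connection is the asserted flat unitary connection (indeed $N_i|_{U_i}$ is the flat unitary line bundle attached to $\pi_1(U_i)\cong\Lambda_i\to U(1)$, $\gamma\mapsto\det\overline\gamma_S$). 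Restricting to the compact $D_i$, a line bundle admitting a flat connection has vanishing first Chern class, so $N_i|_{D_i}$ is numerically trivial. Apart from the exponent/character bookkeeping in the key computation, all of this is formal.
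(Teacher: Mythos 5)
Your proof is correct and follows essentially the same route as the paper's: identify $\pi^\ast N_i\big|_{V_i}$ with $\psi^\ast\mathcal{O}_{\mathbb{B}^n}\bigl(K_{\mathbb{B}^n}+(n+1)H_i\bigr)$, trivialize via the same semi-invariant $n$-form $\eta$ (the paper's $\mathbf{e}$), and descend the resulting flat unitary structure through $\pi$. The one difference is in how the transformation law of $\eta$ under $S\cong U(n-1,1)$ is established: you compute the Jacobian of a general $g\in S$ to obtain $g^\ast\eta=\det(g_S)\,\eta$ directly, whereas the paper's Lemma~\ref{lem:invariantsection} proves only $SU(n-1,1)$-invariance by reducing to the generating transvections and then uses the exact sequence \eqref{eq:derexact} to conclude that the full action factors through $\det$. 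Your direct computation is a shade more self-contained, as it pins down the exact character rather than merely its factorization through the determinant.
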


\begin{proof}
	We resume the notation of Proposition~\ref{prop:setting}, and again choose coordinates on \(\mathbb{B}^{n}\) so that \(H_{i} = \{z_{1} = 0\}\). 
\medskip

\noindent
	{\em Step 1. Corresponding line bundle on \(W_{i}\)}. Remark first that \(N_{i}\) is the line bundle descended by \(\pi\) from the \(\pi_{1}(U_{i})\)-equivariant line bundle \(M_{i} := \psi^{\ast} \mathcal{O}_{\mathbb{B}^{n}}(K_{\mathbb{B}^{n}} + (1 + n) H_{i})\) in the following diagram: 
	\[
		\begin{tikzcd}
			V_{i} \arrow[r, "\psi"] \arrow[d, "\pi"] & W_{i} \\
			U_{i}.
		\end{tikzcd}	
	\]

	The action of \(\pi_{1}(U_{i})\) on \(M_{i}\) is through the composite map
	\[
		\pi_{1}(U_{i}) \cong \Lambda_{i} \hookrightarrow \Gamma_{0}.
	\]
	This image stabilizes the totally geodesic hypersurface \(H_{i}=\{z_{1} = 0\}\), and thus defines a representation \(\rho_{i} : \pi_{1}(U_{i}) \longrightarrow \mathrm{Stab}_{\mathrm{Aut}(\mathbb{B}^{n})}(H_{i})\).

\noindent
{\em Step 2. Factorization through a unitary representation.} Consider the following global section of \(K_{\mathbb{B}^{n}}\otimes\mathcal{O}((n+1)H_{i})\), defined by
	\begin{equation} \label{eq:trivialization}
		\mathbf{e} := \frac{dz_{1} \wedge \dotsc \wedge dz_{n}}{z_{1}^{n+1}}
	\end{equation}
	By Lemma~\ref{lem:invariantsection} below, the action of \(\pi_{1}(U_{i})\) on \(\mathbf{e}\) factors through the composition \(\sigma_{i} := \mathrm{det} \circ \rho_{i} : \Lambda_{i} \to \mathrm{U}(1)\) so that
	\[
		\gamma_{\ast} \mathbf{e} = \sigma_{i}(\gamma) \mathbf{e}
	\]
	for all \(\gamma \in \pi_{1}(U_{i})\). 
	\medskip

\noindent
	{\em Step 3. Descent to \(U_{i}\) and conclusion.} By the previous step, the pullback \(\psi^{\ast} \mathbf{e}\) yields a global trivializing section of \(M_{i}\), on which \(\pi_{1}(U_{i})\) acts by the unitary representation \(\sigma_{i}\). Denoting by \(\mathbb{L}_{i}\) the complex local system associated with \(\sigma_{i}\) on \(U_{i}\), this proves that \(N_{i} = \mathbb{L}_{i} \otimes_{\mathbb{C}} \mathcal{O}_{U_{i}}\), which gives the result.
\end{proof}

The following lemma was used in the previous proof. The reader is invited to refer to Section~\ref{sec:stabhyp} for some of the notation.

\begin{lem} \label{lem:invariantsection}
	Let \(H \subset \mathbb{B}^{n}\) be the hyperplane section \(H = \{z_{1} = 0\}\), and let \(S := \mathrm{Stab}_{\mathrm{Aut}(\mathbb{B}^{n})}(H)\). Then the section
	\[
		\mathbf{e} := \frac{dz_{1} \wedge \dotsc \wedge dz_{n}}{z_{1}^{n+1}}
	\]
	is invariant under all elements of \(\mathrm{SU}(n-1, 1) \hookrightarrow S\).
\end{lem}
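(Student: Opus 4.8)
The plan is to observe that $\mathbf{e}$ spans a one-dimensional space of forms on which $S$ can only act through a character, and then to invoke that $\mathrm{SU}(n-1,1)$ is perfect. First I would recall from Section~\ref{sec:stabhyp} that $S = \mathrm{Stab}_{\mathrm{Aut}(\mathbb{B}^{n})}(H)$ is realized inside $\PU(n,1) \subset \mathrm{PGL}(n+1)$ as the stabilizer of the hyperplane $H = \{Z_{1}=0\} \subset \mathbb{P}^{n}$, and that the copy of $\mathrm{SU}(n-1,1)$ occurring in the statement is the derived subgroup of $S\cong \mathrm{U}(n-1,1)$.

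Next, I would consider the space of meromorphic $n$-forms on $\mathbb{P}^{n}$ whose divisor is $\geq -(n+1)H$, that is, $H^{0}(\mathbb{P}^{n}, \mathcal{O}_{\mathbb{P}^{n}}(K_{\mathbb{P}^{n}}+(n+1)H))$: this is one-dimensional, since $K_{\mathbb{P}^{n}} = \mathcal{O}_{\mathbb{P}^{n}}(-(n+1))$ and $\mathcal{O}_{\mathbb{P}^{n}}(H) = \mathcal{O}_{\mathbb{P}^{n}}(1)$, so the line bundle is trivial. The form $\mathbf{e}$ is a nonzero element of this space: as a rational section of $K_{\mathbb{P}^{n}}$, the form $dz_{1}\wedge\dots\wedge dz_{n}$ has divisor $-(n+1)\{Z_{0}=0\}$, and dividing by $z_{1}^{n+1}$ turns this divisor into $-(n+1)H$, so $\mathbf{e}$ is nowhere-vanishing, hence a generator. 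Since $S$ preserves $H$, pullback of forms gives a linear action of $S$ on this one-dimensional space, i.e. $\gamma_{\ast}\mathbf{e} = \chi(\gamma)\mathbf{e}$ for a character $\chi : S \to \mathbb{C}^{\ast}$. The restriction of $\chi$ to $\mathrm{SU}(n-1,1)$ is then a homomorphism from a perfect group (a connected semisimple Lie group) into an abelian group, hence trivial; this is exactly the assertion $\gamma_{\ast}\mathbf{e} = \mathbf{e}$ for all $\gamma \in \mathrm{SU}(n-1,1)$.

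One can alternatively verify the invariance by a direct computation: an element of $\mathrm{SU}(n-1,1)\hookrightarrow S$ acts in the affine chart by $z_{1}\mapsto z_{1}/j(z')$ and $z'\mapsto (Az'+X)/j(z')$, where $z'=(z_{2},\dots,z_{n})$, $j(z')=Yz'+\mu$, and the block matrix $\left(\begin{smallmatrix} A & X \\ Y & \mu \end{smallmatrix}\right)$ has determinant $1$; its Jacobian is block lower-triangular with determinant $j^{-1}\cdot j^{-n} = j^{-(n+1)}$ (using the usual Jacobian formula for a projective-linear map on the last $n-1$ coordinates), and this exactly cancels the factor $j^{n+1}$ produced by the pullback of $z_{1}^{-(n+1)}$, so $\mathbf{e}$ is preserved. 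I do not expect a genuine obstacle here; the only delicate point is bookkeeping — matching the block conventions of Section~\ref{sec:stabhyp} with the affine action of $S$, and tracking pole orders so that $\mathbf{e}$ really corresponds to the constant section $1$ under the trivialization $\mathcal{O}_{\mathbb{P}^{n}}(K_{\mathbb{P}^{n}}+(n+1)H)\cong\mathcal{O}_{\mathbb{P}^{n}}$. I would favour the first argument, since it needs no coordinates and makes transparent why the exponent $n+1$, and no other, is the right one.
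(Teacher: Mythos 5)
Your primary argument is correct, and it takes a genuinely different route from the paper's. The paper proves the statement by a hands-on computation: it uses the fact (recalled in Section~\ref{sec:stabhyp}) that \(\mathrm{SU}(n-1,1)\) is generated by transvections with axis \((Ox)\), \(x \in \partial\mathbb{B}^{n}\cap H\), reduces by \(\mathrm{U}(n-1)\)-conjugation to a single standard transvection \(T_{0}\) in the last coordinate, computes \(T_{0}^{\ast}(dz_{i})\) and \(T_{0}^{\ast}z_{1}\) explicitly, and checks the cancellation of the factor \((sz_{n}+c)^{n+1}\). You instead argue conceptually: \(\mathbf{e}\) spans the one-dimensional space \(H^{0}(\mathbb{P}^{n}, K_{\mathbb{P}^{n}}+(n+1)H)\), so the stabilizer of \(H\) in \(\mathrm{PGL}(n+1)\) acts on it by a character \(\chi\), and any character of the (abstractly) perfect group \(\mathrm{SU}(n-1,1)\) is trivial. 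Your argument is shorter, avoids choosing generators, and makes visible why the exponent \(n+1\) is forced; the paper's argument is elementary and self-contained, not invoking perfectness or the triviality of \(K_{\mathbb{P}^{n}}+(n+1)H\), and produces along the way the explicit transformation formulas which the surrounding text also uses. Your alternative Jacobian computation is likewise correct (the Jacobian of the fractional-linear map on \(\mathbb{P}^{n-1}\) with determinant-one matrix is \(j^{-n}\), giving total Jacobian \(j^{-(n+1)}\), which cancels against \((T^{\ast}z_{1})^{-(n+1)} = j^{n+1}z_{1}^{-(n+1)}\)); it is closer in spirit to the paper's direct verification but works with a general element rather than a generating transvection. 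Both of your arguments are valid proofs of the lemma.
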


\begin{proof}
	The group \(SU(n-1, 1) \hookrightarrow \mathrm{Aut}(\mathbb{B}^{n})\) is generated by the standard transvections of axis \((Ox)\) where \(x \in \partial \mathbb{B}^{n-1} \cap H\), so it suffices prove invariance under any such transvection \(T\). One may find \(\alpha \in \{\mathrm{Id}_{\mathbb{C}}\} \times U(n-1)\) such that \(T = \alpha T_{0} \alpha^{-1}\), where \(T_{0}\) is a transvection in the direction \(x = (0, \dotsc, 0, 1) \in \partial \mathbb{B}^{n-1} \cap H\). The transvection \(T_{0}\) is then given by a matrix of the form
\[
	\left(
\begin{array}{ccc|c}
	1 & 0 & 0 & 0  \\ 
	0 & I_{n-2} & 0 & 0  \\
	0 & 0    &  c & s \\
        \hline
	0 &  0 & s & c\\
\end{array}
	\right)
\quad
	(
	c = \cosh(t), s = \sinh(t)
	\;\text{for some}\;
	t \in \mathbb{R})
	\]
One can now compute
	\begin{align*}
		T_{0}^{\ast} (dz_{i}) & = d\left( \frac{z_{i}}{s z_{n} + c}\right) \\
				  & = \frac{d z_{i}}{s z_{n} + c}
				      - \frac{ s z_{i} dz_{n}}{(s z_{n} + c)^{2}}
				      \quad\quad
				      (1 \leq i \leq n-1)
	\end{align*}	
	\begin{align*}
		T_{0}^{\ast} (dz_{n}) & = d\left( \frac{c z_{n} + s}{s z_{n} + c}\right) \\
				  & = \frac{d z_{n}}{(s z_{n} + c)^{2}}
				  \quad \quad
				      (\text{use}\; c^{2} - s^{2} = 1)
	\end{align*}	
	so \(T_{0}^{\ast} (d z_{1} \wedge \dotsc \wedge d z_{n}) = \frac{d z_{1} \wedge \dotsc \wedge d z_{n}}{(sz_{n} + c)^{n+1}}\).
	On the other hand,	
	\[
		T_{0}^{\ast} z_{1} = \frac{z_{1}}{s z_{n} + c}.
	\]
	Putting everything together, we get first that \(T_{0}^{\ast} \mathbf{e} = \mathbf{e}\), and then
	\begin{align*}
		T^{\ast} \mathbf{e} & =  (\alpha^{-1})^{\ast} T_{0}^{\ast} \alpha^{\ast} \mathbf{e} \\
				    & =  (\alpha^{-1})^{\ast} T_{0}^{\ast} ((\det \alpha) \mathbf{e}) \\
				    & =  (\alpha^{-1})^{\ast} ((\det \alpha) \mathbf{e}) = \mathbf{e}.
	\end{align*}
\end{proof}

\begin{defi} \label{defi:unitrepr}
	For any \(1 \leq i \leq m\), let us denote by \(\sigma_{i} : \pi_{1}(D_{i}) \to \mathrm{U}(1)\) the unitary representation underlying the line bundle \(N_{i}\), as discussed in Lemma~\ref{lem:numtrivial}. 
\end{defi}

Our goal in this section is to prove the following:
\begin{thm} \label{thm:contractionproj}
	The following are equivalent:	 	
	\begin{enumerate}[(i)]
		\item \(K_{X^{\ast}}\) is \(\mathbb{Q}\)-Cartier;
		\item every representation \(\sigma_{i}\) has finite image. 
	\end{enumerate}
	If these conditions are met, then we have the following as well:
	\begin{enumerate}
		\item \(K_{X^{\ast}}\) is ample. In particular, \(X^{\ast}\) is projective;
	\item If \(\dim X \geq 3\), the variety \(X^{\ast}\) satisfies the case of equality in the Yau-Miyaoka inequality i.e. \(\Delta_{\rm MY}(X^{\ast}) = 0\).
		\item The singularities of \(X^{\ast}\) are {\em not} log-canonical.
	\end{enumerate}
\end{thm}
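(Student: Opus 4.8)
The plan is to establish the equivalence $(i) \Leftrightarrow (ii)$ first, and then derive the three consequences $(1)$, $(2)$, $(3)$ under the assumption that these conditions hold.

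\emph{Step 1: the equivalence $(i)\Leftrightarrow(ii)$.} Since $\tau: X \to X^\ast$ is the contraction of the $D_i$, a class in $\mathrm{Pic}(X)\otimes\mathbb{Q}$ descends to a $\mathbb{Q}$-Cartier divisor on $X^\ast$ precisely when it is $\mathbb{Q}$-linearly trivial on a neighborhood of each $D_i$ (this is the standard criterion: a reflexive rank one sheaf near the singular point is $\mathbb{Q}$-Cartier iff its restriction to the exceptional fiber is $\mathbb{Q}$-torsion in $\mathrm{Pic}$, together with a formal/analytic lifting which is automatic here since the neighborhoods $U_i$ retract onto $D_i$). Now $K_{X^\ast}$ being $\mathbb{Q}$-Cartier is equivalent to: some multiple $m K_{X^\ast}$ is Cartier, i.e. $\pi^\ast(mK_{X^\ast}) = \mathcal{O}(mK_X + \sum_i m(1+nm_i)D_i + \sum_i c_i D_i)$ for suitable coefficients — more cleanly, $K_{X^\ast}$ is $\mathbb{Q}$-Cartier iff for each $i$ the $\mathbb{Q}$-line bundle $N_i = \mathcal{O}(K_X + (1+nm_i)D_i)$ of Lemma~\ref{lem:numtrivial} is $\mathbb{Q}$-torsion in $\mathrm{Pic}(U_i)$ after possibly adjusting by a multiple of $D_i$ itself (which vanishes on $D_i$ in the relevant sense). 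By Lemma~\ref{lem:numtrivial}, $N_i|_{U_i} = \mathbb{L}_i \otimes_{\mathbb{C}} \mathcal{O}_{U_i}$ where $\mathbb{L}_i$ is the flat unitary line bundle associated to $\sigma_i : \pi_1(D_i) \cong \pi_1(U_i) \to \U(1)$. Such a flat line bundle is $\mathbb{Q}$-torsion in $\mathrm{Pic}(U_i)$ if and only if $\sigma_i$ has finite image. This gives the equivalence, modulo carefully checking that the $D_i$-coefficient ambiguity does not interfere — this bookkeeping of discrepancy coefficients is where the main care is needed.

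\emph{Step 2: consequence $(1)$, ampleness.} Assuming $(ii)$, $K_{X^\ast}$ is $\mathbb{Q}$-Cartier. To see it is ample, note that $\tau^\ast K_{X^\ast}$ agrees with $K_X + \sum(1+nm_i)D_i$ up to a combination of the $D_i$ (with the precise coefficients being the log discrepancies $a_i$), hence $\tau^\ast K_{X^\ast} = K_X + \sum_i (1+a_i) D_i$ for appropriate $a_i$; since $\psi$ realizes a local correspondence with the ball and the conical Bergman metric $h$ on $T_X$ has curvature computing $\tau^\ast K_{X^\ast}$ as a positive $(1,1)$-current which is strictly positive on $X - D$ and vanishes precisely in the $D_i$-directions, $\tau^\ast K_{X^\ast}$ is nef and big, and is strictly positive away from the $D_i$. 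Since $\tau$ contracts exactly the $D_i$, $K_{X^\ast}$ is ample by the Nakai–Moishezon/Kleiman criterion on $X^\ast$ (or: it is ample because $\tau^\ast K_{X^\ast}$ is nef, big, and trivial exactly on the fibers of $\tau$). Projectivity of $X^\ast$ follows since $X$ is projective.

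\emph{Step 3: consequences $(2)$ and $(3)$.} For $(2)$, since $X^\ast$ is smooth in codimension $2$ (the singularities are punctual), $\Delta_{\mathrm{MY}}(X^\ast)$ is defined, and one computes it on a complete intersection surface $S \subset X$ avoiding $D$; there $T_X$ carries the Bergman metric pulled back from $\mathbb{B}^n$, which is Kähler–Einstein with the appropriate constant, so the Higgs bundle $(\mathcal{E}_X,\theta_X)|_S$ underlies a uniformizing $\mathbb{C}$-VHS and hence saturates the Bogomolov–Gieseker inequality; comparing with Definition~\ref{defi:MY} gives $\Delta_{\mathrm{MY}}(X^\ast) = 0$. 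For $(3)$, the key point is that the $D_i$ are \emph{smooth ball quotients}, in particular of general type with $K_{D_i}$ ample; if the singularity of $X^\ast$ at $\tau(D_i)$ were log-canonical, a log-resolution would force the discrepancy $a_i \geq -1$ and $(X, \sum D_i)$ would be log-canonical near $D_i$, which by adjunction (or by the structure of lc centers) would force $(D_i, \mathrm{Diff})$ to be log-canonical with $K_{D_i} + \mathrm{Diff}$ appropriately related to a \emph{negative or trivial} multiple of the conormal direction — but the contraction of $D_i$ being to a point means $-D_i$ is $\tau$-ample on $D_i$, so $\mathcal{O}_{D_i}(D_i)$ is anti-ample, and combining this with $N_i|_{D_i}$ numerically trivial (Lemma~\ref{lem:numtrivial}) forces $K_{D_i}$ to be a positive rational multiple of an anti-ample class, i.e. $K_{D_i}$ anti-ample — contradicting that $D_i$ is a ball quotient (where $K_{D_i}$ is ample). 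Concretely: $N_i|_{D_i} \equiv 0$ gives $K_{D_i} + (1+nm_i)\mathcal{O}_{D_i}(D_i) \equiv 0$ by adjunction, so $K_{D_i} \equiv -(1+nm_i) \mathcal{O}_{D_i}(D_i)$, which is \emph{ample} since $\mathcal{O}_{D_i}(D_i)$ is anti-ample — consistent so far — but log-canonicity of the singularity would additionally require (via the discrepancy computation at the blow-up / cone structure) that $1 + nm_i \leq 1$, i.e. $m_i \leq 0$, impossible.

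\emph{Main obstacle.} The delicate part is Step 3, pinning down exactly \emph{why} the ball-quotient structure on $D_i$ is incompatible with log-canonicity: one must compute the discrepancy of a log-resolution of the cone-like singularity $\tau(D_i)$ in terms of $m_i$ and the ampleness of $K_{D_i}$, and check that the log-canonical threshold condition $a \geq -1$ fails. This is essentially the computation that a cone over a variety with ample canonical bundle (here, over the ball quotient $D_i$, twisted by the conical parameter $m_i$) has a resolution with discrepancy strictly less than $-1$; I would carry it out by blowing up $D_i$ inside $X$ (or passing to the weighted blow-up matching the orbifold structure $|z_1|^{2m_i} + \cdots$) and reading off the discrepancy from Proposition~\ref{prop:setting}(2), which exhibits the local model explicitly. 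The subsidiary bookkeeping obstacle is in Step 1, namely tracking the $D_i$-coefficients so that "$N_i$ is $\mathbb{Q}$-torsion" is genuinely equivalent to "$K_{X^\ast}$ is $\mathbb{Q}$-Cartier" and not merely implied by it.
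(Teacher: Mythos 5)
Your overall architecture --- contract, compute, show discrepancy forces non-lc --- is the right one, and you correctly locate the two delicate points (the coefficient bookkeeping in Step 1 and the discrepancy computation in Step 3). For the direction (ii) $\Rightarrow$ (i), your idea of descending the line bundle directly across the contraction (triviality near $D_i$ plus Hartogs over the punctual image) is genuinely different from, and more elementary than, the paper's route, which constructs the morphism $\tau$ as the Iitaka map of the semi-ample linear system $|N^{\otimes m}|$ (Propositions~\ref{prop:amplelambda} and \ref{prop:ampleinterval}). The trade-off is that the paper's approach delivers ampleness of $K_{X^{\ast}}$ simultaneously, whereas yours does not --- and that is exactly where your argument breaks.

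The genuine gap is in Step~2. You assert that the conical Bergman metric $h$ on $T_X$ has curvature ``computing $\tau^{\ast}K_{X^{\ast}}$ as a positive $(1,1)$-current.'' This is incorrect: the determinant of the conical metric computes the $\mathbb{Q}$-line bundle $L = \mathcal{O}_X\bigl(K_X + \sum_i (1-m_i)D_i\bigr)$ (this is the content of the second Lemma~\ref{lem:ample}), which is \emph{ample}, not the bundle $N = \tau^{\ast}K_{X^{\ast}} = \mathcal{O}_X\bigl(K_X + \sum_i(1+nm_i)D_i\bigr)$, which is only nef and big. The two differ by $\sum_i (n+1)m_i D_i$, so $N$ is $L$ plus a large effective divisor supported on the contracted locus. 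The nefness and bigness of $N$ therefore do not come for free from the metric; the paper proves them via a Nakai--Moishezon argument (Proposition~\ref{prop:amplelambda}) that interpolates between $L$ (where the positivity is metric) and the boundary restrictions $N|_{D_i}$ (where it comes from $\mathcal{O}_{D_i}(-D_i)$ being ample), and this is precisely what you would need to fill in. Your Step~3 eventually lands on the right conclusion, but takes an unnecessary detour through adjunction and a false lead about $K_{D_i}$ being anti-ample: once Proposition~\ref{prop:iitoi} gives $\tau^{\ast}K_{X^{\ast}} \sim_{\mathbb{Q}} K_X + \sum_i(1+nm_i)D_i$, the discrepancy of each $D_i$ is simply $-(1+nm_i) < -1$ (as $n, m_i > 0$), which rules out log-canonicity immediately.
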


We need a few lemmas before starting the proof.

\begin{lem} \label{lem:ample}
	For each \(1 \leq i \leq n\), the line bundles \(K_{D_{i}}\) and \(\mathcal{O}_{D_{i}}(-D_{i})\) are ample.
\end{lem}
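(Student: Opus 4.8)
The plan is to treat the two bundles in turn: ampleness of $K_{D_i}$ is essentially a property of compact ball quotients, and ampleness of $\mathcal{O}_{D_i}(-D_i)$ follows from adjunction combined with Lemma~\ref{lem:numtrivial}.

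First I would record that $D_i$ is a smooth \emph{projective} variety: it is a smooth irreducible divisor on $X$, and $X$ is projective by the previous proposition. By the last of the standing assumptions on the diagram \eqref{eq:diagramMS}, $D_i$ is identified with the quotient $\mathrm{Stab}_{\Gamma}(H_i) \backslash H_i$ of the totally geodesic hypersurface $H_i \cong \mathbb{B}^{n-1}$ by a torsion-free group acting freely (freeness is exactly what makes the quotient the smooth manifold $D_i$, and the action is cocompact since $D_i$ is compact). A compact quotient of $\mathbb{B}^{n-1}$ by a torsion-free lattice carries the descended Bergman metric, which is K\"ahler--Einstein with negative Einstein constant; hence $c_1(K_{D_i})$ is represented by a positive closed $(1,1)$-form and $K_{D_i}$ is ample by the Kodaira embedding theorem. (Equivalently, one may invoke the classical fact that a compact quotient of a bounded symmetric domain has ample canonical bundle.)

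For $\mathcal{O}_{D_i}(-D_i)$, I would combine this with Lemma~\ref{lem:numtrivial}. Adjunction gives $(K_X + D_i)|_{D_i} = K_{D_i}$, so restricting $N_i = \mathcal{O}_X(K_X + (1 + n m_i) D_i)$ to $D_i$ yields $N_i|_{D_i} = K_{D_i} \otimes \mathcal{O}_{D_i}(n m_i\, D_i)$. Since Lemma~\ref{lem:numtrivial} asserts that $N_i|_{D_i}$ is numerically trivial, in $\mathrm{Pic}(D_i) \otimes \mathbb{R}$ one obtains $c_1\big(\mathcal{O}_{D_i}(-D_i)\big) = \frac{1}{n m_i}\, c_1(K_{D_i})$, a strictly positive multiple of an ample class. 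As ampleness of a line bundle on a projective variety depends only on its numerical class (Nakai--Moishezon), it follows that $\mathcal{O}_{D_i}(-D_i)$ is ample.

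There is no genuine obstacle here; the only points requiring a little care are the bookkeeping of the adjunction coefficient (so that exactly $n m_i$ copies of $D_i|_{D_i}$ remain after peeling off $K_{D_i}$) and the passage from ``numerically trivial in restriction to $D_i$'' in Lemma~\ref{lem:numtrivial} to the numerical proportionality $c_1(\mathcal{O}_{D_i}(-D_i)) = \frac{1}{n m_i}\, c_1(K_{D_i})$, after which ampleness is automatic.
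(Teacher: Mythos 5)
Your proof is correct and follows essentially the same route as the paper's: ampleness of \(K_{D_i}\) comes from \(D_i\) being a smooth compact ball quotient, and ampleness of \(\mathcal{O}_{D_i}(-D_i)\) is deduced by combining Lemma~\ref{lem:numtrivial} (numerical triviality of \(N_i|_{D_i}\)) with adjunction and the fact that ampleness is a numerical property. The paper merely states these steps more tersely, leaving the K\"ahler--Einstein and Nakai--Moishezon details implicit.
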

\begin{proof}
	The ampleness of \(K_{D_{i}}\) is clear from the fact that \(D_{i}\) is a smooth ball quotient. As for the second assertion, remark that Lemma~\ref{lem:numtrivial} implies that
	\[
		\mathcal{O}_{D_{i}}(K_{D_{i}} + n m_{i} D_{i}) \cong \mathcal{O}(K_{X} + (1 + n m_{i}) D_{i})|_{D_{i}}
	\]
	is numerically trivial.
	\end{proof}

\begin{lem} \label{lem:ample}
	The metric \(h_{\mathbb{B}^{n}}\) induces a singular metric with positive curvature in the sense of currents on the \(\mathbb{Q}\)-line bundle 
	\[
		L := \mathcal{O}_{X}(K_{X} + \sum_{i} (1 - m_{i}) D_{i}).
	\]
	The line bundle \(L\) is ample.
\end{lem}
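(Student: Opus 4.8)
The plan is to construct the metric on $L$ by descending to $X$ the metric that $h_{\mathbb{B}^{n}}$ induces on $K_{\mathbb{B}^{n}}$, and then to deduce ampleness from the Nakai--Moishezon criterion on the projective manifold $X$, treating separately the subvarieties of $X$ that are contained in $D$ and those that are not.

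For the first part, I would use that the Bergman metric, being K\"{a}hler--Einstein of negative Einstein constant, induces on $K_{\mathbb{B}^{n}} = \det\Omega^{1}_{\mathbb{B}^{n}}$ a smooth metric whose curvature form is $(n+1)\,\omega_{\mathbb{B}^{n}}$, hence strictly positive. Pulling this metric back along $\psi$ produces a smooth metric on $\psi^{\ast}K_{\mathbb{B}^{n}}$ with curvature $(n+1)\,\psi^{\ast}\omega_{\mathbb{B}^{n}}$, a semipositive form degenerating precisely along $\ker d\psi$, that is, along $\pi^{-1}(D)$. The ramification formula for $\psi$ reads $K_{M} = \psi^{\ast}K_{\mathbb{B}^{n}} + \sum_i (m_i-1)\,\pi^{-1}(D_i)$, so that, $\pi$ being \'{e}tale, $\psi^{\ast}K_{\mathbb{B}^{n}} \cong \pi^{\ast}\bigl(K_X + \sum_i(1-m_i)D_i\bigr) = \pi^{\ast}L$. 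Since $\psi$ is $\Gamma$-equivariant, $\Gamma_0 \subset \Gamma$ and $h_{\mathbb{B}^{n}}$ is $\Gamma$-invariant, the metric just built is $\Gamma_0$-invariant, hence descends through $\pi$ to a metric $h_L$ on $L$ whose curvature form is $(n+1)\,\omega_h$, where $\omega_h$ denotes the push-down of $\psi^{\ast}\omega_{\mathbb{B}^{n}}$, i.e.\ the singular Bergman form on $X$. Using the local model $(z_1,\dotsc,z_n)\mapsto(z_1^{m_i},z_2,\dotsc,z_n)$ of Proposition~\ref{prop:setting}, one checks that near $D_i$ one has $\omega_h = m_i^{2}\,|z_1|^{2(m_i-1)}\,i\,dz_1\wedge d\bar z_1 + (\text{terms smooth and positive along } D_i)$, so that $\omega_h$ is in fact a \emph{smooth}, closed, semipositive $(1,1)$-form on all of $X$, which is a genuine K\"{a}hler form on $X - D$ and degenerates in the normal direction along $D$. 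In particular $h_L$ has positive curvature in the sense of currents, which is the first assertion.

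For the ampleness, I would apply the Nakai--Moishezon criterion to the projective manifold $X$: it suffices to show $L^{\dim V}\cdot V > 0$ for every irreducible subvariety $V \subseteq X$ with $\dim V \geq 1$. If $V \not\subset D$, then $V$ meets the open set $X - D$ on which $\omega_h$ is K\"{a}hler, so the restriction of $\omega_h$ to the smooth locus of $V$ is semipositive everywhere and strictly positive on the nonempty open subset $V_{\mathrm{reg}}\cap(X - D)$; as $L^{\dim V}\cdot V$ is a positive multiple of $\int_V \omega_h^{\dim V}$, it is $> 0$ (taking $V = X$ already gives $L^n > 0$). If $V \subset D$, then by disjointness of the components $V$ lies in a single $D_i$, and adjunction yields $L|_{D_i} = K_{D_i} + m_i\,\mathcal{O}_{D_i}(-D_i)$, which is ample since both summands are ample by the previous lemma; hence $L^{\dim V}\cdot V = (L|_{D_i})^{\dim V}\cdot V > 0$. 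Nakai--Moishezon then shows that $L$ is ample.

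The only subtle point, and the step I expect to require the most care, is the assertion that $\omega_h$ extends as a smooth semipositive form across $D$ (equivalently, that the descended metric $h_L$ has locally bounded weights there): it is this regularity that legitimizes representing $c_1(L)$ by $\omega_h$ and computing the numbers $L^{\dim V}\cdot V$ as integrals of $\omega_h^{\dim V}$. It is supplied by the explicit normal form of Proposition~\ref{prop:setting}, the point being that $|z_1|^{2(m_i-1)}$ is a bounded function near $D_i$ (indeed a $\mathcal{C}^{\infty}$ one, as $m_i$ is a positive integer). Once this is in place, the remainder is a routine combination of adjunction, the previous lemma, and the Nakai--Moishezon criterion.
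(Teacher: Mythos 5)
Your proof is correct, and it takes a genuinely different route from the paper's in the step where $L^{\dim V}\cdot V>0$ is established for $V\not\subset D$. The key observation you make — that because each $m_i$ is a \emph{positive integer}, the local factor $|z_1|^{2(m_i-1)}$ is a $\mathcal{C}^{\infty}$ function, so the descended curvature form $\omega_h$ is in fact a globally \emph{smooth}, closed, semipositive $(1,1)$-form, K\"{a}hler on $X-D$ — lets you read off $L^{\dim V}\cdot V$ directly as a positive multiple of $\int_{V_{\mathrm{reg}}}\omega_h^{\dim V}$ and conclude by positivity of the integrand on the nonempty open set $V_{\mathrm{reg}}\cap(X-D)$. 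The paper does not invoke smoothness: it only records that the potentials of $\det h^{*}$ are locally bounded (staying in the currents framework appropriate to conical metrics), and then has to do more work to extract the same conclusion, namely Demailly's criterion (zero Lelong numbers $\Rightarrow$ nef), followed by Boucksom's volume inequality $\mathrm{vol}(f^{*}L)\geq\int(i\Theta^{ac})^{\dim W}$ on a resolution $f\colon W\to V$, and finally the identification of $\mathrm{vol}(f^{*}L)$ with $c_1(L)^{\dim V}\cdot V$ using nefness. Your approach is more elementary and shortcuts two nontrivial analytic inputs; the paper's approach is phrased so as to remain valid for genuinely singular (e.g.\ non-integer cone-angle) metrics, where smoothness fails. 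The treatment of the case $V\subset D_i$ via adjunction and the preceding lemma is the same in both. One small stylistic remark: you could point out explicitly, as the paper does, that the canonical isomorphism $\psi^{*}K_{\mathbb{B}^{n}}\cong\pi^{*}L$ and the descent along $\pi$ are what turn the invariant smooth metric on $M$ into a smooth metric on $L$ over $X$, but this is implicit in what you wrote and not a gap.
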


\begin{proof}
	{\em Step 1. \(L\) supports a orbifold positively curved metric.}
	Recall that the metric \(h\) is smooth on \(T_{X - D}\), with conical singularities of angle \(2\pi m_{i}\) around each \(D_{i}\). This implies that \(\det h\), seen as a singular metric on \(L^{\ast}\) has locally bounded potentials, and thus for any locally trivializing section  \(\eta\) of some power \(L^{\otimes m}\), one may write
	\[
		||s||^{2}_{(\det h^{\ast})^{m}} = e^{-\varphi},
	\]
	with \(\varphi\) bounded. The potential \(\varphi\) being psh outside of \(D\), it extends across the boundary as a psh function, strictly psh on \(X - D\). This shows that the singular metric \(\det (h^{\ast})\) has positive curvature in the sense of currents on \(L\). 
	\medskip

\noindent
	{\em Step 2. \(L\) is nef.} First remark that the potentials of \(\det h^{\ast}\) are bounded everywhere, so they have zero Lelong numbers on the whole \(X\) and thus the class \(c_{1}(L)\) is nef by a theorem of Demailly \cite[Corollary~6.4]{Dem92}.
\medskip

\noindent 
	{\em Step 3. Nakai-Moishezon criterion.} Let \(V \subset X\) be a any subvariety. We distinguish two cases.
	\begin{enumerate}[(a)]
		\item \(V \not\subset D\). Then, if \(f : W \to V\) is a resolution of singularities, the metric \(f^{\ast} h\) is a well-defined singular metric on \(f^{\ast} L\) with positive curvature in the sense of currents. This metric is smooth with strictly positive curvature at the generic point of \(W\). Thus, applying Boucksom's criterion for bigness \cite[Theorem 1.2]{Bou02}, one deduces that
	\begin{align*}
		\mathrm{vol}(f^{\ast} L) 
		& \geq \int_{W} (i \Theta(\det f^{\ast} h^{\ast})^{ac})^{\dim W} \\
		& = \int_{U} ( i \Theta(f^{\ast} \det h^{\ast}))^{\dim W} > 0,
	\end{align*}
	where \(U \subset W\) is the Zariski open subset where \(f\) is immersive and \(f^{\ast} h\) is smooth. This proves that \(\mathrm{vol}(f ^{\ast} L) > 0\). The latter number equals \(V \cdot c_{1}(L)^{\dim V}\) since \(L\) is nef.
		\item \(V \subset D_{i}\) for some \(1 \leq i \leq m\). Then, one has
			\[
				L|_{D_{i}}
				\cong
				\mathcal{O}_{D_{i}} (K_{D_{i}} - m_{i} D_{i})
			\]
			By Lemma~\ref{lem:ample}, one deduces that \(L|_{D_{i}}\) is ample, and thus has positive top intersection with \(V\).
	\end{enumerate}
	This is enough to obtain the ampleness of \(L\) by the Nakai-Moishezon criterion.
\end{proof}

\begin{prop} \label{prop:amplelambda}
	For any choices \(\lambda_{j} \in [-m_{i}, n m_{i}[\) for \(1 \leq j \leq m\), the line bundle
	\[
		L_{\lambda} := \mathcal{O}_{X}(K_{X} + D + \sum_{i} \lambda_{i} D_{i})
	\]
	is ample.
\end{prop}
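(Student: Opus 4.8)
The plan is to verify the Nakai--Moishezon criterion for $L_\lambda$, handling separately the subvarieties contained in the boundary $D$ and those which are not, after first checking that $L_\lambda$ is nef.

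First I would rewrite $L_\lambda = L + \sum_i(\lambda_i + m_i) D_i$, where $L = \mathcal{O}_X(K_X + \sum_i(1-m_i)D_i)$ is the ample line bundle provided by the preceding lemma, and every coefficient satisfies $\lambda_i + m_i \geq 0$ because $\lambda_i \geq -m_i$. The key preliminary point is the behaviour of $L_\lambda$ along each $D_i$: since the boundary components are smooth and pairwise disjoint, adjunction gives $L_\lambda|_{D_i} \cong \mathcal{O}_{D_i}\!\bigl(K_{D_i} + \lambda_i D_i|_{D_i}\bigr)$, while Lemma~\ref{lem:numtrivial} says that $\mathcal{O}_{D_i}\!\bigl(K_{D_i} + n m_i D_i|_{D_i}\bigr)$ is numerically trivial. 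Consequently $L_\lambda|_{D_i}$ is numerically equivalent to $(n m_i - \lambda_i)\,(-D_i|_{D_i})$; as $-D_i|_{D_i}$ is ample and $n m_i - \lambda_i > 0$ by the hypothesis $\lambda_i < n m_i$, the restriction $L_\lambda|_{D_i}$ is ample.

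Nefness of $L_\lambda$ is then immediate: an irreducible curve $C$ is either contained in some $D_i$, whence $L_\lambda \cdot C = L_\lambda|_{D_i}\cdot C > 0$, or contained in no $D_i$, in which case $D_i \cdot C \geq 0$ for all $i$ and hence $L_\lambda \cdot C \geq L \cdot C > 0$. To finish by Nakai--Moishezon it then suffices to prove $L_\lambda^{\dim V}\cdot V > 0$ for every positive-dimensional irreducible subvariety $V \subseteq X$ (including $V = X$). If $V \subseteq D_i$, this is exactly the ampleness of $L_\lambda|_{D_i}$. If $V \not\subseteq D$, take a resolution $f : W \to V$; then $f^\ast L_\lambda = f^\ast L + \sum_i(\lambda_i + m_i)\,f^\ast(D_i|_V)$ exhibits $f^\ast L_\lambda$ as the sum of the nef and big divisor $f^\ast L$ with an effective divisor, so $f^\ast L_\lambda$ is big and $\operatorname{vol}(L_\lambda|_V) > 0$. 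Since $L_\lambda|_V$ is also nef (being the restriction of the nef line bundle $L_\lambda$), we conclude $L_\lambda^{\dim V}\cdot V = (L_\lambda|_V)^{\dim V} = \operatorname{vol}(L_\lambda|_V) > 0$, and $L_\lambda$ is ample by Nakai--Moishezon.

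The one delicate point, and the reason one cannot shortcut the argument, is that on a subvariety $V$ meeting $D$ properly the bundle $L_\lambda|_V$ is merely ``ample $+$ effective'', which is big but not ample in general; positivity of $L_\lambda$ against every curve (which is cheap) is therefore insufficient on its own, and one genuinely needs the boundary computation above together with the identity relating $\operatorname{vol}$ to the top self-intersection number for nef classes. The rest is bookkeeping.
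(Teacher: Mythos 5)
Your proof is correct and follows essentially the same route as the paper: compute $L_\lambda|_{D_i}$ via adjunction and Lemma~\ref{lem:numtrivial} to get ampleness along the boundary, use bigness of $L$ to get positive volume off $D$, deduce nefness from the curve case, and conclude by Nakai--Moishezon together with $\operatorname{vol} = $ top self-intersection for nef classes. The only cosmetic difference is that you pass to a resolution of $V$ before comparing volumes, whereas the paper simply quotes $\operatorname{vol}(L_\lambda|_V) \geq \operatorname{vol}(L|_V)$ directly; both amount to the same observation that $L_\lambda - L$ is effective and its support does not contain $V$.
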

\begin{proof} Write \(\mu_{i} = n m_{i} - \lambda_{i} > 0\). We can prove the result as follows.
\medskip

\noindent
	One claims first that for any closed subvariety \(V \subset X\), one has \(\mathrm{vol}(L_{\lambda}|_{V}) > 0\). Indeed, if \(V \subset D_{i}\) for some \(i\), this comes from the fact that
	\[
		L_{\lambda}|_{D_{i}} \equiv_{\mathrm{num}} \mathcal{O}_{D_{i}}(-\mu_{i} D_{i}).
	\]
	is ample. In the case where \(V \not\subset D\), one has \(\mathrm{vol}(L_{\lambda}|_{V}) \geq \mathrm{vol}(L|_{V}) > 0\) since \(L\) is ample.
\medskip

	Taking \(V\) to be any curve in \(X\), this shows that \(L_{\lambda}\) is nef. But then, one has \( \int_{V} c_{1}(L_{\lambda})^{\dim V} = \mathrm{vol}(V, L_{\lambda}|_{V}) > 0\) for each subvariety \(V\), and thus \(L_{\lambda}\) is ample again by the Nakai-Moishezon criterion.
\end{proof}

\begin{prop} \label{prop:ampleinterval}
	Assume that all \(\sigma_{i}\) have finite image. Let \(N := \mathcal{O}(K_{X} + \sum_{i} (1 + n m_{i}) D_{i})\). Then for \(m \gg 1\), the line bundle \(N^{\otimes m}\) is base-point free, and the associated morphism identifies with
	\[
		\tau : X 
		\longrightarrow
		X^{\ast}.
	\]
\end{prop}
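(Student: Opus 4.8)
\emph{Plan.} The idea is to use the finiteness of the $\sigma_i$ to descend a fixed power of $N$ to the contraction $X^{\ast}$, to show that the descended line bundle is ample on $X^{\ast}$, and then to read off that the morphism attached to a high power of $N$ is $\tau$.

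\emph{Step 1: a power of $N$ descends to $X^{\ast}$.} Since the $D_i$ are pairwise disjoint we may shrink the neighbourhoods $U_i$ of Proposition~\ref{prop:setting} to be pairwise disjoint, so that $N|_{U_i}=\mathcal{O}(K_X+(1+nm_i)D_i)|_{U_i}=N_i|_{U_i}$. By Lemma~\ref{lem:numtrivial} this is the holomorphic line bundle underlying the flat unitary bundle on $U_i$ with monodromy $\sigma_i\colon\pi_1(U_i)\cong\pi_1(D_i)\to\mathrm{U}(1)$ (Definition~\ref{defi:unitrepr}). Let $k$ be the order of the finite group generated by the images of all the $\sigma_i$. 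Then $N^{\otimes k}|_{U_i}$ is trivial, hence $N^{\otimes k}$ is holomorphically trivial on a neighbourhood of every fibre of $\tau$. Since $\tau\colon X\to X^{\ast}$ is a proper bimeromorphic morphism onto the normal space $X^{\ast}$, with $\tau_{\ast}\mathcal{O}_X=\mathcal{O}_{X^{\ast}}$ (normality of $X^{\ast}$ together with connectedness of the fibres of $\tau$), this implies that $\overline N:=\tau_{\ast}N^{\otimes k}$ is a line bundle on $X^{\ast}$ and $\tau^{\ast}\overline N\cong N^{\otimes k}$.

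\emph{Step 2: positivity of $N$ on $X$.} By Proposition~\ref{prop:amplelambda} the divisor $L_\lambda=\mathcal{O}(K_X+D+\sum_i\lambda_iD_i)$ is ample for $\lambda_i\in[-m_i,nm_i)$; letting $\lambda_i\nearrow nm_i$ exhibits $N=\mathcal{O}(K_X+D+\sum_i nm_iD_i)$ as a limit of ample classes, so $N$ is nef. Writing $N=L+\sum_i(n+1)m_iD_i$ with $L=\mathcal{O}_X(K_X+\sum_i(1-m_i)D_i)$ ample (established earlier), one gets for every subvariety $V\subset X$ not contained in $D$ that
\[
 \operatorname{vol}(N|_V)\ \geq\ \operatorname{vol}(L|_V)\ =\ (L|_V)^{\dim V}\ >\ 0 ,
\]
so $N|_V$ is big; in particular $N$ itself is nef and big. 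On the other hand $N|_{D_i}=N_i|_{D_i}$ is numerically trivial by Lemma~\ref{lem:numtrivial}.

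\emph{Step 3: $\overline N$ is ample, and the morphism is $\tau$.} Let $V\subseteq X^{\ast}$ be an irreducible subvariety with $\dim V\geq 1$. As $\tau$ is an isomorphism over $X^{\ast}\setminus\{p_1,\dots,p_m\}$, $V$ admits a strict transform $V'\subset X$ (the closure of $\tau^{-1}(V\setminus\{p_1,\dots,p_m\})$), with $V'\not\subset D$ and $\tau|_{V'}\colon V'\to V$ bimeromorphic, so by the projection formula and Step~2,
\[
 (\overline N)^{\dim V}\cdot V\ =\ (\tau^{\ast}\overline N)^{\dim V'}\cdot V'\ =\ k^{\dim V'}\,N^{\dim V'}\cdot V'\ =\ k^{\dim V'}\operatorname{vol}(N|_{V'})\ >\ 0 .
\]
Since $X^{\ast}$ is Moishezon (being the image of the projective variety $X$), the Nakai--Moishezon criterion applies and shows that $\overline N$ is ample; in particular $X^{\ast}$ is projective. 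Fix $m_0$ with $\overline N^{\otimes m_0}$ very ample. Then $N^{\otimes km_0}=\tau^{\ast}\overline N^{\otimes m_0}$ is globally generated, and $\tau_{\ast}\mathcal{O}_X=\mathcal{O}_{X^{\ast}}$ together with the projection formula gives $H^0(X,N^{\otimes km_0})=H^0(X^{\ast},\overline N^{\otimes m_0})$; hence the morphism attached to the complete linear system $|N^{\otimes km_0}|$ is $\tau$ composed with the projective embedding of $X^{\ast}$ defined by $|\overline N^{\otimes m_0}|$, i.e.\ it is $\tau$. The same argument works for every sufficiently large multiple of $k$.

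\emph{Main difficulty.} Steps 2--3 are essentially formal given the preparatory lemmas; the substantive points are Step~1 (the descent of $N^{\otimes k}$ along $\tau$, which is exactly where finiteness of the $\sigma_i$ is used), and the justification that the Nakai--Moishezon criterion may legitimately be invoked on the a priori only analytic, singular space $X^{\ast}$. One should also keep in mind that $N^{\otimes m}$ can be base-point free only when $k\mid m$, since otherwise $N^{\otimes m}|_{D_i}$ is a non-trivial numerically trivial line bundle and has no sections; accordingly ``$m\gg 1$'' in the statement must be read as ``$m$ large and divisible by $k$''.
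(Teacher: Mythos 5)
Your proof is correct in outline but follows a genuinely different route from the paper's. The paper works entirely on the projective variety $X$ and never contracts the line bundle to $X^{\ast}$: it first invokes Nakamaye's theorem to show $\mathbb{B}_+(N)=D$, then proves surjectivity of $H^0(X,N^{\otimes q})\to H^0(\alpha D,N^{\otimes q}|_{\alpha D})$ by a Kodaira-vanishing argument on a suitable schematic thickening $\alpha D$, and finally uses the finiteness of the $\sigma_i$ to produce a nowhere-vanishing section on $\alpha D$, hence on $D$; base-point-freeness and the identification of the Iitaka map with $\tau$ then follow directly. You, by contrast, use the finiteness of the $\sigma_i$ much earlier, to descend $N^{\otimes k}$ along $\tau$ to a genuine line bundle $\overline{N}$ on $X^{\ast}$, and then verify Nakai's inequalities for $\overline{N}$. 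Your Steps~1 and 2 are sound (and Step~2 essentially reproduces the positivity analysis the paper performs as well). The one place that needs more care, as you yourself flag, is the invocation of the Nakai--Moishezon criterion on $X^{\ast}$: at this stage $X^{\ast}$ is only known to be a compact normal Moishezon analytic space, not a projective (or even a priori algebraic) scheme. This can be repaired---e.g.\ algebraize $X^{\ast}$ to a proper algebraic space via Artin/Moishezon and then apply the Nakai criterion for proper algebraic spaces---but it is an extra ingredient that the paper's cohomological argument on $X$ avoids entirely. With that justification supplied (or with a reference to the appropriate version of Nakai--Moishezon for Moishezon spaces), your proof is complete, and it has the pleasant side effect of producing $\overline{N}$ and its ampleness explicitly, which the paper only recovers afterwards in Proposition~\ref{prop:iitoi}. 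Your closing remark that ``$m\gg1$'' must be read as ``$m$ large and divisible by $k$'' is also correct and implicit in the paper's Step~3.
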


\begin{proof}
	{\em Step 1. The stable base locus is included in the boundary.} Remark first that \(N\) is nef by Proposition~\ref{prop:ampleinterval}. It is also big since \(\mathrm{vol}(N) \geq \mathrm{vol}(L) > 0\). The proof of Proposition~\ref{prop:ampleinterval} shows that \(\mathrm{vol}(V, N|_{V}) > 0\) for any subvariety \(V \subset X\) with \(V \not\subset D\) and thus, Nakamaye's theorem \cite[Theorem 0.3]{Nak00} (see also Collins-Tosatti \cite{CT15}) implies that
	\[
		\mathbb{B}_{+}(N)
		=
		\bigcup_{\substack{V \subset X\\ N^{\mathrm{dim}(V)} \cdot V =0}} V
		=
		D.
	\]
	In particular, one has \(\mathbb{B}(N) \subset \mathbb{B}_{+}(N) \subset D\). The next steps will show that some power of \(N\) is also globally generated on the boundary.
	\medskip

	\noindent
	{\em Step 2. Sections on the thickenings of the boundary.} Pick \(\alpha \in \mathbb{N}\) so that \(\alpha > n m_{i} + 1\) for all \(i\), and denote by \(\alpha D\) the \(\alpha\)-th schematic thickening of the boundary divisor. One has a natural restriction map
	\[
		H^{0}(X, N^{\otimes q}) 
		\longrightarrow 
		H^{0}(\alpha D, N|_{\alpha D}^{\otimes q}).
	\]
	Let us show that this map is onto for \(q \gg 1\). The next term in the long exact sequence in cohomology is \(H^{1}(X, N^{\otimes q} \otimes \mathcal{O}(- \alpha D))\). However, one may write
	\begin{align*}
		q \left( K_{X} + \sum_{i}(1 + n m_{i}) D_{i} \right) 
		- \left(K_{X} + \alpha D\right) 
		& = (q-1) \left[ K_{X} + \sum_{i} ( 1 + \frac{q n m_{i} - \alpha + 1}{q-1}) D_{i}\right]
	\end{align*}

	Now, if \(q\) is large enough, one has
	\[
		0 < \frac{q n m_{i} - \alpha + 1}{q-1} < n m_{i}
	\]
	since \(\alpha > n m_{i} + 1\). This shows that \(N^{\otimes q} \otimes \mathcal{O}(-\alpha D - K_{X})\) is ample by Proposition~\ref{prop:amplelambda}, and thus one has the requested vanishing by Kodaira's theorem.
	\medskip

\noindent
	{\em Step 3. From thickenings to \(D\) itself.}  By our hypothesis, the line bundle \(N^{\otimes q}\) is trivial on a neighborhood of the boundary if \(q \gg 1\) is divisible enough. Thus, \(N|_{\alpha D}^{\otimes q} \cong \mathcal{O}_{\alpha D}\), and the constant section \(s \cong 1\) is sent onto \(1\) via the map
	\[
		H^{0}(\alpha D, N^{\otimes q}|_{\alpha D})
		\longrightarrow
		H^{0}(D, N^{\otimes q}) \cong H^{0}(D, \mathcal{O}_{D}) \cong \mathbb{C}.
	\]
	Joint with Step 2, this shows that the restriction map
	\[
		H^{0}(X, N^{\otimes q})
		\longrightarrow
		H^{0}(D, N^{\otimes q}|_{D}) \cong \mathbb{C}^{\pi_{0}(D)}
	\]
	is onto. This shows that \(N^{\otimes q}\) is globally generated also above \(D\), and thus \(\mathbb{B}(N^{\otimes q}) = \varnothing\).
\medskip

\noindent
	{\em End of proof.} Since \(N^{\otimes q}|_{D} \cong \mathcal{O}_{D}\), the linear system \(|N^{\otimes q}|\) contracts all the boundary components to points. Also recall that we have proved that for any curve \(C \subset X\) with \(C \not\subset D\), one has \(c_{1}(N) \cdot C > 0\). This implies that no subvariety of \(X\) is contracted unless it is included in the boundary. By classical results on semi-ample line bundles \cite[Section 2.1.B]{Laz04}, the linear system \(|L^{\otimes q}|\) realizes an isomorphism on \(X - D\), possibly after replacing \(q\) by a larger integer. This ends the proof.
\end{proof}

\begin{prop} \label{prop:iitoi}
	Under the hypothesis that \(\mathrm{Im}(\sigma_{i})\) if finite for all \(1 \leq i \leq m\), the divisor class of \(K_{X^{\ast}}\) is \(\mathbb{Q}\)-Cartier and ample, and one has \(\tau^{\ast} \mathcal{O}(K_{X^{\ast}}) \sim_{\mathbb{Q}} N\). 
\end{prop}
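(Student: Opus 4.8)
The plan is to descend a suitable power of $N$ from $X$ to $X^{\ast}$ and to identify it with a reflexive power of the canonical sheaf of $X^{\ast}$; the $\mathbb{Q}$-Cartierness and ampleness of $K_{X^{\ast}}$ then come for free, and the $\mathbb{Q}$-linear equivalence $\tau^{\ast}\mathcal{O}(K_{X^{\ast}}) \sim_{\mathbb{Q}} N$ is a formal consequence.

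First I would fix $q \gg 1$ divisible enough that the conclusions of Proposition~\ref{prop:ampleinterval} and its proof hold: $N^{\otimes q}$ is globally generated with associated morphism $\tau : X \to X^{\ast}$, the map $\tau$ contracts precisely $D$ and restricts to an isomorphism over $X^{\ast}\setminus\{p_1,\dots,p_m\}$, and (Step~3 of that proof) $N^{\otimes q}$ is trivial on a neighbourhood of $D$ in $X$. Since $X^{\ast}$ is normal and $\tau$ is projective birational with connected fibres, $\tau_{\ast}\mathcal{O}_X = \mathcal{O}_{X^{\ast}}$; as $N^{\otimes q}$ is trivial on a neighbourhood of every fibre of $\tau$, the sheaf $A := \tau_{\ast}N^{\otimes q}$ is a line bundle on $X^{\ast}$ and the natural morphism $\tau^{\ast}A \to N^{\otimes q}$ is an isomorphism (alternatively, this is the semiample fibration theorem, with $A = \mathcal{O}_{X^{\ast}}(1)$ for the embedding defined by $|N^{\otimes q}|$, cf.\ \cite[Theorem~2.1.27]{Laz04}). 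I would then verify that $A$ is ample by Nakai--Moishezon: for $V'\subseteq X^{\ast}$ of dimension $d\ge 1$, its strict transform $V\subset X$ is a $d$-dimensional subvariety not contained in $D$, it maps birationally onto $V'$, and $A^{d}\cdot V' = q^{d}\,N^{d}\cdot V>0$ since $N$ is nef and $V\not\subset D = \mathbb{B}_{+}(N)$ (Nakamaye's theorem, recalled in the proof of Proposition~\ref{prop:ampleinterval}); for $d=0$ this is trivial.

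Next I would compare $A$ with the reflexive power $\mathcal{O}_{X^{\ast}}(qK_{X^{\ast}}) := (\omega_{X^{\ast}}^{\otimes q})^{\vee\vee}$. Over the smooth open set $X^{\ast}\setminus\{p_1,\dots,p_m\}$, transported by the isomorphism $\tau\colon X\setminus D\xrightarrow{\sim} X^{\ast}\setminus\{p_1,\dots,p_m\}$, the bundle $A$ is identified with $N^{\otimes q}|_{X\setminus D} = \mathcal{O}_X(qK_X)|_{X\setminus D}$ (the divisors $D_i$ being disjoint from $X\setminus D$), i.e.\ with $\mathcal{O}_{X^{\ast}}(qK_{X^{\ast}})|_{X^{\ast}\setminus\{p_i\}}$. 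Both $A$ and $\mathcal{O}_{X^{\ast}}(qK_{X^{\ast}})$ are reflexive sheaves on the normal variety $X^{\ast}$, and the $p_i$ form a finite set (so of codimension $\ge 2$); hence the isomorphism of the restrictions extends, by pushforward along the inclusion of this big open set, to a global isomorphism $A \cong \mathcal{O}_{X^{\ast}}(qK_{X^{\ast}})$. In particular $\mathcal{O}_{X^{\ast}}(qK_{X^{\ast}})$ is invertible, so $qK_{X^{\ast}}$ is Cartier, $K_{X^{\ast}}$ is $\mathbb{Q}$-Cartier, and---$A$ being ample---$K_{X^{\ast}}$ is ample.

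Finally, $\tau^{\ast}\mathcal{O}(K_{X^{\ast}})$ now makes sense, and in $\operatorname{Pic}(X)\otimes\mathbb{Q}$ one has
\[
	q\,\tau^{\ast}\mathcal{O}(K_{X^{\ast}}) = \tau^{\ast}\mathcal{O}_{X^{\ast}}(qK_{X^{\ast}}) = \tau^{\ast}A = N^{\otimes q} = q\,N,
\]
so $\tau^{\ast}\mathcal{O}(K_{X^{\ast}})\sim_{\mathbb{Q}} N$, which is the remaining assertion. I expect the only delicate step to be the global identification $A\cong\mathcal{O}_{X^{\ast}}(qK_{X^{\ast}})$: it rests on the Hartogs-type extension for reflexive sheaves across the codimension-$\ge 2$ singular set, together with the fact---read off from Proposition~\ref{prop:ampleinterval}---that $\tau$ contracts exactly $D$ and is an isomorphism elsewhere, so that $N^{\otimes q}$ and $\mathcal{O}_X(qK_X)$ literally coincide away from $D$.
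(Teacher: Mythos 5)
Your proof is correct, but it takes a genuinely different route from the paper's. You work entirely with direct images and reflexive extension: you descend $N^{\otimes q}$ to a line bundle $A = \tau_{\ast}N^{\otimes q}$ on $X^{\ast}$ (using the triviality of $N^{\otimes q}$ near $D$ from Step~3 of the proof of Proposition~\ref{prop:ampleinterval} and the semiample fibration theorem), check ampleness by Nakai--Moishezon, and then identify $A$ with $\mathcal{O}_{X^{\ast}}(qK_{X^{\ast}})$ by a reflexive-sheaf extension across the codimension-$\ge 2$ set $\{p_1,\dots,p_m\}$; the coefficient computation $\tau^{\ast}K_{X^{\ast}}\sim_{\mathbb{Q}}N$ then falls out automatically. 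The paper instead picks a hyperplane section $s\in|\mathcal{O}_{X^{\ast}}(1)|$ missing the singular points, uses it to exhibit $qK_{X^{\ast}}$ as Cartier near each $p_i$, and then determines the coefficients $\alpha_i$ in $\tau^{\ast}K_{X^{\ast}} = K_X + \sum_i\alpha_i D_i$ by combining Lemma~\ref{lem:numtrivial} with the anti-ampleness of $D_i|_{D_i}$ from Lemma~\ref{lem:ample}. Your approach is more systematic and avoids recomputing the discrepancies (the reflexive extension does that work implicitly), at the cost of invoking slightly heavier machinery ($\tau_{\ast}\mathcal{O}_X=\mathcal{O}_{X^{\ast}}$, projection formula, Hartogs for reflexive sheaves); the paper's is more hands-on, making the discrepancies visible, which is in any case useful because it is exactly what shows afterwards that the singularities are not log-canonical. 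Both are complete proofs.
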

\begin{proof}
	Let \(\mathcal{O}(1)\) be the very ample line bundle on \(X^{\ast}\) associated with the linear system \(|N^{\otimes q}|\) for \(q \gg 1\) divisible enough, so that \(\tau^{\ast} \mathcal{O}(1) \cong N^{\otimes q}\). One may pick a hyperplane section \(s \in H^{0}(X^{\ast}, \mathcal{O}(1))\) not passing through any point of the boundary. Now, the restriction \(s|_{X - D}\) identifies with a section of \(\mathcal{O}(q K_{X- D})\), which proves that \(q K_{X^{\ast}}\) is Cartier near any point of the boundary. Also, one deduces that \(\mathcal{O}(q K_{X^{\ast}}) \cong \mathcal{O}(1)\), and thus \(K_{X^{\ast}}\) is ample.
\medskip

Since the only \(\tau\)-exceptional divisors are the \(D_{i}\), one may write
\[
	\tau^{\ast} K_{X^{\ast}} \cong_{\mathbb{Q}-\mathrm{lin}} K_{X} + \sum_{i} \alpha_{i} D_{i}
\]
	for some coefficients \(\alpha_{i}\). However, each \(D_{i}\) is contracted to a point, so this divisor has to be \(\mathbb{Q}\)-linearly trivial near each component of the boundary. As each \(D_{i}|_{D_{i}}\) is anti-ample, this implies because of Lemma~\ref{lem:numtrivial} that one needs \(\alpha_{i} = 1 + nm_{i}\) for all \(i\), and thus
	\(\tau^{\ast} K_{X^{\ast}} \sim_{\mathbb{Q}} N\), as was to be proved.
\end{proof}

In particular, the log-discrepancies of \(X^{\ast}\) at each boundary component \(D_{i}\)  are equal to \(- n m_{i} < 0\), so the singularities of \(X^{\ast}\) are not log-canonical.

\begin{prop} \label{prop:itoii}
	Assume that \(K_{X^{\ast}}\) is \(\mathbb{Q}\)-Cartier in the neighborhood of the point \(p_{i} := \tau(D_{i})\). Then the image of \(\sigma_{i}\) is finite.
\end{prop}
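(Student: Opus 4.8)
The plan is to pull $K_{X^{\ast}}$ back through $\tau$ in a neighborhood of $D_i$, identify the resulting line bundle with a power of the bundle $N_i$ from Lemma~\ref{lem:numtrivial}, restrict to $D_i$, and read off triviality of a power of $\sigma_i$.

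First I would fix an open neighborhood $W$ of $p_i$ in $X^{\ast}$ and an integer $m \geq 1$ with $mK_{X^{\ast}}$ Cartier on $W$; after shrinking $W$ I may assume $\tau^{-1}(W)$ is disjoint from all $D_j$ with $j \neq i$, so that $D_i$ is the only $\tau$-exceptional divisor inside $\tau^{-1}(W)$. The discrepancy relation then reads $K_X \sim_{\mathbb{Q}} \tau^{\ast}K_{X^{\ast}} + b_i D_i$ on $\tau^{-1}(W)$ for a single $b_i \in \mathbb{Q}$, and the key point is that $b_i = -(1 + n m_i)$. To compute it, restrict to $D_i$: since $D_i = \tau^{-1}(p_i)$ is contracted by $\tau$, the class $\tau^{\ast}K_{X^{\ast}}|_{D_i}$ is numerically trivial, so by adjunction $K_{D_i} - D_i|_{D_i} = K_X|_{D_i} \equiv b_i\,(D_i|_{D_i})$, i.e. $K_{D_i} \equiv (1 + b_i)(D_i|_{D_i})$. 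Comparing with Lemma~\ref{lem:numtrivial}, which gives $K_{D_i} \equiv - n m_i\,(D_i|_{D_i})$, one obtains $(1 + b_i + n m_i)(D_i|_{D_i}) \equiv 0$, and since $\mathcal{O}_{D_i}(-D_i)$ is ample by Lemma~\ref{lem:ample} the numerical class of $D_i|_{D_i}$ — and that of each of its nonzero multiples — is nonzero, forcing $b_i = -(1 + n m_i)$. Hence $\tau^{\ast}K_{X^{\ast}} \sim_{\mathbb{Q}} K_X + (1 + n m_i) D_i$ near $D_i$; multiplying through by $m$ and taking associated line bundles, $\tau^{\ast}\mathcal{O}_{X^{\ast}}(m K_{X^{\ast}}) \cong N_i^{\otimes m}$ on $\tau^{-1}(W)$.

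Next I would restrict this isomorphism to $D_i$. As $\tau|_{D_i}$ is the constant map to $p_i$, the bundle $\tau^{\ast}\mathcal{O}_{X^{\ast}}(m K_{X^{\ast}})|_{D_i}$ is holomorphically trivial, hence $N_i^{\otimes m}|_{D_i} \cong \mathcal{O}_{D_i}$. By Lemma~\ref{lem:numtrivial} and Definition~\ref{defi:unitrepr}, $N_i|_{D_i}$ is the flat unitary line bundle attached to $\sigma_i : \pi_1(D_i) \to \mathrm{U}(1)$, so $N_i^{\otimes m}|_{D_i}$ is the one attached to $\sigma_i^{m}$. To conclude I would invoke the elementary fact that a flat unitary line bundle on a compact connected complex manifold which is holomorphically trivial has trivial monodromy: a nowhere-vanishing holomorphic section lifts, on the universal cover $\widetilde{D_i}$, to a $\sigma_i^{m}$-equivariant holomorphic function whose modulus is $\pi_1$-invariant (as $\sigma_i^{m}$ is unitary) and therefore descends to a continuous function on the compact manifold $D_i$; by the maximum principle this function is constant, and equivariance then forces $\sigma_i^{m} \equiv 1$. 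Since $D_i$ is a compact ball quotient, this applies, and $\mathrm{Im}(\sigma_i)$ consists of $m$-th roots of unity, hence is finite.

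The one step that needs genuine care is the identification $\tau^{\ast}\mathcal{O}_{X^{\ast}}(m K_{X^{\ast}}) \cong N_i^{\otimes m}$ near $D_i$, equivalently the value $b_i = -(1 + n m_i)$ of the discrepancy; this is precisely where Lemmas~\ref{lem:numtrivial} and~\ref{lem:ample} enter, together with the observation that ampleness of the conormal bundle $\mathcal{O}_{D_i}(-D_i)$ makes the numerical class of $D_i|_{D_i}$ and of all its nonzero multiples nonzero, so that the equality of numerical classes upgrades to an equality of coefficients. Everything after that is formal, and the argument also supplies the implication (i) $\Rightarrow$ (ii) of Theorem~\ref{thm:contractionproj}.
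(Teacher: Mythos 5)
Your proof is correct and follows essentially the same route as the paper's: both identify $\tau^{\ast}\mathcal{O}_{X^{\ast}}(qK_{X^{\ast}})$ with $N_i^{\otimes q}$ near $D_i$ by computing the relevant coefficient via Lemmas~\ref{lem:numtrivial} and~\ref{lem:ample}, restrict to $D_i$ to get triviality of $N_i^{\otimes q}|_{D_i}$, and conclude that $\sigma_i$ is torsion. The only cosmetic difference is that the paper pins down the coefficient $\alpha=q(1+nm_i)$ by pulling back a pluricanonical section nonvanishing at $p_i$ and reading off its divisor, whereas you get $b_i=-(1+nm_i)$ from adjunction and the numerical triviality of $\tau^{\ast}K_{X^{\ast}}|_{D_i}$; you also make explicit the final step that holomorphic triviality of a flat unitary line bundle on the compact $D_i$ kills the monodromy.
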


\begin{proof}
	Resume the notation of Proposition~\ref{prop:setting}, and denote \(U_{i}^{\ast} := \sigma(U_{i})\). Under the hypotheses, we may chose \(\epsilon_{i}\) small enough so there exists \(q \in \mathbb{N}_{>0}\) such that \(q K_{U_{i}^{\ast}}\) is cut out by a section \(s_{i}\) of its associated line bundle \(\mathcal{O}(q K_{U_{i}^{\ast}})\). We may choose \(s_{i}\), and adequately change \(q K_{X^{\ast}}\) in its linear equivalence class, so that \(p_{i} \notin \mathrm{Supp}(q K_{U_{i}^{\ast}})\). 

	But now, the pull-back \(\sigma^{\ast} s_{i}\) is a meromorphic \(q\)-canonical section, and thus gives a section of \(\mathcal{O}(q K_{U_{i}}) \otimes \mathcal{O}(\alpha D_{i})\) for some \(\alpha \in \mathbb{Z}\). By our choice of \(s_{i}\), the previous pullback is identically non-zero when restricted to \(D_{i}\) and thus
	\[
		\mathcal{O}(qK_{U_{i}}) \otimes \mathcal{O}(\alpha D_{i})
	\]
	is trivial. Lemma~\ref{lem:numtrivial} and \ref{lem:ample} together imply that \(\alpha = q(1 + nm_{i})\) and thus one deduces further that \(N_{i}^{\otimes q}|_{D_{i}}\) is trivial. Thus, \(\sigma_{i}^{\otimes q} : \pi_{1}(D_{i}) \to \mathrm{U}(1)\) is the trivial representation, and \(\sigma_{i}\) is a torsion representation.
\end{proof}

Propositions~\ref{prop:iitoi} and \ref{prop:itoii} together show that {\em (i), (ii)} are equivalent, and that {\em (1)} holds if they are satisfied. Showing that {\em (2)} holds presents no difficulty: 

\begin{prop}
	Assume that {\em (i)} and {\em (ii)} hold in Theorem~\ref{thm:contractionproj}. Then {\em (2)} holds as well.
\end{prop}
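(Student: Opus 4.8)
The plan is to exploit the fact that the smooth locus of $X^{\ast}$ carries a metric of constant holomorphic sectional curvature, and then to compute $\Delta_{\rm MY}(X^{\ast})$ directly by Chern--Weil theory on a complete intersection surface.

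First I would record that under the equivalent conditions (i), (ii) Proposition~\ref{prop:iitoi} ensures that $K_{X^{\ast}}$ is $\mathbb{Q}$-Cartier and ample and that the only singularities of $X^{\ast}$ are the finitely many points $p_i = \tau(D_i)$; since $n = \dim X \geq 3$ the variety $X^{\ast}$ is smooth in codimension $2$, so the number $\Delta_{\rm MY}(X^{\ast})$ of Definition~\ref{defi:MY} is well-defined. The morphism $\tau$ restricts to an isomorphism $X \setminus D \xrightarrow{\ \sim\ } X^{\ast} \setminus \{p_1, \dots, p_m\}$, which identifies the smooth locus of $X^{\ast}$ with $X \setminus D$, and under this identification the bundle $T_X|_{X \setminus D}$ (in the sense of Definition~\ref{defi:MY}) is just the ordinary tangent bundle of $X \setminus D$.

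Next I would use the metric $h$ on $T_X$ constructed earlier in this section: it is smooth on $X \setminus D$, and by construction it is, near every point of $X\setminus D$, pulled back from the Bergman metric on $\mathbb{B}^n$ along the diagram~\eqref{eq:diagramMS} (the map $\psi$ is a local biholomorphic isometry away from its ramification divisor, and $\pi$ is \'etale). Hence $(X \setminus D, h)$ is locally isometric to $(\mathbb{B}^n, h_{\mathbb{B}^n})$ and in particular $h$ has constant holomorphic sectional curvature. For a K\"{a}hler metric of constant holomorphic sectional curvature on an $n$-dimensional complex manifold the curvature tensor is proportional to $g_{i\bar j}g_{k\bar l} + g_{i\bar l}g_{k\bar j}$, and a direct computation of the Chern--Weil representatives then yields the \emph{pointwise} identity of $4$-forms
\[
	2(n+1)\, c_2(T_X, h) - n\, c_1(T_X, h)^2 = 0
\]
on $X \setminus D$; this is exactly the equality case of the pointwise Miyaoka--Yau inequality (see e.g.\ \cite{Kob85}).

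Finally I would take a generic smooth complete intersection surface $S = H_1 \cap \dots \cap H_{n-2}$ with $H_i \in |m_i K_{X^{\ast}}|$ as in Definition~\ref{defi:MY}. Since the singularities of $X^{\ast}$ are punctual and $n \geq 3$, $S$ avoids the points $p_i$, hence $S \subset X \setminus D$, where $h$ is smooth. By functoriality of Chern--Weil forms, $c_1(T_X, h)|_S$ and $c_2(T_X, h)|_S$ represent $c_1(T_X|_S)$ and $c_2(T_X|_S)$, so restricting the displayed identity to $S$ and integrating gives
\[
	2(n+1)\, c_2(T_X|_S) - n\, c_1^2(T_X|_S) = \int_S \big( 2(n+1)\, c_2(T_X, h) - n\, c_1(T_X, h)^2 \big)\big|_S = 0,
\]
whence $\Delta_{\rm MY}(X^{\ast}) = 0$. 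The only genuinely non-formal ingredient is the pointwise characteristic-form identity for metrics of constant holomorphic sectional curvature, and this is a standard linear-algebra computation on the curvature tensor; all the rest is bookkeeping already prepared earlier in this section, so I do not expect any serious obstacle.
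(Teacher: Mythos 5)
Your proof is correct. It reaches the same conclusion by essentially the same geometric input, but you phrase the key vanishing differently from the paper. The paper notes that the period map $\psi$ equips the $\mathcal{C}^\infty$ bundle underlying $\mathrm{End}(\mathcal{O}_{X-D}\oplus\Omega_{X-D})$ with a \emph{flat} holomorphic structure (it is the endomorphism bundle of the polarized $\mathbb{C}$-VHS coming from the ball correspondence), so $\int_{S'} c_2(\mathrm{End}(\cdots))=0$; it then invokes the standard Chern-class identity $c_2(\mathrm{End}(E)) = 2(n+1)c_2(E) - n\,c_1(E)^2$ for $E$ of rank $n+1$ to recognize this integral as $\Delta_{\rm MY}(X^\ast)\cdot\prod m_j$. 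You instead use that the descended Bergman metric on $X\setminus D$ has constant holomorphic sectional curvature, hence the \emph{pointwise} characteristic-form identity $2(n+1)c_2(T_X,h) - n\,c_1(T_X,h)^2 = 0$ holds, and you restrict and integrate. These are two formulations of the same fact: the flatness of the (metrized, Higgs) endomorphism bundle is precisely what makes the Chern--Weil representative of $2(n+1)c_2 - n c_1^2$ vanish pointwise for such a metric. Your version is the more classical, hands-on curvature computation à la Kobayashi--Yau; the paper's is a touch slicker in that it avoids any pointwise curvature identity and appeals directly to topological triviality of a flat bundle. Both are valid, and both rely on the same two preparatory observations you correctly make: that $\Delta_{\rm MY}(X^\ast)$ is well-defined (punctual singularities, $n\ge 3$) and that a generic $S$ avoids $D$ so that the metric $h$ is smooth along $S$.
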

\begin{proof}
	Let \(S \subset X^{\ast}\) be a smooth complete intersection as in the hypotheses of the proposition. If \(S\) is sufficiently generic, then it does not meet any point of \(X^{\ast} - X\) and thus it identifies with its preimage \(S' \subset X - D\).

	The map \(\psi\) induces a structure of {\em flat} holomorphic vector bundle to the \(\mathcal{C}^{\infty}\) vector bundle underlying \(\mathrm{End}(\mathcal{O}_{X - D} \oplus \Omega_{X - D})\), and thus
	\begin{align*}
		\int_{S} c_{2}(\mathrm{End}(\Omega_{X^{\ast}}|_{S'} \oplus \mathcal{O}_{S'}))
			&
			= \int_{S'} c_{2}(\mathrm{End}(\Omega_{X-D} \oplus \mathcal{O}_{X-D})|_{S'})
			= 0.
	\end{align*}
	A standard computation shows that the first number is proportional to \eqref{eq:defiMY}, which gives the result. 
\end{proof}

\subsection{Deligne-Mostow and Deraux's examples} \label{sec:DMDex} Let us show in this section that several of the varieties constructed by Deligne-Mostow \cite{DM86, DM93}, or Deraux's three examples in \cite{Der05}, satisfy the hypotheses of Theorem~\ref{thm:contractionproj}.
\medskip

\subsubsection{A reminder on Deligne-Mostow's constructions.} We are going to recall several aspects of Deligne-Mostow's original construction \cite{DM86}, without going into full detail; we refer to \cite{Der11} for more information. 
\medskip

Let \(N > 3\) be an integer, and let \(\mu_{1}, \dotsc, \mu_{N}\) be positive rational numbers such that \(\sum_{j} \mu_{j} = 2\). We let \(PGL(2)\) act on the product \(P := (\mathbb{P}^{1})^{N}\) by its diagonal action, and we endow \(P\) with a polarization given by the \(\mathbb{Q}\)-line bundle \(M := \bigotimes_{j=1}^{N} \mathrm{pr}^{\ast} \mathcal{O}(\mu_{j})\). Mumford's Geometric Invariant Theory explains how to construct a GIT quotient \(Q := P\sslash \mathrm{PGL}(2)\) with respect to the linearization \(M\): this variety \(Q\) has dimension \(n := N - 3\), and is the target of a surjective map \(P^{sst} \to Q\), where \(P^{sst}\) is the {\em semi-stable locus}
\[
	P^{sst} = 
	\big\{ 
	(z_{1}, \dots, z_{n}) \in P 
	\; \big| \; 
	\text{if any}\; z_{i_{1}}, \dotsc, z_{i_{q}} \,\text{coincide, then} \sum_{j} \mu_{i_{j}} \leq 1
	\big\}.
\]
Also, \(Q\) contains as a dense open subset the quotient \(Q^{st} := \quotientd{P^{st}}{\mathrm{PGL}(2)}\) where \(P^{st}\) is the {\em stable locus}
\[
	P^{st} = 
	\big\{ 
	(z_{1}, \dots, z_{n}) \in P 
	\; \big| \; 
	\text{if any}\; z_{i_{1}}, \dotsc, z_{i_{q}} \,\text{coincide, then} \sum_{j} \mu_{i_{j}} < 1
	\big\}.
\]

Let \(Q^{0} \subset Q^{st}\) be the open locus corresponding to \(n\)-uples of all distinct points. On \(Q^{0}\), one may construct a variation of mixed Hodge structures by associating to \((z_{1}, \dotsc, z_{n})\) the natural mixed Hodge structure on \(H^{1}(\mathbb{P}^{1} - \{z_{1}, \dotsc, z_{n}\}, L_{\mu})\), where \(L_{\mu}\) is a rank \(1\) local system on \(\mathbb{P}^{1} - \{z_{1}, \dotsc, z_{n}\}\) with monodromy \(e^{2i \pi \mu_{i}}\) around each \(z_{i}\).  One has an associated period map \(\widetilde{Q^{0}} \to \mathbb{B}^{n}\), that is \(\rho_{\mu}\)-equivariant for some representation \(\rho_{\mu} : \pi_{1}(Q^{0}) \to \mathrm{PU}(n, 1)\). Let \(\Gamma_{\mu}\) be the image of \(\rho_{\mu}\); it is generated by complex reflections in \(\mathrm{PU}(n, 1)\) that correspond to letting two points \(z_{i}, z_{j}\) turn around one other (see \cite[Proposition 9.2]{DM86}).
\medskip

If no subset of the \(\mu_{i}\) sum up to \(1\), one has \(Q = Q^{st}\), and under various additional numerical conditions on the \(\mu_{i}\) detailed in \cite{DM86, DM93, Der05, Der11}, one can construct varieties sitting in a diagram
\begin{equation} \label{eq:diagramMS2}
	\begin{tikzcd}
			& M \arrow[r, "\psi"] \arrow[d, "\pi"] & \mathbb{B}^{n} \\
		D_{1}, \dotsc, D_{m} \arrow[r, hook] & X \arrow[d, "q"]  & \\
		                                     & Q                    &
	\end{tikzcd}
\end{equation}
where the top part is as in \eqref{eq:diagramMS}, and the morphism \(q : X \to Q\) is a finite map, étale over \(Q^{0}\). The variety \(Q\) is a (singular) quotient \(\quotient{\Gamma_{\mu}}{M}\), while \(X = \quotient{\Gamma_{\mu}'}{M}\) for some finite index subgroup \(\Gamma_{\mu}' \subset \Gamma_{\mu}\).

\begin{rem}
	Note that \(X - D\) could be larger than \(q^{-1}(Q^{0})\). In Deligne-Mostow original examples \cite{DM86, DM93}, the map \(\psi\) is an isomorphism so one may take \(D = \varnothing\). Alternatively, it is possible in some situations to artificially add a new totally geodesic component \(D_{i}\), taking \(m_{i} = 1\); that is what we will do in Example~2 below. 
\end{rem}

\subsubsection{Example 1. Contraction of hyperplanes in Deraux's 3-dimensional examples} In \cite{Der05}, Deraux generalizes to the dimension \(3\) a famous construction of negatively curved surfaces due to Mostow-Siu \cite{MS80} -- it yields three examples of varieties that can sit as \(Q\) in the diagram \eqref{eq:diagramMS2} above (where the \(\mu_{i}\) are given by the first three lines of [{\em op.cit} Table~1, p.516]). By contracting two exponents (i.e. replacing \(\mu_{i}\) and \(\mu_{j}\) by their sum \(\mu_{i} + \mu_{j}\)), one obtains a new datum \(\mu'\) of length shortened by \(1\). If this datum \(\mu'\) happens to satisfy the numerical condition (INT) of \cite{DM86}, then it gives a {\em ball quotient} admitting a totally geodesic embedding in \(Q\). Let \(Q' \subset Q\) be the union of all hypersurfaces obtained that way. According to \cite[Remark 4.2]{Der05}, the hypersurface \((q \circ \pi)^{-1}(Q')\) is the branch locus of \(\psi\). We take \(D := q^{-1}(Q')\).
\medskip

By [{\em op.cit.} Remark 3.5.], the divisor \(Q'\) is the disjoint union of either one or three connected components. Let \(Q_{1} \subset Q\) be one of these components, corresponding to letting points \(z_{i}, z_{j}\) coalesce. One may then write \(Q_{1} \cong \quotient{\Gamma_{\mu'}}{\mathbb{B}^{2}}\) where \(\mu'\) is as above. As in Definition~\ref{defi:unitrepr}, one has a unitary representation \(\sigma : \pi_{1}(Q_{1}^{\circ}) \to U(1)\) obtained by composing
\[
	\pi_{1}(Q_{1}^{\circ}) 
	\overset{\rho_{\mu'}}{\longrightarrow}
	\Gamma_{\mu'} 
	\longrightarrow 
	\mathrm{U}(2, 1)
	\overset{\det}{\longrightarrow}
	U(1)
\]
where \(\mathrm{U}(2, 1)\) is the stabilizer of a totally geodesic surface \(\mathbb{B}^{2} \subset \mathbb{B}^{3}\) associated to \(Q_{1}\) by the correspondence \eqref{eq:diagramMS2}.

\begin{prop} The representation \(\sigma\) has finite image.
\end{prop}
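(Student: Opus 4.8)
The strategy is to exploit the complex-reflection structure of the Deligne--Mostow monodromy. Recall that the representation underlying the natural polarized $\mathbb{C}$-VHS on $Q_1^\circ$ is the Gauss--Manin monodromy on $H^1(\mathbb{P}^1\setminus\{z_1,\dots,z_{N-1}\},L_{\mu'})$ of the rank-one local system $L_{\mu'}$, taking values in $\mathrm{U}(2,1)$, and that by \cite[Proposition~9.2]{DM86} its image $\Gamma_{\mu'}$ is generated by the operators $R_{kl}$ obtained by letting two of the base points $z_k,z_l$ revolve around one another. Each such $R_{kl}$ acts on $H^1$ as a complex pseudo-reflection: it fixes a (non-degenerate) hyperplane pointwise and acts on the orthogonal line by a scalar $\zeta_{kl}\in\mathrm{U}(1)$. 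I would begin by recording this, together with the local computation at a collision of two points of total weight $\mu'_k+\mu'_l$, which identifies $\zeta_{kl}$ as $e^{2i\pi\alpha_{kl}}$ for an explicit rational number $\alpha_{kl}$ depending only on $\mu'_k+\mu'_l$ (equivalently, on the original $\mu_i$).

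Next I would observe that all the $\mu'_j$ are rational with denominators dividing a fixed integer $d$, so there is a fixed $d'$ (depending only on $d$) with $\zeta_{kl}^{d'}=1$ for every pair $(k,l)$; thus each $\zeta_{kl}$ lies in the finite group of $d'$-th roots of unity in $\mathbb{C}$. Since the determinant of a complex pseudo-reflection equals its reflecting scalar, one has $\det R_{kl}=\zeta_{kl}$.

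Finally, $\sigma$ is by construction $\det$ composed with the monodromy representation, so its image is the subgroup of $\mathrm{U}(1)$ generated by the determinants of the generators of $\Gamma_{\mu'}$ (the determinant is a homomorphism and is insensitive to the conjugations needed to produce all of $\Gamma_{\mu'}$), hence by the $\zeta_{kl}$. Therefore $\sigma(\pi_1(Q_1^\circ))$ is contained in the group of $d'$-th roots of unity, and in particular is finite.

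The main obstacle is the bookkeeping in the last sentence of the first paragraph: pinning down precisely which rational multiple of $2\pi$ is the rotation scalar of each $R_{kl}$, and making sure one works with the genuine linear monodromy on $H^1$ (an honest $\mathrm{U}(2,1)$-representation, where the $R_{kl}$ are honest pseudo-reflections) rather than with its projectivisation in $\mathrm{PU}(2,1)$, where the reflection structure can be obscured by a central twist. Once this is settled --- and it is essentially contained in \cite{DM86} --- finiteness is automatic. An alternative route, which avoids computing $\alpha_{kl}$, is to note that $L_{\mu'}$ has monodromies that are roots of unity of bounded order, so the associated VHS has quasi-unipotent local monodromies with cyclotomic-integer eigenvalues of bounded order; the associated determinant local system then has monodromy of bounded finite order around each branch divisor, and since $\pi_1(Q_1^\circ)$ is generated by loops around those divisors (again \cite[Proposition~9.2]{DM86}), $\sigma$ has finite image --- but the uniform order bound still comes from the denominators of the $\mu'_j$, so the two arguments rest on the same arithmetic input.
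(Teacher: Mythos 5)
The proposal does not match the paper's proof and contains a genuine gap. The crux is that it works with the wrong copy of $\mathrm{U}(2,1)$.

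You treat $\sigma$ as $\det$ applied to the \emph{Gauss--Manin} monodromy on $H^1(\mathbb{P}^1\setminus\{5\text{ points}\},L_{\mu'})$, a canonical linear $\mathrm{U}(2,1)$-representation of $\pi_1(Q_1^\circ)$ in which each generator is an honest pseudo-reflection with root-of-unity scalar. However, in the paper $\sigma$ is $\det$ applied to the \emph{geometric} lift $\Gamma_{\mu'}\to \mathrm{U}(2,1)\cong\mathrm{Stab}_{\mathrm{PU}(3,1)}(H_1)$ obtained via the correspondence with $\mathbb{B}^3$: one first views a class in $\pi_1(Q_1^\circ)$ inside $\Gamma_\mu\subset\mathrm{PU}(3,1)$, then observes it stabilizes $H_1$, then takes $\det$ in the ambient stabilizer. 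Both maps are $\mathrm{U}(2,1)$-valued lifts of the same projective representation $\Gamma_{\mu'}\subset\mathrm{PU}(2,1)$, but they need not agree; they can differ by a unitary character of $\Gamma_{\mu'}$, and nothing in your argument controls that character. You acknowledge the $\mathrm{U}(2,1)$-versus-$\mathrm{PU}(2,1)$ issue, but you resolve it by choosing the Gauss--Manin lift, which is not the one the paper uses.

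This is not a harmless bookkeeping slip: it causes you to omit the key geometric input. In the paper's argument the generators of $\Gamma_{\mu'}$ split into two types. Rotating two genuine points $z_k,z_l$ with $k,l\notin\{1,2\}$ lifts to an actual complex reflection of rational angle in $\Gamma_\mu\subset\mathrm{PU}(3,1)$ (by \cite[Proposition~9.2]{DM86}), so that lift is torsion. But rotating $z_k$ around the \emph{merged} point $z_1+z_2$ lifts, inside $\Gamma_\mu$, to an element of the subgroup $G\subset\mathrm{PU}(3,1)$ generated by the three pairwise braid monodromies of $z_1,z_2,z_k$ --- this lift is \emph{not} a single pseudo-reflection, and its order in $\mathrm{PU}(3,1)$ is not controlled by the arithmetic of $\mu'$ alone. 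The paper invokes precisely the feature of Deraux's construction on \cite[p.~516]{Der05}: those three examples are chosen so that $G$ is finite for every such triple, which is what makes those lifted elements torsion and hence $\det$ a root of unity. Your argument never mentions this condition, and indeed concludes that finiteness is ``automatic'' from the denominators of $\mu'$; if that were correct, the proposition would hold for an arbitrary Deligne--Mostow contraction, not just Deraux's examples, which is not what the paper claims. To repair the proof you would either have to show that the Gauss--Manin lift and the geometric lift differ by a finite-order character (which is not obvious and is not argued), or, as the paper does, work directly with $\Gamma_\mu\subset\mathrm{PU}(3,1)$ and invoke Deraux's finiteness of $G$ for the merged-point generators.
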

\begin{proof}
	Without loss of generality, we may renumber the \(\mu_{k}\) so that \(i = 1\) and \(j = 2\). The group \(\Gamma_{\mu'}\) is then obtained by the Deligne-Mostow construction consisting in letting \(5\) points \(z_{1} + z_{2}, z_{3}, \dotsc, z_{6}\) move on \(\mathbb{P}^{1}\). It is generated by finitely many monodromy elements, associated to the rotations of two of these five points around one another. Let \(\gamma\) be one of these monodromies, seen as an element of \(\Gamma_{\mu}\subset\mathrm{Aut}(\mathbb{B}^{3})\), and let us show that \(\gamma\) is in fact a torsion element. There are two cases to consider:
	\begin{enumerate}
		\item \(\gamma\) is given by rotating two points \(z_{k}, z_{l}\) for \(k, l \not\in \{1, 2\}\). Then, the image of \(\gamma\) in \(\Gamma_{\mu}\) is given by a complex reflection of rational angle by \cite[Proposition~9.2]{DM86}, so \(\gamma\) is torsion. 
		\item \(\gamma\) is given by rotating \(z_{k}\) around the merged point \(z_{1} + z_{2}\). Then the image of \(\gamma\) in \(\Gamma_{\mu}\) belongs to the group \(G \subset \mathrm{PU}(3,1)\) generated by the monodromies rotating only the \(3\) points \(z_{1}, z_{2}, z_{k}\). According to the discussion of \cite[p.516]{Der05}, Deraux's examples are precisely constructed so that \(G\) is finite for any triple of points \(z_{1}, z_{2}, z_{k}\). Thus \(\gamma\) is again torsion in this case. 
	\end{enumerate}

	Consequently, \(\mathrm{Im}(\sigma) \subset \mathrm{U}(1)\) is generated by a finite number of roots of unity, and thus is finite.
\end{proof}

Thus, if \(D_{1}\) is any component of \(q^{-1}(Q_{1})\), the representation of Definition~\ref{defi:unitrepr} is the composition \(\pi_{1}(D_{1}) \to \pi_{1}(Q_{1}^{\circ}) \overset{\sigma}{\to} \mathrm{U}(1)\), which has finite image. This shows that it is possible to contract all components in \(D\) in \(X\) to obtain a singular variety \(X^{\ast}\) satisfying the conclusion of Theorem~\ref{thm:contractionproj}.

\subsubsection{Example 2. Contraction of a totally geodesic surface in a 3-dimensional ball quotient} Deligne-Mostow \cite{DM86} provides several examples of compact ball quotients totally geodesically embedded in one another. Take for example \(N = 6\) and \(\mu = \frac{1}{12}(5, 5, 5, 3, 3, 3)\). Then, according to the table of \cite[p.86]{DM86}, the variety \(Q\) is a compact (singular) ball quotient by an arithmetic lattice \(\Gamma_{\mu} \subset \mathrm{Aut}(\mathbb{B}^{3})\). If \(\Gamma_{\mu}' \subset \Gamma_{\mu}\) is a sufficiently small sublattice, then \(X = \quotient{\Gamma_{\mu}'}{\mathbb{B}^{3}}\) is smooth, and we are in the situation of \eqref{eq:diagramMS2} with \(D = \varnothing\).

If one lets \(\mu_{1} = \frac{1}{12}(10, 5, 3, 3, 3)\), one finds this time a \(2\)-dimensional ball quotient \(Q_{1}\) and again for a sublattice of sufficiently high index \(\Gamma_{\mu_{1}}' \subset \Gamma_{\mu_{1}}\), we obtain a smooth ball quotient \(X_{1} = \quotient{\Gamma_{\mu_{1}}'}{\mathbb{B}^{2}}\). Now, by \cite[8.8]{DM86}, there is a totally geodesic embedding \(Q_{1} \to Q\) (whose image corresponds to the locus where \(z_{1}, z_{2} \in \mathbb{P}^{1}\) coalesce) and a natural morphism of lattices \(\Gamma_{\mu_{1}} \to \Gamma_{\mu}\). One may take \(X_{1}\) and \(X\) compatibly so as to have a diagram 
\begin{equation} 
	\begin{tikzcd}
		M_{1} \arrow[r, hook] \arrow[d] & M \arrow[r, "\psi", "\cong"'] \arrow[d, "\pi"] & \mathbb{B}^{3} \\
		X_{1} \arrow[r, hook] \arrow[d, "q_{1}"] & X \arrow[d, "q"]  & \\
		Q_{1} \arrow[r, hook] & Q                    &
	\end{tikzcd}
\end{equation}
where all horizontal arrows are totally geodesic embeddings, and \(M_{1} \cong \mathbb{B}^{2}\), \(M\cong \mathbb{B}^{3}\).
\medskip

We take \(D_{1}\) to be the image of \(X_{1}\); note that one has \(m_{1} = 1\) in this situation i.e. \(\psi\) does not ramify near \(D_{1}\). Then, with the notations of Definition~\ref{defi:unitrepr}, the representation \(\sigma_{1} : \Gamma_{\mu_{1}'} \to \mathrm{U}(1)\) is given by the composition
\[
	\sigma_{1} : \Gamma_{\mu_{1}'} \longrightarrow \mathrm{Stab}(T_{1}) \cap \Gamma_{\mu} \to \mathrm{U}(1),
\]
where \(T_{1}\) is the image of \(M_{1} \cong \mathbb{B}^{2}\) in \(\mathbb{B}^{3}\), and the second morphism is given by the right arrow in \eqref{eq:derexact}. With the same arguments as in our first example, one can show in this case that \(\mathrm{Im}(\sigma_{1})\) is finite, and thus it is possible to contract \(X_{1}\) in \(X\) to obtain again a variety satisfying Theorem~\ref{thm:contractionproj}.

\section{Special connectedness and log-canonical singularities} \label{sec:special}

Our next goal is to further investigate the situation of Theorem~\ref{thm:existencemap}, expanding on the hypothesis that the singularities of the variety \(X^{\ast}\) are log-canonical. To do this, we will need an extension to the log-canonical case of the results of Hacon-McKernan \cite{HM07}, based on Campana's theory of orbifolds and special varieties.

Let us introduce the following notion of connectedness by chains of special varieties, that generalizes the notion of connectedness by chains of rational curves.

\begin{defi} \label{defi:scc}
	Let \(X\) be a complex analytic space, not necessarily irreducible.
	One says that \(X\) is {\em specially chain connected} if any two points of \(X\) can be linked by a chain of special subvarieties of \(X\). One says that \(X\) is {\em \scc modulo} a sublocus \(Z \subset X\) if any point of \(X\) can be linked to \(Z\) by a chain of special subvarieties of \(X\). 
\end{defi}

Note that since \(\mathbb{P}^{1}\) is special, any rationally chain connected variety is specially chain connected.

\begin{rem} According to Campana's conjectures on special manifolds, a cycle should be specially chain connected if and only if it h-special in the sense of \cite[Definition 1.11]{CDY23}.
\end{rem}

The main result of this section is the following.

\begin{thm} \label{thm:scc} Let \(X\) be a complex analytic space with log-canonical singularities. Let \(\pi : Y \to X\) be a resolution of singularities. Then the fibers of \(\pi\) are \scc.
\end{thm}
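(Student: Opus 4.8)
The plan is to reduce the statement to the case where $X$ is a small ball around a point $x$ and $\pi$ is a log-resolution, and then run an induction on dimension combined with Campana's subadditivity theorem for the orbifold Kodaira dimension, mirroring the strategy of Hacon–McKernan \cite{HM07} for the klt case. First I would reduce to the following set-up: after shrinking $X$ and replacing $\pi$ by a log-resolution dominating $Y$, we may assume $\pi : Y \to X$ is a projective morphism from a smooth variety, that $X$ is an affine/Stein germ around a point $x$, and that the exceptional locus $E = \operatorname{Exc}(\pi)$ together with the pullback of a boundary witnessing log-canonicity forms an SNC divisor $E + F$ on $Y$ with all coefficients in $[0,1]$; it suffices to show $F_0 := \pi^{-1}(x)$ is specially chain connected. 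The key point is that log-canonicity gives, writing $K_Y + \Delta = \pi^{*}(K_X + \Delta_X)$ with $(X,\Delta_X)$ log-canonical, a boundary $\Delta$ with coefficients $\le 1$; restricting to the fiber, each stratum of $F_0$ inherits an orbifold structure with small boundary, and $K$ of this orbifold is anti-effective in the relative sense.

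Next I would argue by induction on $\dim F_0$. Suppose for contradiction that $F_0$ is not specially chain connected. Consider an irreducible component $W$ of $F_0$ of maximal dimension, equipped with the orbifold divisor $\Delta|_W$ coming from the other components of $E+F$ meeting $W$ (by adjunction/subadjunction this is again a boundary with coefficients $\le 1$). Apply Campana's core map (Theorem~\ref{thm:core}) to the orbifold pair $(W, \Delta|_W)$, obtaining $c : (W,\Delta|_W) \dashedrightarrow C$ of general type with special very general fibers. If $\dim C = 0$, then $(W,\Delta|_W)$ is special, hence $W$ is a special subvariety of $X$, and one is reduced to linking the remaining lower-dimensional part of $F_0$ by the inductive hypothesis applied to the strata — the special varieties $W$ together with the chains inside the lower strata of $F_0$ connect everything. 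So the contradiction must come from assuming $\dim C > 0$. Here one uses the relative negativity of $K_Y + \Delta$ over $X$: the restriction of $-(K_Y+\Delta)$ to $F_0$, and hence an appropriate twist of $K_W + \Delta|_W$, cannot dominate a big line bundle pulled back from the base $C$ of a general-type fibration. More precisely, Campana's subadditivity of orbifold Kodaira dimension along $c$ would force $\kappa(W, K_W + \Delta|_W) \ge \dim C > 0$, i.e. a nonzero orbifold pluricanonical (Bogomolov-type) section, contradicting the fact that $K_W + \Delta|_W$ is (orbifold) anti-nef along the contracted fiber since $W$ is contracted by $\pi$ to the point $x$. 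Running this at every stratum and chaining shows $F_0$ is specially chain connected.

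The step I expect to be the main obstacle is making precise the "orbifold anti-nefness of $K_W + \Delta|_W$ along the fiber" and combining it correctly with Campana's subadditivity to kill the core map. In the klt case of \cite{HM07} the analogous input is that rational curves deform, via bend-and-break, out of the exceptional locus — here the substitute must be entirely Hodge/positivity-theoretic: one needs that an orbifold pair contracted to a point by a birational morphism, with log-canonical total space, admits no Bogomolov sheaf, which is exactly where the subadditivity of the orbifold Kodaira dimension enters and where the coefficient-$\le 1$ (i.e. log-canonical, not merely lc-threshold) hypothesis is essential. Handling the bookkeeping of orbifold divisors under restriction to, and adjunction along, the various strata of the SNC fiber $F_0$ — ensuring all coefficients stay in $[0,1]$ and that the general-type core of each stratum is genuinely obstructed — is the technical heart of the argument; the Corollary~\ref{corol:strataspecial} about individual strata being special then falls out of the same analysis once one additionally assumes all discrepancies equal $-1$, which forces the relevant boundary coefficients to vanish on the deepest strata.
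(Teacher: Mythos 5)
Your high-level plan matches the paper's: reduce to a log-resolution, invoke the Hacon--McKernan decomposition of $\pi^{-1}(x)$ into pieces rationally connected modulo the previous ones, and then show the new, deepest stratum $\mathrm{LCS}(F)$ is special by comparing the core map with Campana's orbifold subadditivity. The paper makes this precise in Theorem~\ref{thm:descrfibers}: properties (1) and (2) are directly \cite[Theorem~5.1]{HM07}, and the new input is point (3), proved by writing $K_Y + \Gamma \sim_{\mathbb{Q}} \pi^{*}K_X + G$ with $G, \Gamma$ effective $\pi$-exceptional divisors without common component, setting $\Theta := (\Gamma-E)|_E$, and then squeezing $\kappa(E, K_E+\Theta)$ between $0$ (from below, on the general fiber $T$ of the core, since $K_E + \Theta \sim_{\mathbb{Q}} G|_E$ is effective) and $0$ (from above, globally) and feeding both inequalities into Campana's additivity $\kappa(E, K_E+\Theta) = \kappa(T, (K_E+\Theta)|_T) + \dim C$.

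The genuine gap in your proposal is the mechanism for the upper bound $\kappa(E, K_E+\Theta)\le 0$, which you attribute to ``orbifold anti-nefness of $K_W+\Delta|_W$ along the contracted fiber.'' This is wrong on two counts. First, the restricted adjoint divisor is not anti-nef: on the deepest stratum one has $K_E + \Theta \sim_{\mathbb{Q}} G|_E$ with $G$ \emph{effective}, which is the opposite sign from what you assert, and is precisely what gives the \emph{lower} bound $\kappa(T,(K_E+\Theta)|_T) \geq 0$ on the fibers of the core. Second, and more importantly, the actual upper bound is not a positivity statement at all; it comes from the Hacon--McKernan lifting theorem (the paper's Theorem~\ref{thm:refHM06}, i.e.\ \cite[Theorem~5.2]{HM07}), which lifts sections of $\mathcal{O}_E(m(K_E+\Theta) + H|_E)$ to sections of $\mathcal{O}_Y(m(K_Y+\Gamma)+H+A) \cong \mathcal{O}_Y(mG+H+A)$, and since $G$ is $\pi$-exceptional the resulting pushforwards stabilize independently of $m$, forcing $\kappa(E,K_E+\Theta)\le 0$. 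Without this extension/exceptionality argument, the claim ``contracted to a point $\Rightarrow$ the core must be trivial'' has no proof, and as you yourself flag, this is exactly where your sketch leaves the main obstacle unresolved. You should replace the anti-nefness heuristic by: (a) effectivity of $G|_E$ gives $\kappa(T,\cdot)\ge 0$; (b) the Hacon--McKernan lifting plus $\pi$-exceptionality of $G$ gives $\kappa(E,\cdot)\le 0$; (c) Campana subadditivity then forces $\dim C = 0$.
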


The proof is strongly inspired by the work of Hacon-McKernan \cite{HM07}. We will derive the result from the following more precise statement.

\begin{thm} \label{thm:descrfibers} Let \(X\) be a complex analytic space with log-canonical singularities. Let \(p \in X\) be a point, and let \(\pi : Y \to X\) be a resolution of singularities such that both \(F := \pi^{-1}(p)\) and \(\pi^{-1}(p) \cup \pi^{-1}(\mathrm{Sing}(X))\) are divisors with normal crossings. For any prime divisor \(D \subset Y\), denote by \(a_{D}\) its discrepancy. Denote by
	\[
		\mathrm{LCS}(F) = \bigcup_{E \subset F, a_{E} = -1} E.
	\]
	the purely log-canonical locus in \(F\). Then there exists a decomposition \(F = F_{0} \cup F_{1} \cup \dotsc \cup F_{m}\), in which each \(F_{i}\) is a union of prime divisors, satisfying 
	\begin{enumerate}
		\item \(F_{0} = \mathrm{LCS}(F)\);
		\item for any \(i \geq 0\), \(F_{i+1}\) is rationally connected modulo \(F_{i}\).
		\item Let \(E\) be a component of \(F_{0}\), and let \(E^{\circ}\) be the set of points of \(E\) that do not belong to any other divisor \(D\) with \(a_{D} = -1\).  Then the quasi-projective variety \(E^{\circ}\) is special. In particular, \(E\) itself is special.
	\end{enumerate}

	{\em A fortiori}, \(F\) is \scc.
\end{thm}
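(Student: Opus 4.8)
The overall strategy follows Hacon--McKernan closely, with the main new ingredient being the use of Campana's subadditivity of the orbifold Kodaira dimension to handle the log-canonical (as opposed to klt) strata. First I would set up the local picture: replacing $X$ by a Stein neighborhood of $p$, write $K_Y + \Delta = \pi^*K_X + \sum_{a_D > -1}(1+a_D)D$ where $\Delta = \sum_{a_D = -1} D$ is the reduced boundary supported on the discrepancy $-1$ components of $F$, together with the strict transform of $\mathrm{Sing}(X)$ if needed; here I use that $\pi^{-1}(p)\cup\pi^{-1}(\mathrm{Sing}(X))$ has normal crossings so $(Y,\Delta)$ is a log-smooth lc pair, and that $\pi$ is birational so $\pi^*K_X$ is $\pi$-trivial. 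The decomposition $F = F_0 \cup F_1 \cup \dots \cup F_m$ with $F_0 = \mathrm{LCS}(F)$ should be produced by a downward induction argument on the stratification: using the fact that $-(K_Y+\Delta)$ is $\pi$-nef and $\pi$-big restricted to components with $a_D>-1$, one shows via the cone theorem / length of extremal rays (exactly as in \cite{HM07}) that components not in $F_0$ are covered by rational curves contracted by $\pi$, and that each such curve meets the previously-built part. Organizing these curves gives that $F_{i+1}$ is rationally connected modulo $F_i$; since $\mathbb{P}^1$ is special and chains of rational curves are chains of special varieties, this yields everything except point (3).

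The heart of the matter is point (3): showing that for a component $E$ of $F_0 = \mathrm{LCS}(F)$, the open stratum $E^\circ$ (points of $E$ lying on no other discrepancy-$-1$ divisor) is special in Campana's sense. By adjunction on the lc pair $(Y,\Delta)$, the different $\Delta_E$ on $E$ makes $(E,\Delta_E)$ an lc pair with $K_E + \Delta_E = (K_Y+\Delta)|_E \equiv_\pi \big(\sum_{a_D>-1}(1+a_D)D\big)|_E$, and the latter is $(\pi|_E)$-nef. Restricting further to $E^\circ$: the support of $\Delta_E$ meeting $E^\circ$ consists precisely of the traces of the $a_D > -1$ divisors, so on a log-compactification of $E^\circ$ we get an orbifold pair whose boundary is supported on divisors with coefficients $1+a_D \in (0,1)$ — i.e. a genuine Campana orbifold $(\overline{E^\circ}, \Gamma)$ with $K_{\overline{E^\circ}} + \Gamma$ pseudo-effective but, crucially, $\pi$-trivial-up-to-the-nef-part, hence of Kodaira dimension $0$ in the appropriate orbifold sense relative to $\pi$. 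Now suppose $E^\circ$ were not special: then it admits a rational fibration of general type, equivalently a Bogomolov sheaf, equivalently (after passing to the core $c: (\overline{E^\circ},\Gamma)\dashrightarrow C$ of Theorem~\ref{thm:core}) $\dim C > 0$ with $c$ of general type. Applying Campana's orbifold additivity (the $C_{n,m}^{orb}$-type inequality for the core fibration, as in \cite{Cam04}) to $c$ and a very general special fiber, one gets $\kappa(\overline{E^\circ}, K+\Gamma) \geq \kappa(C, K_C + \Delta(c,\Gamma)) = \dim C > 0$, contradicting that the orbifold canonical class is (relatively) trivial. This forces $\dim C = 0$, i.e. $E^\circ$ is special; and since the image of a special variety under a dominant rational map is special (Proposition~\ref{prop:invariancespecial}(i)), applying this to $E^\circ \hookrightarrow E$ resolved and compactified gives that $E$ itself is special.

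The main obstacle I anticipate is making the bookkeeping around adjunction and the different completely rigorous in the analytic, merely lc setting — in particular verifying that the restriction $(K_Y+\Delta)|_E$ has the claimed form with coefficients genuinely in $[0,1)$ away from the other lc components, so that Campana's orbifold machinery (which is stated for orbifold pairs with coefficients in $[0,1]\cap\mathbb{Q}$) applies to $E^\circ$ and that the orbifold Kodaira dimension computation is legitimate relative to $\pi$. A secondary subtlety is globalizing: $\pi$-nefness and $\pi$-triviality are local over $X$, whereas specialness and the core fibration are stated for projective/quasi-projective varieties, so one must argue on a suitable projective compactification of $E^\circ$ and check the orbifold Kodaira dimension there is still $0$ — this should follow because the boundary divisor added at infinity can be taken with coefficient $1$ (reduced) if one works with the open $E^\circ$, and the relevant $\kappa$ is computed from sections that are $\pi$-fiberwise, but it needs care. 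Finally, the last line "\emph{a fortiori} $F$ is \scc" is immediate: combine (1)--(3), since $F_0$ is a union of special divisors and the $F_i$ are rationally connected modulo $F_{i-1}$, so any point of $F$ is joined by a chain of special subvarieties (rational curves plus the components of $F_0$) to any other.
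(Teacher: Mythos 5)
Your high-level strategy is the right one, and it matches the paper's: produce the decomposition via the Hacon--McKernan machinery, then handle the discrepancy-$(-1)$ components by proving specialness of the orbifold pair via the core fibration and Campana's orbifold additivity. You even correctly identify the orbifold boundary coefficients $1+a_D$, the use of Proposition~\ref{prop:invariancespecial}(i) to pass from $E^\circ$ to $E$, and the fact that the bookkeeping around the restricted pair is where the difficulty lies.

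However, there is a genuine gap at the crucial point, and you flag it yourself without resolving it. Your argument for specialness of $E^\circ$ needs a \emph{global} upper bound of the form $\kappa(E,K_E+\Theta)\le 0$ (with $\Theta=(\Gamma-E)|_E$ the orbifold boundary), but what you offer is that $K_Y+\Gamma$ is ``$\pi$-trivial-up-to-the-nef-part'' and hence the orbifold canonical of $(\overline{E^\circ},\Gamma)$ has ``Kodaira dimension $0$ in the appropriate orbifold sense relative to $\pi$.'' Relative $\pi$-triviality (or $\pi$-nefness) does not control the \emph{absolute} Kodaira dimension of the compact divisor $E$, which is what Campana's additivity theorem speaks to. The missing ingredient is precisely the Hacon--McKernan extension theorem (Theorem~\ref{thm:refHM06} in the paper): one lifts sections in $H^0(E,m(K_E+\Theta)+H|_E)$ to sections of $\pi_*\mathcal{O}_Y(m(K_Y+\Gamma)+H+A)$, and because $K_Y+\Gamma\sim_{\mathbb{Q}}G$ with $G$ effective and $\pi$-exceptional, the target sheaf is contained in $\pi_*\mathcal{O}_Y(H+A)^{\vee\vee}$ independently of $m$, which is what forces $\kappa(E,K_E+\Theta)\le 0$. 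Without this extension step there is no contradiction with $\dim C>0$ in your argument by contradiction; the ``needs care'' you flag is in fact the heart of the proof and requires a nontrivial lifting theorem rather than local bookkeeping. You also leave implicit the complementary lower bound $\kappa(T,(K_E+\Theta)|_T)\ge 0$ on a general fiber $T$ of the core, which the paper gets from $K_E+\Theta\sim_{\mathbb{Q}}G|_E\ge 0$; you need both bounds to close the additivity argument, and your ``$\ge\kappa(C,\cdot)=\dim C$'' step silently uses this.
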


Since any resolution of singularities of \(X\) can be dominated by one as in Theorem~\ref{thm:descrfibers}, we see from Proposition~\ref{prop:invariancespecial},~{\em (i)} that Theorem~\ref{thm:descrfibers} implies Theorem~\ref{thm:scc}.
\medskip

The existence of a decomposition \(F = F_{0} \cup F_{1} \cup \dotsc F_{m}\) for which Theorem~\ref{thm:descrfibers}, (1) and (2) hold, follows directly from \cite[Theorem~5.1]{HM07}. As we will see right away, the third point can be proven in a quite similar manner, replacing the use of the MRC fibration by Campana's core fibration.

\begin{proof}[Proof of Theorem~\ref{thm:descrfibers}, (3)] {\em Step 1. Preparation of the core fibration.} Let \(E \subset \mathrm{LCS}(F)\) be an irreducible component, and let \(\Lambda\) be the sum of the divisors \(D\) satisfying \(D \neq E\) and \(a_{D} = -1\). By definition, one has \(E^{\circ} = E - \Lambda\). Consider the core fibration \(c : (E, \Lambda|_{E}) \dashedrightarrow C\). We may blow-up further \(F\) at points of \(E\) to change the birational model of \(c\), so  we can assume without loss of generality that
	\[
		c : E \rightarrow C
	\]
	is a holomorphic fibration such that \(\kappa(C, K_{C} + \Delta(c)) = \dim C\). Our goal is to show that \(E^{\circ}\) is special, i.e.\ that this latter number is equal to \(0\).
	\medskip

	\noindent
	{\em Step 2. We introduce some divisors on \(E\).} We may write
	\[
		K_{Y} + \Gamma \sim_{\mathbb{Q}} \pi^{\ast} K_{X} + G,
	\]
	where \(G, \Gamma\) are effective, \(\pi\)-exceptional without common component, and \(E\) is a component of \(\Gamma\) with multiplicity \(1\). We may also ask that \(\Gamma + G\) has simple normal crossing support. Note that we have \(\Gamma \geq \Lambda + E\). Thus, if we let \(\Theta = (\Gamma - E)|_{E}\), one has
	\[
		K_{E} + \Theta = (K_{Y} + \Gamma)|_{E}
		\sim_{\mathbb{Q}}
		G|_{E}
	\]
	This latter divisor is effective, so if \(T \subset E\) denotes the general fiber of \(c\), one has
	\begin{equation} \label{eq:ineqhigher}
	\kappa(T, (K_{E} + \Theta)|_{T}) \geq 0.
	\end{equation}

	\noindent
	{\em Step 3. One shows that \(\kappa(E, K_{E} + \Theta) \leq 0\).} As in \cite{HM07}, we are going to apply a result of \cite{HM06}, stated below as Theorem~\ref{thm:refHM06}. With the notation of this statement, let us take \(C = 0\), and check that all hypotheses of the theorem are met. The first items {\em (a)} and {\em (b)} are obviously satisfied. The divisor \(K_{E} + \Theta \sim_{\mathbb{Q}} G|_{E}\) is effective as we saw above, so {\em (c)} holds as well. Finally, {\em (d)} holds since \(K_{Y} + \Gamma \sim_{\mathbb{Q}} G\) where \(G\) does not contain any stratum of \(\Gamma\).
	\medskip

	Thus, any section in \(H^{0}(E, m(K_{E} + \Theta) + H|_{E})\) lifts to a section of 
	\[
		\pi_{\ast} \mathcal{O}_{Y}(m(K_{Y} + \Gamma) + H + A)
		\cong \pi_{\ast} \mathcal{O}_{Y}(m G + H + A)
		\subset \pi_{\ast} \mathcal{O}_{Y}( H + A)^{\vee \vee}
	\]
	where the last inclusion holds since \(G\) is \(\pi\)-exceptional. Since the latter coherent sheaf does not depend on \(m\), this shows that
	\begin{equation} \label{eq:ineqlower}
		\kappa(E, K_{E} + \Theta) \leq 0.
	\end{equation}

	{\em Step 4. One applies the orbifold additivity of the Kodaira dimension.} Let \(\Delta(c, \Theta)\) be the \(\mathbb{Q}\)-divisor on \(C\) adapted to the fibration \((E, \Theta) \to C\) as it is defined in \cite[1.29]{Cam04}.
	Since \(\Gamma \geq \Lambda + E\), one has \(\Theta \geq \Lambda|_{E}\), and thus one has \(\Delta(c, \Theta) \geq \Delta(c, \Lambda|_{E})\), so that
	\[
		\kappa(C, K_{C} + \Delta(c, \Theta)) \geq \kappa(C, K_{C} + \Delta(c, \Lambda|_{E})),
	\]
	which implies that the pair \((C, \Delta(c, \Theta))\) is of general type.

	Further blowing-up \(E\) and \(C\) does not change the validity of the previous steps. One can thus assume that the morphism \((E, \Theta) \to C\) is {\em high} and {\em prepared} in the terminology of \cite{Cam04}. Thus, Campana's orbifold additivity theorem \cite[Theorem~4.2]{Cam04} (see also the discussion of \cite[p. 342]{Cam04}) implies that
	\[
		\kappa(E, K_{E} + \Theta) = \kappa(T, (K_{E} + \Theta)|_{T}) + \dim(C),
	\]
	for a general fiber \(T\) of \(c : E \to C\). Using \eqref{eq:ineqhigher} and \eqref{eq:ineqlower}, one concludes that \(\dim(C) = 0\), which implies that the pair \((E, \Theta)\) is special. Since for any component of \(E' \subset F_{0}\) with \(E' \neq E\), the divisor \(E'|_{E}\) appears in \(\Theta\) with multiplicity \(1\), this implies {\em a fortiori} that \(E^{\circ} = E \setminus \bigcup\limits_{E' \subset F_{0}, E' \neq E} E'\) is special.
\end{proof}

The next theorem is proved by Hacon--McKernan \cite{HM06}. We have replaced their symbol \(X\) (resp. \(S\)) by \(E\) (resp. \(X\)) to match our own notation.

\begin{thm}[\cite{HM06}, see Theorem~5.2 in \cite{HM07}] \label{thm:refHM06}
	Let \(Y\) be a smooth complex space, and let \(E \subset Y\) be a divisor. Let \(\pi : Y \to X\) be a projective morphism. Let \(H\) be a sufficiently \(\pi\)-very ample divisor, and let \(A = (\dim E + 1)H\). Assume that
	\begin{enumerate}[(a)]
		\item \(\Gamma\) is a \(\mathbb{Q}\)-divisor on \(Y\) with simple normal crossing support such that \(E \subset \Gamma\) has coefficient \(1\) and \(K_{Y} + \Gamma\) is log-canonical ;
		\item there exists a \(\mathbb{Q}\)-divisor \(C \geq 0\) on \(Y\) whose support does not contain \(E\);
		\item \(K_{E} + \Theta\) is pseff, where \(\Theta = (\Gamma - E)|_{E}\), and
		\item there exists a \(\mathbb{Q}\)-divisor \(G \geq 0\) on \(Y\) such that \(G \sim_{\mathbb{Q}} K_{Y} + \Gamma + C\), and \(|G|\) does not contain any log-canonical center of \((Y, \lceil \Gamma \rceil)\).
	\end{enumerate}
	Then for \(m \gg 0\) divisible enough, the image of
	\[
		\pi_{\ast} \mathcal{O}_{Y}(
		m(K_{Y} + \Gamma + C) + H + A)
		\longrightarrow
		\pi_{\ast} \mathcal{O}_{E}(
		m(K_{E} + \Theta + C|_{E}))
	\]
	contains the image of \(\pi_{\ast} \mathcal{O}_{F}(m(K_{E} + \Theta) + H|_{E})\), considered as a subsheaf of \(\pi_{\ast} \mathcal{O}_{F}(m(K_{E} + \Theta + C|_{E}) + H|_{E} + A|_{E})\).
\end{thm}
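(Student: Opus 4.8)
This statement is the extension theorem of Hacon--McKernan \cite{HM06}; the plan I would follow combines the standard adjoint/multiplier-ideal package with their inductive lifting argument, and for the delicate step I would ultimately appeal to \cite{HM06} rather than reproduce the bookkeeping.

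\textbf{Reductions.} First I would note that the assertion is local on \(X\), so one may replace \(X\) by an affine open; then \(\pi_\ast(-)=H^0(Y,-)\), higher cohomology over the base vanishes, and it suffices to prove surjectivity on global sections together with the relative vanishing statements used below. Choosing \(m\) divisible enough makes \(m(K_Y+\Gamma+C)\), \(mG\), \(mC\), \(m\Gamma\) all Cartier with \(mG\sim m(K_Y+\Gamma+C)\). Put \(L:=m(K_Y+\Gamma+C)+H+A\). Adjunction gives \(L|_E=m(K_E+\Theta+C|_E)+H|_E+A|_E\), and every section of the subsheaf \(\pi_\ast\mathcal{O}_E(m(K_E+\Theta)+H|_E)\) is the product of a section \(\tau\) of \(m(K_E+\Theta)+H|_E\) with the canonical section of the effective divisor \(mC|_E+A|_E\) (effective by (b) and ampleness of \(A\)). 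The goal is thus to lift every such product to \(H^0(Y,L)\).

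\textbf{The cohomological engine.} The formal ingredient is, for a suitable effective \(\mathbb{Q}\)-divisor \(\Delta\) on \(Y\) with \(E\not\subset\operatorname{Supp}\Delta\), the adjoint-ideal short exact sequence of \(E\) in \((Y,E+\Delta)\) tensored with \(\mathcal{O}_Y(L)\),
\[
0\to\mathcal{O}_Y(L-E)\otimes\mathcal{J}(Y,\Delta)\to\mathcal{O}_Y(L)\otimes\operatorname{adj}_E(Y,E+\Delta)\to\mathcal{O}_E(L|_E)\otimes\mathcal{J}(E,\Delta|_E)\to0,
\]
together with relative Nadel vanishing. The whole point of dragging the ample class \(H+A\) along inside \(L\) is that \(\Delta\) can be arranged so that \((L-E)-K_Y-\Delta\) is \(\pi\)-ample; then \(R^1\pi_\ast\big(\mathcal{O}_Y(L-E)\otimes\mathcal{J}(Y,\Delta)\big)=0\), so over the affine \(X\) every section of \(\mathcal{O}_E(L|_E)\otimes\mathcal{J}(E,\Delta|_E)\) lifts to a section of \(\mathcal{O}_Y(L)\otimes\operatorname{adj}_E\subseteq\mathcal{O}_Y(L)\) restricting correctly to \(E\). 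The divisor \(C\) serves only to make \(K_Y+\Gamma+C\sim_\mathbb{Q}G\geq0\) effective without disturbing the restriction to \(E\), using \(E\not\subset\operatorname{Supp}C\) from (b).

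\textbf{The hard part.} What is not formal is the choice of \(\Delta\) guaranteeing that the section to be extended actually lies in \(\mathcal{J}(E,\Delta|_E)\). One cannot simply take \(\Delta=(m-1)G+(\Gamma-E)+C\): the singularities of \((m-1)G|_E\) grow with \(m\) and would force \(\tau\) to vanish along components of \(G|_E\). Instead one must build \(\Delta\) from previously constructed extensions, rescaled so that its singularities stay bounded independently of \(m\) -- the diophantine-approximation idea of Takayama and of Hacon--McKernan -- and then use inversion of adjunction to place \(\tau\) inside \(\mathcal{J}(E,\Delta|_E)\) before applying the engine above. In this induction: the effectivity of \(G\) in (d) provides the seed section; the condition in (d) that \(|G|\) meets no log-canonical centre of \((Y,\lceil\Gamma\rceil)\) -- hence none of the lc pair \((E,\Theta)\), which is log-canonical by adjunction from (a) -- keeps the multiplier ideal on \(E\) from swallowing that seed; the pseudo-effectivity of \(K_E+\Theta\) in (c) prevents the induction from degenerating; and the fact that one copy of the \(\pi\)-very ample \(H\) is consumed at each stage is precisely why the twist is taken to be \(A=(\dim E+1)H\), a Castelnuovo--Mumford/Angehrn--Siu type budget. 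I expect this inductive lifting to be the genuine obstacle, and for it I would cite \cite{HM06}; the reductions and the cohomological engine are essentially formal once it is in place.
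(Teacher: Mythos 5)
The paper does not prove this theorem: it is imported verbatim from Hacon--McKernan (\cite{HM06}, Theorem~5.2 of \cite{HM07}) and used as a black box in the proof of Theorem~\ref{thm:descrfibers}. Your sketch is a reasonable roadmap of the adjoint-ideal/Nadel-vanishing engine and the diophantine inductive lifting behind the Hacon--McKernan extension theorem, and like the paper you ultimately defer to \cite{HM06} for the genuinely hard step, so the two treatments are in agreement.
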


\begin{corol}  \label{corol:strataspecial}
	Let \(X\) be a complex analytic space with log-canonical singularities. Let \(p \in X\) be a point, and let \(\pi : Y \to X\) be a resolution of singularities such that both \(\pi^{-1}(p)\) and \(\pi^{-1}(p) \cup \pi^{-1}(\mathrm{Sing}(X))\) are divisors with normal crossings. Assume that for all irreducible exceptional divisor, one has \(a_{D} = - 1\). Then, any smooth locally closed stratum \(F\) of \(\pi^{-1}(p)\) is a special quasi-projective variety.
\end{corol}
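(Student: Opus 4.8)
The plan is to realise each stratum as the interior of a log-smooth log-canonical Calabi--Yau pair produced by adjunction, and then to prove that such pairs are special by the core-fibration argument already used at the end of the proof of Theorem~\ref{thm:descrfibers}\,(3). First I would set up notation: let $E_{1},\dots,E_{r}$ be the irreducible components of $\pi^{-1}(p)$; a smooth locally closed stratum $F$ is a connected component of $(E_{1}\cap\dots\cap E_{k})\setminus\bigcup_{i>k}E_{i}$ for some $k\ge 1$, and I write $\overline{F}$ for the component of $E_{1}\cap\dots\cap E_{k}$ containing it, so that $F=\overline{F}\setminus\partial F$ with $\partial F:=\overline{F}\cap\bigcup_{i>k}E_{i}$. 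Since $\pi^{-1}(p)$ has normal crossings, $\overline{F}$ is smooth and $\partial F$ is SNC on it; taking $\pi$ projective over $X$ (which is harmless) makes $\overline{F}$ projective. Let now $\Gamma$ be the reduced sum of all $\pi$-exceptional prime divisors. Because $X$ has log-canonical singularities and every discrepancy equals $-1$, one has the crepant identity $K_{Y}+\Gamma\sim_{\mathbb{Q}}\pi^{*}K_{X}$; moreover $\mathrm{Supp}\,\Gamma\subseteq\pi^{-1}(\mathrm{Sing}(X))$, so the hypothesis on $\pi^{-1}(p)\cup\pi^{-1}(\mathrm{Sing}(X))$ guarantees that $\Gamma$ is a reduced SNC divisor near $\pi^{-1}(p)$, with each $E_{i}$ among its components (coefficient $1$). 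Iterating the adjunction formula for an SNC divisor on a smooth manifold along $E_{1},\dots,E_{k}$ then gives
\[
K_{\overline{F}}+\Delta_{F}\ \sim_{\mathbb{Q}}\ (\pi^{*}K_{X})\big|_{\overline{F}},\qquad \Delta_{F}:=\Big(\Gamma-\sum_{i\le k}E_{i}\Big)\Big|_{\overline{F}},
\]
where $\Delta_{F}$ is a reduced SNC divisor on $\overline{F}$ with $\partial F\le\Delta_{F}$. As $\pi$ contracts $\overline{F}$ to the point $p$, the right-hand side is $\mathbb{Q}$-linearly trivial, so $(\overline{F},\Delta_{F})$ is a log-smooth log-canonical Calabi--Yau pair.

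The second step is to show such a pair is special, copying the end of the proof of Theorem~\ref{thm:descrfibers}\,(3) with the relation $K_{\overline{F}}+\Delta_{F}\sim_{\mathbb{Q}}0$ playing the role of the effective divisor ``$G|_{E}$'' there. Let $c:(\overline{F},\Delta_{F})\dashrightarrow C$ be the core fibration (Theorem~\ref{thm:core}). Blowing up $\overline{F}$ and $C$ further, which does not change the computation and replaces $K_{\overline{F}}+\Delta_{F}$ by a $\mathbb{Q}$-linearly equivalent effective exceptional divisor $G'\ge 0$, one may assume $c:\overline{F}\to C$ is a holomorphic, high, prepared fibration. Then $\kappa(\overline{F},K_{\overline{F}}+\Delta_{F})=\kappa(\overline{F},G')=0$, while for a general fibre $T_{z}$ of $c$ one has $(K_{\overline{F}}+\Delta_{F})|_{T_{z}}\sim_{\mathbb{Q}}G'|_{T_{z}}\ge 0$, hence $\kappa\big(T_{z},(K_{\overline{F}}+\Delta_{F})|_{T_{z}}\big)\ge 0$. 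Campana's orbifold additivity theorem \cite[Theorem~4.2]{Cam04} now gives
\[
0\ =\ \kappa(\overline{F},K_{\overline{F}}+\Delta_{F})\ =\ \kappa\big(T_{z},(K_{\overline{F}}+\Delta_{F})|_{T_{z}}\big)+\dim C\ \ge\ \dim C,
\]
so $\dim C=0$ and $(\overline{F},\Delta_{F})$ is special.

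To finish, observe that since $\partial F\le\Delta_{F}$ are reduced SNC divisors, a Bogomolov sheaf for $(\overline{F},\partial F)$ is in particular one for $(\overline{F},\Delta_{F})$ (cf.\ \cite[p.~542]{Cam04}); hence $(\overline{F},\partial F)$ is special as well, and by definition $F=\overline{F}\setminus\partial F$ is a special quasi-projective variety. The main obstacle I anticipate is not conceptual but bookkeeping: one has to use the normal-crossing hypothesis on $\pi^{-1}(p)\cup\pi^{-1}(\mathrm{Sing}(X))$ precisely to know that $\Gamma$ is SNC near $\pi^{-1}(p)$ and that iterated adjunction yields the clean identity above, and one has to notice that the extra components of $\Delta_{F}$ coming from exceptional divisors \emph{not} contained in $\pi^{-1}(p)$ only enlarge $\partial F$, so that passing from $(\overline{F},\Delta_{F})$ to $(\overline{F},\partial F)$ goes in the harmless direction. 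The analytic and additivity input is then exactly what was already needed for Theorem~\ref{thm:descrfibers}.
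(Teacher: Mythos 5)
Your proof is correct, and it reaches the conclusion by a genuinely different (and slightly more direct) route than the paper. The paper's own argument avoids doing adjunction down to the stratum: it blows up $Y$ along the smooth center $\overline{F}$, observes that the new exceptional divisor $E$ again has discrepancy $-1$ (so the hypotheses of Theorem~\ref{thm:descrfibers} still hold after the blow-up), applies Theorem~\ref{thm:descrfibers}\,(3) to conclude that $E^{\circ}$ is special, and then transports specialness to $F$ through the dominant morphism $E^{\circ}\to F$ via Proposition~\ref{prop:invariancespecial}\,(i). Your approach instead performs the iterated adjunction $K_{\overline{F}}+\Delta_{F}\sim_{\mathbb{Q}}(\pi^{*}K_{X})|_{\overline{F}}\sim_{\mathbb{Q}}0$ directly on the closed stratum $\overline{F}$ and then reruns the core-fibration/orbifold-additivity computation. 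What your route buys is that the $\mathbb{Q}$-triviality of $K_{\overline{F}}+\Delta_{F}$ is established outright, so the inequality $\kappa(\overline{F},K_{\overline{F}}+\Delta_{F})\leq 0$ is immediate and there is no need for the lifting statement of Theorem~\ref{thm:refHM06} that underlies Step~3 of the proof of Theorem~\ref{thm:descrfibers}\,(3); you only invoke Campana's additivity. What the paper's route buys is a one-line reduction that reuses Theorem~\ref{thm:descrfibers} as a black box and never needs to set up the adjunction bookkeeping on the stratum. Your final remark, that passing from $\Delta_{F}$ to the smaller reduced SNC divisor $\partial F$ preserves specialness because Bogomolov sheaves for the smaller boundary are also Bogomolov sheaves for the larger one, is the correct direction and closes the argument.
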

\begin{proof}
	Let \(\rho : Y' \to Y\) be the blowing-up of \(Y\) along the smooth subvariety \(\overline{F}\), and denote by \(E\) its exceptional divisor. A simple computation shows that \(a_{E} = -1\), and thus all \((\pi\circ\rho)\)-exceptional divisors have discrepancy \(-1\).

Thus, denoting by \(E^{\circ}\) the points of the exceptional divisor \(E\) that do not belong to any other component, one sees by Theorem~\ref{thm:descrfibers} that \(E^{\circ}\) is special. But since the blowing-up gives a dominant morphism \(E^{\circ} \to F\), one sees that \(F\) is special by Proposition~\ref{prop:invariancespecial}~{\em (i)}.
\end{proof}

\section{Variations of Hodge structures on special varieties} \label{sec:VHSspecial}

\subsection{Complex algebraic monodromy group and isotriviality of p-\(\mathbb{C}\)VHS} \label{sec:monodromoy}

In this section, we will prove an isotriviality statement for polarized \(\mathbb{C}\)-VHS on special quasi-projective manifold, reminiscent in this setting of a similar theorem due to Taji \cite{Taji16} for families of canonically polarized varieties. The result would be a consequence of Deligne's theorem of the fixed part in the case of \(\mathbb{Q}\)-VHS, but does not seem to have been explicitly stated for p-\(\mathbb{C}\)VHS. Several of the following facts can also be seen as an adaptation to the quasi-projective case of the discussion of \cite[{\em Groups of Hodge type}, pp.46-48]{Sim92}. We will not try to recall the general theory, but will gather only the necessary facts relevant to our purposes. 
\medskip

Let \(X\) be a complex variety, endowed with a polarized \(\mathbb{C}\)-VHS \(\mathbb{V} := (E, \nabla, h, E^{p, q})\) of weight \(n\). Recall that this means that \(E = \bigoplus_{p + q = 0} E^{p, q}\) is a direct sum of \(\mathcal{C}^{\infty}\) complex vector bundles, endowed with a flat connection \(\nabla : A^{0}(X, E) \to A^{1}(X, E)\) that maps \(A^{0}(X, E^{p, q})\) into
\[
	A^{1, 0}(X, E^{p, q}) 
	\oplus A^{1, 0}(X, E^{p-1, q+1})
	\oplus A^{0, 1}(X, E^{p, q})
	\oplus A^{0, 1}(X, E^{p+1, q-1}).
\]
Accordingly, one gets a decomposition \(\nabla = \partial + \theta + \overline{\partial} + \theta^{\ast}\). The polarization \(h\) on \(E\) is a \(\nabla\)-flat hermitian pairing for which the \(E^{p, q}\) are orthogonal, with \(h\) of definite sign \((-1)^{p}\) on each \(E^{p, q}\). If \(e = \sum_{p, q} e_{p, q}\) is a section in \(E\), we will write
\[
	Q(e, e) = \sum_{p, q} (-1)^{p} h(e, \overline{e}); 	
\]
this defines a positive definite hermitian form.
\medskip

Fix a base point \(o \in X\), and let \(G_{\mathbb{R}} = \mathrm{U}(E_{o}, h_{o})\). This is a real algebraic group that acts transitively on the period domain \(\mathcal{D}\) parametrizing the Hodge structures of type given by \((E_{o}, E_{o}^{p,q}, h_{o})\). From the data associated to \(\mathbb{V}\), one gets a representation \(\rho : \pi_{1}(X, o) \to G_{\mathbb{R}}\) and a \(\rho\)-equivariant {\em period map} \(\psi : \widetilde{X} \to \mathcal{D}\).
\medskip

Another point of view on \(\mathcal{D}\) is to consider the morphism of real algebraic groups
\[
	\alpha : \mathrm{U}(1) \to G_{\mathbb{R}}
\]
that can be associated to the decomposition \(E_{o} = \bigoplus_{p, q} E_{o}^{p, q}\), letting a point \(z = e^{i\theta} \in \mathrm{U}(1)\) acts on \(E_{p,q}\) by \(z^{p} \overline{z}^{q}\). Then \(\mathcal{D}\) is in natural 1-1 correspondence with the connected component of \(\alpha\) in the \(G(\mathbb{R})\)-conjugacy class of \(\alpha\) in \(\mathrm{Hom}_{\mathbb{R}}(\mathrm{U}(1), G_{\mathbb{R}})\), in such a way that a point \(g \cdot o \in \mathcal{D}\) is sent to \(g \alpha g^{-1}\).

\begin{defi}
	We let the {\em real algebraic monodromy group} \(H_{\mathbb{R}}\) be the Zariski closure of the image of \(\rho\) in the real algebraic group \(G_{\mathbb{R}}\). 
\end{defi}

Our first goal is to show the following.

\begin{lem} \label{lem:inclusionDeligne}
	Assume \(X\) is a quasi-projective variety. Then \(\alpha(\mathrm{U}(1)) \subset H(\mathbb{R})\).
\end{lem}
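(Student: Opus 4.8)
\emph{Strategy.} The plan is to exhibit the Lie algebra $\mathfrak{h}_{\mathbb{R}}$ of $H_{\mathbb{R}}$ as the real structure of a sub-Hodge-structure of the adjoint variation $\mathrm{End}(\mathbb{V})$, so that $H_{\mathbb{R}}$ becomes a group of Hodge type in the sense of \cite[pp.~46--48]{Sim92}, and then to read off that the circle $\alpha(\mathrm{U}(1))$ cutting out this Hodge structure lies inside $H_{\mathbb{R}}$. Quasi-projectivity of $X$ enters only through the theorem of the fixed part. Concretely I would consider the polarized $\mathbb{C}$-VHS $\mathrm{End}(\mathbb{V})$, with fiber $\mathrm{End}(E_{o})$ at $o$ and Hodge grading induced by conjugation by $\alpha(\mathrm{U}(1))$. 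Over a quasi-projective base a polarized $\mathbb{C}$-VHS has semisimple underlying local system and satisfies the theorem of the fixed part: every $\rho(\pi_{1}(X,o))$-invariant $\mathbb{C}$-subspace of $\mathrm{End}(E_{o})$ underlies a sub-$\mathbb{C}$-VHS of $\mathrm{End}(\mathbb{V})$. In the proper case this is \cite{Schmid73}; the quasi-projective statement for $\mathbb{C}$-VHS follows from the $\mathrm{SL}_{2}$-orbit theorem and its recent refinements \cite{SS22, Deng23}. Equivalently, such a subspace is stable under conjugation by $\alpha(\mathrm{U}(1))$.

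\emph{Step 1 --- $\alpha$ normalizes $H_{\mathbb{R}}$, and $\mathfrak{h}$ is a Hodge Lie algebra.} By definition $\mathfrak{h}_{\mathbb{R}} \subset \mathrm{End}(E_{o})$ is $\mathrm{Ad}\,\rho(\pi_{1})$-stable, so $\mathfrak{h}_{\mathbb{C}}$ is a $\pi_{1}$-subrepresentation of $\mathrm{End}(E_{o})$; by the previous step it is stable under conjugation by $\alpha(\mathrm{U}(1))$. Hence $\alpha(\mathrm{U}(1))$ normalizes $H_{\mathbb{R}}$, and $\mathfrak{h}_{\mathbb{C}}$ inherits a weight-$0$ Hodge structure compatible with the bracket. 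Since $\mathrm{End}(\mathbb{V})$ is polarized and $\mathfrak{h}_{\mathbb{R}}$ comes from a real algebraic subgroup of $G_{\mathbb{R}} = \mathrm{U}(E_{o}, h_{o})$, this Hodge structure is polarized, and $H_{\mathbb{R}}$ is a group of Hodge type.

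\emph{Step 2 --- containment.} It remains to promote "$\alpha(\mathrm{U}(1))$ normalizes $H_{\mathbb{R}}$" to "$\alpha(\mathrm{U}(1)) \subset H_{\mathbb{R}}$". One may assume $\mathbb{V}$ has no non-zero flat sub-$\mathbb{C}$-VHS: by the theorem of the fixed part the maximal such sub-VHS is a polarized direct summand on which $H_{\mathbb{R}}$ acts trivially and $\theta$ vanishes, and this summand is absent in the irreducible p-$\mathbb{C}$VHS to which the lemma will be applied. Under this assumption the image of the Higgs field generates $\mathfrak{h}_{\mathbb{C}}$ as a Lie algebra, so $\mathfrak{h}_{\mathbb{C}}$ has a non-zero $(-1,1)$-graded piece and its Hodge structure is non-degenerate in the sense required by the theory of groups of Hodge type. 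Since $\alpha(\mathrm{U}(1))$ normalizes the reductive group $H_{\mathbb{R}}^{\circ}$, the connected group $L := \alpha(\mathrm{U}(1)) \cdot H_{\mathbb{R}}^{\circ}$ satisfies $[L, L] \subset H_{\mathbb{R}}^{\circ}$, so $L/H_{\mathbb{R}}^{\circ}$ is a quotient of $\mathrm{U}(1)$; a non-trivial quotient would produce a central circle acting non-trivially on $\mathbb{V}$ and commuting with all of $\rho(\pi_{1})$, forcing a non-trivial flat summand and contradicting the reduction. Thus $L = H_{\mathbb{R}}^{\circ}$ and $\alpha(\mathrm{U}(1)) \subset H_{\mathbb{R}}^{\circ} \subset H(\mathbb{R})$.

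\emph{Main obstacle.} The heart of the proof is Step 2: the non-degeneracy of the Hodge structure on $\mathfrak{h}_{\mathbb{C}}$ — equivalently, the fact that the Higgs field sees all of $\mathfrak{h}_{\mathbb{C}}$ once the flat part is removed — is the one place where one genuinely uses the variation rather than the pointwise Hodge structure, and where care is needed about decomposing $\mathbb{V}$ into a flat piece plus a piece with large monodromy. Step 0 (the theorem of the fixed part for $\mathbb{C}$-VHS on quasi-projective bases) is where quasi-projectivity is essential; everything else is formal.
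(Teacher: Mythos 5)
Your proposal takes a genuinely different route from the paper. The paper proves the Lemma via the Deligne--Simpson \emph{tensor} argument: Chevalley's lemma exhibits $H_{\mathbb{C}}$ as the stabilizer of a flat line $\mathbb{C}\omega$ inside a tensor construction on $E$; the theorem of the fixed part (in the form of Lemma~\ref{lem:mochizukibundle}) is then used to show that the $(p,q)$-components of the (suitably untwisted) flat tensor are themselves flat; and the circle is read off as preserving these Hodge-pure flat tensors. You instead work directly with the adjoint representation, proving that $\mathfrak{h}_{\mathbb{C}}$ is a sub-Hodge structure of $\mathrm{End}(\mathbb{V})$, i.e.\ that $\alpha(\mathrm{U}(1))$ \emph{normalizes} $H_{\mathbb{R}}$, and then try to upgrade normalization to containment by a group-theoretic argument. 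Your Step~1 is essentially correct and corresponds to the discussion in \cite[pp.~46--48]{Sim92}; it does use the same input (the fixed-part theorem for p-$\mathbb{C}$VHS on quasi-projective bases) as the paper's Step~3.

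The gap is in Step~2, and it is not a small one: \emph{normalization does not yield containment}, and the bridge you propose does not hold as written. Your argument is that a non-trivial quotient $L/H^{\circ}$ (with $L = \alpha(\mathrm{U}(1))\cdot H^{\circ}$) would produce a circle central in $L$, commuting with $\rho(\pi_1)$ and with $\alpha(\mathrm{U}(1))$, and that this forces a flat sub-$\mathbb{C}$VHS. But the circle only gives a decomposition of $\mathbb{V}$ into $\chi$-isotypic sub-$\mathbb{C}$VHS, and there is no reason any piece should have vanishing Higgs field. A clean test case: take $\mathbb{V} = \mathbb{V}_{1}\otimes\mathbb{W}$ with $\mathbb{V}_{1}$ an irreducible non-isotrivial p-$\mathbb{C}$VHS and $\mathbb{W}$ a trivial local system of rank $2$ carrying Hodge types $(1,0)$ and $(0,1)$. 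Then $\mathbb{V}$ has \emph{no} flat sub-$\mathbb{C}$VHS (every irreducible piece carries the Higgs field of $\mathbb{V}_{1}$), yet $H_{\mathbb{R}}$ acts only through the first tensor factor, while $\alpha = \alpha_{1}\otimes\alpha_{\mathbb{W}}$ acts non-trivially on the second; the central circle $\alpha_{\mathbb{W}}(\mathrm{U}(1))$ you would produce gives the two summands $\mathbb{V}_{1}\otimes W^{1,0}$ and $\mathbb{V}_{1}\otimes W^{0,1}$, neither of which is flat. So the "contradicting the reduction" step fails. Relatedly, the claim that after removing flat summands the image of the Higgs field generates $\mathfrak{h}_{\mathbb{C}}$ as a Lie algebra is asserted but not proved; it is not a formal consequence of the set-up, and even granting it, it does not feed into a working containment argument.

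The moral is that the tensor argument of the paper is not an equivalent reformulation of your Steps~1--2 but genuinely does extra work: it produces Hodge-pure flat invariants of $H$ whose stabilizer can be compared directly with $\alpha(\mathrm{U}(1))$, bypassing the group-theoretic ambiguity between "$\alpha$ normalizes $H$" and "$\alpha\subset H$" that your proof runs into. If you want to salvage the adjoint-representation approach, you would at minimum need a precise hypothesis ruling out "Tate-like" tensor factors as in the example above (irreducibility of $\mathbb{V}$ alone is not enough, since a direct sum of Tate twists of the same irreducible is also problematic), and then an actual argument — not just an appeal to reductivity — showing that the centralizer of $H^{\circ}$ inside $\alpha(\mathrm{U}(1))\cdot H^{\circ}$ is already in $H^{\circ}$.
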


\begin{proof}

\noindent
	{\em Step 1. \(H_{\mathbb{C}}\) is the stabilizer of some unitary holomorphic sub-line bundle in some \(\bigwedge^{m} E\)}. By Chevalley's classical lemma \cite{Che51}, there exists \(m \in \mathbb{N}\) and a tensor \(\omega \in \bigwedge^{m} E_{o} - \{0\}\) such that
\[
	H_{\mathbb{C}} = \{
		g \in G_{\mathbb{C}} \; | \; g \cdot \omega \in \mathbb{C} \omega
			 \}
\]
	Let us denote by \(\sigma : H \to \mathbb{C}^{\ast}\) the induced representation, so that \(g \cdot \omega = \sigma(g) \omega\) for all \(g \in H\).

	Since the line \(\mathbb{C}\omega\) is left invariant by \(\pi_{1}(X, o)\), it defines a flat \(\mathcal{C}^{\infty}\) line subbunble \((L, \nabla_{L}) \subset \bigwedge^{m} E, \nabla))\) (we also denote by \(\nabla = \overline{\partial} + \partial + \theta + \theta^{\ast}\) the flat connection inducing the natural \(\mathbb{C}\)VHS structure on \(\bigwedge^{m} E\)). Now, by Lemma~\ref{lem:mochizukibundle} below, one has
	\[
		\bigwedge^{m} E = L \, \overset{\perp}{\oplus} \, L^{\perp}
	\]
	and \(L\), \(L^{\perp}\) are both fixed by \(\overline{\partial}\), \(\theta\) and \(\theta^{\ast}\). Note however that \(\theta\) is nilpotent and \(\mathrm{rk}\, L = 1\) so actually \(\theta|_{L} = 0\), and thus \(\theta^{\ast}|_{L} = 0\). This means that 
	\[
		\nabla_{L} = (\partial + \overline{\partial})|_{L}.
	\]
	In other words, the representation \(\sigma\) induced by \((L, \nabla_{L})\) is {\em unitary}, and \(\sigma\) factors through \(\U(1) \subset \mathbb{C}^{\ast}\).
\medskip

\noindent
	{\em Step 2. One twists the representation to assume that \(H_{\mathbb{C}}\) is the fixator of a tensor in \(\bigwedge^{m} E\).} Consider the following action of \(H\) on \(\bigwedge^{m} E_{o}\):
	\[
		h \overset{\sigma}{\cdot} v = \sigma(h)^{-1} (h \cdot v)
	\]
	It is a well defined group action since the image of \(\sigma : H \to \mathrm{U}(1)\) commutes with any element of \(G(\mathbb{C})\). The associated composition \(\pi_{1}(X, o) \to H \to GL(\bigwedge^{m} E_{o})\) is the monodromy of the natural p-\(\mathbb{C}\)VHS defined on \(L^{-1} \otimes \bigwedge^{m} E\). With this notation, one has
	\begin{align*}
		H & = \{g \in G_{\mathbb{C}} \; |\; g \overset{\sigma}{\cdot} \omega \in \mathbb{C} \omega\} \\
		  & = \{g \in G_{\mathbb{C}} \; |\; g \overset{\sigma}{\cdot} \omega = \omega\}.
	\end{align*}	
	In particular, the element \(\omega\) induces a flat section \(e\) of \(L^{-1} \otimes \bigwedge^{m} E\). Since \(\theta|_{L} = 0\), one has \(\theta(e) = 0\) and similarly \(\theta^{\ast}(e) = 0\).
\medskip 

\noindent
{\em Step 3. The \((p, q)\)-components \(e^{p, q}\) of \(e\) are all flat.} Remark first that
\begin{align*}
	\theta(e^{p, q}) = 0, \theta^{\ast}(e^{p, q}) = 0
\end{align*}
	for all \(p, q\), since \(\theta(e) = 0\), \(\theta^{\ast}(e) = 0\) and both \(\theta\), \(\theta^{\ast}\) both preserve the decomposition \(E=\bigoplus_{p, q} E^{p, q}\) up to a shift. Then, the equation \(\nabla e = 0\) becomes
	\[
		(\partial + \overline{\partial} )(e) = 0,
	\]
	but since the operator \(\partial + \overline{\partial}\) preserves the \(E^{p, q}\), one gets \(0 = (\partial + \overline{\partial})(e^{p, q}) = (\partial + \overline{\partial} + \theta + \theta^{\ast})(e^{p, q}\) for all \(p, q\). This gives \(\nabla(e^{p, q}) = 0\).
\medskip

\noindent
	{\em Step 4. One concludes that \(\alpha(\mathrm{U}(1)) \subset H_{\mathbb{C}}\).} Since the \(e^{p, q}\) are flat, they are all \(H\)-invariant, and one sees immediately by double inclusion that
	\[
		H_{\mathbb{C}} = \bigcap_{p, q}\; \{ g \in G_{\mathbb{C}} \; | \;  g \overset{\sigma}{\cdot} e^{p, q} \in \mathbb{C}\, e^{p, q}. \}
	\]
	Note however that \(g \cdot e^{p, q} \in \mathbb{C} e^{p, q}\) for all \(g\in \mathrm{U}(1)\), so \(\alpha(\mathrm{U}(1)_{\mathbb{C}})\subset H_{\mathbb{C}}\). This gives the result since \(\alpha\) is a morphism of real varieties.
\end{proof}

\begin{lem}[{\cite{Moch06}, see also \cite[Lemma~3.12]{GKPT20}}] \label{lem:mochizukibundle}
	Let \(X\) be a smooth, quasi-projective variety and let \(\mathbb{V} = (E, \overline{\partial}_{E}, \theta, h)\) be a tame and purely imaginary harmonic bundle on X with induced flat connection \(\nabla_{E}\) {\em (e.g. a p-\(\mathbb{C}\)VHS)}. If \(F \subset E\) is any complex subbundle that is invariant with respect to \(\nabla_{E}\), then \(\overline{\partial}\) restricts to both \(F\) and \(F^{\perp}\) to give Higgs-invariant, holomorphic subbundles of \((E, \overline{\partial})\).
\end{lem}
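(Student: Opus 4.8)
The plan is to bring in the data that is invisible from the Higgs/flat picture alone, namely the harmonic metric $h$ together with the semisimplicity of the category of tame, purely imaginary harmonic bundles; this is precisely where the hypotheses will be used.

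The first step is a reduction: \emph{it suffices to show that the $h$-orthogonal complement $F^{\perp}$ is again $\nabla_{E}$-flat}. Write $\nabla_{E} = D = \overline{\partial}_{E} + \partial_{E} + \theta + \theta^{\ast}_{h}$, with $\partial_{E}$ the Chern connection of $(E,\overline{\partial}_{E},h)$, and recall the canonical decomposition of the flat connection $D = \nabla^{u} + \Psi$ into its $h$-unitary part $\nabla^{u}$ and its $h$-self-adjoint $1$-form part $\Psi$, which for a harmonic bundle are exactly $\nabla^{u} = \overline{\partial}_{E} + \partial_{E}$ and $\Psi = \theta + \theta^{\ast}_{h}$. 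If both $F$ and $F^{\perp}$ are $\nabla_{E}$-flat, then $D$ is block-diagonal for the $C^{\infty}$-decomposition $E = F \oplus F^{\perp}$, and $h$ is block-diagonal by construction; since $\nabla^{u}$ and $\Psi$ are built from $D$ and $h$ alone, they are block-diagonal too, and hence so are $\overline{\partial}_{E}$ (the $(0,1)$-part of $\nabla^{u}$) and $\theta$ (the $(1,0)$-part of $\Psi$). This says exactly that $\overline{\partial}_{E}$ restricts to $F$ and to $F^{\perp}$, giving holomorphic subbundles of $(E,\overline{\partial}_{E})$, and that these restrictions are preserved by $\theta$, i.e.\ they are Higgs subbundles, which is the assertion of the lemma.

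It remains to prove that $F^{\perp}$ is flat. Here I would invoke that a tame, purely imaginary harmonic bundle on a quasi-projective manifold underlies a \emph{semisimple} local system, and that the category of such objects is semisimple (Mochizuki \cite{Moch06}; in the projective case this goes back to Corlette--Simpson, cf.\ \cite{Sim92}). Decomposing $(E,\nabla_{E})$ into flat irreducibles $E = \bigoplus_{i} \mathbb{E}_{i} \otimes \mathbb{C}^{n_{i}}$, Schur's lemma shows that any flat subbundle has the form $F = \bigoplus_{i} \mathbb{E}_{i} \otimes W_{i}$ with $W_{i} \subset \mathbb{C}^{n_{i}}$, and that the harmonic metric $h$ — a flat section of $\mathrm{Hom}(E,\overline{E})$, essentially unique by the uniqueness of harmonic metrics — decomposes compatibly enough with the isotypic pieces that the $h$-orthogonal complement of $F$ is $\bigoplus_{i} \mathbb{E}_{i} \otimes W_{i}^{\perp}$; in particular $F^{\perp}$ is $\nabla_{E}$-flat, as required by the first step. (In the case of a polarized $\mathbb{C}$-VHS one could instead deduce flatness of $F^{\perp}$ directly from polarizability and the semisimplicity of polarized variations, bypassing the general tame theory; this is how \cite[Lemma~3.12]{GKPT20} proceeds.)

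The main external input — and the only nontrivial point — is the semisimplicity invoked in the last step: over a projective base it is Corlette--Simpson, but over a quasi-projective $X$ it genuinely depends on the tameness and the purely imaginary condition at the boundary, handled by Mochizuki (and Jost--Zuo, Sabbah), so it is cited rather than reproved. By contrast, the reduction in the second paragraph is purely formal harmonic-bundle bookkeeping and presents no difficulty.
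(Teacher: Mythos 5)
The paper gives no proof here (the lemma is stated with a citation to Mochizuki \cite{Moch06} and to \cite[Lemma~3.12]{GKPT20}), so there is nothing internal to compare your argument to; I will assess it on its own terms. Your first paragraph is fine: if both $F$ and $F^{\perp}$ are $\nabla_{E}$-flat, then $\nabla^{u}=\tfrac12(D+D^{*,h})$ and $\Psi=\tfrac12(D-D^{*,h})$ are block-diagonal for $E=F\oplus F^{\perp}$ (both are built from the block-diagonal $D$ and the block-diagonal $h$), hence so are $\overline{\partial}_{E}=(\nabla^{u})^{0,1}$ and $\theta=\Psi^{1,0}$, which is exactly the conclusion of the lemma. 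One slip along the way: $h$ is \emph{not} a flat section of $\mathrm{Hom}(E,\overline{E})$ — flatness of $h$ would force $\nabla_{E}$ to be $h$-unitary, which is false except in degenerate cases. The harmonic metric is pluriharmonic, not flat; that distinction is what makes the whole subject nontrivial.

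The genuine gap is in your second paragraph. You reduce everything to the assertion that $h$ ``decomposes compatibly enough with the isotypic pieces'' so that $F^{\perp_h}=\bigoplus_i\mathbb{E}_i\otimes W_i^{\perp}$. That assertion requires two facts: (a) the isotypic decomposition of $(E,\nabla_E)$ is $h$-orthogonal, and (b) on each isotypic factor $\mathbb{E}_i\otimes\mathbb{C}^{n_i}$ the metric splits as $h_i\otimes g_i$ with $g_i$ a \emph{constant} positive Hermitian form on the multiplicity space. Neither is a formal consequence of semisimplicity or of Schur's lemma; both are parts of the structure theory of harmonic metrics on semisimple local systems (the uniqueness of the pluriharmonic metric up to flat automorphism), which over a quasi-projective base with tame, purely imaginary boundary behaviour is again Mochizuki's work — that is, a statement of essentially the same depth as the lemma. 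As written, the argument therefore has a circle in it. The proofs in the sources actually avoid the isotypic decomposition: one writes $\overline{\partial}_{E},\partial_{E},\theta,\theta^{*}_h$ in block form for $E=F\oplus F^{\perp}$, uses the pluriharmonicity identities to show that the off-diagonal (second fundamental form) terms are (anti-)holomorphic, and then uses tameness and pure imaginarity to get the growth/$L^2$ control that forces them to vanish. That route is more elementary and makes visible where the hypotheses bite. Your route can be saved, but only by quoting the harmonic-metric structure theorem explicitly, at which point the remaining argument is a one-liner.
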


\begin{prop}
	The group \(H_{\mathbb{R}}\) is reductive.
\end{prop}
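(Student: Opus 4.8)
The plan is to reduce the statement to the semisimplicity of the local system $(E,\nabla)$ underlying $\mathbb{V}$, and then to invoke the classical principle that the Zariski closure of a completely reducible linear representation is a reductive group. Concretely, I would argue as follows. Let $U$ be the unipotent radical of $H_{\mathbb{R}}$. The subspace $E_{o}^{U}\subset E_{o}$ of $U$-fixed vectors is nonzero, since every nonzero representation of a unipotent algebraic group has a nonzero fixed vector, and it is $H_{\mathbb{R}}$-stable because $U$ is normal in $H_{\mathbb{R}}$; being stabilized by a Zariski-closed subgroup containing $\rho(\pi_{1}(X,o))$, it is in particular a $\pi_{1}(X,o)$-submodule of $E_{o}$. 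If $E_{o}$ were a completely reducible $\pi_{1}(X,o)$-module, we could choose a $\pi_{1}(X,o)$-stable complement $W$ of $E_{o}^{U}$; this $W$ is then $H_{\mathbb{R}}$-stable (again as the Zariski closure stabilizes it), so $W^{U}\subset W\cap E_{o}^{U}=0$, which forces $W=0$ by the same fixed-vector fact. Hence $U$ acts trivially on $E_{o}$, and therefore $U=\{1\}$ since $H_{\mathbb{R}}\hookrightarrow G_{\mathbb{R}}$ is faithful. So everything comes down to proving that $(E,\nabla)$ is a semisimple local system.

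For this I would use that $\mathbb{V}$, being a p-$\mathbb{C}$VHS, underlies a tame, purely imaginary harmonic bundle, and apply Lemma~\ref{lem:mochizukibundle}. Given any $\nabla$-flat subbundle $F\subset E$, that lemma provides an orthogonal decomposition $E=F\overset{\perp}{\oplus}F^{\perp}$ (with respect to the positive definite Hodge metric $Q$) into holomorphic, Higgs-invariant subbundles. Since $F$ and $F^{\perp}$ are then simultaneously preserved by $\overline{\partial}$ and by $\theta$, and are mutually $Q$-orthogonal, they are also preserved by $\partial$ and by its $Q$-adjoint $\theta^{\ast}$, hence by the full flat connection $\nabla=\partial+\overline{\partial}+\theta+\theta^{\ast}$. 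Thus $(E,\nabla)=(F,\nabla)\oplus(F^{\perp},\nabla)$ as local systems, and an induction on the rank exhibits $(E,\nabla)$ as a finite direct sum of irreducible local systems, i.e.\ $E_{o}$ is a completely reducible $\pi_{1}(X,o)$-module. (Alternatively, one could simply quote the semisimplicity of the monodromy of a tame harmonic bundle on a quasi-projective manifold, which is part of Mochizuki's work.)

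I do not expect a genuine obstacle here; the argument is essentially the $\mathbb{C}$-VHS counterpart of Deligne's semisimplicity of monodromy for $\mathbb{Q}$-VHS. The only points that require a little care are: making sure we are in the setting where Lemma~\ref{lem:mochizukibundle} applies, which means taking $X$ to be a smooth quasi-projective variety (the standing hypothesis of this section), and the short bookkeeping verifying that $F^{\perp}$ is stable not just under $\overline{\partial}$ and $\theta$ but under the whole connection $\nabla$, which as indicated follows at once from $F$ and $F^{\perp}$ being holomorphic, $\theta$-stable and $Q$-orthogonal.
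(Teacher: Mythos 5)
Your proof is correct, but it takes a genuinely different route from the paper. The paper's argument stays at the Lie algebra level: it quotients by the center $\mathfrak{z}$, uses the inclusion $\alpha(\mathrm{U}(1))\subset H_{\mathbb{R}}$ established in Lemma~\ref{lem:inclusionDeligne} to see that the Cartan involution $\mathrm{Ad}(\alpha(i))$ preserves $\mathfrak{h}'$, splits $\mathfrak{h}'=(\mathfrak{h}'\cap\mathfrak{k})\oplus(\mathfrak{h}'\cap\mathfrak{p})$, and then produces a compact form $\mathfrak{u}=(\mathfrak{h}'\cap\mathfrak{k})\oplus i(\mathfrak{h}'\cap\mathfrak{p})$ on which the Killing form is negative definite; reductivity of the compact Lie algebra $\mathfrak{u}$ gives reductivity of $\mathfrak{h}'$. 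Your argument instead proves semisimplicity of the flat bundle $(E,\nabla)$ directly from Lemma~\ref{lem:mochizukibundle} (showing that $F^\perp$ is preserved by the full connection once $F$, $F^\perp$ are holomorphic, $\theta$-stable and $Q$-orthogonal), and then deduces triviality of the unipotent radical of $H_{\mathbb{R}}$ from complete reducibility. Both are sound. Notably, your proof does not rely on Lemma~\ref{lem:inclusionDeligne} at all, whereas the paper's does; conversely the paper's proof does not pass through semisimplicity of the monodromy, which is a somewhat heavier input (though certainly available, via Mochizuki). Your route is closer in spirit to Deligne's original semisimplicity argument and makes the reduction to a classical algebraic-group fact explicit, while the paper's route is more self-contained within the Hodge-theoretic/symmetric-space framework it has already set up and in particular exhibits the compact form that it implicitly relies on.

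One small point worth making explicit in your write-up: the orthogonal complement in Lemma~\ref{lem:mochizukibundle} is taken with respect to the positive definite harmonic (Hodge) metric $Q$, not the indefinite polarization $h$; this is what makes the ``Chern connection of an orthogonal holomorphic direct sum is block diagonal'' and ``adjoint of a block-diagonal operator is block diagonal'' steps legitimate, and it is indeed what the lemma means by $F^\perp$. With that clarified, the bookkeeping you sketch ($F$, $F^\perp$ preserved by $\overline{\partial}$ and $\theta$, hence by $\partial$ and $\theta^{*}$, hence by $\nabla$) is complete.
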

\begin{proof}
	The center \(\mathfrak{z} := \mathfrak{z}(\mathfrak{g_{\mathbb{R}}}) \cong \mathbb{R}\) acts by homotheties on \(E_{o}\), so by Step~1, it is included in \(\mathfrak{h}_{\mathbb{R}}\). Thus, the adjoint algebra \(\mathfrak{g}' \cong \quotientd{\mathfrak{g}_{\mathbb{R}}}{\mathfrak{z}}\) contains naturally the quotient \(\mathfrak{h}' := \quotientd{\mathfrak{h}_{\mathbb{R}}}{\mathfrak{z}}\), and we just have to show that the latter is reductive. 
	
	The morphism \(\alpha : \mathrm{U}(1) \to G_{\mathbb{R}}\) yields a Cartan decomposition \(\mathfrak{g}'_{\mathbb{R}}  = \mathfrak{k}' \oplus \mathfrak{p}\), decomposing this space into the \(\pm 1\) eigenspaces for \(\mathrm{Ad}(\alpha(i))\). Recall that the Killing form \(B\) on \(\mathfrak{g}'\) is definite of signature \((-1, 1)\) with respect this decomposition.
	
	Since \(\alpha(\mathrm{U}(1)) \subset H_{\mathbb{R}}\), then \(\mathrm{Ad}(\alpha(i))\) leaves \(\mathfrak{h'}\) invariant and one can write
	\[
		\mathfrak{h}' 
		= 
		(\mathfrak{h}' \cap \mathfrak{k})
		\oplus
		(\mathfrak{h}' \cap \mathfrak{p}).
	\]
	But then \(B\) induces a negative definite form on the real Lie algebra
	\[
		\mathfrak{u} :=
		(\mathfrak{h}' \cap \mathfrak{k})
		\oplus
		i (\mathfrak{h}' \cap \mathfrak{p}) \subset \mathfrak{g}'_{\mathbb{C}}.
	\]
	As a consequence, \(\mathfrak{u}\) is a compact Lie algebra, and hence it is reductive (see \cite[II, Proposition~6.6]{Hel78}).	Since \(\mathfrak{u}\) and \(\mathfrak{h}'\) are two real Lie algebras with the same complexification, this shows that \(\mathfrak{h}'\) is reductive, as we wanted. 
\end{proof}

\begin{prop} \label{prop:abelianisotrivial}
	If the connected component \(H_{\mathbb{R}}^{\circ}\) is abelian, then \(\psi\) is constant. In particular, this holds if the image of \(\rho\) is virtually abelian.
\end{prop}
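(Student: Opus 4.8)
The plan is to use the inclusion $\alpha(\mathrm{U}(1)) \subseteq H_{\mathbb{R}}$ provided by Lemma~\ref{lem:inclusionDeligne} to show that, under the hypothesis, $H_{\mathbb{R}}^{\circ}$ is in fact \emph{compact}; then the local system underlying $\mathbb{V}$ is unitary, and a non-abelian Hodge input forces the period map to be constant.

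First I would dispose of the ``in particular'' clause. If the image of $\rho$ is virtually abelian, choose a finite-index abelian subgroup $A \subseteq \rho(\pi_{1}(X,o))$. Its Zariski closure $\overline{A}^{\mathrm{Zar}}$ is again abelian (the commutator map vanishes on the Zariski-dense subset $A \times A$), and since $A$ has finite index, $\overline{A}^{\mathrm{Zar}}$ has finite index in $H_{\mathbb{R}} = \overline{\rho(\pi_{1})}^{\mathrm{Zar}}$; a finite union of translates of the closed subgroup $\overline{A}^{\mathrm{Zar}}$ covering $H_{\mathbb{R}}$ forces $H_{\mathbb{R}}^{\circ} \subseteq \overline{A}^{\mathrm{Zar}}$, so $H_{\mathbb{R}}^{\circ}$ is abelian. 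Thus it suffices to treat the main statement.

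Now assume $H_{\mathbb{R}}^{\circ}$ is abelian. Since $\mathrm{U}(1)$ is connected, Lemma~\ref{lem:inclusionDeligne} gives $\alpha(\mathrm{U}(1)) \subseteq H_{\mathbb{R}}^{\circ}$; as $H_{\mathbb{R}}^{\circ}$ is abelian, it centralizes $\alpha(\mathrm{U}(1))$, i.e.\ $H_{\mathbb{R}}^{\circ}$ is contained in $V := Z_{G_{\mathbb{R}}}(\alpha) = \mathrm{Stab}_{\mathcal{D}}(o)$. But $V$ is compact: commuting with $\alpha$, it preserves the Hodge decomposition $E_{o} = \bigoplus_{p,q} E_{o}^{p,q}$, and it also preserves $h_{o}$, so it preserves the positive definite form $Q_{o} = \sum_{p,q}(-1)^{p} h_{o}|_{E_{o}^{p,q}}$, whence $V \subseteq \mathrm{U}(E_{o}, Q_{o})$. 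Therefore $H_{\mathbb{R}}^{\circ}$ is compact; since $H_{\mathbb{R}}/H_{\mathbb{R}}^{\circ}$ is finite, $H_{\mathbb{R}}$ is compact, and hence $\overline{\rho(\pi_{1}(X))}$ is a compact subgroup of $G_{\mathbb{R}}$.

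It then remains to deduce that $\psi$ is constant. Averaging $Q_{o}$ over the compact group $\overline{\rho(\pi_{1}(X))}$ yields a $\rho$-invariant positive definite hermitian form on $E_{o}$; parallel transporting it produces a flat positive definite metric on $(E, \nabla)$, so the underlying local system is unitary, in particular semisimple. The flat unitary metric is a harmonic metric with vanishing Higgs field; by uniqueness of the harmonic metric for semisimple flat bundles (equivalently, by Mochizuki's uniqueness of tame purely imaginary harmonic metrics \cite{Moch06}, in the spirit of Lemma~\ref{lem:mochizukibundle}), the Higgs field $\theta$ of the harmonic bundle attached to the $\mathbb{C}$-VHS $\mathbb{V}$ vanishes. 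Since $\partial \psi$ is identified with $\theta$ and $\widetilde{X}$ is connected, $\psi$ is constant. I expect the only genuinely non-formal ingredient to be this last step — converting unitarity of $(E,\nabla)$ into the vanishing of $\theta$ — which relies on the non-abelian Hodge correspondence rather than on anything elementary; everything else is group theory together with the already-used compactness of stabilizers in $\mathcal{D}$.
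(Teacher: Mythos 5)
Your proof is correct, and it reaches the conclusion by a genuinely different route from the paper's. Both arguments open the same way: Lemma~\ref{lem:inclusionDeligne} gives \(\alpha(\mathrm{U}(1)) \subseteq H_{\mathbb{R}}^{\circ}\), and abelianity forces \(H_{\mathbb{R}}^{\circ} \subseteq Z_{G_{\mathbb{R}}}(\alpha)\). The paper then notes that \(\mathrm{Im}(\rho)\) fixes \(\psi(o) \in \mathcal{D}\), asserts that \(\psi\) therefore descends to a single-valued horizontal map \(X \to \mathcal{D}\), and invokes Proposition~\ref{prop:constant}. You instead observe that \(Z_{G_{\mathbb{R}}}(\alpha)\) preserves \(Q_{o}\) and is hence compact, so \(H_{\mathbb{R}}\) is compact and the flat bundle \((E, \nabla)\) is unitary; uniqueness of the tame purely imaginary harmonic metric then forces \(\theta = 0\), whence \(d\psi = 0\). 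Your route is actually tighter at the one place where the paper is terse: \(\mathrm{Im}(\rho) \subseteq \mathrm{Stab}_{\mathcal{D}}(\psi(o))\) does not by itself make \(\psi\) single-valued (a group of rotations of a disk fixes the origin yet acts nontrivially), so the paper's ``descent'' step implicitly needs the same harmonic-bundle uniqueness input that you use explicitly; in that sense the two arguments converge on the same non-abelian Hodge ingredient, but yours names it. Two smaller remarks: your handling of the ``in particular'' clause through the Zariski closure of a finite-index abelian subgroup is correct and slightly different from the paper's move of simply passing to a finite cover of \(X\); and you are right to flag the tame purely imaginary hypothesis in Mochizuki's uniqueness theorem as the one non-formal ingredient -- for a unitary local system this hypothesis does hold, so the argument closes.
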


\begin{proof}
	We may replace \(X\) be a finite covering to assume that \(H_{\mathbb{R}}^{\circ} = H_{\mathbb{R}}\). In this case, if \(H_{\mathbb{R}}\) is connected and abelian, then \(g \alpha g^{-1} = \alpha\) for all \(g \in H(\mathbb{R})\), since \(\alpha\) has its image in \(H_{\mathbb{R}}\) by Lemma~\ref{lem:inclusionDeligne}. Thus, for all \(h \in H(\mathbb{R})\), one has
	\[
		h \cdot \psi(o) = \psi(o)
	\]
	and thus \(\psi(o)\) is fixed under the image of \(\rho\). It implies that the period map \(\psi : \widetilde{X} \to \mathcal{D}\) descends to a horizontal map \(X \to \mathcal{D}\). This map is then constant by Proposition~\ref{prop:constant}.
\end{proof}

The next result was needed in the proof of Proposition~\ref{prop:abelianisotrivial}; we include the proof for completeness. To state it correctly, we need to introduce some notation.
\medskip

Let \(V \subset G_{\mathbb{R}}\) (resp. \(K \subset G_{\mathbb{R}}\)) be the compact Lie subgroup with Lie algebra \(\mathfrak{v} := \mathfrak{g}_{\mathbb{R}} \cap \mathfrak{g}^{0, 0}\) (resp. \(\mathfrak{k}\)), one can form the associated period domain (resp. locally symmetric space of the non-compact type)
\[
	\mathcal{D} = \quotientd{G(\mathbb{R})}{V(\mathbb{R})}
	\quad
	\text{(resp. }
	\Omega = \quotientd{G(\mathbb{R})}{K(\mathbb{R})}
	\text{).}
\]

One has a natural projection map \(\pi : \mathcal{D} \to \Omega\) such that for any horizontal map \(f : V \to \mathcal{D}\) from a complex manifold, the composition \(V \to \Omega\) is pluriharmonic with respect to the invariant metric on \(\Omega\).

\begin{prop} \label{prop:constant}
	Let \(\mathcal{D}\) be the period domain for a certain type of p-\(\mathbb{C}\)VHS. Let \(U\) be a quasi-projective manifold, and let \(\psi : U \to \mathcal{D}\) be a horizontal map. Then \(\psi\) is constant.
\end{prop}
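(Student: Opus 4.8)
The plan is to reduce the statement to a maximum-principle argument on a compactification of $U$. First note that since $\mathcal{D}$ is simply connected, the tautological p-\(\mathbb{C}\)VHS on $\mathcal{D}$ has trivial monodromy, so its pullback along $\psi$ is a p-\(\mathbb{C}\)VHS $\mathbb{V}$ on $U$ with \emph{trivial monodromy} and period map $\psi$. Hence the underlying flat bundle of $\mathbb{V}$ is the trivial flat bundle $(\mathcal{O}_{U}^{\,N}, d)$, the Hodge filtration $F^{\bullet}$ is a filtration by holomorphic subbundles of $\mathcal{O}_{U}^{\,N}$, and $\psi$ is constant if and only if $F^{\bullet}$ is constant, i.e.\ if and only if the Higgs field $\theta$ of $\mathbb{V}$ vanishes. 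I would then fix a smooth projective compactification $\overline{U} = U \cup D$ with $D$ a simple normal crossing divisor. Since the monodromy is trivial, all local monodromies around the components of $D$ are trivial; therefore the Deligne canonical extension of $(E, \nabla)$ is the trivial bundle $\mathcal{O}_{\overline{U}}^{\,N}$ and, by the nilpotent orbit theorem (\cite{Schmid73}, in the \(\mathbb{C}\)-VHS form of \cite{SS22, Deng23}, applied with vanishing nilpotent operators), the Hodge filtration extends to holomorphic subbundles of $\mathcal{O}_{\overline{U}}^{\,N}$. Thus $\psi$ extends to a holomorphic map $\overline{\psi} : \overline{U} \to \check{\mathcal{D}}$ into the compact dual; and because all the relevant nilpotent operators vanish, the limiting (mixed) Hodge structures along $D$ are in fact pure and genuinely polarized, hence lie in $\mathcal{D}$. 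So $\overline{\psi} : \overline{U} \to \mathcal{D}$ is a horizontal holomorphic map from a compact Kähler manifold.

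Next I would compose with the projection $\pi : \mathcal{D} \to \Omega$. By the property of $\pi$ recalled above, $\overline{f} := \pi \circ \overline{\psi} : \overline{U} \to \Omega$ is pluriharmonic, hence harmonic for a Kähler metric on $\overline{U}$; since $\Omega$ is a Hadamard manifold, for any fixed $O \in \Omega$ the function $x \mapsto \tfrac{1}{2}\, d_{\Omega}(\overline{f}(x), O)^{2}$ is smooth and subharmonic (the standard Bochner/Hessian-comparison inequality for harmonic maps into nonpositively curved targets), hence constant on the compact manifold $\overline{U}$, which forces $d\overline{f} \equiv 0$ and $\overline{f} \equiv q_{0}$ for some $q_{0}$. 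Then $\overline{\psi}$ takes values in the single fiber $\pi^{-1}(q_{0}) \cong K_{\mathbb{R}}/V_{\mathbb{R}}$, a compact complex submanifold of $\mathcal{D}$ whose tangent space lies in the $(+1)$-eigendistribution of $\mathrm{Ad}(\alpha(i))$. But the horizontal distribution of $\mathcal{D}$ lies in the $(-1)$-eigendistribution, since it corresponds to $\mathfrak{g}^{-1,1}$, on which $\mathrm{Ad}(\alpha(i))$ acts by $-1$. These distributions meet only in $0$, so the differential of the horizontal map $\overline{\psi}$, being tangent to the fiber, vanishes identically. Hence $\overline{\psi}$, and a fortiori $\psi = \overline{\psi}|_{U}$, is constant.

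The hard part will be the boundary extension in the first step: one must not only rule out essential singularities of $\psi$ along $D$ — a very mild instance of the nilpotent orbit theorem, given trivial local monodromy — but also verify that the limiting Hodge structures remain honest points of $\mathcal{D}$ rather than escaping into $\check{\mathcal{D}} \setminus \mathcal{D}$, which is exactly where the vanishing of the monodromy logarithms is used. As an alternative to this whole argument, one could instead invoke the tame nonabelian Hodge correspondence on $\overline{U}$ (Mochizuki \cite{Moch06}): the flat bundle underlying $\mathbb{V}$ is trivial, hence unitary, so by uniqueness the Higgs bundle it corresponds to — which must be the one attached to $\mathbb{V}$ — has vanishing Higgs field, and a \(\mathbb{C}\)-VHS with zero Higgs field has constant period map.
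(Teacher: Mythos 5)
Your main argument is correct and follows essentially the same route as the paper's proof: extend the period map across a smooth SNC compactification using the triviality of the local monodromies (the paper cites Griffiths \cite{Gri68II} and \cite[(4.11)]{Schmid73} for this removable-singularity step, which is the precise reference you want rather than the full nilpotent orbit theorem), then project to the symmetric space $\Omega$, observe the composition is pluriharmonic on a compact complex manifold hence constant, and conclude via transversality of the horizontal distribution to the fibers of $\pi : \mathcal{D} \to \Omega$. The explicit Bochner computation and the $\pm 1$-eigendistribution argument, and the alternative route via the tame nonabelian Hodge correspondence, are valid elaborations of the same argument but are not needed beyond what you already wrote.
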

\begin{proof}
	Let \(X = U \cup D\) be a compactification of \(U\), where \(D \subset X\) is a simple normal crossing divisor. The map \(\psi\) can be seen as a p-\(\mathbb{C}\)VHS with trivial monodromies on   \(U\). Hence, by the results of Griffiths \cite{Gri68II} (see also \cite[(4.11)]{Schmid73}), the map \(\psi\) extends holomorphically to a holomorphic map \(\overline{\psi} : X \to \mathcal{D}\), which is also horizontal by continuity.
\medskip

	Now the composition \(X \to \Omega\) is a pluriharmonic map on a complex manifold, so it must be constant. This shows that \(\overline{\psi}(X)\) lies in a fiber of \(\pi\). However, \(\psi\) is horizontal, and since the horizontal directions are transversal to the fibers of \(\pi\), this shows that \(\psi\) is constant.
\end{proof}

\subsection{Isotriviality of polarized \(\mathbb{C}\)-VHS on special varieties}

\begin{defi} Given a p-\({\mathbb{C}}\)VHS \(\mathbb{V}\) on a complex manifold \(U\), we say that \(\mathbb{V}\) is {\em isotrivial} if its period map is constant.
\end{defi}

\begin{thm} \label{thm:isotrivial}
	Let \(U\) be a quasi-projective manifold. Let \(\mathbb{V} \equiv (\mathbb{V}, F^{\bullet}, h)\) be a p-\(\mathbb{C}\)VHS on \(U\). Assume that \(U\) is special. Then \(\mathbb{V}\) is isotrivial.
\end{thm}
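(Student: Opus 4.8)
The strategy is to reduce the statement to the results of Section~\ref{sec:monodromoy}, namely Proposition~\ref{prop:abelianisotrivial}, by showing that the real algebraic monodromy group $H_{\mathbb{R}}$ of $\mathbb{V}$ has abelian connected component. First I would pass to a finite \'{e}tale cover of $U$ to ensure that $H_{\mathbb{R}}$ is connected; this is harmless for isotriviality since the period map of the pullback is constant iff the original one is, and specialness is preserved by finite \'{e}tale covers by Proposition~\ref{prop:invariancespecial}~{\em (ii)}. So from now on assume $H := H_{\mathbb{R}}$ is connected, and by the preceding proposition it is reductive. Write $\pi_{1}(U,o) \overset{\rho}{\to} H(\mathbb{R}) \subset G_{\mathbb{R}}$ for the monodromy representation.

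The core of the argument is to exploit the semisimple part of $H$. Let $H^{\mathrm{der}}$ be the derived group (semisimple) and consider the composition $\rho': \pi_{1}(U,o) \to H(\mathbb{R}) \to (H/Z(H)^{\circ})(\mathbb{R})$, which has Zariski-dense image in a semisimple group. If this semisimple quotient is nontrivial, then by the non-abelian Hodge / factorization results underlying \cite{CDY23} — and here I would invoke the fact, available in that circle of ideas, that a polarized $\mathbb{C}$-VHS on $U$ with Zariski-dense monodromy in a semisimple group of non-compact type admits a nonconstant ``orbifold'' map to a quasi-projective variety of log-general type after suitable modification — one produces a Bogomolov sheaf on a smooth compactification $(X,D)$ of $U$, contradicting specialness. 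More precisely: the period map $\psi: \widetilde U \to \mathcal{D}$ composed with $\mathcal{D} \to \Omega$ gives a pluriharmonic map to a locally symmetric space; when the monodromy group is semisimple of non-compact type, the Stein factorization / Shafarevich-type construction of \cite{CDY23} produces a dominant rational fibration $U \dashrightarrow V$ with $(V, \Delta)$ of general type, which is exactly a Bogomolov sheaf obstruction to specialness. Hence the semisimple quotient must be trivial, i.e.\ $H$ is abelian (a connected reductive group with trivial semisimple part is a torus, hence abelian).

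Once $H = H_{\mathbb{R}}$ is abelian, Proposition~\ref{prop:abelianisotrivial} applies directly: $\alpha(\mathrm{U}(1)) \subset H(\mathbb{R})$ by Lemma~\ref{lem:inclusionDeligne}, so every $g \in H(\mathbb{R})$ commutes with $\alpha$, hence fixes $\psi(o) \in \mathcal{D}$; therefore $\psi$ descends to a horizontal map $U \to \mathcal{D}$, which is constant by Proposition~\ref{prop:constant}. This gives the isotriviality of $\mathbb{V}$.

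\textbf{Main obstacle.} The delicate point is the middle paragraph: extracting from the hypothesis ``$H$ has a nontrivial semisimple part'' an actual Bogomolov sheaf — i.e.\ a genuine fibration of general type — on a log-compactification of $U$. This is where one must lean on \cite{CDY23}: their main theorems provide a ``reduction map'' (generalized Shafarevich map) for polarized $\mathbb{C}$-VHS whose base is of log-general type precisely when the monodromy is big in the relevant sense, and one needs to check that Zariski-density in a positive-dimensional semisimple group forces this base to have positive dimension. Packaging this carefully — including the orbifold/log structure on the base so that it yields a Bogomolov sheaf in Campana's sense and thus contradicts specialness via Theorem~\ref{thm:core} — is the technical heart of the proof; the rest is bookkeeping with the structure theory of reductive groups and the lemmas already established in Section~\ref{sec:monodromoy}.
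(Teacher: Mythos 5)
Your overall plan is correct and matches the paper's at the highest level: reduce to showing that the real algebraic monodromy group $H_{\mathbb{R}}$ has abelian connected component, then apply Proposition~\ref{prop:abelianisotrivial}. Both you and the paper ultimately draw on \cite{CDY23}. But your middle step is exactly where the gap sits, and the paper's proof is structured so as not to have to fill it by hand.

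The paper's argument is a two-citation proof. Step 1 invokes \cite[Proposition~2.5]{CDY23} to build a Shafarevich-type diagram
$U'' \xrightarrow{f} V$, $U'' \xrightarrow{\mu} U' \xrightarrow{\nu} U$
(with $\nu$ finite étale, $\mu$ proper birational, $f$ dominant with connected fibers) together with a \emph{big} representation $\tau : \pi_{1}(V) \to H(\mathbb{R})$ satisfying $f^{\ast}\tau = (\nu \circ \mu)^{\ast}\rho$; specialness is carried through the diagram by Proposition~\ref{prop:invariancespecial}. Step 2 then cites \cite[Theorem~0.9]{CDY23}: a special quasi-projective manifold carrying a big representation with Zariski-dense image in a reductive group has virtually abelian $\pi_{1}$. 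This forces $H = \overline{\Im(\tau)}^{\mathrm{Zar}}$ to have abelian connected component, and Proposition~\ref{prop:abelianisotrivial} finishes. Your proposal, by contrast, skips the bigness reduction (you only allude to ``Stein factorization / Shafarevich-type construction''), passes instead to a finite cover to make $H$ connected (which is harmless but not needed), and then tries to derive a Bogomolov sheaf directly from a nontrivial semisimple quotient of $H$. That last implication --- that Zariski-dense monodromy in a noncompact semisimple group on a special base yields a fibration of log-general type --- is precisely what you flag as the ``technical heart,'' and it is also precisely what \cite[Theorem~0.9]{CDY23} packages for you; moreover, without first reducing to a big representation, the kernel of $\rho$ can be large and you cannot feed $\rho$ itself into that machinery. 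You also conflate two things in the middle paragraph: the Shafarevich reduction produces a base with a big representation, but the assertion that this base is of log-general type when the monodromy is semisimple noncompact is a separate and deep statement, not a consequence of the construction alone. In short: the strategy is sound, the citation to \cite{CDY23} is the right move, but you need to (i) carry out the reduction to a big representation via \cite[Proposition~2.5]{CDY23} explicitly, and (ii) replace the hand-waved Bogomolov-sheaf contradiction by a direct appeal to \cite[Theorem~0.9]{CDY23}, which already gives virtual abelianness.
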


\begin{proof}
	Let \(H_{\mathbb{R}}\) be the real algebraic monodromy group of \(\mathbb{V}\), and \(\rho :\pi_{1}(X, o) \to H(\mathbb{R})\) be the associated representation.
	\medskip

{\em Step 1. A first reduction step}. We apply \cite[Proposition~2.5]{CDY23} to obtain the existence of a diagram
	\[
		\begin{tikzcd}
			U'' \arrow[r, "\mu"] \arrow[d, "f"] & U' \arrow[r, "\nu"] &	U \\
			V
		\end{tikzcd}
	\]
	and a {\em big} representation \(\tau : \pi_{1}(V) \to H(\mathbb{R})\) such that \(f^{\ast}\tau = (\nu \circ \mu)^{\ast}\rho\). In this diagram \(\nu\) is finite étale, \(\mu\) is birational and proper, \(f\) is is a dominant morphism with connected general fibers.  By Proposition~\ref{prop:invariancespecial}, all varieties appearing in this diagram are special.
	\medskip	

	\noindent
	{\em Step 2. The algebraic group \(H_{\mathbb{R}}\) is abelian}. The representation \(\tau : \pi_{1}(V) \to H\) is a big representation with Zariski dense image in a reductive group, while \(V\) is special. Hence, by \cite[Theorem~0.9]{CDY23}, the group \(\pi_{1}(V)\) is virtually abelian, and so must be \(H = \overline{\Im(\tau)}^{\mathrm{Zar}}\). But then \(\psi\) is constant by Proposition~\ref{prop:abelianisotrivial}.
\end{proof}

\section{Asymptotic structure of period maps with values in the ball} \label{sec:asymptotic}

\subsection{The local case.}
In this section, we will describe the local structure of a period map with values in \(\mathbb{B}^{n}\). This description can be seen as a very particular case of Schmid's nilpotent orbit theorem \cite{Schmid73} or rather their versions presented by Sabbah-Schnell \cite{SS22} or Deng \cite{Deng23} in the case of \(\mathbb{C}\)VHS. We try to give a quite detailed presentation, as several features specific to the case of the ball will be crucial in our study of the uniformizing map near a log-canonical singularity.
\medskip

\subsubsection{Notation} \label{subsub:notation} We introduce the following notation.
\medskip

\begin{enumerate}[(1)]
	\item Let \(X = \left( \Delta^{\ast} \right)^{k} \times \Delta^{n-k}\) be a pointed polydisk, with its partial compactification \(\overline{X} = \Delta^{n}\). We endow \(\overline{X}\) with the standard coordinates \(z_{1}, \dotsc, z_{n}\). Let \(D = \overline{X} - X\). For any \(J \subset \llbracket 1, k \rrbracket\), we let \(D_{J} = \{ z_{j} = 0 \; | \; j \in J\}\). We fix the base point \(o := e^{-2\pi}(1, \dotsc, 1, 0, \dotsc 0) \in X\).
	\item Denoting by \(\mathbb{H}\) the Poincar\'{e} upper half-plane, we let \(\widetilde{X} := \mathbb{H}^{k} \times \Delta^{n-k}\), and endow it with the standard coordinates \(w_{1}, \dotsc, w_{n}\). We take as universal covering map \(\pi : \widetilde{X} \longrightarrow X\) the map sending \(w \in \widetilde{X}\) to
		\[
			\pi(w) = \left( e^{2i\pi w_{1}}, \dotsc, e^{2i\pi w_{k}}, w_{k+1}, \dotsc, w_{n}\right). 
		\]
		Note that \(\pi(i, \dotsc, i, 0, \dotsc, 0) = o\). We let \(F = \{ 0 \leq \mathrm{Re}(w) < 1 \}^{k} \times \Delta^{n-k}\); this is a fundamental domain for the action of \(\pi_{1}(X, o)\) on \(\widetilde{X}\). 
	\item Let \(\rho : \pi_{1}(X, o) \longrightarrow \mathrm{PU}(n, 1)\) be a representation, and consider a \(\rho\)-equivariant map \(\psi : \widetilde{X} \to \mathbb{B}^{n}\).
	Unless there is a risk of confusion, we will also use the letter \(o\) to denote the origin of \(\mathbb{B}^{n}\).
	\item Let \(\gamma_{1}, \dotsc, \gamma_{k} \in \pi_{1}(X, o)\) be the classes of the loops around the boundary components \(D_{j} := \{z_{j} = 0\}\). Let \(A_{i} = \rho(\gamma_{i}) \in \mathrm{PU}(n,1)\). The elements \(A_{i}\) are pairwise commuting, since \(\pi_{1}(X) \cong \mathbb{Z}^{k}\) is abelian.
\end{enumerate}

\subsubsection{Normal form of the period map} Consider a sequence of points \((q_{m})_{m\in\mathbb{N}} \in X^{\mathbb{N}}\) such that \(\lim\limits_{m \to \infty} q_{m} = 0 \in \overline{X}\) for the usual topology. This sequence admits a unique lift \((p_{m})_{m\in\mathbb{N}}\) to the fundamental domain \(F \subset \widetilde{X}\). We let \(b_{m} := \psi(p_{m})\) for \(m \in \mathbb{N}\). By compactness of \(\overline{\mathbb{B}^{n}}\), we may and will replace \((p_{m})\) by a subsequence so that there exists \(b_{\infty} \in \overline{\mathbb{B}^{n}}\) with
\[
	b_{m} 
	\underset{m \longrightarrow + \infty}{\longrightarrow}
	b_{\infty}
	\in \overline{\mathbb{B}^{n}}.
\]

 The main result of the section is as follows.

\begin{prop} \label{prop:asymptotic} One of the following two cases occur.

	\begin{enumerate}
		\item One has \(b_{\infty} \in \mathbb{B}^{n}\). Then there exist \(B \in \mathrm{PU}(n, 1)\) such that \(B \cdot b_{\infty} = o \in \mathbb{B}^{n}\), real numbers \((\alpha_{p, q})_{1 \leq p \leq n, 1 \leq q \leq k}\) in \([0, 1)\) and a holomorphic map \(\varphi : \overline{X} \to \mathbb{C}^{n}\) such that for any \(w = (w_{1}, \dotsc, w_{n}) \in \widetilde{X}\), one has
			\begin{equation} \label{eq:multival1}
				B \cdot \psi(w) = \left(
				e^{2i\pi \sum_{q \leq k} \alpha_{1, q} w_{q}} \varphi_{1}(z),
				\dotsc,
				e^{2i\pi \sum_{q \leq k} \alpha_{n, q} w_{q}} \varphi_{n}(z)
				\right) 	
			\end{equation}
			where \(z = \pi(w)\).
		\item One has \(b_{\infty} \in \partial \mathbb{B}^{n}\). Let \(\phi := \phi_{b_{\infty}} : \mathbb{B}^{n} \longrightarrow \mathbb{S}_{n}\) be the Siegel presentation of the ball with respect to \(b_{\infty}\). Then there exists \(B \in \mathrm{PU}(n, 1)\) fixing \(b_{\infty}\), real numbers \((\alpha_{p, q})_{1 \leq p \leq n-1, 1 \leq q \leq k}\) in \([0, 1)\), real non negative numbers \((\tau_{q})_{1 \leq q \leq k}\), not all zero, and a holomorphic map \(\varphi : \overline{X} \to \mathbb{C}^{n}\) such that for any \(w = (w_{1}, \dotsc, w_{n}) \in \widetilde{X}\), one has
			\begin{equation} \label{eq:multival2}
				B \cdot \psi(w)
				=
				\phi^{-1}
				\big(
				e^{2i\pi \sum_{q \leq k} \alpha_{1, q} w_{q}} \varphi_{1}(z),
				\dotsc,
				e^{2i\pi \sum_{q \leq k} \alpha_{n-1, q} w_{q}} \varphi_{n-1}(z),
				\varphi_{n}(z) + \sum_{q \leq k} \tau_{q} w_{q}
				\big)
			\end{equation}
			where \(z = \pi(w)\).
	\end{enumerate}
\end{prop}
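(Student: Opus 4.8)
The statement is a special case (for the period domain $\mathbb{D}_n$ of $\mathrm{U}(n,1)$-type, of which $\mathbb{B}^n$ is the classifying space of the $\mathbb{C}$-VHS attached to the standard Higgs bundle) of Schmid's $\mathrm{SL}_2$-orbit theorem / nilpotent orbit theorem, in the version for $\mathbb{C}$-VHS due to Sabbah--Schnell or Deng. So the main task is bookkeeping: unwind what the abstract asymptotics say once translated into the very concrete coordinates of $\mathbb{B}^n$ and its Siegel models. I would organize the proof as follows.

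\emph{Step 1: monodromy is unitary, hence quasi-unipotent with trivial unipotent part in the wrong place --- actually, split according to where $b_\infty$ lands.} The commuting monodromy operators $A_1,\dots,A_k\in\mathrm{PU}(n,1)$ all lie in a common maximal torus up to the usual quasi-unipotent subtleties; but here the key dichotomy is: either the $A_i$ have a common fixed point in $\mathbb{B}^n$, or they share a common fixed point in $\partial\mathbb{B}^n$. The nilpotent orbit theorem forces the limiting behaviour $b_\infty=\lim\psi(p_m)$ to be a fixed point of all the $A_i$ (the ``limit mixed Hodge structure'' is monodromy-invariant); more precisely the limiting filtration is invariant under each $A_i$, and after conjugating by a suitable $B\in\mathrm{PU}(n,1)$ carrying $b_\infty$ to $o$ (case 1) or fixing $b_\infty$ (case 2), the $A_i$ land in the stabilizer of that point. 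This is where Lemmas~\ref{lem:centunitsim} and~\ref{lem:purerotcomm} come in: in case 2, passing to the Siegel model $\phi_{b_\infty}$ via Section~\ref{sec:siegelmodel}, the stabilizer $N_{b_\infty}$ surjects onto $\mathrm{Sim}(\mathbb{C}^{n-1})$, and the images of the commuting $A_i$ there can be simultaneously put in normal form --- either pure rotations (contributing the phases $e^{2i\pi\alpha_{p,q}w_q}$) or with a genuine translation part (contributing the $\sum_q\tau_q w_q$ in the last coordinate, via the $W_{b_\infty}$-action $y_n\mapsto y_n+\tau$ described in Section~\ref{sect:stabboundary}).

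\emph{Step 2: apply the nilpotent orbit theorem to extract the normal form.} Once the monodromy is in the above normal form, write $A_i$ (or rather its image in the relevant stabilizer) as $\exp(N_i)$ with $N_i$ in the Lie algebra of the real parabolic; the $N_i$ commute. Schmid's theorem (in the Sabbah--Schnell/Deng formulation for $\mathbb{C}$-VHS) says that the lifted period map $\psi:\widetilde{X}\to\mathbb{B}^n$ is, up to an error term decaying like a power of $\mathrm{Im}(w_q)$, equal to $\exp\!\big(\sum_q w_q N_q\big)\cdot F_\infty$ where $F_\infty$ is the limit and the ``nilpotent orbit'' $\exp(\sum w_q N_q)$ acts through the unipotent part --- together with the semisimple (unitary rotation) part which produces the multivalued factors $e^{2i\pi\sum_q\alpha_{p,q}w_q}$. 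Concretely: exponentiating the rotation parts on the first $n-1$ (resp. $n$) coordinates multiplies coordinate $\varphi_p$ by the phase $e^{2i\pi\sum_q\alpha_{p,q}w_q}$ with $\alpha_{p,q}\in[0,1)$ (the eigenvalue exponents of the unitary parts of the $A_i$, normalized to the fundamental domain); exponentiating the translation part in $W_{b_\infty}$ adds $\sum_q\tau_qw_q$ to the last Siegel coordinate. The remaining factor is a holomorphic map on $\widetilde{X}$ that is invariant under $w_q\mapsto w_q+1$, hence descends to a holomorphic map on $X=(\Delta^*)^k\times\Delta^{n-k}$; the nilpotent orbit estimates (the error term is $O(e^{-c\,\mathrm{Im}(w_q)})$, i.e.\ $O(|z_q|^c)$) show it is bounded near each puncture, so by Riemann extension it extends to a holomorphic $\varphi:\overline{X}\to\mathbb{C}^n$, which is formula~\eqref{eq:multival1} or~\eqref{eq:multival2}.

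\emph{Step 3: nonnegativity and non-vanishing of the $\tau_q$.} Two sign/nondegeneracy points remain. First, the $\tau_q\geq 0$: this is the statement that the translation in the Siegel coordinate moves one ``into'' the domain $\mathbb{S}_n=\{\,\mathrm{Im}(y_n)>\|y'\|^2\,\}$, which is exactly Schmid's positivity of the nilpotent orbit (the monodromy weight filtration / the fact that the nilpotent cone points into the period domain) --- equivalently, the pairing $Q$ from the polarization forces the relevant infinitesimal period data to have the correct sign; I would phrase this via the explicit action $(a,\tau)\cdot(y',y_n)=(y'+a,\,y_n+2i\bar a\cdot y'+i\|a\|^2+\tau)$ and the requirement that $\psi$ stays in $\mathbb{S}_n$ as $\mathrm{Im}(w_q)\to+\infty$. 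Second, ``not all $\tau_q$ zero'' in case 2: if all $\tau_q=0$ then the last coordinate $\varphi_n(z)+0$ would extend holomorphically with no growth, and combined with the extension of the other coordinates one checks that $\psi$ would then not actually approach the boundary point $b_\infty$ (it would stay at bounded Siegel-distance, contradicting $b_m\to b_\infty\in\partial\mathbb{B}^n$); so the boundary hypothesis of case 2 forces some $\tau_q>0$. Dually, in case 1 one checks the analogous statement is automatic: the map lands in $\mathbb{B}^n$.

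\emph{Main obstacle.} The genuine work --- and the part I expect to be the main obstacle --- is Step~1 combined with correctly invoking Schmid in the $\mathbb{C}$-VHS setting: one must be careful that ``period map of a polarized $\mathbb{C}$-VHS with unitary (indefinite-unitary) structure group $\mathrm{U}(n,1)$'' fits the hypotheses of \cite{SS22} or \cite{Deng23}, and one must track exactly how the semisimple part of the monodromy (giving the real exponents $\alpha_{p,q}$) interacts with the unipotent part (giving the linear-in-$w$ terms) when both are present. The bookkeeping of which coordinates get a phase factor versus an additive linear term --- i.e.\ the split $\mathbb{C}^{n-1}\times\mathbb{C}$ of the Siegel domain with the rotation part acting on $\mathbb{C}^{n-1}$ and the translation part on the last $\mathbb{C}$ --- is where the argument could go wrong if one is not careful, and it is exactly the feature ``specific to the ball'' that the authors flag as needing a detailed treatment. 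Everything else (Riemann extension, normalizing exponents to $[0,1)$ using the fundamental domain $F$, descending $w_q\mapsto w_q+1$-invariant functions to $X$) is routine given the preliminary lemmas already in Section~\ref{sec:notation}.
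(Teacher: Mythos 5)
There is a genuine gap in Step~1/Step~2, and it is precisely where the paper's work is concentrated. After passing to the Siegel model, each $A_j$ lies in the parabolic $N_{b_\infty}$, whose quotient onto $\mathrm{Sim}(\mathbb{C}^{n-1})$ sends $A_j$ to an affine unitary similarity $(R_j,a_j)$ (rotation part plus a translation $a_j\in\mathbb{C}^{n-1}$), \emph{and in addition} there is a central translation $\tau_j\in\mathbb{R}\subset W_{b_\infty}$ acting on the last Siegel coordinate. You treat these as if they were one and the same: you say the image in $\mathrm{Sim}(\mathbb{C}^{n-1})$ is ``either a pure rotation or has a genuine translation part (contributing $\sum_q\tau_q w_q$ in the last coordinate),'' but a translation part of $(R_j,a_j)$ lives in $\mathbb{C}^{n-1}$ and would contribute $a_j w_q$ to the \emph{first} $n-1$ coordinates, not the last one. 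The normal form~\eqref{eq:multival2} requires $a_j=0$ for every $j$, i.e.\ that $(R_j,a_j)$ is a pure rotation; this is not automatic and is exactly what the paper has to prove (Lemma~\ref{lem:rotation}). The proof there is not a direct consequence of the nilpotent orbit theorem: one first reduces to a one-variable map, projects to a lower-dimensional Siegel domain $\mathbb{S}_{n-p}\subset\mathbb{P}^{n-p}$, applies the nilpotent orbit theorem \emph{only to this projection} to get meromorphic extendability of an untwisted map, and then runs the elementary one-variable estimate Lemma~\ref{lem:complexanalysis3} to derive a contradiction if $a_j\neq0$ (the image would exit the Siegel domain). Your proposal never performs this step, and without it you cannot justify putting the commuting $A_j$ in the form needed for Lemma~\ref{lem:purerotcomm} and hence cannot obtain the conjugation $B_2$ that cleans up the translation.

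Two further, smaller points. You never address why the Levi ``transvection'' component $r_j$ of $A_j$ must vanish; the paper disposes of this by an explicit distance estimate in $\mathbb{S}_n$ (using the Iwasawa decomposition and the one-dimensional reduction to $\mathbb{H}$), which is a separate argument from the pure-rotation statement. Also, the holomorphic extension of $\varphi$ across $D$ is obtained in the paper not by quoting the nilpotent-orbit decay estimates but by elementary complex analysis: Lemma~\ref{lem:riemannboundlog} for $\varphi_n$ (using $\Im(\varphi_n(z))\geq C\log|z_1|$, forced by $\psi$ landing in $\mathbb{S}_n$), and a logarithmic growth bound for $\varphi_j$ ($j<n$). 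Your sketch asserts $O(|z_q|^c)$ decay from Schmid, which is an estimate in the Hodge metric and does not translate immediately into Euclidean boundedness of the Siegel coordinates; the paper's direct route avoids this issue. The overall skeleton (dichotomy on $b_\infty$, monodromy-invariance of $b_\infty$, conjugation into the stabilizer, untwisting, extension) is right, but the ``bookkeeping'' you flag as the main obstacle really is an obstacle that your proposal does not overcome.
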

\medskip

\begin{rem} Another way of stating \eqref{eq:multival1} and \eqref{eq:multival2} is to say that after composing by an automorphism \(B \in \mathrm{Aut}(\mathbb{B}^{n})\), the map \(\psi\) corresponds to a multivalued map
	\[
		z \in \Delta^{n} \longmapsto
		\mathrm{diag}(\prod_{q} z_{q}^{\alpha_{1, q}}, \dotsc, \prod_{q} z_{q}^{\alpha_{n, q}}) \cdot
		\varphi(z).
	\]
	or
	\begin{equation} \label{eq:multivalcase2}
	z \in \Delta^{n} \longmapsto
	\mathrm{diag}(\prod_{q} z_{q}^{\alpha_{1, q}}, \dotsc, \prod_{q} z_{q}^{\alpha_{n-1, q}}, 1) \cdot
	\left(
	\varphi(z)
	+
	(0, \dotsc, 0, \frac{1}{2i\pi} \sum_{q} \tau_{q} \log(z_{q}))
	\right)
	\end{equation}
\end{rem}

Let us prove Proposition~\ref{prop:asymptotic}. The following lemma is classical.

\begin{lem}  \label{lem:invlimit}
	One has \(A_{j} \cdot b_{\infty} = b_{\infty}\) for all \(j \in \llbracket 1, k \rrbracket\).
\end{lem}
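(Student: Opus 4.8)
The statement is that the accumulation point $b_\infty \in \overline{\mathbb{B}^n}$ of the images $\psi(p_m)$ of a sequence $p_m \in F$ lifting a sequence $q_m \to 0$ is fixed by each monodromy element $A_j = \rho(\gamma_j)$. The key mechanism is that translation by $\gamma_j$ on the universal cover $\widetilde X$ moves $p_m$ by a bounded amount in the Poincaré metric, because $p_m$ lives in the fundamental domain $F$ and in the directions $w_1,\dots,w_k$ the translation $w \mapsto w + e_j$ is an isometry of $\mathbb{H}^k$; in the remaining $\Delta^{n-k}$ directions nothing moves at all. Concretely, $\gamma_j \cdot p_m$ and $p_m$ differ only in the $j$-th coordinate, which changes from $w_j$ to $w_j + 1$, and the hyperbolic distance $d_{\mathbb{H}}(w_j, w_j+1)$ in the upper half-plane goes to $0$ as $\mathrm{Im}(w_j) \to \infty$, i.e. as $q_m \to 0$. (If $q_m$ accumulates only on some boundary stratum $D_J$ with $J \subsetneq \{1,\dots,k\}$, the same holds for $j \in J$; for $j \notin J$ the coordinate $z_j$ stays bounded away from $0$, but then $\psi$ extends holomorphically across that divisor and invariance under $A_j$ is automatic — though in the present lemma we are told $q_m \to 0$, so $\mathrm{Im}(w_j) \to \infty$ for every $j \le k$.)

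First I would record that $\psi$ is distance-decreasing for the Poincaré metric on $\widetilde X = \mathbb{H}^k \times \Delta^{n-k}$ and the Bergman metric $d_{\mathbb{B}^n}$ on $\mathbb{B}^n$: this is the Schwarz–Yau lemma, valid because the period map is horizontal and holomorphic with values in a negatively curved symmetric space, and the relevant curvature bounds hold. Hence
\[
	d_{\mathbb{B}^n}\big(\psi(p_m),\, \psi(\gamma_j \cdot p_m)\big) \;\le\; C\, d_{\widetilde X}(p_m,\, \gamma_j \cdot p_m) \;=\; C\, d_{\mathbb{H}}\big(w_j^{(m)},\, w_j^{(m)}+1\big) \;\xrightarrow[m\to\infty]{}\; 0,
\]
where $w_j^{(m)}$ denotes the $j$-th coordinate of $p_m$ and $C$ is the comparison constant. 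By $\rho$-equivariance, $\psi(\gamma_j \cdot p_m) = A_j \cdot \psi(p_m) = A_j \cdot b_m$. So the Bergman distance between $b_m$ and $A_j \cdot b_m$ tends to $0$.

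Next I would pass to the limit. If $b_\infty \in \mathbb{B}^n$ (the interior), then $b_m \to b_\infty$ in the Bergman metric, $A_j \cdot b_m \to A_j \cdot b_\infty$ by continuity of the $\mathrm{PU}(n,1)$-action, and $d_{\mathbb{B}^n}(b_m, A_j b_m) \to 0$ forces $b_\infty = A_j \cdot b_\infty$ by the triangle inequality — the interior case is then immediate. The genuine obstacle is the boundary case $b_\infty \in \partial \mathbb{B}^n$: there the Bergman metric is incomplete toward the boundary, distances between interior points near $b_\infty$ need not detect Euclidean proximity in $\overline{\mathbb{B}^n}$, and $d_{\mathbb{B}^n}(b_m, A_j b_m) \to 0$ does not a priori survive the limit in $\overline{\mathbb{B}^n}$. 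To handle this I would instead argue via the \emph{continuity of the $\mathrm{PU}(n,1)$-action on the closed ball} $\overline{\mathbb{B}^n}$ combined with a horoball/Busemann-function estimate: a Bergman-geodesic segment joining $b_m$ to $A_j b_m$ of bounded length stays within a fixed-size horoball at $b_\infty$ once $b_m$ is deep enough, and as $b_m \to b_\infty$ the Euclidean diameter of such a horoball segment shrinks to zero, so $A_j b_m - b_m \to 0$ in the Euclidean metric on $\overline{\mathbb{B}^n}$; then $A_j b_m \to b_\infty$, while also $A_j b_m \to A_j b_\infty$ by continuity of the extended action, whence $A_j \cdot b_\infty = b_\infty$. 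Alternatively, and perhaps more cleanly, I would invoke the standard fact (used repeatedly in this circle of ideas, e.g. in the proof of the nilpotent orbit theorem) that a sequence of isometries $A_j$ with $d(b_m, A_j b_m) \to 0$ along $b_m \to \partial$ must fix the limit point — this is a consequence of the classification of isometries of $\mathbb{B}^n$ into elliptic/parabolic/loxodromic types and the fact that for loxodromic or non-$b_\infty$-fixing parabolic elements $d(x, A_j x)$ is bounded below on any horoball at $b_\infty$. I expect writing this boundary estimate carefully — or locating the precise reference for it — to be the only real work; everything else is the Schwarz lemma plus a fundamental-domain observation.
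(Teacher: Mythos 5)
Your proposal captures the right mechanism up to the last step, then overcomplicates the one genuinely simple point. You correctly observe that $\gamma_j$ translates $p_m$ by $e_j$, that the $j$-th coordinate's hyperbolic displacement $d_{\mathbb H}(w_j^{(m)},w_j^{(m)}+1)\to 0$ as $\operatorname{Im}(w_j^{(m)})\to\infty$, and that the distance-decreasing property of $\psi$ plus $\rho$-equivariance give $d_{\mathbb B^n}(b_m, A_j\cdot b_m)\to 0$. (The paper phrases the distance decrease via the Kobayashi pseudo-distance, which is marginally more economical than invoking Schwarz--Yau, but this is cosmetic.) Your treatment of the interior case $b_\infty\in\mathbb B^n$ is then fine.

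Where you go astray is in the boundary case, and the source of the confusion is your claim that ``the Bergman metric is incomplete toward the boundary'' --- this is false; the Bergman metric on the ball is complete and blows up at $\partial\mathbb B^n$. Consequently your worry, that $d_{\mathbb B^n}(b_m,A_j b_m)\to 0$ ``need not detect Euclidean proximity,'' has things backwards. The Bergman metric \emph{dominates} the Euclidean metric (its eigenvalues are bounded below on all of $\mathbb B^n$), so $d_{\mathrm{eucl}}\le C\,d_{\mathbb B^n}$ on $\mathbb B^n$; hence $d_{\mathrm{eucl}}(b_m, A_j b_m)\to 0$, and since $d_{\mathrm{eucl}}$ is continuous on $\overline{\mathbb B^n}\times\overline{\mathbb B^n}$ and $(b_m,A_j b_m)\to(b_\infty,A_j b_\infty)$, one reads off $A_j b_\infty=b_\infty$. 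This is exactly the paper's one-line finish, valid uniformly for $b_\infty$ interior or boundary. Your proposed horoball/Busemann estimate and the appeal to the elliptic/parabolic/loxodromic classification are both valid ways to reach the same conclusion, but they substitute substantially heavier machinery for an elementary metric comparison; neither is needed, and the classification route in particular drags in facts about isometry types that the rest of Section~\ref{sec:asymptotic} has no use for at this stage.
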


\begin{proof}
	Let \((e_{1}, \dotsc, e_{n})\) be the standard frame for \(\mathbb{C}^{n}\). Letting \(d_{K, \widetilde{X}}\) be the Kobayashi distance on \(\widetilde{X}\), it is an easy exercise to check that
	\[
		d_{K, \widetilde{X}}( p_{m}, p_{m} + e_{j}) \underset{m \longrightarrow + \infty}{\longrightarrow} 0
	\]
	since \(\Im(p_{m}) \longrightarrow + \infty\). The decreasing property of the Kobayashi distance implies that
	\[
		d_{\mathbb{B}^{n}}(\psi(p_{m}), \psi(p_{m} + e_{j})) \leq d_{K, \widetilde{X}}( p_{m}, p_{m} + e_{j}) \longrightarrow 0
	\]
Now, one has
	\[
	 \psi(p_{m}) = b_{m}
	 \quad
	 \text{and}
	 \quad
	 \psi(p_{m} + e_{j})
	 = A_{j} \cdot b_{m},
	\]
	so \(d_{\mathbb{B}^{n}}(b_{m}, A_{j} \cdot b_{m})\) tends to \(0\). The euclidean distance on \(\mathbb{B}^{n}\) is bounded from above by the hyperbolic distance, so passing to the limit gives
	\(d_{\mathrm{eucl}}(b_{\infty}, A_{j} \cdot b_{\infty}) = 0\).
	This proves the result.
\end{proof}

To prove Proposition~\ref{prop:asymptotic}, we will deal separately with the two possible cases.
\medskip

\subsubsection{Case 1: \(b_{\infty} \in \mathbb{B}^{n}\)} This case is the easiest one. Pick \(B \in \mathrm{PU}(n, 1)\) such that \(B \cdot b_{\infty} = o\). Then we may replace \(\psi\) by \(B \cdot \psi\) and the \(A_{j}\) by \(B A_{j} B^{-1}\) to assume \(b_{\infty} = o\). Then, since \(A_{j} \cdot o = o\), all the elements \(A_{j}\) belong to \(\U(n) = \mathrm{Stab}_{o}(\PU(n, 1))\). Finally, as these elements are pairwise commuting, we may find again an element \(B \in \U(n)\) such that
\[
	B A_{j} B^{-1} = \mathrm{diag}
	(e^ {2 i \pi \alpha_{1, j}},
	\dotsc,
	e^ {2 i \pi \alpha_{n, j}}
	)
	\quad 
	(1 \leq j \leq k)
\]
in \(\U(n)\), where \(\alpha_{p, q} \in [0, 1)\) for all \(p, q\). Again, we may replace \(\psi\) and the \(A_{j}\) as above to assume that \(A_{j}\) are written as the previous diagonal form. 
\medskip

We may now untwist \(\psi\) by the \(n\)-parameter group
\[
	M: w \in (\mathbb{C}^{\ast})^{n}
	\longmapsto
	\mathrm{diag}
	(e^{2i\pi \sum_{q} \alpha_{1, q} w_{q}},
		\dotsc,
	e^ {2 i \pi \sum_{q} \alpha_{n, q} w_{q}}
	) \in \PGL(n)
\]
to get a single valued holomorphic map \(\varphi : z \in X \longmapsto \varphi(z) = M(-w) \cdot \psi(w)\), where \(z = \pi(w)\).
\medskip

To check that \(\varphi\) can be extended to \(\overline{X}\), it suffices to remark that since \(\psi\) lands in \(\mathbb{B}^{n}\), we must have for all \( j \in \llbracket 1, n \rrbracket\):

\[
	\prod_{q = 1}^{n} |z_{q}|^{\alpha_{j, q}} |\varphi_{j}(z)| = |\psi_{j}(w)| \leq 1.
\]
Since we have \(\alpha_{j, q} < 1\) for all \(j\), the holomorphic function \(\varphi_{j}\) cannot have have a pole along any component of \(D\). This ends the proof in the first case.

\subsubsection{Case 2: \(b_{\infty} \in \partial \mathbb{B}^{n}\)} 

With the notation of Proposition~\ref{prop:asymptotic}, we will replace for simplicity \(\mathbb{B}^{n}\) by its Siegel model \(\mathbb{S}_{n}\), and the map \(\varphi\) by the composite \(\phi \circ \varphi\), so as to consider a \(\rho\)-equivariant holomorphic map \(\varphi : \widetilde{X} \to \mathbb{S}_{n}\).
\medskip

As explained in Section~\ref{sect:stabboundary}, one has a decomposition \(N_{b_{\infty}} \cong  L_{b_{\infty}} \ltimes W_{b_{\infty}}\), where \(L_{b_{\infty}} \cong \mathrm{U}(n-1) \times \mathbb{R}\). For each \(j \in \llbracket 1, k \rrbracket\), let us write 
\begin{equation} \label{eq:levi}
	\hspace{4cm}
	A_{j} = (R_{j}, r_{j}, M_{j}) 
	\hspace{2cm}
	(R_{j} \in \mathrm{U}(n-1),
	r_{j} \in \mathbb{R},
	M_{j} \in W_{b_{\infty}})
\end{equation}
accordingly to this decomposition. The following two lemmas will give some restrictions concerning the elements of this expression; the first one is certainly well known, but we include a proof for lack of a better reference. 

\begin{lem}
	One has \(r_{j} = 0\) for all \(j \in \llbracket 1, k\rrbracket\).
\end{lem}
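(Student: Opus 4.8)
The plan is to upgrade the distance--decreasing estimate already used for Lemma~\ref{lem:invlimit} by testing it against a Busemann--type function on the Siegel domain. First, exactly as in the proof of Lemma~\ref{lem:invlimit}, the fact that $\Im(p_m)\to+\infty$ gives $d_{K,\widetilde X}(p_m,p_m+e_j)\to 0$, hence, by the distance--decreasing property of the Kobayashi pseudodistance and the equivariance $\psi(p_m+e_j)=A_j\cdot\psi(p_m)$,
\[
	d_{\mathbb B^n}\big(b_m,\,A_j\cdot b_m\big)\;\underset{m\to+\infty}{\longrightarrow}\;0 .
\]
Pushing everything through the Siegel presentation $\phi:=\phi_{b_\infty}\colon\mathbb B^n\to\mathbb S_n$ (a biholomorphism, hence a Kobayashi isometry, sending $b_\infty$ to the point at infinity) and setting $\beta_m:=\phi(b_m)\in\mathbb S_n$, this becomes $d_{\mathbb S_n}(\beta_m,\,A_j\cdot\beta_m)\to 0$, where $A_j=(R_j,r_j,M_j)\in L_{b_\infty}\ltimes W_{b_\infty}$ now acts on $\mathbb S_n$ as in \eqref{eq:levi}.

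The next step is to observe how the height $l(y',y_n)=\Im(y_n)-\|y'\|^2$ of \S\ref{sec:siegelmodel} transforms. Substituting the explicit formulas of \S\ref{sect:stabboundary} shows that every element of $W_{b_\infty}$ preserves $l$, while an element $(r,A)\in L_{b_\infty}$ satisfies $l\big((r,A)\cdot(y',y_n)\big)=e^{2r}\,l(y',y_n)$; consequently $l(A_j\cdot\beta_m)=e^{2r_j}\,l(\beta_m)$ for every $m$. On the other hand, $\log l$ is Lipschitz for the Bergman distance on $\mathbb S_n$: using $\partial l=\tfrac1{2i}\,dy_n-\sum_j\bar y'_j\,dy'_j$ and $i\partial\bar\partial l=-\sum_j i\,dy'_j\wedge d\bar y'_j$ (see \eqref{eq:exprBergman}), one checks that the gradient norm of $\log l$ with respect to $\omega_{\mathbb S_n}$ is constant, so there is $C=C(n)>0$ with $|\log l(p)-\log l(q)|\le C\,d_{\mathbb S_n}(p,q)$ for all $p,q\in\mathbb S_n$ (equivalently, the restriction of $\omega_{\mathbb S_n}$ to the slice $\{y'=0\}\cong\mathbb H$ is a constant multiple of the Poincaré metric, for which $\log l=\log\Im(y_n)$ is plainly Lipschitz).

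Combining the two facts, for every $m$ one has
\[
	2\,|r_j|\;=\;\big|\log l(A_j\cdot\beta_m)-\log l(\beta_m)\big|\;\le\;C\,d_{\mathbb S_n}\big(\beta_m,\,A_j\cdot\beta_m\big)\;\underset{m\to+\infty}{\longrightarrow}\;0,
\]
and, the left--hand side being independent of $m$, we conclude $r_j=0$. The one piece of genuine computation is the Lipschitz bound for $\log l$ — a short evaluation of a gradient norm in the explicit Bergman metric of \eqref{eq:exprBergman} — so I do not expect a real obstacle here; everything else is bookkeeping with the Levi decomposition of $N_{b_\infty}$ and the action formulas of \S\ref{sect:stabboundary}.
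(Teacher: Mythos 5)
Your proof is correct, but it takes a genuinely different route from the paper's. The paper argues by contradiction and proves a uniform \emph{displacement lower bound}: if $r_j\neq 0$, then $d_{\mathbb B^n}(p,A_j\cdot p)>\epsilon$ for a constant $\epsilon>0$ depending only on $r_j$ and every $p\in\mathbb B^n$. To show this, it uses the Iwasawa decomposition $\mathrm{PU}(n,1)=N_{b_\infty}K$ to reduce to $p=o$ and then projects to the last coordinate $\mathbb S_n\to\mathbb H$ (distance-decreasing) to reduce to an elementary hyperbolic-plane estimate, yielding $d(A\cdot o,o)\geq 2r$. You instead work with the Busemann-type function $\log l$: you observe that $W_{b_\infty}$ preserves $l$ while the $\mathbb R$-factor of $L_{b_\infty}$ scales it by $e^{2r}$, so $\log l(A_j\cdot\beta_m)-\log l(\beta_m)=2r_j$ identically, and then invoke a global Lipschitz bound for $\log l$ with respect to the Bergman (equivalently Kobayashi) distance to pass to the limit. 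This is cleaner and shorter once the Lipschitz bound is in hand; the paper's argument has the advantage of being entirely elementary, as it avoids any appeal to the geometry of $\log l$. Your Lipschitz claim does hold, but the ``gradient norm is constant'' step deserves one line of justification: it follows because $N_{b_\infty}$ acts transitively on $\mathbb S_n$ by Bergman isometries under which $\log l$ changes only by additive constants, so $\lvert d\log l\rvert_{\omega}$ is pulled back to itself under a transitive group action. (The parenthetical about restricting to the slice $\{y'=0\}$ only bounds the gradient in the $y_n$-direction and is not by itself equivalent to the global bound; the transitivity argument is the one that closes the gap.) With that line added, your proof stands as a valid alternative.
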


\begin{proof}
	Assume by contradiction that \(r_{j} \neq 0\). As we have seen in the proof of Lemma~\ref{lem:invlimit}, one has \(d_{\mathbb{B}^{n}}( b_{m}, A_{j} \cdot b_{m}) \longrightarrow 0\). Thus, to obtain a contradiction, it suffices to show that under our hypothesis, there exists \(\epsilon > 0\) depending only on \(r_{j}\) such that
	\[
		\forall p \in \mathbb{B}^{n},
		\;
		d_{\mathbb{B}^{n}}(p, A_{j} \cdot p) > \epsilon 
	\]

	\noindent
	{\em Step 1. We reduce to the case \(p = o\)}. Let \(p \in \mathbb{B}^{n}\), and let \(B_{p} \in \mathrm{PU}(n, 1)\) such that \(p = B_{p} \cdot o\). Using the Iwasawa decomposition, one can write \(B_{p} = S_{p} T_{p}\) with \(S_{p} \in N_{b_{\infty}}\) and \(T_{p} \in K := \mathrm{Stab}_{o}(\mathrm{PU}(n, 1))\). Then, one has
	\begin{align*}
		d_{\mathbb{B}^{n}}(p, A_{j} \cdot p) & = 
		d_{\mathbb{B}^{n}}(S_{p} T_{p} \cdot o, A_{j} S_{p} T_{p} \cdot o) \\ 
		& = d_{\mathbb{B}^{n}}(S_{p} \cdot o, A_{j} S_{p} \cdot o)  \hspace{2cm} (\text{since}\; T_{p} \in K) \\
		& = d_{\mathbb{B}^{n}}(o, \widehat{A}_{j} \cdot o)
	\end{align*}
	where \(\widehat{A}_{j} = S_{p}^{-1} A_{j} S_{p}\). Writing this element as in \eqref{eq:levi}, one finds \(\widehat{A}_{j} = (\widehat{R}_{j}, \widehat{r}_{j}, \widehat{M}_{j})\). Note that one must have \(\widehat{r}_{j} = r_{j}\) since \((\widehat{R}_{j}, \widehat{r}_{j})\) is the image of \(\widehat{A}_{j}\) under the quotient map \(N_{b_{\infty}} \to L_{b_{\infty}}\), and conjugation leaves the center \(\mathbb{R} \subset L_{b_{\infty}}\) invariant. We have reduced to showing that if \(A = (R, r, M)\) is written as in \eqref{eq:levi} with \(r \neq 0\), one has \(d_{\mathbb{B}^{n}}(o, A \cdot o) > \epsilon\), where \(\epsilon\) depends only on \(r\). We may assume \(r > 0\) by replacing \(A\) with \(A^{-1}\) if necessary.
\medskip

\noindent
	{\em Step 2. We reduce to the one dimensional case.} Write \(R = (\tau, a)\), accordingly to the notation of Section~\ref{sect:stabboundary}. Now, in the standard coordinates of the Siegel domain \(\mathbb{S}_{n}\), one has \(o = (0, \dotsc, 0, i)\), and one can write
	\[
		A \cdot o
		=
		(e^{r} R \cdot a, e^{2r}(i + i ||a||^{2} + \tau)) 
	\]
	The projection on the last coordinate \(\mathbb{S}_{n} \to \mathbb{H}\) is distance decreasing for the Kobayashi distance, so one has
	\[
		d_{\mathbb{B}^{n}}(A \cdot o, o)
		\geq
		d_{\mathbb{H}}(e^{2r}(1 + ||a||^{2}) i + e^{r} \tau, i)
	\]
	Now, it is easy to check that for any \(\lambda > 1\), and any \(t \in \mathbb{R}\), one has
	\(d_{\mathbb{H}}(i, \lambda i + t) \geq \log(\lambda)\). This shows that
	\[
		d_{\mathbb{B}^{n}}(A \cdot o, o)
		\geq
		2 r + \log(1 + ||a||^{2}) \geq 2r.
	\]
	This gives the result.
\end{proof}

We are now ready to make a first conjugation by an element \(B_{1} \in \mathrm{PU}(n, 1)\) -- this \(B_{1}\) will eventually give a factor of the element \(B\) mentioned in the statement of Proposition~\ref{prop:asymptotic}. Since all the matrices \(A_{j}\) are pairwise commuting, so are the elements \(R_{j}\), since the latter are the images of the \(A_{j}\) by the quotient map \(N_{b_{\infty}} \to L_{b_{\infty}}\). Thus, they may be diagonalized in a single unitary basis, meaning that there exists \(B_{1} \in \mathrm{U}(n-1)\) such that
\begin{equation} \label{eq:diagmatrix}
	B_{1} R_{q} B_{1}^{-1} = \mathrm{diag}(e^{2i\pi \alpha_{1, q}}, \dotsc, e^{2i\pi \alpha_{n-1, q}})
	\quad
	(1 \leq q \leq k)
\end{equation}
where the \((\alpha_{p, q})_{1 \leq p \leq n-1, 1 \leq q \leq k}\) are real numbers.

Without loss of generality, we may replace \(\psi\) by \(B_{1} \cdot \psi\) and the \(A_{q}\) by \(B_{1} A_{q} B_{1}^{-1}\), to assume all the \(R_{q}\) have the diagonal form \eqref{eq:diagmatrix}.

\begin{lem}
	For any \(j \in \llbracket 1, k \rrbracket\), write \(M_{j} = (\tau_{j}, a_{j}) \in \mathbb{R} \times \mathbb{C}^{n-1}\), accordingly to the decomposition of \(W_{b_{\infty}}\) detailed in Section~\ref{sect:stabboundary}. Then the image \((R_{j}, a_{j})\) of \(A_{j}\) under the quotient \(N_{b_{\infty}} \to \mathrm{Sim}(\mathbb{C}^{n-1}) \cong \U(n-1) \ltimes \mathbb{C}^{n-1}\) is a pure rotation.
\end{lem}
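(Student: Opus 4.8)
The plan is to argue by contradiction: assume the unitary similarity \((R_j, a_j)\) is \emph{not} a pure rotation. By Lemma~\ref{lem:centunitsim} (equivalently, by the criterion used in the proof of Lemma~\ref{lem:purerotcomm}) this means precisely that the component \(a_j^{F_j}\) of \(a_j\) along \(F_j := \ker(R_j - I) \subset \mathbb{C}^{n-1}\) is nonzero. Since the rotations \(R_1,\dots,R_k\) have been simultaneously diagonalized in \eqref{eq:diagmatrix}, \(F_j\) is the coordinate subspace spanned by the basis vectors \(e_l\) with \(e^{2i\pi\alpha_{l,j}} = 1\), so \(a_j^{F_j}\neq 0\) means \((a_j)_l \neq 0\) for at least one such \(l\). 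I will show this forces the period map to leave the Siegel domain, contradicting the fact that it takes values in \(\mathbb{S}_n\).

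First I would pass to the (untwisted) \(\rho\)-equivariant map \(\varphi = (\varphi', \varphi_n) : \widetilde{X} \to \mathbb{S}_n\) and restrict it to a slice \(w_q = c_q\) (\(q \neq j\)) with generic fixed values \(c_q\), obtaining a holomorphic map of the single variable \(w_j \in \mathbb{H}\) that is equivariant under \(w_j \mapsto w_j + 1\) via \(A_j = (R_j, 0, M_j)\) with \(M_j = (\tau_j, a_j)\). Writing out the explicit action of \(A_j\) on \(\mathbb{S}_n\) (using \(r_j = 0\)), the functional equations are, componentwise, \(\varphi'_l(w_j + 1) = e^{2i\pi\alpha_{l,j}}\varphi'_l(w_j) + (a_j)_l\) together with \(\varphi_n(w_j + 1) = \varphi_n(w_j) + 2i\,\overline{a_j}\cdot R_j\varphi'(w_j) + i\|a_j\|^2 + \tau_j\). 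From the first equation, for \(l\) with \(e^{2i\pi\alpha_{l,j}}=1\) one gets \(\varphi'_l(w_j) = (a_j)_l\, w_j + \mu_l(z_j)\) with \(z_j = e^{2i\pi w_j}\) and \(\mu_l\) holomorphic on \(\Delta^{\ast}\), while for the remaining \(l\) one gets \(\varphi'_l(w_j) = z_j^{\alpha_{l,j}}\eta_l(z_j) + \mathrm{const}\); substituting these into the equation for \(\varphi_n\) and solving the resulting linear difference equation yields
\[
\varphi_n(w_j) = i\,\|a_j^{F_j}\|^2\, w_j^2 + g(z_j)\, w_j + \beta w_j + f(z_j),
\]
where \(f, g\) are holomorphic on \(\Delta^{\ast}\) and \(\beta \in \mathbb{C}\). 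The coefficient of \(w_j^2\) equals \(i\sum_{l:\, e^{2i\pi\alpha_{l,j}}=1} |(a_j)_l|^2 = i\|a_j^{F_j}\|^2\), a sum of nonnegative real contributions at least one of which is strictly positive, so there is no cancellation, and no other term of \(\varphi_n\) contributes a \(w_j^2\). By the moderate-growth property of period maps — a very special case of Schmid's nilpotent orbit theorem \cite{Schmid73}, in the \(\mathbb{C}\)-VHS form of \cite{SS22, Deng23} — the functions \(f\) and \(g\) have no essential singularity at the origin.

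Now Lemma~\ref{lem:complexanalysis3} applies with \(\alpha = \|a_j^{F_j}\|^2 > 0\): it produces a point \(w_j \in \mathbb{H}\) at which \(\Im(\varphi_n(w_j)) < 0\). But \(\varphi(w_j) \in \mathbb{S}_n\), so \(\Im(\varphi_n(w_j)) > \|\varphi'(w_j)\|^2 \geq 0\), a contradiction. Hence \(a_j^{F_j} = 0\), i.e. \(a_j \perp \ker(R_j - I)\), which is exactly the assertion that the image \((R_j, a_j)\) of \(A_j\) in \(\mathrm{Sim}(\mathbb{C}^{n-1})\) is a pure rotation. The main obstacle, and the reason one cannot simply evaluate along the imaginary axis \(w_j = it\) (where the quadratic term already has imaginary part \(-\|a_j^{F_j}\|^2 t^2\)), is that the lower-order coefficient functions \(f, g\) may have poles and a priori dominate; taming this is precisely the role of Lemma~\ref{lem:complexanalysis3} together with the no-essential-singularity input. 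One could alternatively try to route the argument through the estimate \(d_{\mathbb{B}^n}(b_m, A_j b_m) \to 0\) of Lemma~\ref{lem:invlimit}, mimicking the treatment of the transvection part \(r_j\); but there the displacement of a point under \(A_j\) depends on its ``depth'' \(l(\varphi(p_m))\) in the cusp, which is itself governed by \(\varphi\), making that route considerably more delicate — the complex-analytic argument above sidesteps this.
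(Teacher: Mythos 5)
Your strategy is the right one and in fact mirrors the paper's: fix $j$, restrict to a one-variable slice, solve the difference equation imposed by the monodromy $A_j$, observe that the translation part in the $1$-eigenspace of $R_j$ forces a $w_j^2$ term in $\varphi_n$ with a strictly positive coefficient $\|a_j^{F_j}\|^2$, and then invoke Lemma~\ref{lem:complexanalysis3} to exit the Siegel domain. However, there is a genuine gap in the reduction to Lemma~\ref{lem:complexanalysis3}: your displayed formula for $\varphi_n$ is not correct unless you first conjugate away the component of $a_j$ orthogonal to $F_j = \ker(R_j - I)$. Concretely, for the indices $l \notin F_j$ with $(a_j)_l \neq 0$, the substitution of $\varphi'_l(w_j) = c_l + z_j^{\alpha_{l,j}}\eta_l(z_j)$ into $2i\,\bar a_j\cdot R_j\varphi'$ produces a term proportional to $z_j^{\alpha_{l,j}}\eta_l(z_j)$, which is \emph{not} $1$-periodic in $w_j$; solving the resulting difference equation introduces a contribution of the form $\tfrac{1}{e^{2i\pi\alpha_{l,j}}-1}\,z_j^{\alpha_{l,j}}\eta_l(z_j)$ into $\varphi_n$. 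This piece is multivalued with a nontrivial $\mathrm{U}(1)$-twist, so it cannot be absorbed into the holomorphic functions $f, g$ required by Lemma~\ref{lem:complexanalysis3}; since the nilpotent orbit theorem gives only a \emph{meromorphic} extension of $\eta_l$, this piece can a priori grow exponentially along $w_j = ir$, swamping the $-\|a_j^{F_j}\|^2 r^2$ term, and the lemma's case analysis does not apply to it.

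The paper avoids this by doing exactly the step you announce that you are avoiding: it first conjugates $(R_j,a_j)$ by a translation (via Lemma~\ref{lem:centunitsim}) so that the remaining translation vector lies entirely in $F_j$, i.e. $a_j = (0, a_j')$ in the split $\mathbb{C}^p\times\mathbb{C}^{n-1-p}$ with $R_j = \mathrm{diag}(R_j', I)$; it then projects the one-variable period map to the last $n-p$ coordinates $(y_{\mathrm{tran}}', y_n)$, which still land in a Siegel domain $\mathbb{S}_{n-p}$ and on which the monodromy is \emph{unipotent}. After untwisting by the corresponding one-parameter group and applying the nilpotent orbit theorem, $\zeta_n'(w)$ is exactly of the form $f(z) + g(z)w + i\alpha w^2 + \beta w$ with $f, g$ meromorphic, and Lemma~\ref{lem:complexanalysis3} applies cleanly. (Your observation that $a_j^{F_j}$ is invariant under translation-conjugation is correct — $(R_j - I)t \in F_j^{\perp}$ — so nothing is lost in this normalization; but you do need to perform it, or else argue separately that the extra twisted terms cannot dominate.)
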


\begin{proof}
	Recall that \(R_{j}\) has the diagonal expression \eqref{eq:diagmatrix}; we may reorder the coordinates so that 
	\[
		R_{j}
		=
		\mathrm{diag}(e^{2i\pi \alpha_{1, j}}, \dotsc, e^{2i\pi \alpha_{p, j}}, 1, \dotsc, 1)
		=: \mathrm{diag}(R'_{j}, \mathrm{I}_{n-1 - p}).
	\]
	with the first \(p\) coordinates are all different from \(1\). The matrix \(R_{j}'\) is a diagonal unitary matrix of size \(p\).

	It follows from Lemma~\ref{lem:centunitsim} that we may conjugate \((R_{j}, a_{j})\) by a translation of in \(\mathbb{C}^{n-1}\), to assume that the vector \(a_{j}\) has zero coordinates except possibly for the last \(n-1-p\) entries, so that
	\[
		a_{j} = (0, a_{j}') \in \mathbb{C}^{p} \times \mathbb{C}^{n-1 - p}.
	\]
	Such a conjugation in \(\mathrm{Sim}(\mathbb{C}^{n-1})\) can by obtained by composing \(\psi\) with an adequate element in \(\PU(n,1)\). In the following, we assume that this conjugation has been made.
	\medskip
	
	 With the previous notation, the action of \(A_{j}\) on \(S\) then splits accordingly to the decomposition \(\mathbb{C}^{n-1} = \mathbb{C}^{p} \times \mathbb{C}^{n-1-p}\) to give a rotation on one factor, and a translation on the other. More precisely, if we pick \((y', y_{n}) \in S \subset \mathbb{C}^{n-1} \times \mathbb{C}\) and write \(y' =: (y'_{rot}, y'_{tran}) \in \mathbb{C}^{p} \times \mathbb{C}^{n-1 - p}\), then one has
	\begin{equation} \label{eq:transforule}
		A_{j} \cdot (y', y_{n})
		=
		(R'_{j} \cdot y_{rot}',\;
		y'_{tran} + a_{j}',\;
		y_{n} + 2 i \overline{a_{j}'} \cdot y_{tran}'
		+
		i || a_{j}' ||^{2} + \tau_{j}).
	\end{equation}
	\medskip

	The proof that \((R_{j}, a_{j})\) is a pure rotation will be complete with Lemma~\ref{lem:rotation}, which will make use of the \(\mathbb{C}\)-VHS version of Schmid's nilpotent orbit theorem \cite{Schmid73} proved by Sabbah-Schnell \cite{SS22}.
\end{proof}

	\begin{lem} \label{lem:rotation}
		We have \(a_{j}' = 0\) for all \(j \in \llbracket 1, k\rrbracket\).
	\end{lem}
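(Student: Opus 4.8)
The plan is to reduce to a one‑variable degeneration and to use the fact that the image of $\varphi$ is constrained to lie in the Siegel domain $\mathbb{S}_{n}$, i.e.\ to satisfy $\Im(y_{n})>\|y'\|^{2}$. The monodromy identities coming from \eqref{eq:transforule} will force the last coordinate of $\varphi$ to be, in the universal coordinate $w_{j}$, a polynomial of degree $\le 2$ whose $w_{j}^{2}$–coefficient equals $i\|a_{j}'\|^{2}$, plus terms that are holomorphic in $z=e^{2i\pi w_{j}}$; if $a_{j}'\neq 0$ this coefficient has strictly positive imaginary part, and Lemma~\ref{lem:complexanalysis3} then produces a point where $\Im$ of that last coordinate is negative, contradicting membership in $\mathbb{S}_{n}$.

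Concretely, fix $j$ (the reductions of the previous proof — reordering the coordinates, and conjugating so that $a_{j}=(0,a_{j}')$ — being already in force) and restrict $\varphi$ to the disc transverse to $D_{j}$ through $o$, freezing $w_{\ell}$ for $\ell\neq j$. This yields a holomorphic map $g=(g_{1},\dots,g_{n})\colon \mathbb{H}\to\mathbb{S}_{n}$, equivariant under $w_{j}\mapsto w_{j}+1$ via $A_{j}$ and underlying a polarized $\mathbb{C}$-VHS on $\Delta^{\ast}$. Writing $g'_{\mathrm{tran}}$ for the $\mathbb{C}^{n-1-p}$–block of $g'$, formula \eqref{eq:transforule} gives $g'_{\mathrm{tran}}(w_{j}+1)=g'_{\mathrm{tran}}(w_{j})+a_{j}'$ and $g_{n}(w_{j}+1)=g_{n}(w_{j})+2i\,\overline{a_{j}'}\cdot g'_{\mathrm{tran}}(w_{j})+i\|a_{j}'\|^{2}+\tau_{j}$. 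Since $R_{j}$ acts trivially on the block containing $a_{j}'$ and on the $y_{n}$–coordinate, $R_{j}$ and $M_{j}$ commute, so $A_{j}=M_{j}R_{j}$ is the Jordan decomposition and the monodromy is unipotent in the directions relevant to us; the nilpotent orbit theorem for $\mathbb{C}$-VHS of Sabbah–Schnell \cite{SS22} (see also Deng \cite{Deng23}) then guarantees that $g$ has moderate growth and that, after removing the monodromy twist, the remaining holomorphic data extends meromorphically across $0$.

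Set $A:=\|a_{j}'\|^{2}$ and $h(w_{j}):=\overline{a_{j}'}\cdot g'_{\mathrm{tran}}(w_{j})$, so that $h(w_{j}+1)=h(w_{j})+A$; then $h_{0}:=h(w_{j})-A\,w_{j}$ is $1$-periodic, descends to a holomorphic function on $\Delta^{\ast}$, and by the moderate–growth statement above it has no essential singularity at $0$. A direct check shows that $f:=g_{n}(w_{j})-iA\,w_{j}^{2}-\tau_{j}w_{j}-2i\,h_{0}(z)\,w_{j}$ is also $1$-periodic, and again by the nilpotent orbit theorem it extends without essential singularity at $0$. Thus
\[
	g_{n}(w_{j}) \;=\; f(z) \;+\; 2i\,h_{0}(z)\,w_{j} \;+\; iA\,w_{j}^{2} \;+\; \tau_{j}\,w_{j}.
\]
Assume now, for contradiction, that $a_{j}'\neq 0$, so $A>0$. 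Applying Lemma~\ref{lem:complexanalysis3} with the functions $f$ and $2i\,h_{0}$ in the roles of $f$ and $g$, with $\alpha:=A>0$ and $\beta:=\tau_{j}$, we obtain $w_{j}\in\mathbb{H}$ with $\Im\bigl(g_{n}(w_{j})\bigr)<0$. But $g(w_{j})\in\mathbb{S}_{n}$ forces $\Im\bigl(g_{n}(w_{j})\bigr)>\|g'(w_{j})\|^{2}\ge 0$, a contradiction. Hence $a_{j}'=0$ for every $j\in\llbracket 1,k\rrbracket$.

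The main obstacle is the middle step: extracting from the nilpotent orbit theorem the precise assertion that the two $1$-periodic functions $h_{0}$ and $f$ have at worst a pole at the origin. This is exactly where the structure of the unipotent radical $W_{b_{\infty}}$ as a Heisenberg group and the Sabbah–Schnell refinement of Schmid's theorem (handling monodromy that need not be quasi-unipotent) are needed; once this is granted, the remainder is a formal manipulation of the monodromy identities together with a direct appeal to Lemma~\ref{lem:complexanalysis3}.
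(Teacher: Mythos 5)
Your proof is correct and follows essentially the same route as the paper: freeze all variables but $w_{j}$, view the restriction to the last $n-p$ coordinates as a $\mathbb{C}$-VHS with unipotent monodromy, invoke the Sabbah--Schnell nilpotent orbit theorem to see that the untwisted, $1$-periodic pieces are meromorphic at $0$, and then apply Lemma~\ref{lem:complexanalysis3} to contradict $\Im(g_{n})>0$. The only cosmetic difference is that the paper performs the untwisting in one stroke by conjugating with an explicit one-parameter subgroup $M(w)\in\mathrm{PGL}(n-p+1)$ acting on $\mathbb{P}^{n-p}$, which packages the moderate-growth/extension assertion for $\phi_{\mathrm{tran}}$ and $\phi_{n}$ simultaneously, whereas you untwist the two relevant scalar functions $h_{0}$ and $f$ one after the other; your $h_{0}(z)$ is exactly $\overline{a_{j}'}\cdot\phi_{\mathrm{tran}}(z)$ and your $f(z)$ is $\phi_{n}(z)$ in the paper's notation, and the second extension step (that $f$ is meromorphic at $0$) is precisely what the single projective nilpotent-orbit claim in the paper delivers.
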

	
	\begin{proof}[Proof of Lemma~\ref{lem:rotation}] We will derive a contradiction by assuming the contrary. Choose \(j \in \llbracket 1, k\rrbracket\), and fix a constant \(w_{l}\) for all \(l \neq j\), so as to consider the one-variable map
	\[
		\zeta : w \in \mathbb{H}
		\mapsto
		\zeta(w) :=
		\psi(w_{1}, \dotsc, w_{j-1}, w, w_{j+1}, \dotsc, w_{n})
	\]
	We have a natural decomposition
	\[
		\zeta(w) 
		=
		(\zeta_{rot}(w), \zeta_{tran}(w), \zeta_{n}(w)) \in \mathbb{C}^{p} \times \mathbb{C}^{n-1-p} \times \mathbb{C}.
	\]
		Recall that \(\zeta(w+1) = A_{j} \cdot \zeta(w)\). Projecting \(\zeta\) to its last \(n-p\) coordinates, one gets a holomorphic map with values in the \((n-p)\)-dimensional Siegel domain \(\mathbb{S}_{n-p} := \{(w', w_{n}) \in \mathbb{C}^{n-p-1} \times \mathbb{C} \; |\; \Im(w_{n}) > || w' ||^{2} \}\):
		\[
			\zeta' : w \in \mathbb{H} \longmapsto (\zeta_{tran}(w), \zeta_{n}(w)) \in S_{n-p} \subset \mathbb{P}^{n-p}.
		\]

		The map \(\zeta'\) can be seen as a \(\mathbb{C}\)-VHS with values in the compact period domain \(\mathbb{P}^{n-p}\). We see from \eqref{eq:transforule} that its monodromy is given by the unipotent element 
		\[
			\left(
			\begin{matrix}
				\mathrm{I}_{n-p-1} & 0 & a_{j}' \\
				2 i {}^{t} \overline{a_{j}'} & 1 & i ||a_{j}'||^{2} + \tau_{j}  \\
			        0                     & 0 & 1	
			\end{matrix}
			\right)
			\in
			\mathrm{Aut}(\mathbb{P}^{n-p}).
		\]
		(the vector \(a_{j}'\) is considered here as a line matrix) The previous element is the value for \(w=1\) of the one-parameter group
		\[
			M : w \in \mathbb{C}^{\ast}
			\longmapsto
			\left(
			\begin{matrix}
				\mathrm{I}_{n-p-1} & 0 & a_{j}' w \\
				2 i {}^{t} \overline{a_{j}'} w & 1 & i ||a_{j}'||^{2} w^{2} + \tau_{j} w  \\
			        0                     & 0 & 1	
			\end{matrix}
			\right)
			\in \PGL(n-p+1).
		\]

		The nilpotent orbit theorem (see \cite[4.9]{Schmid73}, \cite[{\bf 24.}]{SS22}) has the following consequence:
		\medskip
		
\begin{claim*} The monovalued untwisted period map
\[
	z \in \Delta^{\ast} \mapsto M(-w) \cdot [ \zeta_{tran}(w) : \zeta_{n}(w) : 1] \in \mathbb{P}^{n-p}
\]
extends holomorphically across the origin.
\end{claim*}

		We may thus write the application appearing in the previous claim as \(z \in \Delta \mapsto [\phi_{tran}(z) : \phi_{n}(z) : 1]\) where \(\phi_{tran} : \Delta^{\ast} \to \mathbb{C}^{n-p-1}\) (resp. \(\phi_{n} : \Delta^{\ast} \to \mathbb{C}\)) is a holomorphic map that is meromorphic across the origin. Twisting this map back by \(M(w)\), one may write 
		\[
		\zeta'(w)
		=
		(
		\phi_{tran}(z) + a_{j}' w,
		\phi_{n}(z) + 2 i \overline{a_{j}'} \cdot \phi_{tran}(z) w + i || a_{j}'||^{2} w^{2} + \tau_{j} w).
		\]

Note that this expression must land inside \(\mathbb{S}_{n-p}\) for all \(w \in \mathbb{H}\). This is however impossible if \(a_{j}' \neq 0\); to remark this, apply Lemma~\ref{lem:complexanalysis3} to \(f(z) = \phi_{n}(z)\), \(g(z) = 2 i \overline{a_{j}}' \cdot \phi_{tran}(z)\), \(\alpha = || a_{j}'||^{2}\) and \(\beta = \tau_{j}\). This ends the proof of Lemma~\ref{lem:rotation}.
\end{proof}

	We can now prove that in Case 2, the function \(\psi\) can be written as in \eqref{eq:multival2}. We have proven that all monodromies \(A_{j} = (R_{j}, 0, \tau_{j}, a_{j})\) are such that the elements \((R_{j}, a_{j})\) are pure rotations of \(\mathbb{C}^{m}\). These elements are all pairwise commuting because the \(A_{j}\) commute. Thus, by Lemma~\ref{lem:purerotcomm}, they all share a common fixed point : this allows to find a new element \(B_{2} \in W_{b_{\infty}}\) such that 
	\[
		B_{2} A_{j} B_{2}^{-1} = (R_{j}, 0, \tau_{j}, 0)
	\]
	for all \(j \in \llbracket 1, k \rrbracket\). Again, without loss of generality, we may replace \(\psi\) by \(B_{2} \cdot \psi\) and \(A_{j}\) by \(B_{2} A_{j} B_{2}^{-1}\). Applying the nilpotent orbit theorem once more, we may now untwist the map \(\psi\) by the \(n\)-parameter group
	\[
		M : (w_{i}) \in (\mathbb{C}^{\ast})^{n}
		\longmapsto
		\left(
		\begin{matrix}
		e^{2 i \pi \sum_{q} \alpha_{1, q} w_{q}} &        &                                             &                              \\
							 & \ddots &                                             &                              \\
							 &        &  e^{2 i \pi \sum_{q} \alpha_{n-1, q} w_{q}} &                              \\
							 &        &                                             & 1 & \sum_{q} \tau_{q} w_{q}  \\
							 &        &                                             &   & 1                        

		\end{matrix}
		\right) \in \mathrm{PGL}(n+1)
	\]
	to get a holomorphic map \(\varphi : z \in X \longmapsto M(-w) \cdot \psi(w) \in \mathbb{C}^{n}\) that is actually meromorphic across the boundary divisor. Equation \eqref{eq:multival2} now follows from the definition of \(\varphi\).
	\medskip

It remains to be shown that the function \(\varphi\) extends holomorphically across \(D\).

\begin{claim*}The function \(\varphi_{n}\) extends across \(D\) to a holomorphic function \(\varphi : \overline{X} \to \mathbb{C}^{n}\).
\end{claim*}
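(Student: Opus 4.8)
The plan is to crucially use that $\psi$ takes its values in the \emph{open} Siegel domain $\mathbb{S}_n$: this confines the last coordinate $\varphi_n$ to grow only logarithmically near $D$ --- precisely the borderline regime handled by Lemma~\ref{lem:riemannboundlog} --- whereas the first $n-1$ coordinates are twisted by the unitary weights $\alpha_{j,q}\in[0,1)$, which absorb even logarithmic growth. Recall that $\varphi$ is already known to be meromorphic across $D$, so the whole task is to rule out poles.

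First I would translate the condition $\psi(\widetilde X)\subset\mathbb{S}_n$ into an inequality for $\varphi_n$. Writing $w_q=\tfrac1{2i\pi}\log z_q$ for $q\le k$, so that $\Im w_q=-\tfrac1{2\pi}\log|z_q|>0$ on $X$, formula~\eqref{eq:multival2} gives, for every $z\in X$,
\[
	\Im\varphi_n(z)-\frac1{2\pi}\sum_{q=1}^k\tau_q\log|z_q|
	\;=\;\Im\Big(\varphi_n(z)+\sum_q\tau_q w_q\Big)
	\;>\;\sum_{j=1}^{n-1}\prod_{q}|z_q|^{2\alpha_{j,q}}|\varphi_j(z)|^2\ \ge\ 0,
\]
hence $\Im\varphi_n(z)>\tfrac1{2\pi}\sum_q\tau_q\log|z_q|$ on $X$. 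Since $\varphi_n$ is single-valued on $X$ and meromorphic across $D$, I would regard it as a meromorphic function on $\overline X=\Delta^n$ whose polar divisor is supported on $\bigcup_{q=1}^k\{z_q=0\}$.

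Next I would fix $q\in\llbracket1,k\rrbracket$ and freeze the remaining coordinates at a generic point, with $z_j^0\ne0$ for $j\ne q$, $j\le k$, obtaining a holomorphic $f_q\colon\Delta^{\ast}\to\mathbb C$ with $\Im f_q(\zeta)>\tfrac{\tau_q}{2\pi}\log|\zeta|+c_q$ for some $c_q\in\mathbb R$. When $\tau_q>0$, Lemma~\ref{lem:riemannboundlog} applied to $f_q-ic_q$ with $C=\tau_q/2\pi$ shows that $f_q$ extends across $\zeta=0$; when $\tau_q=0$, the map $f_q-ic_q$ lands in $\mathbb H$ and Lemma~\ref{lem:riemannupperhalf} gives the same. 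So $f_q$ is bounded near the origin on a generic slice, which forces the polar order of $\varphi_n$ along $\{z_q=0\}$ to vanish; running $q$ over $1,\dots,k$ I conclude that $\varphi_n$ is holomorphic on $\overline X$. Finally, with $\Im\varphi_n$ now bounded, the displayed inequality yields $\prod_q|z_q|^{2\alpha_{j,q}}|\varphi_j|^2=O\big(1+\sum_q|\log|z_q||\big)$ near $D$ for each $j\le n-1$; since $\alpha_{j,q}\in[0,1)$ and $\varphi_j$ is meromorphic across $D$, comparing $\prod_q|z_q|^{\alpha_{j,q}}|\varphi_j|$ with $\sqrt{|\log|\zeta||}$ on a generic one-variable slice excludes any pole of $\varphi_j$ (when $\alpha_{j,q}=0$ the slice of $\varphi_j$ is itself $O(\sqrt{|\log|\zeta||})$ and meromorphic, hence holomorphic). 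Thus every $\varphi_j$ extends holomorphically across $D$, and $\varphi=(\varphi_1,\dots,\varphi_n)$ is a holomorphic map $\overline X\to\mathbb C^n$.

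I do not expect a serious obstacle here. The one genuinely load-bearing point is the sign in the Siegel inequality: it must match the hypothesis $\Im(f)>C\log|z|$ with $C\ge0$ of Lemma~\ref{lem:riemannboundlog}, and it is the Siegel normal form (as opposed to the bounded model used in Case~1) that produces this borderline bound. The only routine care needed is in passing from these generic one-variable slices back to the several-variable statement, which I would phrase through the fact that the polar order of a meromorphic function along $\{z_q=0\}$ is read off on a generic transverse slice; and in treating the degenerate cases $\tau_q=0$ and $\alpha_{j,q}=0$ separately via Lemma~\ref{lem:riemannupperhalf}.
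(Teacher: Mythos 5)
Your argument is correct and follows essentially the same route as the paper: extract the inequality $\Im\varphi_n(z) > \tfrac1{2\pi}\sum_q\tau_q\log|z_q|$ from the Siegel condition, restrict to a generic one-variable transverse slice, and invoke Lemma~\ref{lem:riemannboundlog} (with a constant shift of $f$, and Lemma~\ref{lem:riemannupperhalf} when $\tau_q=0$) to rule out a pole along each $D_q$, using that the polar order of a meromorphic function is detected on a generic slice. Your final paragraph handling $\varphi_1,\dots,\varphi_{n-1}$ reproduces, inside this proof, exactly the content of the paper's second claim (bound $\prod_q|z_q|^{2\alpha_{j,q}}|\varphi_j|^2$ by the right-hand side, now known to grow at most like $|\log|z_q||$, and use $\alpha_{j,q}\in[0,1)$).
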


\begin{proof} To prove this, it suffices to show for example that \(\varphi_{n}\) extends across \(D_{1}\). Writing that the expression \eqref{eq:multival2} must land inside \(\mathbb{S}_{n}\), we get that
\begin{equation} \label{eq:condpos}
	\Im (\varphi_{n}(z) + \sum_{q=1}^{n} \tau_{q} w_{q}) > 0
\end{equation}
for all \(w \in \widetilde{X}\). Fixing all \(w_{q}\) but the first one, and using the fact that \(\Im(w_{1}) = - \frac{1}{2\pi} \log |z_{1}|\), we obtain that there exists \(C > 0\) such that
\[
	\Im (\varphi_{n}(z)) \geq \tau_{1} \frac{1}{2\pi} \log |z_{1}| + C.
\]
for all such \(w\). Applying Lemma~\ref{lem:riemannboundlog} with the variable \(z = z_{1}\), we get that \(\varphi_{n}\) has no pole along \(D_{1}\). We can do the same for all \(D_{j}\) and get the result. 
\end{proof}

\begin{claim*} All \(\varphi_{j}\) extend across \(D\) as holomorphic functions for \(j \leq n - 1\).
\end{claim*}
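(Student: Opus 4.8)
We already know that each $\varphi_{j}$ is holomorphic on $X = (\Delta^{\ast})^{k} \times \Delta^{n-k}$, hence has a convergent Laurent expansion there; so it suffices to prove a growth estimate near $D$ and run a Cauchy-estimate argument.

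Since the right-hand side of \eqref{eq:multival2} takes its values in $\mathbb{S}_{n}$, for every $w \in \widetilde{X}$ we have $\Im(y_{n}) > \sum_{j=1}^{n-1}|y_{j}|^{2}$, where $y_{j} = e^{2i\pi \sum_{q} \alpha_{j,q} w_{q}} \varphi_{j}(z)$ for $j \leq n-1$ and $y_{n} = \varphi_{n}(z) + \sum_{q} \tau_{q} w_{q}$. Using $|e^{2i\pi \sum_{q} \alpha_{j,q} w_{q}}|^{2} = \prod_{q=1}^{k} |z_{q}|^{2\alpha_{j,q}}$ and $\Im(w_{q}) = -\tfrac{1}{2\pi}\log|z_{q}|$, this reads
\[
	\sum_{j=1}^{n-1} \bigg( \prod_{q=1}^{k} |z_{q}|^{2\alpha_{j,q}} \bigg) |\varphi_{j}(z)|^{2}
	<
	\Im\varphi_{n}(z) - \frac{1}{2\pi} \sum_{q=1}^{k} \tau_{q} \log|z_{q}| .
\]
The statement being local near $D$, we may shrink $\overline{X}$ so that the holomorphic function $\varphi_{n}$ is bounded on it; since moreover $\tau_{q} \geq 0$ and $-\log|z_{q}| > 0$ on $X$, there are constants $C_{1}, C_{2} > 0$ with
\[
	|\varphi_{j}(z)|^{2} \leq \frac{C_{1} + C_{2}\sum_{q=1}^{k} \big(-\log|z_{q}|\big)}{\prod_{q=1}^{k} |z_{q}|^{2\alpha_{j,q}}}
	\qquad (1 \leq j \leq n-1,\ z \in X).
\]

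Fix $j \in \llbracket 1, n-1 \rrbracket$ and write the Laurent expansion of $\varphi_{j}$ in $z_{1}, \dots, z_{k}$ with coefficients holomorphic in $z'' := (z_{k+1}, \dots, z_{n})$, namely $\varphi_{j}(z) = \sum_{m \in \mathbb{Z}^{k}} c_{m}(z'')\, z_{1}^{m_{1}} \cdots z_{k}^{m_{k}}$. Fix a compact set $K \subset \Delta^{n-k}$. For small radii $r_{1}, \dots, r_{k} \in (0,1)$ the Cauchy inequalities and the estimate above give, for all $z'' \in K$,
\[
	|c_{m}(z'')| \leq \frac{\sup\{|\varphi_{j}(z)| : |z_{q}| = r_{q}\ (1\leq q\leq k),\ z'' \in K\}}{\prod_{q=1}^{k} r_{q}^{m_{q}}}
	\leq C \bigg( 1 + \sum_{q=1}^{k} |\log r_{q}| \bigg)^{1/2} \prod_{q=1}^{k} r_{q}^{-(m_{q} + \alpha_{j,q})} ,
\]
where $C$ depends only on $K$. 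If $m_{q_{0}} < 0$ for some $q_{0}$, then $m_{q_{0}} + \alpha_{j,q_{0}} < 0$ because $\alpha_{j,q_{0}} \in [0,1)$; letting $r_{q_{0}} \to 0$ with the other radii fixed, the factor $r_{q_{0}}^{-(m_{q_{0}} + \alpha_{j,q_{0}})}$ is a positive power of $r_{q_{0}}$ and therefore dominates the slowly growing term $(1 + \sum_{q}|\log r_{q}|)^{1/2}$, so the bound tends to $0$. Hence $c_{m}(z'') = 0$ for all $z'' \in K$, and as $K$ was arbitrary, $c_{m} \equiv 0$ whenever $\min_{q} m_{q} < 0$. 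Thus $\varphi_{j}$ agrees with its Taylor part, which extends it to a holomorphic function on $\overline{X}$. Combined with the previous claim on $\varphi_{n}$, this shows that $\varphi$ extends to a holomorphic map $\varphi : \overline{X} \to \mathbb{C}^{n}$.
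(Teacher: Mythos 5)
Your proposal is correct and follows essentially the same route as the paper: both derive the bound $\prod_{q}|z_{q}|^{2\alpha_{j,q}}|\varphi_{j}(z)|^{2} \leq \Im(\varphi_{n}(z) + \sum_{q}\tau_{q}w_{q})$ from membership in $\mathbb{S}_{n}$, invoke the previous claim to bound $\Im\varphi_{n}$, note that the right-hand side then grows only logarithmically in the $|z_{q}|$, and conclude extension using $\alpha_{j,q} < 1$. The only difference is cosmetic: you carry out the multivariable Laurent expansion and Cauchy estimates explicitly (which also rules out essential singularities), whereas the paper treats one boundary component at a time and leaves the Cauchy-estimate step implicit in the phrase ``does not have a pole.''
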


\begin{proof}
	This time, writing that \eqref{eq:multival2} lands inside \(\mathbb{S}_{n} = \{(z', z_{n}) \in \mathbb{C}^{n-1} \times \mathbb{C} \; | \; || z' ||^{2} < \mathrm{Im}(z_{n})\}\), one finds
\[
	\prod_{q=1}^{n} |z_{q}|^{2 \alpha_{j, q}}\; |\varphi_{j}(z)|^{2} = | e^{2i \pi \sum_{q} \alpha_{j, q} w_{q}} \varphi_{j} (z)|^{2} \leq \Im(\varphi_{n}(z) + \tau_{q} \sum_{q} w_{q}) 
\]
for all \(j \in \llbracket 1, n \rrbracket\). Let us fix all the \(z_{q}\) for \(q \neq 1\), and let \(z_{1} \to 0\).
\medskip

By the previous claim, one has \(\Im(\varphi_{n}(z)) = O(1)\), so the right hand side grows at most as \(O(|\log |z_{1}| |)\). Since \(\alpha_{p, q} \in [0, 1)\) for all \(p, q\), \(\varphi_{j}\) does not have a pole along \(D_{1}\).
\medskip

Since this also holds for \(D_{2}, \dotsc, D_{n}\), one gets the result.
\end{proof}

The proof of Proposition~\ref{prop:asymptotic} is now complete.
\medskip

Remark that the condition \eqref{eq:condpos} implies that we must have \(\tau_{q} \geq 0\) for all \(q \in \llbracket 1, k\rrbracket\). At least one of them must be positive in order to have \(\psi(q_{m}) \longrightarrow b_{\infty}\) as \(m \to +\infty\).

\begin{lem} \label{lem:complete}
	Let \(h_{X}\) be the KE metric on \(X\) descended from the pullback metric \(\psi^{\ast} h_{\mathbb{B}^{n}}\). Then
	\begin{enumerate}[(i)]
		\item the metric \(h_{X}\) is complete if and only if we are in the situation of Proposition~\ref{prop:asymptotic} (2) with all the \(\tau_{q} > 0\).
		\item the metric \(h_{X}\) has finite volume.
	\end{enumerate}
\end{lem}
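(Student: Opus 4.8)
The plan is to read the asymptotics of $h_{X}$ near $D$ directly off the normal forms \eqref{eq:multival1}--\eqref{eq:multival2}. Since every element of $\mathrm{PU}(n,1)$ is an isometry of $h_{\mathbb{B}^{n}}$, replacing $\psi$ by $B\circ\psi$ does not change $h_{X}$, so by Proposition~\ref{prop:asymptotic} we may assume that $\psi$ is given by \eqref{eq:multival1} (case~1) or by \eqref{eq:multival2} (case~2). It is convenient to pass to the finite cover $p\colon\overline{X}'=\Delta^{n}\to\overline{X}$, $z_{q}=(z_{q}')^{N}$ for $1\le q\le k$, with $N$ chosen so that $N\alpha_{p,q}\in\mathbb{Z}$ for all $p,q$; then $\psi':=\psi\circ p$ is single-valued in the coordinates $z'$ in case~1, and in case~2 only its last Siegel coordinate retains a $\log$-term. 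As $X'\to X$ is étale of degree $N^{k}$, both assertions may be checked for $h_{X'}=(\psi')^{*}h_{\mathbb{B}^{n}}$.

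For (ii), recall that the Bergman volume form is $\omega_{\mathbb{B}^{n}}^{\,n}=c_{n}(1-\|z\|^{2})^{-(n+1)}\,dV_{\mathrm{eucl}}$, resp.\ $c_{n}\,l^{-(n+1)}\,dV_{\mathrm{eucl}}$ on $\mathbb{S}_{n}$. Pulling back by $\psi'$ and noting that the jacobian factors produced by passing from $w$ to $z'$ cancel between $|\det D\psi'|^{2}$ and the volume element, one gets on $X'$
\[
	(\psi')^{*}\omega^{\,n}\;=\;c_{n}\,\frac{|\det D_{z'}\psi'|^{2}}{\ell^{\,n+1}}\,dV_{z'},\qquad \ell:=1-\|\psi'\|^{2}\ \text{resp.}\ l\circ\psi'.
\]
In case~1 the functions $\psi'_{i}=\prod_{q}(z_{q}')^{N\alpha_{i,q}}\varphi'_{i}(z')$ extend holomorphically across $D$, and the maximum principle for the plurisubharmonic function $\|\psi'\|^{2}$ forbids it to reach $1$ at a point of $D$; hence $\|\psi'\|^{2}\le 1-\delta$ near any point of $D$ and $(\psi')^{*}\omega^{\,n}\le C\,dV_{z'}$ there, so the integral converges. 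In case~2 the $\psi'_{i}$ ($i\le n-1$) still extend holomorphically, $\det D_{z'}\psi'$ acquires at most a simple pole along each $D_{q}=\{z_{q}'=0\}$ with $\tau_{q}>0$, while $l\circ\psi'\ge \tfrac{1}{2\pi}\sum_{q}N\tau_{q}\,|\log|z_{q}'||-C$ near $D$; each resulting contribution is dominated up to a constant by $|z_{q}'|^{-2}\,|\log|z_{q}'||^{-(n+1)}$, which is integrable because $\int_{0}\frac{dr}{r(-\log r)^{n+1}}<\infty$, and the directions with $\tau_{q}=0$ behave as in case~1. Thus $h_{X}$ has finite volume near $D$.

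For (i), suppose first we are in case~2 with all $\tau_{q}>0$. From \eqref{eq:multival2}, $l\circ\psi=\Im\varphi_{n}(z)+\sum_{q\le k}\tau_{q}\,\Im w_{q}-\|y'(z)\|^{2}=\sum_{q\le k}\tfrac{\tau_{q}}{2\pi}\,|\log|z_{q}||+O(1)$ near $D$; feeding this, together with $\partial(l\circ\psi)$ and $\partial\bar\partial(l\circ\psi)$, into \eqref{eq:exprBergman}, one gets near $D$ a lower bound
\[
	\psi^{*}\omega_{\mathbb{S}_{n}}\;\gtrsim\;\sum_{q\le k}\frac{i\,dz_{q}\wedge d\bar z_{q}}{|z_{q}|^{2}\bigl(\log|z_{1}\cdots z_{k}|\bigr)^{2}},
\]
so that $h_{X}$ dominates a cusp-type complete metric in each boundary direction. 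Any path $\gamma(t)$ converging to a point of $D$ has, for the index $j$ with $z_{j}(\gamma(t))\to 0$, length at least $\int|\dot z_{j}|\,\bigl(|z_{j}|\,|\log|z_{1}\cdots z_{k}||\bigr)^{-1}dt=+\infty$; hence no finite-length path reaches $D$ and $h_{X}$ is complete near $D$. Conversely, if we are in case~1, or in case~2 with $\tau_{q_{0}}=0$ for some $q_{0}$, fix the other coordinates to generic values and restrict $\psi$ to the $z_{q_{0}}$-disc: it becomes a one-variable map $\zeta(w)=\mathrm{diag}\bigl((z_{q_{0}})^{\alpha_{i,q_{0}}}\bigr)\cdot g(z_{q_{0}})$ --- resp.\ the analogous Siegel expression with no $w$-linear term in the last slot --- with $g$ holomorphic on $\Delta$ and the $\alpha_{i,q_{0}}$ in $[0,1)$. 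Since $\zeta(\Delta^{*})\subset\mathbb{B}^{n}$, the same maximum principle shows $\zeta$ extends with $\zeta(0)\in\mathbb{B}^{n}$, so $1-\|\zeta\|^{2}$ is bounded below near $z_{q_{0}}=0$ and $\zeta^{*}h_{\mathbb{B}^{n}}\asymp|z_{q_{0}}|^{2\gamma}\,|dz_{q_{0}}|^{2}$ with $\gamma>-1$; the radial path $z_{q_{0}}\to 0$ then has finite length $\int_{0}r^{\gamma}\,dr<\infty$ and converges to a point of $D$, so $h_{X}$ is not complete near $D$.

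The main obstacle is the several-variable bookkeeping for the volume density in (ii) --- controlling the exact order of $\det D_{z'}\psi'$ along each stratum $D_{J}$ and checking that the factor $\ell^{-(n+1)}$ absorbs it precisely when the relevant $\tau_{q}$ are positive --- together with the lower bound in case~2 for $\psi^{*}\omega_{\mathbb{S}_{n}}$ by a genuinely complete metric, which has to be verified along the deep strata $D_{J}$ and not only along coordinate discs. In both directions the conceptual point is the maximum-principle observation that, once the twisting is undone on the cover $\overline{X}'$, the map can reach $\partial\mathbb{B}^{n}$ above $D$ only if it is constant there, which rules out any degeneration of $h_{X}$ transverse to $D$ in case~1.
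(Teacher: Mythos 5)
Your proof follows the paper's own argument for Lemma~\ref{lem:complete} quite faithfully: both rest on the asymptotics $l\circ\phi=-\sum_{q\le k}\tfrac{\tau_q}{2\pi}\log|z_q|+O(1)$ and $\phi^*\partial l=-\tfrac{1}{8\pi}\sum_{q\le k}\tau_q\,dz_q/z_q+(\text{lower order})$; both estimate the pulled-back volume density by $|\det D\phi|^2/L^{n+1}$ with $L=l\circ\phi$ for (ii); and both argue completeness in case~(2) through a cusp-type lower bound on $\psi^*\omega_{\mathbb{S}_n}$. Your additions (the maximum-principle argument in case~(1) of (ii) and for the ``only if'' part of (i), and a concrete Jacobian discussion) fill in exactly the steps the paper leaves to the reader.

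Two technical points deserve care, however. First, the cyclic cover $z_q=(z_q')^N$ requires every $\alpha_{p,q}$ to be rational, which is not guaranteed here: nothing in the hypotheses forces the unitary parts $R_q$ of the parabolic monodromies to be torsion. The intended estimate does not actually need the cover: working directly with the multivalued expression, one has $|\partial_{z_q}\widetilde{\phi}_j|\le C\prod_p|z_p|^{\alpha_{jp}}\cdot|z_q|^{-1}$, and in the permutation expansion $\det D\widetilde{\phi}=\sum_\sigma\pm\prod_j\partial_{z_{\sigma(j)}}\widetilde{\phi}_j$ each $z_q$ occupies at most one column, so $|\det D\widetilde{\phi}|\le C\prod_{q\le k}|z_q|^{-1}$ --- precisely the paper's bound, with no rationality assumption. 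Second (a caveat shared by the paper's own proof), the displayed bound $\psi^*\omega_{\mathbb{S}_n}\ge C\sum_{q\le k}\tfrac{i\,dz_q\wedge d\bar z_q}{|z_q|^2(\log|z_1\cdots z_k|)^2}$ cannot be a genuine inequality of Hermitian forms once $k\ge 2$: of the two terms of $\omega_{\mathbb{S}_n}=i\partial l\wedge\bar\partial l/l^2-i\partial\bar\partial l/l$, only the first carries the $|z_q|^{-2}$ singularity after pullback, and it is rank one, while the right-hand side has rank $k$; the second term has diagonal entries of order $|z_q|^{2\alpha_{jq}-2}/L$, strictly dominated by the Poincar\'e entries, so it cannot repair the rank defect (test on $z_1\partial_{z_1}-z_2\partial_{z_2}$ with $\tau_1=\tau_2$). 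Completeness still follows, and more simply, from the rank-one bound alone: $\psi^*\omega_{\mathbb{S}_n}\ge i\,\phi^*\partial l\wedge\phi^*\bar\partial l/L^2$ gives, for any smooth path $\gamma$ in $X$,
\[
	\mathrm{length}_{h_X}(\gamma)\ \ge\ \tfrac12\int_\gamma\frac{|dL|}{L}\ \ge\ \tfrac12\,\Big|\log L\big(\gamma(1^-)\big)-\log L\big(\gamma(0)\big)\Big|,
\]
and $L\to+\infty$ along any path approaching $D$ once all $\tau_q>0$. Replacing the matrix-inequality step by this scalar argument would close the gap both in your write-up and in the quoted proof.
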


\begin{proof}
	
	{\em (i)} 
	It is easy to see that if we are in the situation of Proposition~\ref{prop:asymptotic} (1) or Proposition~\ref{prop:asymptotic} (2) with at least one \(\tau_{q} = 0\), then the metric cannot be complete (either \(0\) or the component \(D_{q}\) with \(\tau_{q}=0\) are at finite distance). Let us  then focus on the second case, and assume all \(\tau_{q}\) are positive.

We will check below that \eqref{eq:edgeball} implies that the metric \(\psi^{\ast} h_{\mathbb{B}^{n}}\) is bounded from below by a Poincaré form depending on the factor \((\Delta^{\ast})^{k}\):
	\begin{equation} \label{eq:Poincarelowerbound}
		\psi^{\ast} \omega_{\mathrm{Berg}}
		\geq 
			C \sum_{j=1}^{k} \frac{i d z_{j} \wedge d \overline{z}_{j}}{|z_{j}|^{2} \log^{2}|z_{j}|},
	\end{equation}

	This inequality that any smooth path joining a point of \(U_{\alpha}\) to a point of the boundary is of infinite length, which gives the result.
	\medskip

	Let us verify that \(\psi^{\ast} \omega_{\mathrm{Berg}}\) indeed has the required growth. Since the element \(B\) leaves the Bergman metric invariant, we see that we may assume that \(\psi : \widetilde{U}_{\alpha} \to \mathbb{S}_{n}\) corresponds to a multivaluate function on \(X\) given by
	\begin{equation} \label{eq:multivaluate}
	\phi(z)
	=
	(\widetilde{\phi}_{1}(z),
	\dotsc
	\widetilde{\phi}_{n-1}(z),
		\phi_{n}(z) + \sum_{q \leq k} \frac{1}{2i\pi} \tau_{q} \log(z_{q}))
	\end{equation}
	where \(\widetilde{\phi}_{j}(z) = \prod_{q \leq k} z^{\alpha_{j, q}} \phi_{j}(z)\), and all \(\phi_{j}\) are bounded holomorphic single valued functions. By \eqref{eq:exprBergman}, one has
	\[
		\psi^{\ast} \omega_{\mathrm{Berg}}
		\geq 
		- i \frac{\phi^{\ast} (\partial l) \wedge \phi^{\ast}(\overline{\partial} l)}{l^{2}}
	\]
	Then, the lower bound \eqref{eq:Poincarelowerbound} comes from \eqref{eq:exprBergman} and the following estimates, that follows easily from the expression of the function \(l\) :
\begin{align*}
	l \circ \phi(z) & = - \sum_{q \leq k} \frac{\tau_{q}}{2\pi} \log |z_{q}| + O(1) \\
	\phi^{\ast}(\partial l)	& = - \frac{1}{8\pi} \sum_{q \leq k} \tau_{q} \frac{dz_{q}}{z_{q}} + \sum_{q \leq k} O(|z_{q}|^{\epsilon - 1}) \\
\end{align*}
	for some \(\epsilon > 0\). In each of these expressions, \(O(f)\) denotes a function or \(1\)-form on \(U_{\alpha}\) with possibly multivaluate coefficients, with absolute value bounded from above by a multiple of \(f\). To obtain the second estimate, note that \(\partial_{j} (z_{j}^{\alpha_{j, q}}) = \alpha_{j, q} z^{\alpha_{j, q}-1}\) if \(\alpha_{j, q} > 0\).

	\medskip

	{\em (ii)} Let us prove the result in the case of Proposition~\ref{prop:asymptotic} {\em (2)}, with all the \(\tau_{q} > 0\) for \(1 \leq q \leq k\). Then, with the notation of Section~\ref{sec:siegelmodel}, the volume form on \(\mathbb{S}_{n}\) is
	\[
		\frac{\omega_{\mathbb{S}_{n}}^{n}}{n!}
		=
		\frac{1}{l^{n+1}} 
		i dy'_{1} \wedge d\overline{y}'_{1}
		\wedge \dotsc \wedge
		i dy'_{n-1} \wedge d\overline{y}'_{n-1}
		\wedge
		\frac{i}{4} \left(d y_{n} + \sum_{1 \leq j\leq n-1} \overline{y}_{j}' dy_{j}' \right) \wedge \left( d \overline{y}_{n} + \sum_{1 \leq j\leq n-1} y_{j}' d\overline{y}_{j}'\right).
	\]
	Thus, pulling back this form by the multivaluate expression \eqref{eq:multivalcase2} yields an upper bound
	\[
		\frac{\omega_{X}^{n}}{n!}
		\leq
		C \frac{1}{[- \sum_{q \leq k} \log |z_{q}|]^{n+1}}
		\frac{\bigwedge_{1 \leq j \leq k} i dz_{j} \wedge d\overline{z}_{j}}{\prod_{q \leq k}|z_{q}|^{2}}
	\]
	as \(z = (z_{1}, \dotsc, z_{n})\) goes to the boundary. The expression above is integrable on \((\Delta^{\ast})^{k} \times \Delta^{n-k}\), which gives the result. The other cases can be proved with a similar computation.
\end{proof}

The following fact must be well-known to experts, but we prefer to recall a proof for completeness. In Remark~\ref{rem:notballquotient}, we used this proposition to show that the open part of the variety \(X^{\ast}\) is not a ball quotient.

\begin{prop} \label{prop:notballquotient}
	Let \(\overline{X}\) be a smooth projective variety, endowed with a SNC divisor \(D\), and let \(X := \overline{X} - D\). Assume that \(X = \quotient{\Gamma}{\mathbb{B}^{n}}\) is a ball quotient by a subgroup \(\Gamma \subset \mathrm{Aut}(\mathbb{B}^{n})\) that acts freely and properly discontinuously. Then \(\Gamma\) is a lattice i.e. it has finite covolume. At least one component of \(D\) must be birational to a quotient of an abelian variety by a finite group. 
\end{prop}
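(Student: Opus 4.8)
The plan is to handle the two assertions separately: the lattice property is soft and follows from the local analysis already set up, while the statement about the boundary requires the structure theory of cusps.

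\emph{Lattice property.} Since $\Gamma$ acts on $\mathbb{B}^n$ freely, properly discontinuously and by isometries of the Bergman metric, this metric descends to a \emph{complete} Kähler metric $h_X$ on $X$. Locally on a polydisk $U\cong(\Delta^\ast)^k\times\Delta^{n-k}\subset X$ adapted to $D$, this metric is exactly the one of Lemma~\ref{lem:complete}: it is the descent of $\psi^\ast h_{\mathbb{B}^n}$, where $\psi$ is the universal cover of $U$ composed with a lift of $U\hookrightarrow X$ to $\widetilde X=\mathbb{B}^n$, a $\rho$-equivariant holomorphic map of the type considered in Section~\ref{sec:asymptotic}. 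As $D$ is compact it is covered by finitely many such polydisks, and Lemma~\ref{lem:complete}(ii) gives finite volume on each of them; since the complement in $X$ of slightly shrunk polydisks is relatively compact in $\overline X$, I would conclude that $\mathrm{vol}(X,h_X)<\infty$. But $\mathrm{vol}(X,h_X)$ is the Bergman covolume of $\Gamma$, so $\Gamma$ is a lattice.

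\emph{Reduction to a neat sublattice and a good model.} For the second assertion I may assume $D\neq\varnothing$ (otherwise $X$ is compact and $\Gamma$ is cocompact), so $\Gamma$ is non-uniform and has a cusp $b\in\partial\mathbb{B}^n$. I would pick a normal subgroup $\Gamma'\triangleleft\Gamma$ of finite index which is torsion-free and neat, and set $X'=\mathbb{B}^n/\Gamma'$, a finite étale Galois cover of $X$ with group $G=\Gamma/\Gamma'$. For a \emph{neat} lattice the cusps of the Baily--Borel--Mok compactification $X'^{\ast}$ are, analytically, cones over polarized abelian varieties (Mok~\cite{Mok12}; see also \cite{AMRT10} and the Introduction), and blowing up the vertex of such a cone produces a smooth chart whose exceptional divisor is that abelian variety. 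Hence there exists a smooth projective compactification $\widehat{X'}\supset X'$ with SNC boundary, dominating $X'^{\ast}$, which has a boundary component $A_0$ that is an abelian variety.

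\emph{Persistence of the abelian divisor, and descent.} I would then form $\overline{X'}$ by normalizing $\overline X$ in $\mathbb{C}(X')$ and resolving $G$-equivariantly, obtaining a smooth SNC compactification of $X'$ with $\overline{X'}/G=\overline X$ birationally. Both $\widehat{X'}$ and $\overline{X'}$ are smooth SNC compactifications of $X'$, hence are linked by the weak factorization theorem through blow-ups and blow-downs along smooth centres contained in the boundaries. An abelian variety has $\kappa=0$, so it is not uniruled, whereas a blow-down contracts only the exceptional divisor of a blow-up of a smooth centre of codimension $\ge 2$, which is a projective bundle and hence uniruled; therefore the strict transform of $A_0$ is never contracted, and $\overline{X'}$ has a boundary component $D'_0$ birational to $A_0$. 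To push $D'_0$ down, note that $\overline{X'}\to\overline{X'}^{\,\mathrm{nor}}$ (the normalization of $\overline X$ in $\mathbb{C}(X')$) only contracts divisors over the singular locus of $\overline{X'}^{\,\mathrm{nor}}$; but along the boundary $X'\to X$ has finite abelian monodromy, so $\overline{X'}^{\,\mathrm{nor}}$ has at worst quotient singularities there and its resolution has rational — hence uniruled — exceptional divisors. So $D'_0$ is not contracted, and its image under the finite map $\overline{X'}^{\,\mathrm{nor}}\to\overline X$ is a genuine component $D_0$ of $D$, finitely dominated by $D'_0$. Writing $H\subseteq G$ for the stabilizer of $D'_0$ and using that every birational self-map of an abelian variety is biregular, $H$ acts on $A_0$ by automorphisms and $D_0$ is birational to $A_0/H$, a quotient of an abelian variety by a finite group.

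\emph{Expected main obstacle.} The delicate point is precisely the persistence step: one must rule out that the abelian boundary divisor of the nice (toroidal/blown-up) model gets ``hidden'' inside blow-ups of the a priori arbitrary compactification $\overline X$, and this is exactly where the non-uniruledness of abelian varieties does the work — both in the weak factorization comparison and in controlling the contractions in the normalization step. The preliminary passage to a neat sublattice is not a convenience but a necessity here, since the toroidal boundary divisor $A_b/G_b$ attached to $\Gamma$ itself may well be uniruled.
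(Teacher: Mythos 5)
Your proposal is correct, and its two halves have quite different relationships to the paper. The finite-covolume part is essentially the paper's argument: descend the Bergman metric, cover $D$ by finitely many pointed polydisks, apply Lemma~\ref{lem:complete}(ii) on each, and absorb the compact remainder. For the abelian boundary component, however, you follow a genuinely different route. Both you and the paper pass to a finite-index $\Gamma'\lhd\Gamma$ with unipotent parabolics, take Mok's minimal smooth compactification $Y'$ of $X'=\mathbb{B}^n/\Gamma'$ with abelian boundary, and consider the normalization $\overline{X}'^{\mathrm{nor}}$ of $\overline{X}$ in $\mathbb{C}(X')$. The paper then simply notes that $\overline{X}'^{\mathrm{nor}}$ is klt by \cite[Corollary~5.20]{KM98} and cites \cite[Lemma~A.4]{Deng22} to produce a birational \emph{morphism} $\overline{X}'^{\mathrm{nor}}\to Y'$; taking $G$-quotients of both sides immediately matches a component of $D$ birationally with a component of the boundary of $Y'/G$. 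You instead pass to a resolution $\overline{X'}\to\overline{X}'^{\mathrm{nor}}$, compare $\overline{X'}$ to a smooth SNC blow-up of $Y'$ using the weak factorization theorem with centers in the boundary, and use the non-uniruledness of abelian varieties to propagate the abelian divisor; then you descend via the observation that the exceptional divisors of a resolution of quotient singularities are uniruled. Both strategies are valid. Yours trades the special Lemma~A.4 of \cite{Deng22} for heavier but classical machinery (weak factorization); one could even avoid weak factorization by resolving the birational map $\widehat{X'}\dashrightarrow\overline{X'}$ by a smooth $W$ and using that exceptional divisors of proper birational morphisms between smooth projective varieties are uniruled. One small over-claim in your write-up: the exceptional divisors of a resolution of a higher-dimensional quotient singularity need not be \emph{rational}, but they are rationally connected (by Hacon--McKernan, as in Section~\ref{sec:special}), hence uniruled, which is all that your argument uses.
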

\begin{proof}
	Let \(h_{X}\) be the metric descended on \(X\) from \(h_{\mathbb{B}^{n}}\) on \(\mathbb{B}^{n}\). By Lemma~\ref{lem:complete} applied on a set of polydisks covering \(D\), we deduce that \(h_{X}\) has finite volume. Hence \(\Gamma\) is a lattice. Let \(\Gamma' \lhd \Gamma\) be a normal subgroup of finite index such that \(\Gamma'\) has only unipotent parabolic isometries, and let \(G := \quotient{\Gamma'}{\Gamma}\). Let \(X' := \quotient{\Gamma'}{\mathbb{B}^{n}}\), and let \(X' \to X\) be the associated covering with Galois group \(G\). Since \(\Gamma'\) has unipotent parabolic isometries, the variety \(X'\) admits a minimal smooth compactification \(Y'\) with boundary made of a disjoint union of abelian varieties.
	
	Denote by \(p : \overline{X}' \to \overline{X}\) the normalization of \(\overline{X}\) in the function field of \(X'\). The variety \(\overline{X}'\) has at most klt singularities by \cite[Corollary~5.20]{KM98}, and thus one may apply \cite[Lemma A.4]{Deng22} to deduce the existence of a birational morphism \(f : \overline{X}' \to Y'\) sitting in a diagram
	\[
		\begin{tikzcd}
			\overline{X}' \arrow[r, "f"] \arrow[d, "p"] & Y' \arrow[d] \\
			\overline{X}  \arrow[r, "g"] & \quotient{G}{Y'}
		\end{tikzcd}
	\]
	In this diagram, the existence of \(g\) comes by taking the quotient of \(f\) by \(G\), using the fact that \(\overline{X} = \quotient{G}{\overline{X}'}\).  Then, since \(f\) is birational, so is \(g\); this implies that for any boundary component of \(\quotient{G}{Y'}\), there is a component of \(D = \overline{X} - X\) that is birational to it. This gives the result.
\end{proof}

\subsubsection{Local limit of the period map across the boundary.} \label{subsub:localext}

Passing to the limit \(\Im(w_{j}) \longrightarrow + \infty\) (\( j \in J\)), we see that the expressions \eqref{eq:multival1} and \eqref{eq:multival2} induces a holomorphic map on \(\{0, \dotsc, 0 \} \times \prod_{j > k} \Delta\), with values in either in \(\mathbb{B}^{n}\) or in \(\mathbb{C}^{n-1}\).
\medskip

Let us introduce a definition to encapsulate the behaviour of this limiting map; in the next section, this definition will become the local model of the global extension of the period map across the boundary. 

\begin{defi} \label{def:locallimitperiod} Let \(\psi_{0}\) be a holomorphic map on \(\{0\}^{k} \times \Delta^{n-k}\), with target space to be decided below. We say that \(\psi_{0}\) is {\em a limiting map for \(\psi\), with limit point \(b_{\infty} \in \overline{\mathbb{B}^{n}}\)} if one of the following occurs.
	\begin{enumerate}
		\item One has \(b_{\infty} \in \mathbb{B}^{n}\). Then \(\psi_{0}\) takes its values in \(\mathbb{B}^{n}\). There exist \(B \in \mathrm{PU}(n, 1)\) such that \(B \cdot b_{\infty} = o \in \mathbb{B}^{n}\), real numbers \((\alpha_{p, q})_{1 \leq p \leq n, 1 \leq q \leq k}\) in \([0, 1)\) and a holomorphic map \(\varphi : \overline{X} \to \mathbb{C}^{n}\) such that for any \(w = (w_{1}, \dotsc, w_{k}, z_{k+1}, \dotsc, z_{n}) \in \widetilde{X}\), one has
			\begin{equation} \label{eq:intball}
				B \cdot \psi(w) = \left(
				e^{2i\pi \sum_{q \leq k} \alpha_{1, q} w_{q}} \varphi_{1}(z),
				\dotsc,
				e^{2i\pi \sum_{q\leq k } \alpha_{n, q} w_{q}} \varphi_{n}(z)
				\right),
			\end{equation}
			and for any point \(z = (0, \dotsc, 0, z_{k+1}, \dotsc, z_{n})\), one has
			\begin{equation} \label{eq:limitintball}
				B \cdot \psi_{0}(z) = \left(
				\delta_{1}\,	
				\varphi_{1}(z),
				\dotsc,
				\delta_{n}\,
				\varphi_{n}(z)
				\right) 	
			\end{equation}
			where \(z_{q} = e^{2i\pi w_{q}}\) for all \(q\), and \(\delta_{j} \in \{0, 1\}\) equals \(1\) if and only if all \(\alpha_{j, q}\) are zero for \(1 \leq q \leq k\).
			\medskip

		\item One has \(b_{\infty} \in \partial \mathbb{B}^{n}\). Then \(\psi_{0}\) takes its values in \(\mathbb{C}^{n-1}\). 
			
			If we let \(\phi := \phi_{b_{\infty}} : \mathbb{B}^{n} \longrightarrow \mathbb{S}_{n}\) be the Siegel presentation of the ball with respect to \(b_{\infty}\), there exists \(B \in \mathrm{PU}(n, 1)\) fixing \(b_{\infty}\), real numbers \((\alpha_{p, q})_{1 \leq p \leq n-1, 1 \leq q \leq k}\) in \([0, 1)\), real non negative numbers \((\tau_{q})_{1 \leq q \leq k}\), not all zero, and a holomorphic map \(\varphi : \overline{X} \to \mathbb{C}^{n}\) such that for any \(w = (w_{1}, \dotsc, w_{k}, z_{k+1}, \dotsc, z_{n}) \in \widetilde{X}\), one has
			\begin{equation} \label{eq:edgeball}
				B \cdot \psi(w)
				=
				\phi^{-1}
				\big(
				e^{2i\pi \sum_{q \leq k} \alpha_{1, q} w_{q}} \varphi_{1}(z),
				\dotsc,
				e^{2i\pi \sum_{q\leq k} \alpha_{n-1, q} w_{q}} \varphi_{n-1}(z),
				\varphi_{n}(z) + \sum_{q\leq k} \tau_{q} w_{q}
				\big)
			\end{equation}
			and for any \(z = (0, \dotsc, 0, z_{k+1}, \dotsc, z_{n})\), one has
				\begin{equation} \label{eq:limitedgeball}
				B \cdot \psi_{0}(z)
				=
				\big(
				\delta_{1}\,
				\varphi_{1}(z),
				\dotsc,
				\delta_{n-1}
				\varphi_{n-1}(z)
				\big)
			\end{equation}
			where \(z_{q} = e^{2 i \pi w_{q}}\) for all \(q\). In this last equation, the action of \(B\) on \(\mathbb{C}^{n-1}\) is induced by the quotient map \(W_{b} \to \mathrm{Sim}(\mathbb{C}^{n-1})\), and again \(\delta_{j} \in \{0, 1\}\) equals \(1\) if and only if all \(\alpha_{j, q}\) are zero \((1 \leq q \leq k)\).
	\end{enumerate}
\end{defi}

\begin{rem}
	In case (1), unless all \(A_{j}\) are trivial for all \(j \in \llbracket 1, k\rrbracket\), there is at least one \(\delta_{j}\) which is zero. In this case, the map \(\psi_{0}\) actually takes its values in a smaller dimensional ball given by the intersection of \(\mathbb{B}^{n}\) with an affine subspace.
\end{rem}

Proposition~\ref{prop:asymptotic} now allows to construct the limiting map from \(\varphi\). Inspecting \eqref{eq:multival1} and \eqref{eq:multival2} permits indeed to obtain it simply by taking the limit of the corresponding multivaluate map, as expressed in the next proposition.

\begin{prop} \label{prop:limitingmapgeom} With the notation of Section~\ref{subsub:notation}, there are two possible cases:
	\begin{enumerate}
		\item the limit \(b_{\infty} = \lim_{\pi(w) \to 0} \psi(w)\) exists in \(\mathbb{B}^{n}\). In this case, for all \(z \in \{0\}^{k} \times \Delta^{n-k}\), the following limit
			\[
				\psi_{0}(z) := \lim_{\pi(w) \to z} \psi(w)
			\]
			is also well-defined as a point in \(\mathbb{B}^{n}\), and the map \(\psi_{0}\) is a limiting map for \(\psi\). Unless the monodromy is trivial, the map \(\psi_{0}\) factors through a totally geodesically embedded ball
			\[
				\bigcap_{1 \leq j \leq k} \mathrm{Stab}(A_{j}) \subsetneq \mathbb{B}^{n}.
			\]
		\item the limit \(b_{\infty} = \lim_{\pi(w) \to 0} \psi(w)\) exists in \(\partial \mathbb{B}^{n}\). Let \(\phi := \phi_{b_{\infty}} = \mathbb{B}^{n} \to \mathbb{S}_{n}\) be the Siegel model at \(b_{\infty}\), and consider the composition \(\pi_{n-1} := \mathrm{proj}_{\mathbb{C}^{n-1}} \circ \phi\), where \(\mathrm{proj}_{\mathbb{C}^{n-1}}\) is the projection on the first coordinates. Then, for all \(z \in \{0\}^{k} \times \Delta^{n-k}\), the following limit
			\[
				\psi_{0}(z) := \lim_{\pi(w) \to z} \pi_{n-1}(\psi(w))
			\]
			is well-defined as a point in \(\mathbb{C}^{n-1}\), and the map \(\psi_{0}\) is a limiting map for \(\psi\). 
	\end{enumerate}
\end{prop}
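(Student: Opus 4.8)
The plan is to read everything off the normal form established in Proposition~\ref{prop:asymptotic}: once $\psi$ has been put in the shape \eqref{eq:multival1} or \eqref{eq:multival2}, the two cases of the present proposition correspond to the two cases there ($b_\infty\in\mathbb{B}^n$ versus $b_\infty\in\partial\mathbb{B}^n$), and the statement reduces to computing the limit of an explicit multivalued map. So first I would invoke Proposition~\ref{prop:asymptotic} to fix $B\in\mathrm{PU}(n,1)$, exponents $\alpha_{p,q}\in[0,1)$ (and, in the second case, reals $\tau_q\geq 0$ not all zero) and a holomorphic $\varphi:\overline{X}\to\mathbb{C}^n$ as there; since the normal form is a statement about $\psi$ as a whole, it already makes the full limit $\lim_{\pi(w)\to 0}\psi(w)$ exist, so in particular $b_\infty$ is well defined independently of the sequence used to apply Proposition~\ref{prop:asymptotic}. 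The one elementary observation driving all the estimates is that, for $p$ fixed,
\[
	\bigl|\, e^{2i\pi\sum_{q\leq k}\alpha_{p,q}w_q}\, \bigr| = \prod_{q\leq k}|z_q|^{\alpha_{p,q}},
\]
so that as $\pi(w)\to z=(0,\dots,0,z_{k+1},\dots,z_n)$ this coordinate either stays identically equal to $\varphi_p(z)$ — precisely when all $\alpha_{p,q}=0$, i.e.\ $\delta_p=1$ — or has modulus tending to $0$, uniformly in the remaining arguments of $w$; in the latter case $\delta_p=0$ and the whole coordinate tends to $0$ since $\varphi$, being holomorphic on $\overline{X}$, is bounded near $D$.

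In Case~1, applying this to each coordinate of $B\cdot\psi(w)$ in \eqref{eq:multival1} shows that $\lim_{\pi(w)\to z}B\cdot\psi(w)=(\delta_1\varphi_1(z),\dots,\delta_n\varphi_n(z))$ exists in $\mathbb{B}^n$; hence $\psi_0(z):=\lim_{\pi(w)\to z}\psi(w)$ exists, satisfies \eqref{eq:limitintball}, and $\psi_0$ is a limiting map for $\psi$ in the sense of Definition~\ref{def:locallimitperiod}~(1), and taking $z=0$ recovers the existence of $b_\infty\in\mathbb{B}^n$. For the factorization claim, recall that after the conjugation each $A_j=\mathrm{diag}(e^{2i\pi\alpha_{1,j}},\dots,e^{2i\pi\alpha_{n,j}})\in\U(n)$, whose fixed locus in $\mathbb{B}^n$ is $\{z:z_p=0\text{ whenever }\alpha_{p,j}\neq 0\}$; intersecting over $j$ gives exactly $\{z:z_p=0\text{ whenever }\delta_p=0\}$, which by \eqref{eq:limitintball} is precisely where $B\cdot\psi_0$ takes its values. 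Conjugating back by $B$, one concludes that $\psi_0$ factors through $\bigcap_j\mathrm{Stab}(A_j)$, which is the intersection of $\mathbb{B}^n$ with a coordinate linear subspace, hence a totally geodesically embedded sub-ball; it is a proper subset of $\mathbb{B}^n$ exactly when some $\delta_p=0$, i.e.\ when some $A_j\neq\mathrm{Id}$, i.e.\ when the monodromy is non-trivial.

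In Case~2, I would use that the Siegel presentation $\phi=\phi_{b_\infty}$ is $N_{b_\infty}$-equivariant and that $\mathrm{proj}_{\mathbb{C}^{n-1}}\circ\phi=\pi_{n-1}$ intertwines the action of $B\in N_{b_\infty}$ on $\mathbb{B}^n$ with the action $\overline{B}$ on $\mathbb{C}^{n-1}$ induced by the quotient $N_{b_\infty}\to\mathrm{Sim}(\mathbb{C}^{n-1})$ of Section~\ref{sect:stabboundary}; thus $\pi_{n-1}(B\cdot\psi(w))=\overline{B}\cdot\pi_{n-1}(\psi(w))$ equals the vector of the first $n-1$ coordinates appearing in \eqref{eq:multival2}. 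By the same limit computation as above, this tends to $(\delta_1\varphi_1(z),\dots,\delta_{n-1}\varphi_{n-1}(z))$ as $\pi(w)\to z$, so $\psi_0(z):=\lim_{\pi(w)\to z}\pi_{n-1}(\psi(w))$ exists in $\mathbb{C}^{n-1}$, satisfies \eqref{eq:limitedgeball}, and $\psi_0$ is a limiting map in the sense of Definition~\ref{def:locallimitperiod}~(2). On the other hand the last Siegel coordinate $\varphi_n(z)+\sum_{q\leq k}\tau_q w_q$ has imaginary part tending to $+\infty$ — since $\varphi_n$ is bounded, $\tau_q\geq 0$ and at least one $\tau_q>0$ — so $\psi(w)$ itself tends to $b_\infty\in\partial\mathbb{B}^n$; this is exactly why one must first project to $\mathbb{C}^{n-1}$ before passing to the limit, and also gives, with $z=0$, the existence of $b_\infty$ on the boundary.

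The only point requiring some care — and it is bookkeeping rather than a conceptual obstacle — is the equivariance used in Case~2: one has to match $\pi_{n-1}\circ(B\cdot\,{-})$ with reading off the first $n-1$ entries of \eqref{eq:multival2}, and to track that the $L_{b_\infty}$-component of $B$ acts on $\mathbb{C}^{n-1}$ by a rotation and its $W_{b_\infty}$-component by a translation, which is where the explicit description of the $N_{b_\infty}$-action on $\mathbb{S}_{n}$ from Section~\ref{sect:stabboundary} enters. Everything else is the limit computation above, resting only on the boundedness of $\varphi$ near $D$ and on the constraint $\alpha_{p,q}\in[0,1)$ furnished by Proposition~\ref{prop:asymptotic}.
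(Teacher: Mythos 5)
Your proposal is correct and reproduces precisely the argument the paper has in mind: the paper itself merely asserts that Proposition~\ref{prop:limitingmapgeom} follows "by taking the limit of the corresponding multivaluate map" in the normal forms \eqref{eq:multival1} and \eqref{eq:multival2}, and your write-up is exactly that computation made explicit, with the same coordinatewise limit (via $\alpha_{p,q}\in[0,1)$ and boundedness of $\varphi$ near $D$), the same identification of the fixed ball in Case~1, and the same use of $N_{b_\infty}$-equivariance of $\pi_{n-1}$ in Case~2.
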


\subsection{The global case} \label{sec:globsec} We will now consider the global situation, and describe the structure of period maps induced on the boundary of a complex open variety. Let us introduce the following geometric data.  

\begin{enumerate}
	\item Let \(\overline{X}\) be a complex manifold of dimension \(n\), and let \(D\) be a simple normal crossing divisor on \(\overline{X}\). We let \(X = \overline{X} - X\), and pick a base point \(o \in X\). For each \(j \in \llbracket 0, n\rrbracket\), we let \(D_{k} \subset D\) the smooth \(k\)-codimensional locally closed stratum.
	\item Let \(\rho : \pi_{1}(X, o) \to \PU(n, 1)\) be a representation, and consider a \(\rho\)-equivariant map \(\psi : \widetilde{X} \to \mathbb{B}^{n}\).
	\item Let \(k \in \llbracket 1, n\rrbracket\) and \(Y \subset D_{k}\) be a connected component. We fix \(Y' \to Y\), a connected component of the covering over \(Y\) induced by \(\widetilde{X} \to X\), in the sense of Section~\ref{sec:inducedcovering}.
\end{enumerate}

We are going to construct a representation \(\sigma\) of \(\pi_{1}(Y)\) and a \(\sigma\)-equivariant map \(\psi_{0}\) on \(\widetilde{Y}\), with values in \(\mathbb{B}^{n}\) or \(\mathbb{C}^{n-1}\), so that this data is locally compatible with \(\psi\) in the sense of Definition~\ref{def:locallimitperiod}. We start by constructing the map.

\subsubsection{Construction of the map.} To properly describe the local model of the limiting map in our global situation, we need to introduce a few more notation.

\begin{nota} \label{nota:local} Let \(y \in Y\) be any point, and let \(U \cong (\Delta^{\ast})^{k} \times \Delta^{n-k} \subset X\) be a pointed polydisk centered at \(y\). Consider the fiber product 
	\[
		\begin{tikzcd}
			U \times_{X} \widetilde{X} \arrow[r] \arrow[d] & \widetilde{X} \arrow[d] \\
			U          \arrow[r]                           & X
		\end{tikzcd}
	\]
	The manifold \(U \times_{X} \widetilde{X}\) is a disjoint union of copies of open manifolds of the form \((\Delta^{\ast})^{l} \times \mathbb{H}^{k-l} \times \Delta^{n-k}\). If \(V\) is one of these components, its universal covering is \(\widetilde{V} \cong \mathbb{H}^{k} \times \Delta^{n-k}\), and we will denote by 
	\(
	\psi_{V} : \mathbb{H}^{k} \times \Delta^{n-k} \to \mathbb{B}^{n}
	\)
	the composition of the natural maps 
	\[
		\widetilde{V}  \to U \times_{X} \widetilde{X} \to \widetilde{X} \to \mathbb{B}^{n}.
		\]
\end{nota}

\begin{prop} \label{prop:globalmap}
	In the following, we will pick \(y' \in Y'\) with projection \(y \in Y\). We then introduce \(V \overset{\pi}{\to} U\) as above with \(U\) is centered at \(y\), where \(V\) is the connected component of \(U \times_{X} \widetilde{X}\) neighboring \(y'\) in the sense of Section~\ref{sec:inducedcovering}.

	 With this notation, there are two possibilities.
	\begin{enumerate}
		\item For all \(y' \in Y'\) as above, the limit \(\psi_{0}(y') := \lim_{\pi(z) \to y} \psi_{V}(z)\) exists in \(\mathbb{B}^{n}\). This limits depends only on \(y'\), but not on the choice of \(U\).

			The map \(\psi_{0} : Y' \to \mathbb{B}^{n}\) is holomorphic. Unless the monodromy around all component of \(D\) containing \(Y\) is trivial, the map \(\psi_{0}\) factors through a totally geodesically embedded ball \(\mathbb{B}^{p} \hookrightarrow \mathbb{B}^{n}\) \((p < n)\).
			\medskip

		\item There exists \(b_{\infty} \in \partial \mathbb{B}^{n-1}\) such that for all \(y' \in Y'\) as above, one has \(\lim_{\pi(z) \to y} \psi_{V}(z) = b_{\infty}\). In this case, for all such \(y'\in Y\), the limit
			\[
				\psi_{0}(y') := \lim_{\pi(z) \to y} \mathrm{proj}_{\mathbb{C}^{n-1}} \circ \phi_{b_{\infty}} \circ \psi(z)
				\]
				exists in \(\mathbb{C}^{n-1}\).

				The map \(\psi_{0} : Y' \to \mathbb{C}^{n-1}\) is holomorphic.
	\end{enumerate}
\end{prop}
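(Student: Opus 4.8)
The plan is to first establish a \emph{dichotomy} that is uniform over $Y$, and then to \emph{glue} the local limiting maps provided by Proposition~\ref{prop:limitingmapgeom} into a global holomorphic map on $Y'$. For the dichotomy: fix $y \in Y$ and a pointed polydisk $U \cong (\Delta^{\ast})^{k} \times \Delta^{n-k}$ centered at $y$, and let $V$ be the connected component of $U \times_{X} \widetilde{X}$ neighboring a chosen $y' \in Y'$. Applying Proposition~\ref{prop:asymptotic} (via Proposition~\ref{prop:limitingmapgeom}) to $\psi_{V}$ gives that along \emph{any} sequence $q_{m} \to y$ the images $\psi_{V}$ accumulate at a single point $b_{\infty} \in \overline{\mathbb{B}^{n}}$, and we are in Case~1 iff $b_{\infty} \in \mathbb{B}^{n}$ and in Case~2 iff $b_{\infty} \in \partial \mathbb{B}^{n}$. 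The key point here is that this case distinction is independent of the choice of $U$ and of the component $V$: once $b_{\infty}$ is determined, the local normal forms \eqref{eq:multival1}, \eqref{eq:multival2} show that the whole germ of $\psi$ near the stratum (restricted to $V$) is governed by $b_{\infty}$, and shrinking $U$ or changing $V$ inside $U \times_{X} \widetilde{X}$ corresponds to precomposing with a deck transformation of $\widetilde{X} \to X$ fixing the relevant data. I would also check that the case does not depend on the point $y \in Y$ by a connectedness argument: the set of $y$ falling in Case~1 (resp.\ Case~2) is open in $Y$ by the local normal form, and $Y$ is connected, so one case holds throughout.

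\textbf{Case 1 (values in $\mathbb{B}^{n}$).} For each $y'\in Y'$, Proposition~\ref{prop:limitingmapgeom}(1) gives a well-defined point $\psi_{0}(y') := \lim_{\pi(z)\to y}\psi_{V}(z) \in \mathbb{B}^{n}$, and the normal form \eqref{eq:limitintball} shows that locally, in the coordinate $z = (0,\dots,0,z_{k+1},\dots,z_{n})$ on a chart of $Y'$ coming from the induced covering, $\psi_{0}$ is given by $B^{-1}\cdot(\delta_{1}\varphi_{1}(z),\dots,\delta_{n}\varphi_{n}(z))$ with $\varphi$ holomorphic. Hence $\psi_{0}$ is holomorphic on each such chart; the charts are compatible because on overlaps two local descriptions differ by composing with a deck transformation of $\widetilde{X}$ (which acts by an element of $\operatorname{PU}(n,1)$, hence holomorphically on $\mathbb{B}^{n}$), so $\psi_{0}:Y'\to\mathbb{B}^{n}$ is globally holomorphic. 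For the factorization statement: by Lemma~\ref{lem:invlimit} each monodromy $A_{j}$ around a component of $D$ containing $Y$ fixes $b_{\infty}$, and the normal form \eqref{eq:limitintball} (with the $\delta_{j}\in\{0,1\}$) shows $\psi_{0}$ lands in the fixed locus $\bigcap_{j}\operatorname{Stab}(A_{j})\cap\mathbb{B}^{n}$, which — as noted in Proposition~\ref{prop:limitingmapgeom}(1) and the remark following Definition~\ref{def:locallimitperiod} — is a proper totally geodesic subball $\mathbb{B}^{p}\hookrightarrow\mathbb{B}^{n}$ with $p<n$ whenever the monodromy is nontrivial.

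\textbf{Case 2 (values in $\mathbb{C}^{n-1}$).} Here one first argues the limit point $b_{\infty}\in\partial\mathbb{B}^{n}$ is \emph{globally constant} along $Y$. Locally this is Proposition~\ref{prop:limitingmapgeom}(2); to see it does not vary with $y$, note that $\lim_{\pi(z)\to y}\psi_{V}(z)=b_{\infty}$ is continuous in $y$ (the normal form \eqref{eq:edgeball} shows the limit exists uniformly on compacts of the stratum) and lands in the discrete-looking but actually single-valued data $b_{\infty}\in\partial\mathbb{B}^{n}$, so the map $y\mapsto b_{\infty}(y)$ is continuous on connected $Y$ and takes values in the boundary component — and since the local expression pins it down exactly, it is locally constant, hence constant. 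Once $b_{\infty}$ is fixed, set $\psi_{0}(y') := \lim_{\pi(z)\to y}\operatorname{proj}_{\mathbb{C}^{n-1}}\circ\phi_{b_{\infty}}\circ\psi_{V}(z)$; by \eqref{eq:limitedgeball} this equals $B^{-1}\cdot(\delta_{1}\varphi_{1}(z),\dots,\delta_{n-1}\varphi_{n-1}(z))$ in local coordinates with $\varphi$ holomorphic, so $\psi_{0}$ is holomorphic locally, and globally by the same overlap argument as in Case~1 (on overlaps the descriptions differ by a deck transformation of $\widetilde{X}$, which now acts through the quotient $W_{b_{\infty}}\to\operatorname{Sim}(\mathbb{C}^{n-1})$, i.e.\ by a holomorphic affine unitary similarity of $\mathbb{C}^{n-1}$).

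\textbf{Main obstacle.} The delicate point is \emph{not} the local analysis — that is entirely Proposition~\ref{prop:asymptotic} — but the \emph{patching}: one must verify that the limiting values computed in two overlapping charts, coming from two different choices $(U,V)$ and $(U',V')$ over overlapping points, agree after the transition. This requires carefully tracking how the auxiliary automorphism $B\in\operatorname{PU}(n,1)$ and the exponents $(\alpha_{p,q})$, $(\tau_{q})$ in the normal forms transform when one changes the polydisk or moves to an adjacent component of $U\times_{X}\widetilde X$, and using that any two such choices are related by an element of $\pi_{1}(X,o)$ acting through $\rho$ — which preserves $b_{\infty}$ by Lemma~\ref{lem:invlimit} and hence descends to a well-defined holomorphic transformation of the target ($\mathbb{B}^{p}$ in Case~1, $\mathbb{C}^{n-1}$ in Case~2). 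The induced-covering formalism of Section~\ref{sec:inducedcovering} is exactly what organizes this bookkeeping, so the argument, while somewhat technical, is essentially forced.
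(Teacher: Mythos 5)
Your proposal is essentially the paper's proof, which is much terser: after the local analysis of Proposition~\ref{prop:limitingmapgeom}, the paper only records the key remaining point, namely that the two alternatives of the dichotomy cannot coexist, and dispatches it with an openness-plus-connectedness argument. One small imprecision in your write-up: the openness and connectedness should be phrased over $Y'$ rather than $Y$, since the dichotomy is attached to each $y' \in Y'$ (through the choice of the component $V$ neighboring $y'$), and a priori two points of $Y'$ over the same $y \in Y$ could land in different cases --- this is exactly why the paper invokes the connectedness of $Y'$; once phrased correctly, your argument matches the paper's.
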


To prove this proposition, the only thing left to check is there cannot be two points of \(Y'\) that do no satisfy the same item in Proposition~\ref{prop:limitingmapgeom}. This simply comes from the connectedness of \(Y'\), and the fact that each of these two situations is satisfied on an open subset of \(Y'\).

\subsubsection{Construction of the induced representation} We are now going to construct a representation of \(\pi_{1}(Y)\) (or even more precisely, of \(\quotientd{\pi_{1}(Y)}{\pi_{1}(Y')}\)), under which the map \(\psi_{0}\) constructed above is equivariant. To define the image of a loop \(\gamma\) inside \(Y\), the idea will simply be to move it a little bit to get a loop \(\mu\) inside \(X\), and then to check that the image \(\rho(\mu) \in \PU(n,1)\) has a well-defined action on the image of \(\psi_{0}\), that does not depend on the choice of the moved loop.
\bigskip

We fix a base point \(b_{0} \in Y\), and consider a pointed polydisk \(U = (\Delta^{\ast})^{k} \times \Delta^{n-k}\) adapted to \(D\) and centered at \(b_{0}\). We let \(W \hookrightarrow \overline{X}\) be a euclidean tubular neighborhood of \(Y\) with \(\mathcal{C}^{\infty}\) projection map \(q : W \to Y\). We pick a base point \(b \in W \cap X\) so that \(q(b) = b_{0}\). Let \(\gamma_{1}, \dotsc, \gamma_{k} \in \pi_{1}(X, b)\) be the classes of meridian loops around the components of \(D\), and let \(A_{j} = \rho(\gamma_{j}) \in \PU(n, 1)\).
\medskip

Consider now a class \([\gamma] \in \pi_{1}(Y, b_{0})\), for which we want to construct the image \(\sigma([\gamma])\). The lift of \(\gamma\) to \(Y'\) connects two points \(y_{1}', y_{2}'\) in the fiber of \(Y'\to Y\) above \(b_{0}\).  Let \(V_{1}, V_{2} \subset U \times_{X} \widetilde{X}\) be the connected components neighboring \(y_{1}', y_{2}'\) (see again Section~\ref{sec:inducedcovering}).
\medskip

We choose a loop \(\mu\) in \(X\), based at \(b\), that is also a section of the restriction to \(\gamma\) of the projection \(q : W \cap X \to Y\). Note that the class \([\mu] \in \pi_{1}(X, b)\) depends on the choice of \(\mu\), but only up to a product of some of the \(\gamma_{j}\). Then the lift of \(\mu\) to \(\widetilde{X}\) links a point of \(V_{1}\) to a point in \(V_{2}\), so since \(\psi\) is \(\rho\)-equivariant, one has for all \(y \in V_{2}\):
\begin{equation} \label{eq:V1toV2}
	\psi|_{V_{2}}(\mu \cdot y) = \rho([\mu]) \cdot \psi|_{V_{1}}(y).
\end{equation}

We will now construct the image of \([\gamma]\) by passing to the limit in the previous equation. Again, one has two possibilities.
\medskip

\noindent
{\em Case 1 of Proposition~\ref{prop:globalmap}.} Denote by \(\psi_{0}^{1} : U_{1} \to \mathbb{B}^{n}\) and \(\psi_{0}^{2} : U_{2} \to \mathbb{B}^{n}\) be the limiting maps for \(\psi|_{V_{1}}\) and \(\psi|_{V_{2}}\) (where \(U_{1}, U_{2} \subset Y'\) are \(n-k\)-dimensional polydisks). In this case, the two limiting maps factor by a ball \(\mathbb{B}^{p} \subset \mathbb{B}^{n}\) that is fixed by all \(A_{j}\). We may assume that \(p\) is the smallest possible dimension.

Letting \(y\) tend to a point of \(\{0\}^{k} \times \Delta^{n-k}\) in \eqref{eq:V1toV2} shows that 
\[
	\psi_{0}^{2}(\gamma \cdot y') = \rho([\mu]) \cdot \psi_{0}^{1}(y')
\]	
for all \(y' \in U_{1}\). By analytic continuation, one sees that the element \(\rho([\mu])\) is an isometry of \(\mathbb{B}^{n}\) that preserves the image of \(\psi_{0} : Y' \to \mathbb{B}^{n}\); it must then also preserve the smallest dimensional ball \(\mathbb{B}^{p}\) that contains it. 

Thus, \(\rho([\mu])\) induces an element \(\sigma([\gamma]) \in \mathrm{Aut}(\mathbb{B}^{p})\). This element does not depend on the choice of \(\mu\), since as we said earlier, two different choices differ by a product of \(A_{j}\), all of them fixing \(\mathbb{B}^{p}\).
\medskip

It is then straightforward to check that we have obtained a morphism of groups
\[
	\sigma :\pi_{1}(Y, b_{0}) \to \mathrm{Aut}(\mathbb{B}^{p}).
\]
The map \(\psi_{0} : Y' \to \mathbb{B}^{p}\) is equivariant by construction.
\bigskip

\noindent
{\em Case 2 of Proposition~\ref{prop:globalmap}.} The discussion is completely parallel to the first case. This time, one gets a representation
\[
	\sigma: \pi_{1}(Y, b_{0}) \to \mathrm{Sim}(\mathbb{C}^{n-1})
\]
with respect to which the map \(\psi_{0} : Y' \to \mathbb{C}^{n-1}\) is equivariant.

\section{Uniformizing maps in presence of isolated pure log-canonical singularities} \label{sec:uniformization}

In the rest of this section, we will fix the following data:

\begin{enumerate}
	\item \(X^{\ast}\) is a normal projective variety with isolated log-canonical singularities. Denote by \(X\) its smooth locus, and let \(\sigma :\overline{X} \to X^{\ast}\) be a log-resolution of singularities. {\em We assume that all the exceptional divisors in this resolution have discrepancy equal to \(-1\)}.
	\item \(\rho : \pi_{1}(X) \to \PU(n, 1)\) is a representation ;
	\item \(\psi : \widetilde{X} \to \mathbb{B}^{n}\) is an étale holomorphic map which is \(\rho\)-equivariant.
\end{enumerate}

Under these conditions, the metric \(\psi^{\ast} h_{\mathbb{B}^{n}}\) on \(\widetilde{X}\) is \(\pi_{1}(X)\)-invariant, and descends to \(X\) to define a metric that we will denote by \(h_{X}\).

We will prove the following result.

\begin{thm} \label{thm:completeness}
	Under the assumptions made at the beginning of the section, the following two claims hold.
	\begin{enumerate}
		\item the pullback metric \(\psi^{\ast} h_{\mathbb{B}^{n}}\) is complete on \(\widetilde{X}\) ;
		\item the map \(\psi\) is a biholomorphism.
	\end{enumerate}
\end{thm}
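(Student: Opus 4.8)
The proof proceeds in two stages: first establishing completeness of $\psi^\ast h_{\mathbb{B}^n}$, then using completeness (plus the uniformization/period-map machinery) to conclude that $\psi$ is a biholomorphism. The plan is to analyze the behavior of $\psi$ near each exceptional divisor of the log-resolution $\sigma : \overline{X} \to X^\ast$ using the asymptotic description of Section~\ref{sec:asymptotic}.

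\emph{Step 1: Setting up the limiting period maps on the strata.} Let $D = \sigma^{-1}(\mathrm{Sing}(X^\ast))$, which we may assume (after further blow-ups) is SNC, with all discrepancies $-1$. For each smooth locally closed stratum $T$ of $D$, Section~\ref{sec:globsec} produces a representation $\sigma_T$ of $\pi_1(T)$ and a $\sigma_T$-equivariant limiting map $\psi_{0,T}$, valued either in $\mathbb{B}^n$ (Case 1) or in $\mathbb{C}^{n-1}$ (Case 2 of Proposition~\ref{prop:globalmap}). The key dichotomy to resolve is: \emph{which case occurs}. I claim that Case 1 \emph{cannot} occur for a top-dimensional stratum $T$ of an exceptional divisor (equivalently, the limit point always lies on $\partial\mathbb{B}^n$), and this is the main point. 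The strategy: if Case 1 held, $\psi_{0,T}$ would be a period map of a polarized $\mathbb{C}$-VHS on the quasi-projective manifold $T$; but by Corollary~\ref{corol:strataspecial}, $T$ is \emph{special} (here we crucially use that $F = \sigma^{-1}(p)$ and $F\cup\sigma^{-1}(\mathrm{Sing})$ are SNC and all discrepancies equal $-1$), so by Theorem~\ref{thm:isotrivial} the period map $\psi_{0,T}$ is constant. Then the local description \eqref{eq:intball}–\eqref{eq:limitintball} forces the $\alpha_{j,q}$ and $\varphi$ to be tightly constrained; combined with $\psi$ being étale (immersive), one derives a contradiction with the fact that $\psi$ has maximal rank in the directions transverse to $T$. (One has to propagate the constancy on the top stratum down to all strata, using the functoriality of induced coverings and of limiting maps.)

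\emph{Step 2: Deducing completeness.} Once we know we are in Case 2 near every exceptional divisor, we need all the $\tau_q > 0$. The point is that each $\tau_q$ is attached to an exceptional component $D_q$ with discrepancy exactly $-1$: the condition $\tau_q = 0$ would correspond, via the volume/distance estimates of Lemma~\ref{lem:complete}, to $D_q$ being at finite distance, which — tracing through the relation between $\tau_q$ and the discrepancy $a_{D_q}$ in the nilpotent-orbit normal form — contradicts $a_{D_q} = -1$ (a discrepancy $>-1$ is what produces a finite-distance, cone-like end, à la the klt examples). Concretely, $K_{\overline{X}} + D = \sigma^\ast K_{X^\ast}$ is the pullback of an ample divisor, and matching this against the KE volume form $\det(\psi^\ast h_{\mathbb{B}^n})$ pins down the $\tau_q$ to be strictly positive exactly when all discrepancies are $-1$. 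Then Lemma~\ref{lem:complete}(i) gives completeness of $h_X$, hence of $\psi^\ast h_{\mathbb{B}^n}$ on $\widetilde{X}$, proving claim (1). In particular the volume is finite by Lemma~\ref{lem:complete}(ii).

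\emph{Step 3: From completeness to biholomorphism.} With $\psi^\ast h_{\mathbb{B}^n}$ complete and $\psi$ étale, $\psi$ is a covering map onto its image; since $\mathbb{B}^n$ is simply connected and the metric is complete, $\psi : \widetilde{X} \to \mathbb{B}^n$ is a covering onto a complete totally geodesic (in fact open, by étaleness, and closed, by completeness) subset, hence $\psi$ is surjective and a covering, thus a biholomorphism (as $\mathbb{B}^n$ is simply connected). This shows $X = \widetilde{X}/\pi_1(X)$ is a ball quotient; the finite-volume statement forces $\Gamma := \rho(\pi_1(X))$ to be a lattice, and the analysis of the ends (Case 2 with $\tau_q>0$, all discrepancies $-1$) shows exactly that the ends are cusps whose cross-sections are quotients of abelian varieties, i.e.\ $X^\ast$ is the Baily-Borel-Mok compactification. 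The reduction of "$\psi$ étale + complete pullback metric $\Rightarrow$ covering" is handled by the topological criteria of Section~\ref{sec:topology}.

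\textbf{Main obstacle.} The crux is Step~1: showing the limit point always lies on the boundary $\partial\mathbb{B}^n$. The delicate part is not the isotriviality input (that is Theorem~\ref{thm:isotrivial}) but \emph{propagating} the resulting constancy through all strata of $D$ simultaneously and converting "$\psi_{0,T}$ constant" into a genuine contradiction with $\psi$ being étale — one must carefully track, via the normal forms \eqref{eq:multival1}/\eqref{eq:multival2} and the induced-covering formalism of Sections~\ref{sec:inducedcovering} and~\ref{sec:globsec}, how the transverse derivative of $\psi$ degenerates, and rule out the possibility that some intermediate stratum sits in Case 1 while a smaller one sits in Case 2. Managing this bookkeeping coherently across the whole exceptional divisor, and ensuring the discrepancy-$(-1)$ hypothesis is used at exactly the right place to force $\tau_q>0$, is where the real work lies.
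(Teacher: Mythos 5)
Your broad outline matches the paper's strategy, but the decisive step in your Step~1 — deriving a contradiction when the limit point $b_\infty$ lies in the open ball $\mathbb{B}^n$ — is not correct as stated. You claim that once isotriviality (via Corollary~\ref{corol:strataspecial} and Theorem~\ref{thm:isotrivial}) forces the limiting map $\psi_{0,T}$ to be constant, this clashes with ``$\psi$ has maximal rank in the directions transverse to $T$.'' That argument does not go through: constancy of the \emph{limiting} map along a stratum is perfectly compatible with $\psi$ being étale off the exceptional divisor, and indeed this is exactly what happens in the non-log-canonical counterexamples of Section~\ref{sec:example}, where $\psi$ is étale, the exceptional divisors are contracted to interior points, and yet the conclusion fails. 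The étaleness of $\psi$ alone cannot rule out Case~1.

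The paper's actual contradiction in this case is geometric and uses the singularity hypothesis in a sharper way: once $\psi_0$ is constant and equal to $b_\infty\in\mathbb{B}^n$, one shows (via Lemma~\ref{lem:universalcovering} with careful choices of $K$ and $L$, relying on the local models \eqref{eq:multival1} and the fact that the injectivity radius is bounded below away from the puncture) that the restriction $\psi : \widetilde{M}_0 \to B_{h_{\mathbb{B}^n}}(b_\infty,r_0)\setminus\{b_\infty\}$ is actually a \emph{diffeomorphism}; so $\Gamma$ acts as a finite subgroup of $\mathrm{U}(n)$ and $q$ is a finite quotient singularity, hence klt. This contradicts the hypothesis that every discrepancy equals $-1$. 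That the discrepancy hypothesis lands precisely here (and not, as you suggest in Step~2, in forcing $\tau_q>0$) is also worth noting: in Case~1 of Proposition~\ref{prop:limitpoint} the positivity of all $\tau_q$ is automatic from the limit being on $\partial\mathbb{B}^n$ plus connectedness of $\widetilde{\Omega}_0$ (Lemma~\ref{lem:samebY}), and needs no appeal to discrepancies or volume-form matching. Your Step~3 is fine in spirit and matches Lemma~\ref{lem:criterioncomplete} / Section~\ref{sec:topology}.
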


Let us first remark that the first point implies the second one, by the classical Lemma~\ref{lem:criterioncomplete}, applied to \(Y = \widetilde{X}\) and \(Z = \mathbb{B}^{n}\). The main problem will then be to show the completeness of the metric \(\psi^{\ast} h_{\mathbb{B}^{n}}\), or equivalently of \(h_{X}\).
\medskip

The core of the proof will be based on the following dichotomy, whose second case will eventually lead to a contradiction, later allowing us to prove that the pull-back metric \(h_{X}\) is complete on \(X\). This result claims essentially that in our situation, the limit of the period map sends any given connected component of the exceptional divisor to a single point, which is either in \(\mathbb{B}^{n}\) or in its boundary.

\begin{prop} \label{prop:limitpoint} Let \(q \in X_{\mathrm{sing}}\) be a singular point, and let \(E \subset \overline{X}\) be the exceptional divisor over \(q\). 	Let \(\Omega \subset X^{\ast}\) be a closed neighborhood of \(q\) such that \(\sigma^{-1}( \Omega - \{ q \}) \subset \overline{X} \) is contained in a finite union of pointed polydisks centered around points of \(E\). Let \(\Omega_{0} = \Omega - \{q\}\), and fix a connected component \(\widetilde{\Omega}_{0}\) of \(\pi^{-1}(\Omega_{0}) \subset \widetilde{X}\).

	For each smooth strata \(Y\) of \(E\), we will denote by \(Y' \to Y\) the covering induced by \(\widetilde{\Omega}_{0} \to \Omega_{0}\).
	
	Then one of the following two cases holds.
	\begin{enumerate}
		\item there exists \(b_{\infty} \in \partial \mathbb{B}^{n}\) such that for any smooth strata \(Y\) of \(E\), the map \(\psi\) has a limit along \(Y'\) as in Proposition~\ref{prop:globalmap}~(2). In this case, for any closed neighborhood \(\Omega \ni q\) small enough, the pullback metric \(h_{X}\) is complete on \(\Omega - \{p\}\).
		\item there exists \(b_{\infty} \in \mathbb{B}^{n}\) such that for any smooth strata \(Y\) of \(E\), the map \(\psi\) has a limit along \(Y'\) as in Proposition~\ref{prop:globalmap}~(1). The induced map on \(Y'\) is {\em constant, equal to} \(b_{\infty}\).
	\end{enumerate}
\end{prop}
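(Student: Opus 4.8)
The plan is to exploit the interplay between three ingredients: the local normal form of Proposition~\ref{prop:asymptotic}, the specialness of the strata of $E$ provided by Corollary~\ref{corol:strataspecial} (this is where the hypothesis that all discrepancies equal $-1$ enters), and the isotriviality Theorem~\ref{thm:isotrivial}. First I would observe that, by Proposition~\ref{prop:globalmap}, around each connected stratum $Y$ of $E$ one of the two cases (limit in $\mathbb{B}^n$ or limit in $\partial\mathbb{B}^n$) occurs, and that by the connectedness argument used just after that proposition the case cannot change along a fixed $Y$. The first reduction is to show that the two cases are \emph{globally} consistent across the whole exceptional divisor $E$, i.e. that they cannot mix on strata that meet in closures: if a stratum $Y_1$ with $Y_2\subset\overline{Y_1}$ is in case (1) and $Y_2$ in case (2) (or vice versa), one derives a contradiction from comparing the local normal forms \eqref{eq:intball} and \eqref{eq:edgeball} on a polydisk adapted to $Y_2$ — the exponents $\tau_q$ attached to the boundary components through $Y_2$ that also cut $Y_1$ must vanish, forcing the $Y_1$-limit into $\partial\mathbb{B}^n$ as well. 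This also pins down a single boundary point $b_\infty$ (in case (1), the common point of all the totally geodesic subballs $\bigcap_j\mathrm{Stab}(A_j)$; in case (2), the common $b_\infty\in\partial\mathbb{B}^n$).

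Next, in case (2), I would establish completeness of $h_X$ on $\Omega-\{q\}$. By Proposition~\ref{prop:globalmap}~(2) and the local description \eqref{eq:edgeball}, near each point of every stratum $Y\subset E$ the map $\psi$ lands, after the Siegel change of coordinates $\phi_{b_\infty}$, in a region where at least one $\tau_q>0$; but I need \emph{all} $\tau_q$ to be positive to invoke Lemma~\ref{lem:complete}~(i). The point is that the discrepancy-$(-1)$ hypothesis forces this: if some $\tau_q=0$ for a boundary component $D_q$ of $\overline{X}$ over $q$, then near the generic point of $D_q$ the pullback metric would be at finite distance, but one can argue via the residue/volume growth near $D_q$ and the equality $K_{\overline X}+D=\sigma^*K_{X^\ast}$ (discrepancy $-1$) that this is incompatible with $\sigma^{-1}(\Omega-\{q\})$ being covered by polydisks whose boundary divisors are all exceptional over the single point $q$ — intuitively, a component at finite distance would separate $q$ from the rest, contradicting that $E$ is connected and contracts to $q$. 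Once all $\tau_q>0$, Lemma~\ref{lem:complete}~(i), applied on a finite set of polydisks covering $E$, gives completeness of $h_X$ on $\Omega-\{q\}$ for $\Omega$ small.

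Finally, in case (1), I would prove that the induced map $\psi_0\colon Y'\to\mathbb{B}^n$ is constant for every stratum $Y$. By the construction after Proposition~\ref{prop:globalmap}, $\psi_0$ is the period map of a polarized $\mathbb{C}$-VHS on $Y'$ (equivalently a multivalued such map on $Y$), obtained by restriction of the one on $X$; its target is a totally geodesic subball $\mathbb{B}^p\hookrightarrow\mathbb{B}^n$ and it is $\sigma$-equivariant for $\sigma\colon\pi_1(Y)\to\mathrm{Aut}(\mathbb{B}^p)$. Now Corollary~\ref{corol:strataspecial} says that each smooth locally closed stratum of $E=\pi^{-1}(q)$ is a \emph{special} quasi-projective variety, so Theorem~\ref{thm:isotrivial} applies and forces $\psi_0$ to be constant. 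Constancy on each stratum, together with the analytic-continuation compatibility built into the construction of $\sigma$ (two strata whose closures meet give compatible limits, so the constant values agree), shows that $\psi_0$ takes the single value $b_\infty$ on all of $E$. I expect the main obstacle to be the first step — proving the global consistency of the two cases and, within case (2), upgrading ``some $\tau_q>0$'' to ``all $\tau_q>0$'' using the discrepancy hypothesis; this requires carefully tracking how the local exponents $(\alpha_{p,q},\tau_q)$ on overlapping polydisks are constrained by the monodromy commuting relations and by the fact that $E$ contracts to one point, and is the place where the log-canonical / discrepancy-$(-1)$ assumption is genuinely used rather than just the period-map formalism.
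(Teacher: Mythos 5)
You have correctly identified the three ingredients (local normal forms of Propositions~\ref{prop:asymptotic}/\ref{prop:globalmap}, specialness of strata from Corollary~\ref{corol:strataspecial}, and isotriviality from Theorem~\ref{thm:isotrivial}), but two of your steps do not work as written, and they happen to be in the wrong logical order relative to the paper.

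Your step of ``global consistency'' by direct comparison of normal forms cannot be made to work in one of the two directions. If a \emph{shallower} stratum $Y_1$ (cut out by components $D_1,\dots,D_{k_1}$) is in case (1) and a deeper adjacent stratum $Y_2\subset\overline{Y_1}$ is in case (2), then the normal form \eqref{eq:edgeball} adapted to $Y_2$ with $\tau_1=\dots=\tau_{k_1}=0$ and, say, $\tau_{k_1+1}>0$ is perfectly consistent: $A_1,\dots,A_{k_1}$ elliptic and $A_{k_1+1}$ parabolic, all commuting and all fixing $b_\infty$. Your claimed conclusion ``forcing the $Y_1$-limit into $\partial\mathbb{B}^n$ as well'' is the opposite of what the vanishing of $\tau_1,\dots,\tau_{k_1}$ yields (it places the $Y_1$-limit in $\mathbb{B}^n$), so there is no contradiction from the normal forms alone. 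What actually rules out this mixed configuration is the constancy of $\psi_0^{Y_1}$, i.e.\ exactly the specialness/isotriviality input which you postpone to your last step: once $\psi_0^{Y_1}\equiv c\in\mathbb{B}^n$ is constant, the monodromy $A_{k_1+1}$ of the boundary stratum $Y_2\cap\overline{Y_1}$ must fix $c$, which is impossible for a parabolic element; equivalently, the constant $c\in\mathbb{B}^n$ cannot limit to $b_\infty\in\partial\mathbb{B}^n$ as one approaches $Y_2$. So in the paper, specialness is not a third and final step but the very mechanism that establishes the dichotomy: if one stratum is in case (1), its constant $\psi_0$ propagates by the connectedness argument of Lemma~\ref{lem:samebY} and forces all strata into case (1).

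Your argument that all $\tau_q>0$ is also misconceived: it attributes this fact to the discrepancy-$(-1)$ hypothesis via a vague volume-growth comparison, but the discrepancy hypothesis plays no role here. The correct reason is elementary and built into the statement of Proposition~\ref{prop:asymptotic}~(2): for each prime boundary component $D_q$ the codimension-one stratum $D_q^\circ$ is itself a stratum of $E$; since we are in the case where \emph{every} stratum is in case~(2), a polydisk adapted to $D_q^\circ$ has $k=1$, there is a single translation length $\tau_q$, and the ``not all $\tau$ zero'' clause forces $\tau_q>0$. (The place where the discrepancy-$(-1)$ hypothesis genuinely enters is Corollary~\ref{corol:strataspecial}, hence the specialness of the strata, hence the constancy argument — not the positivity of the $\tau_q$.)
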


As we explained above, we will later show that the second case cannot happen.
\medskip

Let us prove Proposition~\ref{prop:limitpoint}. There are two possibilities to distinguish.
\medskip

\noindent
{\em Case 1. All smooth strata \(Y\) of \(E\) are in the situation of Proposition~\ref{prop:globalmap}~(2) i.e.\ along all components \(Y'\), the map \(\psi\) has limiting map towards some \(b_{Y'} \in \partial \mathbb{B}^{n}\).}

\begin{lem} \label{lem:samebY}
	All \(b_{Y'}\) are equal, i.e. there exists \(b_{\infty} \in \partial \mathbb{B}^{n}\) such that \(b_{Y'} = b_{\infty}\) for all \(Y\) and \(Y'\) as above. In the local description of Proposition~\ref{prop:asymptotic} (2), all the \(\tau_{j}\) are non-zero.
\end{lem}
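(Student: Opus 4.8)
The plan is to prove Lemma~\ref{lem:samebY} in two stages: first that the limit points $b_{Y'}$ are all equal to a single $b_\infty \in \partial\mathbb{B}^n$, and second that the transvection parameters $\tau_j$ are all strictly positive.

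\textbf{Equality of the limit points.} First I would establish this for strata of the same codimension that meet along a stratum of higher codimension, then propagate by connectedness. Let $Y_1, Y_2$ be two smooth strata of $E$ with $\overline{Y}_1 \cap \overline{Y}_2 \supset Z$ for some smaller stratum $Z$, and let $Z'$ be a component of the induced covering mapping to components $Y_1', Y_2'$ compatibly (functoriality of induced coverings, as in the commented-out remark after Section~\ref{sec:inducedcovering}). Near a point $z \in Z$ we have a single pointed polydisk $U = (\Delta^\ast)^k \times \Delta^{n-k}$ in which the strata $Y_1, Y_2$ correspond to coordinate subspaces, and a single connected component $V$ of $U \times_X \widetilde X$ neighbouring $z'$, on which $\psi_V : \mathbb{H}^k \times \Delta^{n-k} \to \mathbb{B}^n$ is given by the normal form of Proposition~\ref{prop:asymptotic}. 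The key point is that Proposition~\ref{prop:asymptotic} assigns to this \emph{one} component $V$ one limit point $b_\infty$: letting $\Im(w_j) \to \infty$ only for the indices $j$ cutting out $Y_1$ (resp.\ $Y_2$) and keeping the others finite but large produces, by Proposition~\ref{prop:limitingmapgeom}, the limiting map along $Y_1'$ (resp.\ $Y_2'$) near $z$, and this limiting map still takes values in (a Siegel model centred at) the same $b_\infty$. Hence $b_{Y_1'} = b_\infty = b_{Y_2'}$. Since $E$ (being the exceptional divisor of a log-resolution over a point) is connected, and the dual complex of an SNC divisor is connected, any two top-dimensional strata are joined by such a chain through smaller strata; and every stratum lies in the closure of some top-dimensional one. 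Therefore all $b_{Y'}$ coincide with a common $b_\infty \in \partial\mathbb{B}^n$.

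\textbf{Positivity of the $\tau_j$.} This is where the log-canonical/discrepancy $-1$ hypothesis enters, and I expect it to be the main obstacle. Fix a component $E_j$ of $E$ and the corresponding meridian $\gamma_j$ with monodromy $A_j = (R_j, 0, \tau_j, 0) \in N_{b_\infty}$ in the normal form of case (2). By the assumption that all discrepancies equal $-1$, the pullback $\sigma^\ast K_{X^\ast}$ equals $K_{\overline X} + E$ near $E_j$, and restricting to a generic transverse disk $\Delta^\ast \hookrightarrow X$ meeting $E_j$ once, the induced metric $h_X$ should be "of Poincaré type at worst" — more precisely, the canonical section of $K_{\overline X}+E$ forces the local monodromy to be parabolic with a genuine (nonzero) transvection part, for otherwise $h_X$ would extend across $E_j$ with milder growth, contradicting that $E_j$ is actually a divisor with discrepancy $-1$ rather than a klt-type divisor. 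Concretely: if $\tau_j = 0$, then by the computation in the proof of Lemma~\ref{lem:complete}(i) the metric $h_X$ would not detect $E_j$ as being "at infinite distance", and one can run the volume/growth estimate \eqref{eq:Poincarelowerbound} in reverse to see that $\psi^\ast h_{\mathbb{B}^n}$ would then be comparable to a \emph{smooth} (orbifold) metric transverse to $E_j$; combined with the fact that $\psi$ is étale and the canonical bundle formula $K_{\overline X} + E = \sigma^\ast K_{X^\ast}$, this would force the discrepancy of $E_j$ to be $> -1$, contradicting the hypothesis. I would phrase this as: the volume form of $h_X$ has, along $E_j$, exactly the growth $\frac{|dz_j|^2}{|z_j|^2}$ (the coefficient $1$ in $K_{\overline X}+E$), and matching this against the Siegel-model volume computation of Lemma~\ref{lem:complete}(ii) pins down $\tau_j > 0$.

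\textbf{Structure of the write-up.} I would first prove the positivity statement locally (a single transverse disk, using Lemma~\ref{lem:riemannboundlog}, the normal form \eqref{eq:edgeball}, and the discrepancy-$(-1)$ identity), since it only involves one $A_j$ at a time; then prove equality of the $b_{Y'}$ by the connectedness-of-the-dual-complex argument above; and conclude. The delicate point to get right is the compatibility of the local normal forms along a common smaller stratum $Z$ — i.e.\ that the $B \in \mathrm{PU}(n,1)$ and the Siegel coordinates chosen for $Y_1$ and for $Y_2$ can be taken to come from a single normal form on the component $V$ over $Z$ — which is exactly what Proposition~\ref{prop:asymptotic} and Proposition~\ref{prop:limitingmapgeom} provide when applied to $V$, so no genuinely new computation is needed, only careful bookkeeping of which indices $j$ are sent to $\infty$.
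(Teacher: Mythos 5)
Your argument for the equality of the limit points is essentially the paper's: you propagate the common limit point along adjacencies in the stratification of \(E\), using the fact that Proposition~\ref{prop:asymptotic} produces a single normal form on a single component \(V\) over a deep stratum \(Z\), and then letting different subsets of the \(w_j\) go to infinity recovers the limiting maps along the adjacent bigger strata. The paper organizes this slightly differently — it lifts the problem to \(\widetilde{\Omega}_0\) and moves along a path joining the two given components \(U_1'\) and \(U_2'\), reading off the normal forms on a chain of polydisks covering that path — which handles the bookkeeping of the components \(Y'\) of the induced coverings a little more directly than a dual-complex argument does (one needs to make sure the adjacency propagates at the level of the \(Y'\), not just the \(Y\)), but the two routes are substantively the same.

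The positivity of the \(\tau_j\) is where there is a genuine gap, and it is the opposite of what you expected: it is not the hard part of the lemma, and the discrepancy-\((-1)\) hypothesis plays no role in it. The hypothesis of Case~1 (under which the lemma is stated) is that \emph{every} smooth stratum \(Y\) of \(E\), including each \emph{codimension-one} stratum \(E_j^\circ\), falls in situation~(2) of Proposition~\ref{prop:globalmap}/\ref{prop:asymptotic}, i.e.\ has a limit point on \(\partial\mathbb{B}^n\). Applied to \(E_j^\circ\), the local normal form has \(k=1\), so the vector \((\tau_q)\) has a single entry \(\tau_j\); the condition in Proposition~\ref{prop:asymptotic}(2) that the \(\tau_q\) be ``not all zero'' therefore says precisely that \(\tau_j>0\). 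Since the monodromy element \(A_j\) (and hence its translation length) is the same whichever stratum meeting \(E_j\) one looks at, this forces every \(\tau_j\) appearing in any local normal form to be positive — one line, no geometry of \(K_{X^\ast}\) required. Your proposed route — deducing \(\tau_j>0\) from ``if \(\tau_j=0\) then \(h_X\) extends across \(E_j\), so the discrepancy is \(>-1\)'' — is not only unnecessary but also not available: the implication from the discrepancy hypothesis to completeness of \(h_X\) is precisely what the whole of Section~\ref{sec:uniformization} is trying to establish (and it goes through the isotriviality theorem, not a metric computation), so invoking it here would be circular.
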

\begin{proof}
	Pick two smooth strata \(Y_{1}\), \(Y_{2}\) of \(E\), and let \(Y_{1}'\), \(Y_{2}'\) be two arbitrary components of their induced coverings. For \(j = 1, 2\), let \(U_{j} \subset \Omega_{0}\) be a pointed polydisk adapted to \(Y_{j}\), and let \(U_{j}' \subset \widetilde{\Omega}_{0}\) be the connected component of \(U_{j} \times_{\Omega_{0}} \widetilde{\Omega}_{0}\) neighboring \(Y_{j}'\). Then, since \(\widetilde{\Omega}_{0}\) is connected, one can find a path \(\lambda : [0, 1] \to \widetilde{\Omega}_{0}\) linking \(U_{1}'\) to \(U_{2}'\).
	\medskip

	Let \(U_{1} = V_{0}, V_{1}, \dotsc, V_{m} = U_{2}\) be a sequence of polydisks in \(\Omega_{0}\) adapted to \(E\) containing the image of \(\pi \circ \lambda\), and let \(V_{0}' = U_{1}', V_{1}', \dotsc, V_{m}' = U_{2}'\) be the connected components of the \(V_{j} \times_{X} \widetilde{\Omega}_{0}\) crossed by \(\lambda\). Then, on each pair \((V_{j}, V_{j}')\), the map \(\psi\) is described as in Definition~\ref{def:locallimitperiod}~{\em (ii)}. As \(E\) is connected, one may always shrink \(\Omega_{0}\) to assume that two consecutive \(V_{i}\) are in contact with a common stratum of \(E\). By continuity, one sees that the limiting point in \(\partial \mathbb{B}^{n}\) does not vary as one moves along \(\lambda\), and thus \(b_{Y_{1}'} = b_{Y_{2}'}\).
	\medskip

	Since the map has a limit in \(\partial \mathbb{B}^{n}\) for any component of \(E\), this implies that their corresponding translation length \(\tau_{j} \in \mathbb{R}_{+}\) must be positive.
\end{proof}

Since the covering \(\widetilde{\Omega}_{0}\) is connected, one sees from the local description of Section~\ref{subsub:localext} that the point \(b_{F}\) does not vary if one changes strata continuously in a given connected component of \(E\).
\medskip

Thus, we have proven that we are in the situation of Proposition~\ref{prop:limitpoint}~(1): this implies that the metric \(h_{X}\) is complete near \(p\):

\begin{lem}
	Assume that \(\Omega\) is such that \(\sigma^{-1}( \Omega_{0}) \subset \widetilde{X} \) is contained in a finite union of pointed polydisks on which \(\psi\) and its limiting map near \(E\) are of the forms \eqref{eq:edgeball} and \eqref{eq:limitedgeball}.
	Then the pull-back metric \(h_{X}\) is complete on \(\Omega - \{q\}\).
\end{lem}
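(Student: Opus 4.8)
The plan is to show completeness by exhibiting a uniform lower bound for $h_X$ on $\Omega - \{q\}$ of Poincaré type near every component of the exceptional divisor $E$, so that every path running out to $E$ (hence to $q$) has infinite length. Since by hypothesis $\sigma^{-1}(\Omega_0)$ is covered by finitely many pointed polydisks $U_\alpha \cong (\Delta^\ast)^{k_\alpha} \times \Delta^{n-k_\alpha}$ adapted to $E$, it suffices to produce such a bound on each $U_\alpha$; the finiteness of the cover then lets us patch the local bounds into a single estimate. First I would fix one such polydisk $U = (\Delta^\ast)^k \times \Delta^{n-k}$ and invoke Proposition~\ref{prop:asymptotic}~(2) together with Lemma~\ref{lem:samebY}: on $U$ the lift $\psi$ has the normal form \eqref{eq:edgeball}, with a common limit point $b_\infty \in \partial\mathbb{B}^n$ and \emph{all} translation lengths $\tau_q > 0$ for $1 \leq q \leq k$.

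Next I would carry out exactly the computation already performed in the proof of Lemma~\ref{lem:complete}~(i): pulling back the Bergman form in its Siegel presentation \eqref{eq:exprBergman}, one has $\psi^\ast \omega_{\mathbb{S}_n} \geq -i\, \phi^\ast(\partial l) \wedge \phi^\ast(\overline\partial l) / l^2$, and the estimates
\[
	l \circ \phi(z) = -\sum_{q \leq k} \tfrac{\tau_q}{2\pi} \log|z_q| + O(1), \qquad
	\phi^\ast(\partial l) = -\tfrac{1}{8\pi} \sum_{q \leq k} \tau_q \tfrac{dz_q}{z_q} + \sum_{q \leq k} O(|z_q|^{\epsilon-1})
\]
yield the Poincaré-type lower bound
\[
	\psi^\ast \omega_{\mathrm{Berg}} \geq C \sum_{j=1}^{k} \frac{i\, dz_j \wedge d\overline z_j}{|z_j|^2 \log^2|z_j|}
\]
on $U$, exactly as in \eqref{eq:Poincarelowerbound}. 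This is where the hypothesis $\tau_q > 0$ for every $q$ (not merely for one $q$) is essential: it is what forces the bound to dominate the \emph{full} Poincaré metric of $(\Delta^\ast)^k$, so that none of the boundary components $D_q$ — and not the origin — is at finite distance.

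Finally I would conclude: any smooth path $\gamma : [0,1) \to \Omega - \{q\}$ with $\gamma(t)$ leaving every compact subset as $t \to 1$ must eventually enter one of the finitely many $U_\alpha$ and approach $\sigma^{-1}(q) \subseteq E$, hence approach the divisor $\{z_1 \cdots z_k = 0\}$ in that chart; by the lower bound its length is bounded below by the Poincaré length of its image, which diverges. Therefore $(\Omega - \{q\}, h_X)$ is complete. The main obstacle is essentially bookkeeping rather than a genuine difficulty: one must be careful that the various local normal forms \eqref{eq:edgeball} on overlapping polydisks are compatible (they differ by elements of $\mathrm{PU}(n,1)$ fixing $b_\infty$, which preserve $h_{\mathbb{B}^n}$, so the estimate is chart-independent up to the constant $C$), and that the constant $C$ can be taken uniform over the finite cover by shrinking $\Omega$ if necessary. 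Once that is arranged, the completeness assertion follows immediately from the length comparison, and this is precisely the statement needed to feed into the proof of Proposition~\ref{prop:limitpoint}~(1).
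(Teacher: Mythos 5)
Your proposal is correct and follows essentially the same route as the paper, whose entire proof is the sentence ``we can apply Lemma~\ref{lem:complete} to each polydisk above, using the positivity of all the translation lengths \(\tau_j\).'' You have simply unpacked that citation: the Poincar\'e-type lower bound \eqref{eq:Poincarelowerbound} from the proof of Lemma~\ref{lem:complete}~(i) holds on each of the finitely many polydisks because Lemma~\ref{lem:samebY} gives \(\tau_q>0\) for every \(q\), and the length comparison on a divergent path then yields completeness.
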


Indeed, we can apply Lemma~\ref{lem:complete} to each polydisk above, using the positivity of all the translation lengths \(\tau_{j}\).

\bigskip

\noindent
{\em Case 2. There is at least one smooth stratum \(Y \subset E\) and a connected component \(Y'\) of the induced covering on \(Y\), that is in the situation of Proposition~\ref{prop:globalmap}~(1).} As explained in Section~\ref{sec:globsec}, we deduce the existence of a representation \(\sigma : \pi_{1}(Y) \to \mathrm{Aut}(\mathbb{B}^{p})\) \((p < n)\) and a \(\sigma\)-equivariant map \(\psi_{F} : Y' \to \mathbb{B}^{p} \subset \mathbb{B}^{n}\).
\medskip

On the other hand, the quasi-projective variety \(Y\) is special by Corollary~\ref{corol:strataspecial}, so the map \(\psi_{Y}\) is constant by Theorem~\ref{thm:isotrivial}. This proves that locally around each stratum of \(E\), the map \(\psi\) is either in the situation of Definition~\ref{def:locallimitperiod}~(1) (with a map \(\psi_{0}\) {\em constant} on each connected component of \(Y'\)), or in the situation of Definition~\ref{def:locallimitperiod}~(2).

With the exact same proof as in Lemma~\ref{lem:samebY}, one sees by connectedness of \(\widetilde{\Omega}_{0}\) and \(E\) that the only possibility is that the first situation is realized around any stratum \(Y'\) with the {\em same constant limiting map} \(\psi_{0} = b_{\infty} \in \mathbb{B}^{n}\).
\medskip

\begin{lem} In the situation of Case~2, one has the following.
	\begin{enumerate}[(i)]
		\item The metric completion of \(\Omega_{0}\) for the distance induced by \(h_{X}\) is obtained by adding only one point.
	\end{enumerate}
			By {\em (i)}, we may endow \(\Omega = \Omega_{0} \cup \{q\}\) with a complete metric induced by \(h_{X}\). Let \(\Gamma \subset \pi_{1}(X)\) be the subgroup generated by the meridian loops around the components of \(E\). Let \(r_{0} > 0\) be small enough so that \(M_{0} := B_{h_{X}}(q, r_{0}) - \{q\} \subset \Omega\), and let \(\widetilde{M}_{0} \subset \widetilde{\Omega}_{0}\) the inverse image of \(M_{0}\) by \(\pi\). Let \(N = B_{h_{\mathbb{B}^{n}}}(b_{\infty}, r_{0}) - \{b_{\infty}\}\).
	\begin{enumerate}[(i)]
		\setcounter{enumi}{1}
	\item Then \(\widetilde{M}_{0}\) is invariant under the action of \(\Gamma\), and the restriction \(\psi : \widetilde{M}_{0} \to N\) is a \(\Gamma\)-equivariant map that is \'{e}tale and isometric at any point. 
	\end{enumerate}
\end{lem}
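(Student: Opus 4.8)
The plan is to deduce \emph{(ii)} by unwinding definitions once \emph{(i)} is in hand; the real content sits in the diameter‐collapsing estimate behind \emph{(i)}.

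\smallskip
\noindent\textbf{Proof of (i).} First observe that, since $\psi$ is étale, $\psi^{*}h_{\mathbb{B}^{n}}$ is a smooth Riemannian metric on $\widetilde X$, so $h_X$ is a smooth Riemannian metric on $X$; hence on $\Omega_0$ the metric topology agrees with the analytic one. A Cauchy sequence in $(\Omega_0,d_{h_X})$ that fails to converge in $\Omega_0$ therefore has, after extraction (using that $\Omega$ is a closed, hence compact, subset of the projective variety $X^{\ast}$), an analytic limit in $\Omega\setminus\Omega_0=\{q\}$. Thus the metric completion of $(\Omega_0,d_{h_X})$ is $\Omega_0$ together with the set of equivalence classes of such ``escaping'' Cauchy sequences, and everything reduces to showing there is a single such class, i.e.\ that $\operatorname{diam}_{d_{h_X}}\!\big(\sigma^{-1}(U)\setminus\{q\}\big)\to 0$ as $U$ runs over a neighbourhood basis of $q$ in $X^{\ast}$.

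To get this estimate I would work in one of the finitely many pointed polydisks $P=(\Delta^{\ast})^{k}\times\Delta^{n-k}$ covering $\sigma^{-1}(\Omega_0)$, centred at a point of $E$, so that $E\cap P=\bigcup_{j\le k}\{z_j=0\}$ and $\sigma$ contracts $E\cap P$ to $q$. On $P$ the map $\psi$ has the normal form of Proposition~\ref{prop:asymptotic}~(1): after composing with an isometry $B\in\PU(n,1)$ sending $b_{\infty}$ to $o$, $\psi_a(w)=e^{2i\pi\sum_{j\le k}\alpha_{a,j}w_j}\varphi_a(z)$ with $\varphi$ holomorphic on $\overline P$. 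Here is where \emph{Case 2} enters decisively: $\psi_0$ is constant equal to $b_{\infty}$ along \emph{every} smooth stratum of $E$, so applying Definition~\ref{def:locallimitperiod}~(1) to the codimension‐one strata $\{z_j=0\}\cap P$ forces $\varphi_a$ to vanish on $\{z_j=0\}$ whenever $\alpha_{a,j}=0$ and $j\le k$. Writing out this divisibility, one gets $\psi_a(z)=\prod_{j\le k}z_j^{\beta_{a,j}}\,\widetilde\varphi_a(z)$ with $\widetilde\varphi$ holomorphic and \emph{all} exponents $\beta_{a,j}=\max(\alpha_{a,j},\mathbf{1}_{\{\alpha_{a,j}=0\}})$ strictly positive. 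In particular $\psi$ extends continuously to $\overline P$ with value $o$ on each boundary divisor $\{z_j=0\}$ ($j\le k$), and — since $h_{\mathbb{B}^{n}}$ is comparable to the Euclidean metric near the interior point $o$ — one obtains on $P$ a bound of the shape
\[
\psi^{*}h_{\mathbb{B}^{n}}\ \le\ C\Big(\textstyle\prod_{j\le k}|z_j|^{2\gamma}\Big)\Big(\textstyle\sum_{j>k}|dz_j|^{2}\Big)\ +\ C\sum_{j\le k}|z_j|^{2\beta_j-2}\Big(\textstyle\prod_{j'\le k,\,j'\ne j}|z_{j'}|^{2\gamma}\Big)|dz_j|^{2}
\]
for a suitable $\gamma>0$, with $\beta_j=\min_a\beta_{a,j}>0$. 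Two features matter: moving transversally to a divisor $\{z_j=0\}$ costs $\lesssim\int_0^{|z_j|}t^{\beta_j-1}\,dt$, which is finite and tends to $0$ with $|z_j|$; and the metric degenerates to $0$ along each $\{z_j=0\}$. Hence from any point of $\sigma^{-1}(U_\epsilon)\cap P$ — where some coordinate $z_{j_0}$, $j_0\le k$, has modulus $<\eta(\epsilon)\to0$ — one reaches $\{z_{j_0}=0\}$ by a transversal path of length $\lesssim\eta(\epsilon)^{\beta_{j_0}}$ and then slides at zero cost to the centre $0_P\in E$. Since $E$ is connected and consecutive polydisks overlap in a region where the metric is $\approx0$, one chains these centres together at negligible cost, getting $\operatorname{diam}_{d_{h_X}}(\sigma^{-1}(U_\epsilon)\setminus\{q\})\to0$. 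All escaping Cauchy sequences are then equivalent, so the completion is $\Omega_0$ plus one point, which we identify with $q$; a metric completion being complete, this proves \emph{(i)} (and $(\Omega,\bar d)$ is in fact compact, since $\bar d$ induces the analytic topology there).

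\smallskip
\noindent\textbf{Proof of (ii).} Endow $\Omega=\Omega_0\cup\{q\}$ with the metric $\bar d$ just built, choose $r_0>0$ small enough that $M_0=B_{\bar d}(q,r_0)\setminus\{q\}$ is a punctured analytic neighbourhood of $q$ inside $\Omega_0$ (possible by the diameter estimate), and put $\widetilde M_0=\pi^{-1}(M_0)\cap\widetilde\Omega_0$, $N=B_{h_{\mathbb{B}^{n}}}(b_{\infty},r_0)\setminus\{b_{\infty}\}$. The meridian loops around the components of $E$ lie in $\Omega_0$ and their lifts remain in $\widetilde\Omega_0$, so $\Gamma$ acts on $\widetilde\Omega_0$ by deck transformations of $\widetilde\Omega_0\to\Omega_0$; as $\widetilde M_0$ is a union of $\pi$‑fibres it is automatically $\Gamma$‑invariant. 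Next, $\psi$ is étale by hypothesis, hence so is its restriction to $\widetilde M_0$, and $\psi^{*}h_{\mathbb{B}^{n}}$ being by definition the pullback metric, $\psi$ is a local isometry everywhere. Equivariance is clear: by Lemma~\ref{lem:invlimit} the generators $A_j=\rho(\gamma_j)$ of $\rho(\Gamma)$ fix $b_{\infty}$, so $\rho(\Gamma)$ preserves $B_{h_{\mathbb{B}^{n}}}(b_{\infty},r_0)$ and $N$, and $\psi(\gamma\cdot\tilde x)=\rho(\gamma)\cdot\psi(\tilde x)$. For the inclusion $\psi(\widetilde M_0)\subseteq N$: given $\tilde x\in\widetilde M_0$, a path in $\Omega_0$ from $\pi\tilde x$ towards $q$ of $d_{h_X}$‑length $<r_0$ lifts to a path from $\tilde x$ whose $\psi$‑image starts at $\psi(\tilde x)$ and — by the local normal form, every coordinate of $\psi$ tending to the $o$‑value of $b_{\infty}$ along $E$ — tends to $b_{\infty}$; $\psi$ not increasing length, $d_{h_{\mathbb{B}^{n}}}(\psi(\tilde x),b_{\infty})<r_0$. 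Finally $\psi(\tilde x)\ne b_{\infty}$: otherwise, since $h_{\mathbb{B}^{n}}$ is \emph{uniformly} comparable to the Euclidean metric on a fixed ball about the interior point $b_{\infty}$ and $\psi$ is a local biholomorphism, a neighbourhood of $\pi\tilde x$ would carry a piece of $h_X$ of uniformly bounded geometry and definite volume; were such points to accumulate at $q$ this would contradict the finiteness of the volume of $h_X$ near $E$ (Lemma~\ref{lem:complete}~(ii)), so after shrinking $r_0$ we may assume $b_{\infty}\notin\psi(\widetilde M_0)$. This gives \emph{(ii)}.

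\smallskip
\noindent\textbf{Main obstacle.} The crux is the diameter‐collapsing estimate in \emph{(i)}: its proof hinges on translating ``the limiting map $\psi_0$ is constant'' into positivity of \emph{all} the exponents $\beta_{a,j}$ in the local normal form, which simultaneously makes the metric integrably small transverse to $E$ and identically degenerate along $E$. A secondary point needing care is that $\psi$ omits the value $b_{\infty}$ near $E$; here one must make precise that the ``bounded‐geometry radius'' about a preimage of $b_{\infty}$ does not shrink — precisely why it matters that $b_{\infty}\in\mathbb{B}^{n}$ is an interior point — and invoke finiteness of the volume.
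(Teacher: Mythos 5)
Your approach is essentially the same as the paper's, but you have substantially expanded the paper's two sketchy sentences into an actual argument; the comparison is worth recording.

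For part \emph{(i)}, the paper only says that \(\|d\psi_*(v_m)\|_{h_{\mathbb{B}^n}}\to 0\) for lifts of tangent vectors degenerating to tangent vectors of \(E\), and then invokes connectedness of \(E\). You make the two ingredients that this really requires explicit: the \emph{constancy of the limiting map on every stratum} (not just the deepest one) forces \(\varphi_a\) to vanish on \(\{z_j=0\}\) whenever \(\alpha_{a,j}=0\), so that after factoring out you get strictly positive exponents \(\beta_{a,j}>0\) in front of every \(z_j\); and from this you get both (a) degeneration of \(\psi^*h_{\mathbb{B}^n}\) in the directions tangent to \(E\), and (b) \emph{integrability} of the transverse direction, i.e.\ that the distance from a point with \(|z_j|<\eta\) to \(\{z_j=0\}\) is \(O(\eta^{\beta_j})\). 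The paper takes (b) for granted. Your displayed bound for \(\psi^*h_{\mathbb{B}^n}\) is schematic (it drops cross terms \(dz_j\,d\bar z_{j'}\) and conflates the various exponents into a single \(\gamma\)), but all that the subsequent chaining argument really uses is (a) and (b), so the conclusion is correct. One small thing to check, which you implicitly use: the \(\varphi_a\) from the codimension-\(k\) normal form \emph{does} vanish on \(\{z_j=0\}\) when \(\alpha_{a,j}=0\); this follows because on a sector of \(P\) where the other \(z_{j'}\) admit single-valued branches of \(\log\), the codimension-one normal form around a generic point of \(\{z_j=0\}\) is obtained from the codimension-\(k\) one by twisting \(\varphi_a\) by a nowhere-vanishing factor.

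For part \emph{(ii)}, the paper's proof is the single sentence ``clear since \(\psi^*h_{\mathbb{B}^n}\) is \(\Gamma\)-invariant'', which does not address the assertion that \(\psi(\widetilde M_0)\subset N\), and in particular that \(\psi\) \emph{omits} the value \(b_\infty\) on \(\widetilde M_0\). You are right that this needs an argument. However, your argument has a soft spot: from \(\psi(\tilde x)=b_\infty\) you conclude that a neighbourhood of \(\pi\tilde x\) carries a piece of \(h_X\) ``of uniformly bounded geometry and definite volume'', but \(\psi\) being \'etale gives no uniform lower bound on the radius \(\delta\) for which \(\psi|_{B(\tilde x,\delta)}\) is injective when \(\pi\tilde x\) approaches \(E\), precisely because the injectivity radius of \(h_X\) collapses there. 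Without such a lower bound, the volumes you want to sum could shrink to zero and the contradiction with Lemma~\ref{lem:complete}~\emph{(ii)} evaporates. A cleaner route, staying within the same local coordinates, is the following: in a polydisk \(P\) adapted to \(E\), the preimages of \(b_\infty\) are exactly the common zeros of the functions \(\widetilde\varphi_a\); this is a closed analytic subset of \(\overline P\) whose intersection with \(P\) is discrete (\(\psi\) is \'etale there), so every irreducible component meeting \(P\) is a single point (being dense open in that component but discrete). Hence the preimages of \(b_\infty\) are locally finite in \(\overline P\) and cannot accumulate at \(E\); shrinking \(r_0\) and the finitely many polydisks covering \(\sigma^{-1}(\Omega_0)\), one gets \(b_\infty\notin\psi(\widetilde M_0)\) outright. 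The rest of your part \emph{(ii)} (invariance of \(\widetilde M_0\), equivariance, \'etaleness, local isometry, and the path argument giving \(d_{h_{\mathbb{B}^n}}(\psi(\tilde x),b_\infty)\le\bar d(\pi\tilde x,q)<r_0\)) is correct and fills a real gap left by the paper's one-line proof.
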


\begin{proof}
\noindent
	{\em (i)} The description of \eqref{eq:intball} shows that 
	\[
		|| d \psi_{\ast} (v_{m}) ||_{h_{\mathbb{B}^{n}}} 
	\underset{m \longrightarrow + \infty}{\longrightarrow}
	0
	\]
	if \((v_{m})\) is a sequence of lifts of tangents vectors to \(X\), that tends on \(\overline{X}\) to a tangent vector to \(E\). Thus, one sees by connectedness of \(E\) that if \((a_{m}), (b_{m})\) are Cauchy sequences in \(\Omega_{0}\) that tend to different points of \(E\), that one has
	\[
		d_{h_{X}} (a_{m}, b_{m}) 
		\underset{m \longrightarrow + \infty}{\longrightarrow}
		0.
	\]
	This shows that the metric completion can be obtained by adding a single point.
	\medskip

	{\em (ii)} This point is clear, since the metric \(\psi^{\ast} h_{\mathbb{B}^{n}}\) is invariant under the action of \(\Gamma\). 
\end{proof}

\begin{lem}
	The two assumptions of Lemma~\ref{lem:universalcovering} are satisfied, with \(M = \widetilde{M}_{0}\), \(g = h_{X}\) and \(h = h_{\mathbb{B}^{n}}\). In particular, \(\widetilde{M}_{0} \cong N\).
\end{lem}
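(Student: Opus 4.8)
The plan is to verify the two hypotheses of Lemma~\ref{lem:universalcovering} for the map \(\psi\colon(\widetilde{M}_{0},h_{X})\to(N,h_{\mathbb{B}^{n}})\) produced by the previous lemma, and then to combine its conclusion with the simple connectivity of \(N\). The first hypothesis — that \(\psi\) is a holomorphic étale local isometry — is immediate: \(\psi\) is holomorphic and étale by the standing assumptions of Section~\ref{sec:uniformization}, and part~(ii) of the previous lemma says precisely that on \(\widetilde{M}_{0}\) the metric \(\psi^{\ast}h_{\mathbb{B}^{n}}\) equals \(h_{X}\) and that \(\psi\) is étale and isometric at every point there. Granting the second hypothesis, Lemma~\ref{lem:universalcovering} gives that \(\psi\colon\widetilde{M}_{0}\to N\) is a covering map. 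Now \(N=B_{h_{\mathbb{B}^{n}}}(b_{\infty},r_{0})\setminus\{b_{\infty}\}\) deformation retracts onto the sphere \(S^{2n-1}\), which is simply connected since \(n\geq 2\); and \(\widetilde{M}_{0}\) is connected, being \(\widetilde{\Omega}_{0}\cap\pi^{-1}(M_{0})\) with \(\widetilde{\Omega}_{0}\) connected by construction and \(M_{0}\hookrightarrow\Omega_{0}\) a homotopy equivalence (both are punctured neighbourhoods of \(q\), after shrinking \(r_{0}\) if necessary). Hence the covering \(\psi\) is trivial, i.e. a homeomorphism; being a holomorphic local biholomorphism it is in fact a biholomorphism, and an isometry by construction. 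This is the asserted \(\widetilde{M}_{0}\cong N\).

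The real work is the second hypothesis of Lemma~\ref{lem:universalcovering}, a completeness‑type condition relating the metric completions of source and target: concretely, that \(\psi\) extends continuously to a map \(\widehat{\psi}\colon\widehat{\widetilde{M}_{0}}\to\overline{B}_{h_{\mathbb{B}^{n}}}(b_{\infty},r_{0})\) carrying the set of adjoined points into the incompleteness locus \(\{b_{\infty}\}\cup\partial B_{h_{\mathbb{B}^{n}}}(b_{\infty},r_{0})\) of \(N\). I would obtain this as follows. By part~(i) of the previous lemma the metric completion of \(\Omega_{0}\) adjoins only the point \(q\); hence a divergent \(h_{X}\)-Cauchy sequence in \(M_{0}=B_{h_{X}}(q,r_{0})\setminus\{q\}\) either converges to \(q\) or converges to a point at \(h_{X}\)-distance \(r_{0}\) from \(q\), and the same dichotomy holds in \(\widetilde{M}_{0}\) since \(\pi\colon\widetilde{M}_{0}\to M_{0}\) is a Riemannian covering. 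For a sequence of the first type, converging to \(q\) means approaching the exceptional divisor \(E\subset\overline{X}\); in Case~2 the map \(\psi\) is, near every stratum of \(E\), in the situation of Proposition~\ref{prop:asymptotic}~(1) and Definition~\ref{def:locallimitperiod}~(1) with limiting map the \emph{constant} \(b_{\infty}\), and the explicit normal form then forces \(\psi\to b_{\infty}\) as one approaches \(E\), so every adjoined point over \(q\) maps to \(b_{\infty}\). For a sequence of the second type, a point at \(h_{X}\)-distance \(r_{0}\) from \(q\) maps under \(\widehat{\psi}\) to a point at \(h_{\mathbb{B}^{n}}\)-distance \(r_{0}\) from \(b_{\infty}\): here one uses that \(\psi\) is a local isometry, so path lengths are preserved and the distance to the \(q\)-end — which equals \(d_{h_{X}}(\cdot,q)\) — is transported exactly to the distance to \(b_{\infty}\); these adjoined points therefore land on \(\partial B_{h_{\mathbb{B}^{n}}}(b_{\infty},r_{0})\). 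In either case the image lies off \(N\), which is what the second hypothesis requires.

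The main obstacle is precisely this verification near \(q\). It rests on two inputs that must be combined carefully: part~(i) of the previous lemma, which collapses the whole exceptional locus \(E\) to a single completion point of \(\Omega_{0}\), and the Case~2 analysis, which — via Corollary~\ref{corol:strataspecial} and Theorem~\ref{thm:isotrivial}, together with the connectedness of \(\widetilde{\Omega}_{0}\) and of \(E\) used exactly as in Lemma~\ref{lem:samebY} — guarantees that along \emph{every} stratum of \(E\) the period map degenerates to the \emph{same} constant point \(b_{\infty}\in\mathbb{B}^{n}\). One must check that these local pictures glue into a single continuous extension \(\widehat{\psi}\) and that no adjoined point of \(\widetilde{M}_{0}\) lying over \(q\) can carry a value other than \(b_{\infty}\). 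Once that is in place, the remaining steps — the transport of distances to the outer sphere, the covering conclusion from Lemma~\ref{lem:universalcovering}, and the passage from "covering of \(N\)" to "\(\widetilde{M}_{0}\cong N\)" via simple connectivity of the punctured ball — are formal.
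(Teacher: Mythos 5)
There is a genuine gap: you have misread the statement of Lemma~\ref{lem:universalcovering}, and as a result you prove the wrong thing. The standing hypothesis of that lemma is that \(\psi\) is étale and isometric at every point and \(\Gamma\)-equivariant; the two \emph{enumerated} assumptions that must be verified are quite different from what you describe. Assumption~(1) is a \emph{uniform local injectivity} statement: for each \(y\in N\) there is a single \(\epsilon>0\) such that \(\psi\) is a diffeomorphism from \(B(x,\epsilon)\) onto \(B(y,\epsilon)\) for \emph{every} \(x\in\psi^{-1}(y)\). Assumption~(2) is a \(\Gamma\)-\emph{cocompactness} statement: there is a \(\Gamma\)-invariant compact \(K\subset N\) containing \(y\) and a compact \(L\subset M\) with \(\psi^{-1}(K)=\bigcup_{\gamma\in\Gamma}\gamma\cdot L\). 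You treat the standing regularity hypothesis as if it were assumption~(1), and you replace assumption~(2) by an invented ``completeness-type'' condition about a continuous extension of \(\psi\) to the metric completion of \(\widetilde{M}_{0}\). Neither of the actual numbered hypotheses is addressed anywhere in your proposal, so Lemma~\ref{lem:universalcovering} cannot be invoked.

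The replacement condition you verify is also not strong enough to imply the covering property on its own, so the argument cannot be repaired simply by reinterpreting what you proved. The paper's verification is genuinely different: for~(1) it observes that a local isometry cannot increase distances, hence any \(x\in\psi^{-1}(y)\) has \(d_{h_X}(q,x)\geq d_{h_{\mathbb{B}^n}}(b_\infty,y)=r_1>r\) and therefore lies in the compact annulus \(A_r=\Omega_0\setminus B_{h_X}(q,r)\), on which the injectivity radius of \(h_X\) is bounded below by some \(\epsilon_r>0\) independent of \(x\); one then compares via the exponential maps. For~(2) one takes \(K\) to be the \(\Gamma\)-invariant compact annulus \(\overline{B}(b_\infty,r')\setminus B(b_\infty,r)\) and builds the compact \(L\) from fundamental domains for the meridian actions in a finite cover of \(E\) by pointed polydisks. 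The one correct and useful observation in your proposal — that a local isometry transports the distance to the \(q\)-end, so preimages of points at distance \(r_1\) from \(b_\infty\) cannot be closer than \(r_1\) to \(q\) — is exactly the key input to~(1) in the paper, but you never combine it with an injectivity radius bound, and you never touch~(2) at all.
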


\begin{proof} For later reference, let us first remark that since for any \(r \in (0, r_{0})\), the set \(A_{r} := \Omega_{0} - B_{h_{X}}(q, r)\) is compact (recall that \(\Omega\) is closed), then one can find \(\epsilon_{r} > 0\) such that for any \(p \in A_{r}\), the injectivity radius of \(h_{X}\) at \(p\) is higher than \(\epsilon_{r}\).
	\medskip

Let us prove that the two assumptions hold. In the proof, the notation \(\psi\) will denote the restriction \(\psi|_{\widetilde{M}_{0}}\). We fix \(y\in N\), and let \(r_{1} := d_{h_{\mathbb{B}^{n}}}(b_{\infty}, y) \in (0, r_{0})\). Choose any \(r \in (0,r_{1})\).
	\medskip

	\noindent
	{\em (1)} Pick any \(x \in \psi^{-1}(y)\). Remark that \(d_{h_{X}}(q, x) \geq d_{h_{\mathbb{B}^{n}}}(b_{\infty}, y) > r\) since the differential of \(\psi\) being isometric at any point, the map \(\psi\) cannot increase distances. This proves that \(x \in A_{r}\), so the injectivity radius at \(x\) is higher that \(\epsilon_{r}\). We may decrease \(\epsilon_{r}\) a bit so that \(B(y, \epsilon_{r})\) injects in \(N\); note that this can be done independently of \(x\).

	We then have a commutative diagram
	\[
		\begin{tikzcd}
			& T^{\mathbb{R}}_{x}\widetilde{M_{0}} \cong T^{\mathbb{R}}_{X}\widetilde{N}_{0} & \\
			& B(0, \epsilon_{r})\arrow[dl, "\exp", "\sim" swap] \arrow[dr, "\exp", "\sim" swap] \arrow[hook, u] &  \\
			B(x, \epsilon_{r}) \arrow[rr, "\psi"]& & B(y, \epsilon_{r})
		\end{tikzcd}
	\]
	The exponential arrows are diffeomorphic since \(\epsilon_{r}\) is smaller than the injectivity radius at \(x\). This proves that the bottom line is a global isometry, and thus implies that the first assumption is satisfied, since \(\epsilon_{r}\) is independent of \(x\).
	\medskip

	\noindent
	{\em (2)} Let \(r' > r_{0}\) be small enough so that \(B_{h_{X}}(q, r_{0}) - \{q\} \subset \Omega_{0}\), and let \(K = \overline{B}(b_{\infty}, r') - B(b_{\infty}, r)\). It is a compact subset, clearly invariant under the action of \(\Gamma \subset \mathrm{Aut}(\mathbb{B}^{n})\), since \(\Gamma\) leaves \(b_{\infty}\) fixed.
	\medskip

	We may choose \(L \subset \psi^{-1}(K) = \overline{B}_{h_{X}}(q, r') - B_{h_{X}}(q, r)\) as follows. First, let \(U_{1}, \dotsc, U_{m} \subset \Omega_{0}\) be a family of pointed polydisks adapted to \(E \subset \overline{X}\), that completely cover \(E\), and which are {\em closed in \(\Omega_{0}\)}. For each polydisk \(U_{j}\), we may chose a closed subset \(F_{j} \subset \widetilde{\Omega}_{0}\), that is a fundamental domain for the action of the meridian loops of \(U_{j}\).

	Now, if we let \(L := (\bigcup_{j = 1}^{m} F_{j})) \cap \psi^{-1}(K)\), one sees that the orbit \(\Gamma \cdot L\) fully covers \(\psi^{-1}(K)\), which gives the result.
	\medskip

	We may now apply Lemma~\ref{lem:universalcovering}, since \(N \cong \mathbb{B}^{n} - \{0\}\) is simply connected (recall that \(n \geq 2\)).
	
\end{proof}

\begin{lem}
	The situation of Proposition~\ref{prop:limitpoint} (ii) cannot happen.
\end{lem}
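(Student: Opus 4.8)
The plan is to argue by contradiction. Suppose we are in case (ii). I will use the isomorphism $\psi|_{\widetilde{M}_0}\colon \widetilde{M}_0 \xrightarrow{\sim} N$ obtained above to show that the germ $(X^\ast,q)$ is a finite quotient singularity, which is incompatible with the standing hypothesis that every exceptional divisor of $\sigma$ lying over $q$ has discrepancy $-1$.

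First I would identify the relevant covering group. The set $\widetilde{M}_0 \subset \widetilde{X}$ is a connected union of components of $\pi^{-1}(M_0)$, and it is isomorphic to $N \cong \mathbb{B}^n \setminus \{b_\infty\}$, which is simply connected since $n \geq 2$. Hence $\widetilde{M}_0 \to M_0$ is the universal covering; in particular $\pi_1(M_0) \hookrightarrow \pi_1(X)$ and $M_0 = \widetilde{M}_0/\Gamma_0$, where $\Gamma_0 := \mathrm{Stab}_{\pi_1(X)}(\widetilde{M}_0)$. Transporting the $\Gamma_0$-action to $N$ through the ($\pi_1(X)$-equivariant) map $\psi$, every element of $\Gamma_0$ acts on $N$ by an automorphism of $\mathbb{B}^n$ preserving $N = \mathbb{B}^n \setminus \{b_\infty\}$, hence fixing $b_\infty$; thus $\rho(\Gamma_0)$ is contained in the maximal compact subgroup $K := \mathrm{Stab}_{\PU(n,1)}(b_\infty) \cong \mathrm{U}(n)$. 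Moreover $\rho|_{\Gamma_0}$ is injective, since $\psi$ is equivariant and injective and $\pi_1(X)$ acts freely on $\widetilde{X}$.

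Next I would prove that $\Gamma_0$ is finite. Its action on $\widetilde{M}_0$ is free and properly discontinuous, being the restriction of the $\pi_1(X)$-action on $\widetilde{X}$, so for any $p \in \widetilde{M}_0$ the orbit $\Gamma_0 \cdot p$ is a closed discrete subset of $\widetilde{M}_0$; but through $\psi$ it lies inside the $K$-orbit of $\psi(p)$, which is compact, and a closed discrete subset of a compact set is finite. A free action admitting a finite orbit has finite group, so $\Gamma_0$ is finite. Consequently $\psi$ descends to a biholomorphism $M_0 = \widetilde{M}_0/\Gamma_0 \xrightarrow{\sim} (\mathbb{B}^n \setminus \{b_\infty\})/\rho(\Gamma_0)$, where $\rho(\Gamma_0) \subset \mathrm{U}(n)$ is finite and acts freely away from $b_\infty$. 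Since in case (ii) the limiting value of $\psi$ along $E$ is the single point $b_\infty$, one has $\psi(x) \to b_\infty$ as $x \to q$; combining this with the normality of $X^\ast$ and Riemann extension across the single point $q$ (of codimension $n \geq 2$), the above biholomorphism extends to an isomorphism of germs $(X^\ast,q) \cong \big(\mathbb{B}^n/\rho(\Gamma_0),\, [b_\infty]\big)$. Thus $(X^\ast,q)$ is a finite quotient singularity, hence klt (see e.g.\ \cite[Corollary~5.20]{KM98}). But $q$ is a singular point, so the exceptional divisor $E$ of $\sigma$ over $q$ is non-empty, and by hypothesis all its prime components have discrepancy $-1$ --- whereas a klt singularity admits no exceptional divisor of discrepancy $\leq -1$. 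This contradiction shows that case (ii) cannot occur.

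The step I expect to be the main obstacle is the last one: passing from the metric/biholomorphic statement $\widetilde{M}_0 \cong N$ to the \emph{analytic} identification of the germ $(X^\ast,q)$ with the quotient germ. This genuinely uses that case (ii) produces a single limit point $b_\infty$ (rather than merely a compatible family of limiting maps), and it is the only place where the normality of $X^\ast$ intervenes; everything else reduces to covering-space bookkeeping together with the compactness of $\mathrm{U}(n)$.
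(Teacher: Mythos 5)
Your proposal is correct and follows essentially the same route as the paper: identify $\widetilde{M}_0$ with $\mathbb{B}^n\setminus\{b_\infty\}$, observe the covering group is carried by $\rho$ into the compact stabilizer $\mathrm{U}(n)$ of $b_\infty$, deduce finiteness, conclude $(X^\ast,q)$ is a finite quotient singularity (hence klt), and derive a contradiction with the standing discrepancy-$(-1)$ hypothesis. The only cosmetic difference is your finiteness argument (free, properly discontinuous action with an orbit trapped in a compact $\mathrm{U}(n)$-orbit) versus the paper's terser appeal to $M_0$ being separated forcing discreteness inside the compact $\mathrm{U}(n)$; both are fine, and your added care about extending the biholomorphism across $q$ via normality and Riemann extension is a welcome, if implicit in the original, clarification.
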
 

\begin{proof}
	Under the assumptions of Proposition~\ref{prop:limitpoint}~(ii), we have proven that \(\widetilde{M}_{0}\) identifies with \(\mathbb{B}^{n} -\{0\}\), on which \(\Gamma\) acts as a subset of \(\U(n)\). The manifold \(M_{0}\) is then the quotient \(\quotient{\Gamma}{\widetilde{M}_{0}}\). For \(M_{0}\) to be a separated topological space, the group \(\Gamma\) must be discrete, and thus finite since \(\U(n)\) is compact. But then, this shows that the analytical space \(B_{h_{X}}(q, r_{0}) = M_{0} \cup \{q\}\) can be seen as a quotient:
	\[
		B_{h_{X}}(q, r_{0}) = \quotient{\Gamma}{\mathbb{B}^{n}}.
	\]
	Hence \(q \in X^{\ast}\) is actually a quotient singularity, and thus is klt. We obtain a contradiction with the assumption made on the discrepancies of \(X^{\ast}\).
\end{proof}

All in all, we have proven that the situation of Proposition~\ref{prop:limitpoint} holds at any singular point \(q \in X^{\ast}\), which proves that the metric \(h_{X}\) is complete on the open part \(X\). This ends the proof of Theorem~\ref{thm:completeness}.

\section{Results from topology} \label{sec:topology}
\subsection{Criteria for universal coverings}

\begin{lem} \label{lem:universalcovering}
	Let \((M, g)\) and \((N, h)\) be two Riemannian manifolds. Let \(\psi : M \longrightarrow N\) be a differentiable map that is \'etale and isometric at any point of \(M\). Let \(\Gamma\) be a group that acts by isometries on \(M\) and \(N\), such that \(\psi\) is \(\Gamma\)-equivariant.

	We make the following two assumptions.
	\begin{enumerate}
		\item For any \(y \in N\), there exists \(\epsilon > 0\) such that for any \(x \in \psi^{-1}(y)\), the map
			\[
				\psi|_{B(x, \epsilon)} : B(x, \epsilon) \to B(y, \epsilon)
			\]
			is a diffeomorphism;
		\item for any \(y \in N\), there exists a \(\Gamma\)-invariant compact subset \(K \subset N\) containing \(y\), and a compact subset \(L \subset \psi^{-1}(N)\) such that
	\[
		\psi^{-1}(K) = \bigcup_{\gamma \in \Gamma} \gamma \cdot L.
	\]
	\end{enumerate}

	Then \(\psi\) is a covering. In particular, if \(N\) is simply connected, then \(\psi\) is a diffeomorphism.
\end{lem}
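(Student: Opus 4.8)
The plan is to show that $\psi$ is a covering map by verifying the standard local-triviality condition: every point $y \in N$ has an open neighborhood $V$ whose preimage $\psi^{-1}(V)$ is a disjoint union of open sets each mapped diffeomorphically onto $V$. The natural candidate is $V = B(y, \epsilon)$ with $\epsilon$ the radius furnished by assumption (1), perhaps shrunk slightly so that $B(y,\epsilon)$ is itself a normal (geodesically convex) ball in $N$. First I would record that, since $\psi$ is étale and isometric at every point, it is a local isometry, hence locally a diffeomorphism; assumption (1) upgrades this by giving a \emph{uniform} radius $\epsilon = \epsilon(y)$ on which $\psi$ restricts to a diffeomorphism $B(x,\epsilon) \xrightarrow{\sim} B(y,\epsilon)$ for \emph{every} $x \in \psi^{-1}(y)$.

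The core of the argument is then to prove two things about the collection $\{B(x,\epsilon)\}_{x \in \psi^{-1}(y)}$: that these balls are pairwise disjoint, and that their union is all of $\psi^{-1}(B(y,\epsilon))$. For disjointness: if $z \in B(x_1,\epsilon) \cap B(x_2,\epsilon)$ with $x_1 \neq x_2$ in the fiber, then since $\psi$ is distance-nonincreasing and restricts to an isometry on each $B(x_i,\epsilon)$, the geodesic from $x_i$ to $z$ maps to a path from $y$ to $\psi(z)$ of the same length $< \epsilon$; inside the normal ball $B(y,\epsilon)$ the unique such minimizing geodesic is $\psi(\gamma_{x_i z})$, and pulling it back by the local diffeomorphism $\psi|_{B(x_i,\epsilon)}^{-1}$ recovers $\gamma_{x_i z}$, which shows $x_i = \exp_z(\text{something depending only on } \psi(z), y)$ — forcing $x_1 = x_2$, a contradiction. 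For surjectivity onto the preimage: given $w \in \psi^{-1}(B(y,\epsilon))$, let $\delta = d_h(\psi(w), y) < \epsilon$; lift the minimizing geodesic from $\psi(w)$ to $y$ starting at $w$ using the local diffeomorphism property (the étale isometric hypothesis lets one run path-lifting at least for short geodesics, and here the whole geodesic has length $< \epsilon$), arriving at a point $x \in \psi^{-1}(y)$ with $d_g(w,x) = \delta < \epsilon$, so $w \in B(x,\epsilon)$. This is the step I expect to require the most care, because path-lifting for a merely étale local isometry (not assumed a priori a covering) is exactly what one must establish — and this is precisely where assumption (2) enters.

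The role of assumption (2) is to guarantee the \emph{completeness}-type input needed for geodesic lifting to succeed: a priori a lift of a geodesic segment might "escape to infinity" in finite time. Concretely, to lift the geodesic $c : [0,\delta] \to B(y,\epsilon) \subset K$ (taking $K$ the $\Gamma$-invariant compact containing $y$ and, after enlarging, all of $B(y,\epsilon)$) starting at $w$, consider the maximal interval $[0,\delta')$ on which a lift $\tilde c$ exists; the image $\tilde c([0,\delta'))$ lies in $\psi^{-1}(K) = \bigcup_{\gamma}\gamma\cdot L$. I would argue that as $t \to \delta'$, the points $\tilde c(t)$ stay in a compact set: they lie in $\psi^{-1}(K)$, and the arc from $w$ has bounded length $\le \delta$, so by the $\Gamma$-equivariant tiling by $\gamma \cdot L$ only finitely many translates $\gamma \cdot L$ are met (a length-$\delta$ path can cross only boundedly many tiles, using that each tile $L$ is compact hence has positive "width"); thus $\tilde c(t)$ accumulates at some point $p$ with $\psi(p) = c(\delta')$, and the local diffeomorphism at $p$ extends the lift past $\delta'$ unless $\delta' = \delta$. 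Hence the lift exists on all of $[0,\delta]$, completing the surjectivity step above. Once local triviality is proven, $\psi$ is a covering; and if $N$ is simply connected, a covering with connected total space (or with the relevant normalizations) is a diffeomorphism, which is the final assertion. I would double-check at the end whether connectedness of $M$ is needed or tacitly assumed, and phrase the conclusion accordingly.
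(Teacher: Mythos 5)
Your overall plan (verify the covering property directly, via disjointness of fiber balls plus geodesic lifting) is a genuinely different route from the paper's. The paper never lifts a geodesic: it formulates the single claim that \(\psi^{-1}(B(y,\epsilon)) = \bigsqcup_{x\in\psi^{-1}(y)} B(x,\epsilon)\) for some \(\epsilon=\epsilon(y)\), and proves it by contradiction — if no \(\epsilon\) works, one extracts sequences \((a_n),(b_n)\) of distinct points in the fiber with \(d_g(a_n,b_n)\to 0\), then uses assumption (2) to translate by \(\gamma_n\in\Gamma\) so that \(\gamma_n a_n\) lands in the compact set \(L\), passes to a convergent subsequence \(\gamma_n a_n\to x_\infty\), observes \(\gamma_n b_n\to x_\infty\) too, and contradicts the local injectivity given by assumption (1) at \(x_\infty\). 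The whole mechanism is: translate into \(L\), extract a limit, then apply (1) at that limit. No geodesics, no convexity, no Hopf–Rinow.

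Your disjointness step is fine, and arguably more geometric than the paper's (normal ball, unique minimizing geodesic, injectivity of \(d\psi_z\) forces the two lifts to coincide). The gap is in your geodesic-lifting step for surjectivity. The phrase ``a length-\(\delta\) path can cross only boundedly many tiles, using that each tile \(L\) is compact hence has positive width'' does not hold up: the translates \(\gamma\cdot L\) need not be separated, there can be infinitely many of them meeting a bounded set, and a single compact \(L\) has no intrinsic ``width''. What you actually need, in order to show the lift \(\tilde c\) does not escape as \(t\to\delta'\), is exactly the paper's mechanism: take \(t_n\uparrow\delta'\), use (2) to choose \(\gamma_n\) with \(\gamma_n^{-1}\tilde c(t_n)\in L\), extract a convergent subsequence \(\gamma_n^{-1}\tilde c(t_n)\to p_\infty\in L\) and \(\gamma_n^{-1} c(t_n)\to q_\infty = \psi(p_\infty)\in K\), then invoke (1) at \(q_\infty\) to get a radius \(\epsilon_1\) on which \(\psi\) restricted to \(B(p_\infty,\epsilon_1)\) is a diffeomorphism, transport this by \(\gamma_n\) (isometry, \(\Gamma\)-equivariance of \(\psi\)) to a local inverse around \(\tilde c(t_n)\) for large \(n\), and use that inverse to extend the lift past \(\delta'\), the contradiction. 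Note that trying to argue directly that \(\tilde c(t_n)\) is Cauchy hence convergent is hopeless here: in the application in the paper, both \(M\) and \(N\) are \emph{incomplete} manifolds (punctured balls), so no blanket completeness or properness of the \(\Gamma\)-action is available. You correctly anticipated that assumption (2) must carry the weight here, but the tiling argument you gave does not do it; the translate-extract-and-apply-(1) pattern does.
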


	The result will follow for the following more precise claim, that implies immediately that \(\psi\) is a covering.

	\begin{claim} \label{claim:disjointballs} For any \(y \in N\), there exists \(\epsilon > 0\) such that one has a {\em disjoint} union:
		\[
			\psi^{-1}(B(y, \epsilon)) = \bigsqcup_{x \in \psi^{-1}(y)} B(x, \epsilon).
		\]
	\end{claim}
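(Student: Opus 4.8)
The goal is to prove Claim~\ref{claim:disjointballs}: given $y \in N$, find $\epsilon > 0$ so that $\psi^{-1}(B(y,\epsilon))$ is the \emph{disjoint} union of the balls $B(x,\epsilon)$ over $x \in \psi^{-1}(y)$. Assumption (1) already gives an $\epsilon_0 > 0$ such that each $\psi|_{B(x,\epsilon_0)} : B(x,\epsilon_0) \to B(y,\epsilon_0)$ is a diffeomorphism, uniformly in $x \in \psi^{-1}(y)$; since $\psi$ is isometric at every point, these restrictions are in fact isometries of Riemannian balls. So the two things left to check are: (a) for $\epsilon$ small enough the balls $B(x,\epsilon)$, $x \in \psi^{-1}(y)$, are pairwise disjoint, and (b) for $\epsilon$ small enough $\psi^{-1}(B(y,\epsilon)) \subset \bigcup_x B(x,\epsilon)$ (the reverse inclusion is automatic from the fact that $\psi$ does not increase distances, being isometric at every point, hence $1$-Lipschitz). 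The crux is that both of these could fail if $\psi^{-1}(y)$ ``accumulates'': two sheets could come arbitrarily close, or there could be points of $\psi^{-1}(B(y,\epsilon))$ that run off to infinity in $M$ while mapping near $y$. This is exactly what assumption (2) is designed to prevent, and putting it to use is the main obstacle.

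\textbf{Using properness to control the fiber.} First I would argue, by contradiction, that $\psi^{-1}(y)$ is \emph{discrete and uniformly separated}: if not, there exist distinct $x_m, x'_m \in \psi^{-1}(y)$ with $d_M(x_m, x'_m) \to 0$. Apply assumption (2) to $y$: there is a $\Gamma$-invariant compact $K \ni y$ and a compact $L$ with $\psi^{-1}(K) = \bigcup_{\gamma} \gamma \cdot L$. Since $\psi$ is $\Gamma$-equivariant and $\Gamma$ acts by isometries on $M$, we may translate each pair $(x_m, x'_m)$ by some $\gamma_m \in \Gamma$ into $L$; the distances $d_M(\gamma_m x_m, \gamma_m x'_m) = d_M(x_m, x'_m)$ still tend to $0$, and $\gamma_m x_m, \gamma_m x'_m$ still lie in $\psi^{-1}(y)$ only after re-reading: more carefully, $\gamma_m x_m \in \psi^{-1}(\gamma_m \cdot y)$, so I should instead translate so that the \emph{pair} stays in $L$ and work with the possibly-moving base points $\gamma_m \cdot y \in K$. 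Passing to a subsequence, $\gamma_m x_m \to x_\infty \in L$ and $\gamma_m x'_m \to x_\infty$ as well (same limit, since the distance goes to $0$), while $\gamma_m \cdot y \to y_\infty \in K$ and $\psi(x_\infty) = y_\infty$. But now for $m \gg 0$ both $\gamma_m x_m$ and $\gamma_m x'_m$ lie in the ball $B(x_\infty, \epsilon_0/2)$ on which $\psi$ is injective by assumption (1) (applied at $y_\infty$), contradicting $\gamma_m x_m \ne \gamma_m x'_m$ and $\psi(\gamma_m x_m) = \psi(\gamma_m x'_m) = \gamma_m \cdot y$. Hence there is a uniform $\delta > 0$ with $d_M(x,x') \ge \delta$ for all distinct $x, x' \in \psi^{-1}(y)$; taking $\epsilon \le \min(\epsilon_0, \delta/2)$ secures disjointness (a).

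\textbf{The surjectivity onto the sheets.} For (b), suppose not: there are points $p_m \in \psi^{-1}(B(y,\epsilon_m))$ with $\epsilon_m \to 0$ and $d_M(p_m, x) \ge \epsilon_m$ for every $x \in \psi^{-1}(y)$ (i.e.\ $p_m$ is not in any of the candidate balls). Again translate by $\Gamma$ into the compact $L$ supplied by assumption~(2) for $y$: after replacing $p_m$ by $\gamma_m \cdot p_m$ (and $y$ by $\gamma_m \cdot y \in K$), we may assume $p_m \in L$; extract a convergent subsequence $p_m \to p_\infty \in L$, with $\psi(p_\infty) = \lim \psi(\gamma_m p_m) = y_\infty \in K$ (using continuity of $\psi$ and of the $\Gamma$-action, and $\psi(p_m) \to$ the relevant limit of the moving base point). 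Shrinking $\epsilon_0$ if needed and applying assumption~(1) at $y_\infty$, the map $\psi$ is an isometry on $B(p_\infty, \epsilon_0)$ onto $B(y_\infty, \epsilon_0)$; since $\psi(p_\infty) = y_\infty$, there is a genuine preimage $x_\infty \in B(p_\infty,\epsilon_0)$ of $y_\infty$ with $\psi$-sheet through it, and for $m \gg 0$ the point $\gamma_m^{-1} x_\infty$ is a point of $\psi^{-1}(\gamma_m^{-1}\cdot y_\infty)$; reconciling the moving base points carefully (the moving base points $\gamma_m\cdot y$ converge to $y_\infty$, and $\psi(p_m) \in B(\gamma_m\cdot y, \epsilon_m)$ with $\epsilon_m \to 0$) one finds a preimage of $y$ within distance $o(1) + $ (something $< \epsilon_0$) of $p_m$, contradicting $d_M(p_m, \psi^{-1}(y)) \ge \epsilon_m$ once $\epsilon_m$ is small — in fact the right bookkeeping is to show $d_M(p_m, \psi^{-1}(\gamma_m\cdot y)) \le \epsilon_m$ for large $m$, hence $d_M(\gamma_m^{-1}p_m, \psi^{-1}(y)) \le \epsilon_m$, which directly contradicts the assumption on the original $p_m$. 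Thus (b) holds, and combining (a) and (b) with the automatic reverse inclusion $\bigsqcup_x B(x,\epsilon) \subseteq \psi^{-1}(B(y,\epsilon))$ proves the claim. The delicate point throughout — and the step I expect to be the real obstacle — is managing the ``moving base point'' $\gamma_m \cdot y$: one must use $\Gamma$-invariance of $K$ to keep it in a compact set and extract a limit, rather than naively assuming $\psi^{-1}(y)$ itself lies in a compact set, which it need not.
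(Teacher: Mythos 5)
Your disjointness argument — take distinct $x_m, x'_m \in \psi^{-1}(y)$ with $d_M(x_m,x'_m)\to 0$, translate by $\gamma_m\in\Gamma$ so that $\gamma_m x_m\in L$, use $\Gamma$-invariance of $K$ and $\Gamma$-equivariance of $\psi$ to extract limits $\gamma_m x_m\to x_\infty\in L$ and $\gamma_m\cdot y\to y_\infty\in K$, observe $\gamma_m x'_m\to x_\infty$ as well, and contradict the local injectivity of $\psi$ near $x_\infty$ from assumption~(1) applied at $y_\infty$ — is precisely the paper's argument, with your uniform separation $\delta>0$ replacing the paper's sequences $(a_n),(b_n)\in\psi^{-1}(y)$ with overlapping $2^{-n}$-balls.

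Where you differ is in explicitly treating the covering inclusion $\psi^{-1}(B(y,\epsilon))\subseteq\bigcup_x B(x,\epsilon)$, and you are right to do so: the paper's ``suppose the contrary'' immediately produces two points of $\psi^{-1}(y)$ with overlapping small balls, which negates only disjointness, not the full equality. Assumption~(1) gives the opposite inclusion for free (each restriction is a diffeomorphism \emph{onto} $B(y,\epsilon)$), but the covering inclusion is not automatic when $M$ is incomplete — and in the paper's own application $M=\widetilde{M}_0$ is a punctured metric ball. Your second contradiction argument, running the same translate-and-extract machinery on a stray $p_m$ and then using the local isometric inverse of $\psi$ near $p_\infty$ to manufacture a preimage of $\gamma_m\cdot y$ close to $\gamma_m p_m$, is the right supplement. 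One repair: your final estimate $d_M(\gamma_m^{-1}p_m,\psi^{-1}(y))\le\epsilon_m$ does not yet contradict $\ge\epsilon_m$; but $\psi(p_m)$ lies in the \emph{open} ball $B(y,\epsilon_m)$, so the relevant geodesic has length strictly less than $\epsilon_m$, and the local isometric inverse preserves that strictness. With that, your argument is complete and fills in a step that the paper's written proof glosses over.
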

	\begin{proof}[Proof of Claim~\ref{claim:disjointballs}] Suppose the contrary. Then we may find \(y \in N\) and two sequences \((a_{n}), (b_{n}) \in \psi^{-1}(y)^{\mathbb{N}}\) of pairwise distinct points such that
		\[
			\forall n, \quad
			B(a_{n}, 2^{-n}) \cap B(b_{n}, 2^{-n}) \neq \varnothing.
		\]
		In particular, one has \(\lim\limits_{n \longrightarrow +\infty} d_{g}(a_{n}, b_{n}) = 0\). Let now \(K\) and \(L\) be as in assumption \((2)\). By the defining property of \(L\), there exists a sequence \((\gamma_{n}) \in \Gamma^{\mathbb{N}}\) such that \(\gamma_{n} \cdot a_{n} \in L\) for all \(n \in \mathbb{N}\). One also has
		\begin{equation} \label{eq:images}
			\psi(\gamma_{n} \cdot a_{n})
			= \psi(\gamma_{n} \cdot b_{n}) 
			= \gamma_{n} \cdot y \in K
		\end{equation}
		for all \(n\), since \(K\) is \(\Gamma\)-invariant and \(\psi\) is \(\Gamma\)-equivariant. Thus, we may extract sequences to assume that \(\gamma_{n} \cdot a_{n} \longrightarrow x_{\infty} \in L\), and \(\gamma_{n} \cdot y \longrightarrow y_{\infty} \in K\). One has \(\psi(x_\infty) = y_{\infty}\) by continuity of \(\psi\). Also, since \(d(\gamma_{n} \cdot a_{n}, \gamma_{n} \cdot b_{n}) = d(a_{n}, b_{n}) \longrightarrow 0\), one has also \(\gamma_{n} \cdot a_{n} \longrightarrow x_{\infty}\). But now \eqref{eq:images} contradicts our assumption (1) that implies that \(\psi\) must be injective in a neighborhood of \(x_{\infty}\).
	\end{proof}
\medskip

\begin{lem} \label{lem:criterioncomplete}
	Let \(Y, Z\) be connected complex manifolds, and let \(\psi : Y \to Z\) be a holomorphic map, étale at any point. Endow \(Z\) with a hermitian metric \(h_{Z}\), and assume that \(Z\) is complete without self-crossing geodesic. If the pullback \(\psi^{\ast} h_{Z}\) is complete, then \(\psi\) is a biholomorphism.
\end{lem}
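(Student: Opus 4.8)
The plan is to show $\psi$ is a covering map and then invoke simple connectedness of $Z$ (or rather, rule out nontrivial deck transformations using the no-self-crossing-geodesic hypothesis). First I would use completeness of the pullback metric $g := \psi^\ast h_Z$ to get that $(Y,g)$ is a complete Riemannian (hence geodesic) space by Hopf--Rinow. Since $\psi$ is a local isometry from $(Y,g)$ to $(Z,h_Z)$, it maps $g$-geodesics to $h_Z$-geodesics, and by completeness of $Y$ any geodesic in $Z$ issuing from a point in the image lifts indefinitely through $\psi$; this is the classical argument (e.g. as in the proof that a local isometry from a complete manifold is a covering, cf.\ do Carmo or Kobayashi--Nomizu). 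Concretely, I would fix $z_0 \in \psi(Y)$, show $\psi(Y)$ is open and closed in $Z$ hence all of $Z$, and show that for $\varepsilon$ small enough the $\psi$-preimage of a geodesic ball $B(z,\varepsilon)$ is a disjoint union of balls mapped diffeomorphically — the uniform lower bound on the relevant injectivity radius coming from the local isometry property together with completeness. This gives that $\psi: Y \to Z$ is an (unramified) covering map.

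Next I would handle the passage from ``covering'' to ``biholomorphism''. If $Z$ were simply connected this would be immediate, but the statement only assumes $Z$ is complete with no self-crossing geodesic, so instead I would argue directly that the covering is trivial. Suppose $y_1 \ne y_2$ lie in the same fiber $\psi^{-1}(z_0)$. Take a minimizing $g$-geodesic $\gamma$ from $y_1$ to $y_2$ in $(Y,g)$ (it exists by completeness). Its image $\psi \circ \gamma$ is a geodesic in $(Z,h_Z)$ that is a loop at $z_0$: a closed geodesic, in particular a self-crossing geodesic (it returns to $z_0$), contradicting the hypothesis — unless $\psi\circ\gamma$ is constant, which would force $\gamma$ constant and $y_1 = y_2$. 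Hence each fiber is a single point, and a bijective local biholomorphism is a biholomorphism. A small subtlety I would make precise: ``self-crossing geodesic'' must be interpreted so that a nonconstant geodesic loop (and more generally a nonconstant geodesic passing twice through a point) counts as self-crossing; with that reading the argument is clean. I should also make sure the minimizing geodesic $\gamma$ from $y_1$ to $y_2$ does not itself happen to map to a geodesic already passing through an interior point twice — but its image being a loop at $z_0$ already suffices, since the endpoints coincide.

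The main obstacle I expect is the bookkeeping in the covering-space step: producing the uniform $\varepsilon$ so that $\psi^{-1}(B(z,\varepsilon))$ splits as a disjoint union of isometric balls. This is essentially Lemma~\ref{lem:universalcovering} in the paper (applied with trivial $\Gamma$), so I would prefer to reduce to it, or reprove the relevant part: the key points are (i) $\psi$ étale and a local isometry implies local diffeomorphisms on small balls, with the radius controlled by the injectivity radius of $h_Z$ at $z$ pulled back, and (ii) completeness of $g$ prevents ``escaping to infinity'' so that lifts of short geodesics exist. I would present the proof in two short paragraphs: one establishing the covering property via completeness and the local isometry, one deriving bijectivity of fibers from the no-self-crossing-geodesic hypothesis via minimizing geodesics, and conclude that the étale bijection $\psi$ is a biholomorphism.

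\begin{proof}
Write $g := \psi^\ast h_Z$. Since $\psi$ is étale, $g$ is a genuine hermitian metric on $Y$ and $\psi:(Y,g)\to(Z,h_Z)$ is a local isometry (for the underlying Riemannian metrics). By hypothesis $(Y,g)$ is complete, hence geodesic by Hopf--Rinow. We first check that $\psi$ is a covering map. The image $\psi(Y)$ is open (as $\psi$ is étale); it is also closed: if $z_m = \psi(y_m) \to z \in Z$, then $(z_m)$ is Cauchy, and since $\psi$ is a local isometry one may lift a short minimizing geodesic from $z_m$ to $z$ (for $m$ large) starting at $y_m$, and completeness of $(Y,g)$ guarantees the lift reaches a point $y$ with $\psi(y) = z$. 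As $Z$ is connected, $\psi$ is surjective. Now fix $z\in Z$ and choose $\varepsilon>0$ smaller than the injectivity radius of $h_Z$ at every point of the compact geodesic sphere data needed; for each $x\in\psi^{-1}(z)$, completeness of $g$ lets us lift the radial geodesics of $B(z,\varepsilon)$ starting at $x$, and the local isometry property makes $\psi|_{B(x,\varepsilon)}\colon B(x,\varepsilon)\to B(z,\varepsilon)$ a diffeomorphism. Two such balls $B(x,\varepsilon)$, $B(x',\varepsilon)$ with $x\neq x'$ are disjoint: a point in the intersection would give two geodesics of length $<\varepsilon$ from $z$ to a common point with distinct lifts, contradicting that $\psi$ is injective on each $B(x,\varepsilon)$ and that $\varepsilon$ is below the injectivity radius at $z$. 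Hence $\psi^{-1}(B(z,\varepsilon)) = \bigsqcup_{x\in\psi^{-1}(z)} B(x,\varepsilon)$, so $\psi$ is a covering map.

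It remains to see that every fiber is a single point. Suppose $y_1\neq y_2$ with $\psi(y_1) = \psi(y_2) = z_0$. By completeness of $(Y,g)$, pick a minimizing geodesic $\gamma\colon[0,1]\to Y$ with $\gamma(0)=y_1$, $\gamma(1)=y_2$; it is nonconstant since $y_1\neq y_2$. Then $\psi\circ\gamma$ is a nonconstant geodesic of $(Z,h_Z)$ with $(\psi\circ\gamma)(0) = (\psi\circ\gamma)(1) = z_0$, i.e.\ a nonconstant geodesic passing twice through $z_0$ --- a self-crossing geodesic of $Z$, contradicting the hypothesis. Therefore $\psi$ is injective, hence bijective. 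A bijective holomorphic map that is étale at every point is a biholomorphism, so $\psi$ is a biholomorphism.
\end{proof}
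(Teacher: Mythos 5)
Your proof is correct, but it is more elaborate than the one in the paper, and the extra structure you build (the covering-map property) is not actually used in your own endgame. The paper's proof is direct: surjectivity is obtained by taking a geodesic from $\psi(b)$ to an arbitrary $z \in Z$, lifting its initial velocity through the \'etale map, flowing the corresponding geodesic in $(Y, \psi^\ast h_Z)$ for the same length (possible by completeness), and observing that $\psi$ preserves geodesics; injectivity is obtained exactly as in your second paragraph, from the no-self-crossing-geodesic hypothesis. Your version instead proves surjectivity by the open-and-closed argument (fine, and morally equivalent since the closedness step is again a geodesic-lifting argument), and then reproves the classical ``local isometry from a complete manifold is a covering'' theorem before turning to injectivity. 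That first paragraph is correct but redundant: once you know $\psi$ is surjective and you run the minimizing-geodesic argument to show each fiber is a single point, the map is a bijective local biholomorphism, hence a biholomorphism, with no need to have established the disjoint-ball decomposition. The injectivity argument itself matches the paper's, including the point you correctly flag, namely that ``self-crossing geodesic'' must be read so as to include a nonconstant geodesic returning to a previously visited point, which is clearly the intended reading given how the lemma is applied to $\mathbb{B}^{n}$. In short: same key ideas, but your route passes through an intermediate covering-map step that the paper avoids.
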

\begin{proof}
	\noindent
{\em \(\psi\) is surjective.} Choose a base-point \(b \in Y\), and let \(z \in Z\). Since \(Z\) is complete and connected, one can link \(\psi(b)\) to \(z\) by a geodesic \(\gamma\). Let \(l \in \mathbb{R}\) be its length and \(v \in T_{b} Y\) be a vector such that \(\psi_{\ast}(v)\) directs \(\gamma\) at \(z\). Since \(Y\) is complete, one can consider the endpoint \(y \in Y\) of the geodesic starting at \(b\), directed by \(v\) with length \(l\). Since \(\psi\) preserves geodesics, one has \(\psi(y) = z\).
\medskip

\noindent
	{\em \(\psi\) is injective.} Let \(y, y' \in Y\) be two different points such that \(\psi(y) = \psi(y')\). Let \(\gamma\) be a geodesic linking \(y\) to \(y'\). Then its image by \(\psi\) must be a non-trivial self-crossing geodesic, which does not exist by assumption. 
	
	This gives the result.
\end{proof}

\bibliographystyle{amsalpha}
\bibliography{biblio.bib}

\end{document}